\DeclareMathOperator{\diag}{diag}
\DeclareMathOperator{\Ad}{Ad}
\DeclareMathOperator{\ad}{ad}
\DeclareMathOperator{\Aut}{Aut}
\DeclareMathOperator{\tr}{tr}
\DeclareMathOperator{\Ric}{Ric}
\DeclareMathOperator{\Span}{span}
\newcommand{\fr}{\mathfrak}
\newcommand{\al}{\alpha}
\newcommand{\bb}{\mathbb}
\DeclareMathOperator{\SO}{SO}
\DeclareMathOperator{\s}{S}
\DeclareMathOperator{\Sp}{Sp}
\DeclareMathOperator{\SU}{SU}
\DeclareMathOperator{\U}{U}
 \newtheorem{lemma} {Lemma} [section]
\newtheorem{theorem}[lemma]{Theorem} 
\newtheorem{remark}[lemma] {Remark} 
\newtheorem{prop} [lemma]{Proposition}
\begin{document}

\title{Invariant Einstein metrics on   $\SU(N)$ and  \\ complex Stiefel manifolds} 
\author{Andreas Arvanitoyeorgos, Yusuke Sakane  and Marina Statha}
\address{University of Patras, Department of Mathematics, GR-26500 Rion, Greece}
\email{arvanito@math.upatras.gr}
 \address{Osaka University, Department of Pure and Applied Mathematics, Graduate School of Information Science and Technology, Suita, 
Osaka 565-0871, Japan}
 \email{sakane@math.sci.osaka-u.ac.jp}
\address{University of Patras, Department of Mathematics, GR-26500 Rion, Greece}
\email{statha@master.math.upatras.gr} 
\medskip

   \begin{abstract}
 We study existence of invariant Einstein metrics on complex Stiefel manifolds $G/K = \SU(\ell+m+n)/\SU(n) $ 
 and the special unitary groups $G = \SU(\ell+m+n)$. We decompose the Lie algebra $\frak g$ of $G$ and the tangent space $\frak p$ of $G/K$, by using the generalized flag manifolds $G/H = \SU(\ell+m+n)/\s(\U(\ell)\times\U(m)\times\U(n))$. 
 We parametrize scalar products on the 2-dimensional center of the Lie algebra of $H$, and we consider $G$-invariant and left invariant metrics determined by $\Ad(\s(\U(\ell)\times\U(m)\times\U(n))$-invariant scalar products on $\frak g$ and $\frak p$ respectively.   Then we compute their Ricci tensor for such metrics. 
 We prove existence of $\Ad(\s(\U(1)\times\U(2)\times\U(2))$-invariant Einstein metrics  on $V_3\bb{C}^{5}=\SU(5)/\SU(2)$, $\Ad(\s(\U(2)\times\U(2)\times\U(2))$-invariant Einstein metrics  on $V_4\bb{C}^{6}=\SU(6)/\SU(2)$, and $\Ad(\s(\U(m)\times\U(m)\times\U(n))$-invariant Einstein metrics  on $V_{2m}\bb{C}^{2m+n}=\SU(2m+n)/\SU(n)$. We also prove existence of $\Ad(\s(\U(1)\times\U(2)\times\U(2))$-invariant Einstein metrics
on the compact Lie group $\SU(5)$, which are not naturally reductive. 
The Lie group  $\SU(5)$  is the special unitary group of smallest rank known for the moment, admitting non naturally reductive Einstein metrics. Finally, we show that the compact Lie group
$\SU(4+n)$ admits two non naturally reductive $\Ad(\s(\U(2)\times\U(2)\times\U(n)))$-invariant  Einstein metrics  for $ 2 \leq n \leq 25$,  and four   non naturally reductive Einstein metrics for $n\ge 26$.  This extends previous results of K.~ Mori  about  non naturally reductive Einstein metrics  on $\SU(4+n)$  ($n \geq 2$).

   \end{abstract}

 \footnote[0]{ 
2010 {\it Mathematics Subject Classification.} Primary 53C25; Secondary 53C30, 13P10, 65H10, 68W30.

{\it Keywords}:    Homogeneous space, Einstein metric, Stiefel manifold, special unitary group, invariant metric, isotropy representation, Gr\"obner basis.

The first author was supported by a Grant from the Empirikion Foundation in Athens and the second author by JSPS KAKENHI Grant Number 16K05130.  }
  
\maketitle
 

 \section{Introduction}
\markboth{Andreas Arvanitoyeorgos, Yusuke Sakane and Marina Statha}{Einstein metrics on   $\SU(N)$ and complex Stiefel manifolds}

A Riemannian manifold $(M, g)$ is called Einstein if it has constant Ricci curvature, i.e. $\Ric_{g}=\lambda\cdot g$ for some $\lambda\in\bb{R}$. 
  Besides the detailed exposition on Einstein manifolds  in \cite{Be}, we refer to  \cite{W1},  \cite{W2} for more recent results.
  General existence results are difficult to obtain and some methods are described
  in \cite{Bom}, \cite{BWZ} and \cite{WZ}.
  For the case of homogeneous spaces the problem of finding all invariant Einstein metrics becomes slightly more accessible, due to the possibility 
  of making symmetry assumptions, but  still it is not easy.  For example, the classification of invariant Einstein metrics for important classes of homogeneous spaces, such as the generalized Wallach spaces, was only recently achieved (\cite{CN}).
  Also,  
   for other classes of homogeneous spaces, such as the generalized flag manifolds, a complete classification of invariant Einstein metrics is still open.  We refer to \cite{A2} for more details.
 For Lie groups   the problem of determining all left-invariant Einstein metrics is also quite difficult, even if one makes geometric assumptions, such as the natural reductivity of the metric.

In the present paper we study left-invariant Einstein metrics on the compact Lie group $\SU(n)$ and $\SU(n)$-invariant Einstein metrics on the complex Stiefel manifolds $V_k\bb{C}^n=\SU(n)/\SU(n-k)$, of orthonormal $k$-frames in $\bb{C}^n$.
Two marginal cases are
   the sphere $\bb{S}^{2n-1}=\SU(n)/\SU(n-1)=V_1\bb{C}^n$ and the compact Lie group
   $\SU(n)=V_{n-1}\bb{C}^n$.
   The first is an irreducible symmetric space, therefore it admits up to scale a unique $\SU(n)$-invariant Einstein metric.

Left-invariant Einstein metrics on $\SU(n)$  have not been extensively studied.
We recall that in \cite{DZ} J.E. D'Atri and W. Ziller  found a large number of left-invariant Einstein metrics
on the compact Lie groups
  $\SU(n), \SO(n)$ and $\Sp(n)$, which are naturally reductive  
  and they posed the question whether there exist left-invariant Einstein metrics on compact Lie groups, which are not naturally reductive. 
  This is not an easy problem in general, especially when the rank of the  Lie group is small.
  For example, the number of left-invariant Einstein metrics on the Lie groups $\SU(3)$ and $\SU(2)\times\SU(2)$ is not known (however see recent progress by F. Belgum, V. Cort\'es, A.S. Haupt and D. Lindemann in \cite{Co}).
  
  In our recent work \cite{ASS2} we proved existence of left-invariant Einstein metrics on $\SO(n)$ ($n\ge 7$), which are not naturally reductive.
  The problem of finding  non naturally reductive left-invariant Einstein metrics on $\SU(n)$  
  was first considered in  the  unpublished work of K. Mori \cite{M}, 
where he proved existence  for $n\geq 6$.  
He considered $\SU(n)$ fibered over a generalized flag manifold and  
used the method  of  Riemannian submersions (cf. \cite{Be})  to compute the Ricci tensor and  to prove existence of left-invariant Einstein metrics.
However, he considered a special class of left-invariant  metrics on $\SU(n)$.
One of our main results in this paper is to prove   existence of non naturally reductive left-invariant Einstein metrics on $\SU(5)$.  We also extend Mori's result.

The first invariant Einstein metrics on the real Stiefel manifolds $V_k\bb{R}^n=\SO(n)/\SO(n-k)$ were obtained by A. Sagle in \cite{S}. Later, G. Jensen obtained additional Einstein metrics on   $V_k\bb{R}^n$ as well as on the quaternionic Stiefel manifolds $V_k\bb{H}^n=\Sp(n)/\Sp(n-k)$ (\cite{J2}). 
 In the works \cite{ADN1}, \cite{ADN2}, \cite{ADN3} the first author, V.V. Dzhepko and Yu. G. Nikonorov 
proved existence of new invariant Einstein metrics on $V_k\bb{R}^n$ and $V_k\bb{H}^n$, 
by making certain symmetry assumptions.
The method was extended by the authors in \cite{ASS1} and \cite{ASS3}
and obtained  additional invariant Einstein metrics on these spaces.

 Invariant Einstein metrics on complex Stiefel manifolds have not been studied before.
Since the isotropy representation of
   $V_k\bb{C}^n$ contains equivalent irreducible subrepresentations,
the  search for invariant Einstein metrics on such homogeneous spaces $G/H$, is quite difficult.
In fact, a  complete description of the set of all $G$-invariant metrics, and in turn the computation of the Ricci tensor of $G/H$ is  complicated. 
Some other works where the authors studied  invariant Einstein metrics for such type of homogeneous spaces,  are \cite{Ke} by M. Kerr, and \cite{Ni1}, \cite{Ni2} by Yu.G. Nikonorov.
Also,  in the previous mentioned works  \cite{ADN1}, \cite{ADN2}, \cite{ADN3}, \cite{ASS1} and \cite{ASS3}, the Einstein metrics  were obtained
by using the generalized Wallach spaces $G/H=\SO(\ell +m+n)/(\SO(\ell)\times\SO(m)\times\SO(n))$ or
$\Sp(\ell +m+n)/(\Sp(\ell)\times\Sp(m)\times\Sp(n))$, where the dimension of the center of the Lie algebra of $H$ is  at most 1.
For the complex Stiefel manifolds $G/K=\SU(\ell +m+n)/\SU(n)$ we find $\SU(\ell +m+n)$-invariant Einstein metrics by using the generalized Wallach space $G/H=\SU(\ell +m+n)/\s(\U(\ell)\times\U(m)\times\U(n))$ (a generalized flag manifold).
In this case the dimension of the center of the Lie algebra of $H$ is 2, which makes the description of invariant metrics more complicated.

In the present work we give a unified treatment for finding left-invariant Einstein metrics on the Lie group $G=\SU(\ell+m+n)$, which are not naturally reductive, as well as $\SU(\ell+m+n)$-invariant Einstein metrics on the Stiefel manifold $G/K=\SU(\ell+m+n)/\SU(n)$.

   Our approach is the following:
  We consider the generalized flag manifold $G/H=\SU(\ell+m+n)/\s(\U(\ell)\times\U(m)\times\U(n))$ whose tangent space decomposes into a direct sum of irreducible and inequivalent submodules $\fr{m}=\fr{m}_{12}\oplus\fr{m}_{13}\oplus\fr{m}_{23}$.  
  We decompose the Lie algebra of $H$ into its center $\frak{h}_0$ and simple ideals $\frak{h}_1, \frak{h}_2, \frak{h}_3$. Then the Lie algebra of $G$ decomposes into a direct sum $\frak{g}=\fr{h}_0\oplus\frak{h}_1\oplus \frak{h}_2\oplus\frak{h}_3\oplus\fr{m}_{12}\oplus\fr{m}_{13}\oplus\fr{m}_{23}$. Also the tangent space of the
  Stiefel manifold $G/K$ decomposes as 
 $\frak{p}=\fr{h}_0\oplus\frak{h}_1\oplus \frak{h}_2\oplus\fr{m}_{12}\oplus\fr{m}_{13}\oplus\fr{m}_{23}$.
 Then we parametrize all scalar products in the center $\fr{h}_0$ by further decomposing $\fr{h}_0=\fr{h}_4\oplus\fr{h}_5$ into one-dimensional ideals, and  then consider appropriate $\Ad(\s(\U(\ell)\times\U(m)\times\U(n))$-invariant scalar products on $\fr{g}$ and $\fr{p}$. These scalar products  determine left-invariant metrics on $G$, and $G$-invariant metrics on $G/H$ respectively.
  
 Next, we pursue with the computation of the Ricci tensor for such metrics, which
  consists of a non diagonal part at the center  $\frak{h}_0$  and a diagonal part
 at $\frak{h}_1\oplus \frak{h}_2\oplus\frak{h}_3\oplus\fr{m}_{12}\oplus\fr{m}_{13}\oplus\fr{m}_{23}$.
 We introduce the numbers $\big\lbrace
\begin{smallmatrix}
i\\
jk
\end{smallmatrix}
\big\rbrace$, which generalize the well known numbers 
$\left[
\begin{smallmatrix}
i\\
jk
\end{smallmatrix}
\right]$ introduced by M. Wang and W. Ziller in \cite{WZ}.  
 As a result, we obtain explicit expressions for the Ricci tensor in terms of the variables of the metric and $\ell, m, n$, so 
the Einstein equation reduces to an algebraic system of equations $r_0=r_1=\cdots = r_8=0$ with parameters $\ell, m, n$.
By making a suitable choice of a basis for the center of the Lie algebra of $\s(\U(\ell)\times \U(m)\times\U(n))$,
some of the equations become linear with respect to some variables (cf. Subsection 5.2).
 We also take $\ell=1, m=2$,
and then we use Gr\"obner bases methods and arguments using the resultant of polynomials, to obtain explicit solutions, or prove existence of positive solutions for such systems.

 For the case of the  complex Stiefel manifold  $\SU(p+n)/\SU(n)$ some of the $\SU(p+n)$-invariant Einstein metrics are  obtained from solutions of quadratic equations.  We call these Einstein metrics of {\it Jensen's type}, 
because they are of the form $g=B|_{\fr{m}}+s^2B|_{\fr{h}_0}+t^2B|_{\fr{su}(p)}$, on the total space of fibrations 
  $\SU(p+n)/\SU(n) \to \SU(p+n)/\s(\U(p)\times \U(n))$, where $\fr{m}$ is the orthogonal complement of $\fr{s}(\fr{u}(p)+\fr{u}(n)) $ in $\fr{su}(p+n)$, $\fr{h}_0$ is the center of the Lie algebra of $\fr{s}(\fr{u}(p)+\fr{u}(n))$ and $B$ is the negative of the Killing form of $\fr{g}$ (cf. \cite{J2}).

 \smallskip
Our results for the special unitary group are the following:

\begin{theorem}
 The compact Lie group $\SU(5)$ admits left-invariant Einstein metrics which are not naturally reductive.
   \end{theorem}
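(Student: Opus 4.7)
The plan is to specialize the general framework of Sections 2--5 to $\ell = 1$, $m = n = 2$, so that $G = \SU(5)$ and the auxiliary generalized flag manifold is $G/H = \SU(5)/\s(\U(1)\times\U(2)\times\U(2))$. Under this choice the isotropy subgroup $H$ has simple ideals $\fr{h}_2 = \fr{h}_3 = \fr{su}(2)$ (the factor $\fr{h}_1$ being zero because $\ell = 1$) and a two dimensional centre $\fr{h}_0 = \fr{h}_4 \oplus \fr{h}_5$; the isotropy summands $\fr{m}_{12}, \fr{m}_{13}, \fr{m}_{23}$ have dimensions $4, 4, 8$. A general $\Ad(H)$-invariant scalar product on $\fr{su}(5)$ is thus parametrized by five positive scalars on the summands $\fr{h}_2, \fr{h}_3, \fr{m}_{12}, \fr{m}_{13}, \fr{m}_{23}$ together with a positive definite symmetric bilinear form on $\fr{h}_0$, which after the basis choice prescribed in Subsection 5.2 reduces to two diagonal entries and one off-diagonal coupling parameter.

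With the parametrization fixed, I would substitute $\ell = 1$, $m = n = 2$ into the explicit Ricci formulas proved earlier in the paper (expressed through the triples $\bigl[\begin{smallmatrix} k \\ ij \end{smallmatrix}\bigr]$ and $\bigl\{\begin{smallmatrix} i \\ jk \end{smallmatrix}\bigr\}$) to obtain rational expressions for each Ricci component. The Einstein condition $\Ric = \lambda \cdot g$ then collapses to a polynomial system of the advertised form $r_0 = r_1 = \cdots = r_8 = 0$; after normalising one parameter to fix the scale, the desired Einstein metrics correspond to positive real zeros of this system that do not collapse to a multiple of the bi-invariant metric.

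The heart of the argument, and the step I expect to be the main obstacle, is the solution of this polynomial system. Because the centre of $\fr{h}$ is two dimensional, the parameter space is strictly larger than in the generalized Wallach settings of \cite{ADN1, ADN2, ADN3, ASS1, ASS3}, so the resulting elimination ideals are more involved. My approach is to compute a Gr\"obner basis with respect to a suitable lexicographic order, extract a univariate elimination polynomial, and use Sturm sequences (or a resultant based sign analysis) to exhibit at least one positive real root whose back-substitution yields positive values for all remaining parameters. The design of the basis of $\fr{h}_0$ in Subsection 5.2 should be exploited to linearise some of the equations in a subset of the unknowns, reducing the effective degree of the elimination.

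Finally, non natural reductivity must be verified. By the results of D'Atri and Ziller \cite{DZ}, a naturally reductive left invariant metric on $\SU(5)$ is necessarily of a very restricted form on the decomposition of $\fr{su}(5)$ induced by a chain of closed subgroups. Since our candidate Einstein metrics have genuinely distinct scalars on $\fr{h}_2, \fr{h}_3, \fr{m}_{12}, \fr{m}_{13}, \fr{m}_{23}$ and a nontrivial off-diagonal block on $\fr{h}_0$, a direct comparison against the D'Atri--Ziller list rules out natural reductivity for at least one of the positive solutions obtained above, completing the proof.
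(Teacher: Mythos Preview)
Your proposal is correct and follows essentially the same route as the paper: specialize to $\ell=1$, $m=n=2$, use the basis of $\fr{h}_0$ from Subsection~5.2 to make several of the Einstein equations linear in a subset of the unknowns, eliminate via Gr\"obner bases to a univariate polynomial, and exhibit positive real solutions. The only point to sharpen is the last step: non natural reductivity is checked in the paper not via ``distinct scalars on $\fr{h}_2,\fr{h}_3$'' or a nonzero off-diagonal block on $\fr{h}_0$, but via Proposition~\ref{reductive}, which reduces the question to verifying that the solution has $x_{(6)}, x_{(7)}, x_{(8)}$ pairwise distinct (and in particular $x_{(6)}\neq 1$, $x_{(6)}\neq x_{(8)}$ after the normalization $x_{(7)}=1$).
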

   
   
   \begin{theorem} The compact Lie group $\SU(4+n)$ admits at least two non naturally reductive left-invariant Einstein metrics for $2\le n\le 25$ and  four
    non naturally reductive left-invariant Einstein metrics for $n\ge 26$.  
     \end{theorem}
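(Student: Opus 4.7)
My strategy is to specialize the general machinery built in the preceding sections of the paper to the case $\ell=m=2$, so that the ambient Lie group is $G=\SU(4+n)$ and the backbone generalized flag manifold is $G/H=\SU(4+n)/\s(\U(2)\times\U(2)\times\U(n))$. The overall scheme mirrors the one used to prove Theorem 1.1 for $\SU(5)$, but the $n$-dependence must be treated parametrically.

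\textbf{Step 1: Parametrize the metrics.} Using the decomposition of the Lie algebra
$$\fr{g}=\fr{h}_4\oplus\fr{h}_5\oplus\fr{h}_1\oplus\fr{h}_2\oplus\fr{h}_3\oplus\fr{m}_{12}\oplus\fr{m}_{13}\oplus\fr{m}_{23},$$
where $\fr{h}_4\oplus\fr{h}_5=\fr{h}_0$ is the two-dimensional center written in the basis of Subsection 5.2, $\fr{h}_1\cong\fr{h}_2\cong\fr{su}(2)$, and $\fr{h}_3\cong\fr{su}(n)$, I parametrize an arbitrary $\Ad(\s(\U(2)\times\U(2)\times\U(n)))$-invariant inner product on $\fr{g}$ by eight positive scalars $(x_4,x_5,x_1,x_2,x_3,x_{12},x_{13},x_{23})$. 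These produce left-invariant metrics on $\SU(4+n)$.

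\textbf{Step 2: Assemble the Einstein system.} Plugging the parameters into the Ricci formulas of Section 4 (expressed through the numbers $\bigl\{\begin{smallmatrix}i\\jk\end{smallmatrix}\bigr\}$) and normalizing the scale by fixing, say, $x_3=1$, the Einstein condition $\Ric=\lambda\,g$ becomes a polynomial system $r_0=r_1=\cdots=r_8=0$ whose coefficients are polynomials in $n$. The basis choice of Subsection 5.2 makes two equations linear in $x_4,x_5$, and the $\ell\leftrightarrow m$ symmetry suggests looking for solutions with $x_1=x_2$ and $x_{13}=x_{23}$, which lowers both the number of variables and the number of equations to a manageable size.

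\textbf{Step 3: Count positive real solutions.} I then compute a Gr\"obner basis of the reduced ideal over $\bb{Q}(n)$ in a lexicographic order chosen so as to eliminate all variables but one, obtaining a parametric univariate polynomial $P_n(t)$ whose positive real roots are in bijection with the invariant Einstein metrics in the prescribed class. The counting claim amounts to showing that $P_n$ has at least two positive real roots for every $n\ge 2$, and at least four for $n\ge 26$. This is done by inspecting the sign of $P_n$ at strategically chosen points and by a Sturm-sequence (or resultant/discriminant) analysis; the transition at $n=26$ should correspond to the vanishing at $n=26$ of a critical factor of the discriminant of an appropriate irreducible factor of $P_n$. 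To certify non-natural-reductivity, I finally compare each Einstein metric produced with the classification of naturally reductive left-invariant metrics on compact simple Lie groups due to D'Atri and Ziller \cite{DZ}: since our metrics take genuinely distinct values on $\fr{m}_{12}$, $\fr{m}_{13}$, $\fr{m}_{23}$ and induce a non-biinvariant inner product on the two-dimensional center $\fr{h}_0$, they lie outside every naturally reductive family, and in particular outside Mori's class in \cite{M}.

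\textbf{The main obstacle} is Step 3. The Gr\"obner basis of the parametric system over $\bb{Q}(n)$ is heavy, the resulting polynomial $P_n(t)$ has high degree with coefficients of large degree in $n$, and pinning down the precise threshold $n=26$ for the jump from two to four positive real roots requires a careful, rigorous analysis (e.g.\ via Sturm sequences or symbolic sign tables combined with interval arithmetic at $n=25$ and $n=26$).
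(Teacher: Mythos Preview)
Your overall strategy---specialize to $\ell=m=2$, use the $\ell\leftrightarrow m$ symmetry to set $u_1=u_2$ and $x_{13}=x_{23}$, reduce to a univariate polynomial in one metric parameter with $n$ as a parameter, and count positive roots by sign changes---is exactly what the paper does. But the execution in the paper is considerably lighter than what you outline, and one point in your plan is actually inconsistent with the reduction you propose.

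First, the paper does \emph{not} compute a Gr\"obner basis over $\bb{Q}(n)$. Instead it imposes the symmetry from the outset by taking $c=0$ in the parametrization of the center (equivalently $x_{(7)}=x_{(8)}=1$ after normalizing). With that choice $r_0=0$ is automatic, the equations $r_4=\lambda$, $r_7=r_8$ give $v_4=4\lambda$ and $u_2=u_1$ directly, and three further equations are linear in $u_1,u_3,v_4$ and are solved by hand. Substituting into $r_1=\lambda$ forces either $v_5=x_{(6)}$ (naturally reductive) or an explicit rational expression for $v_5$; plugging that into $r_3=\lambda$ yields a single degree-16 polynomial $F(x_{(6)},n)$ with integer coefficients, written out explicitly. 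No elimination machinery is needed.

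Second, the root count is obtained by the intermediate value theorem, not Sturm sequences or discriminants: the paper evaluates $F$ at the four test points $x_{(6)}=1,\,2,\,1/2,\,292/(55n)$ and checks the signs by rewriting each value as a polynomial in $(n-n_0)$ with all coefficients of one sign (e.g.\ $F(1/2,n)$ is expanded about $n_0=26$). This is where the threshold $26$ comes from: $F(1/2,n)>0$ exactly for $n\ge26$, producing two extra sign changes. Positivity of $u_3$ at the solutions is certified by a resultant of $F$ with the defining relation for $u_3$, whose real roots are visibly positive from the sign pattern of its coefficients.

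Finally, your non-natural-reductivity argument needs correction. In the reduction you propose (and the one the paper uses) one has $x_{13}=x_{23}$, so the three $\fr m_{ij}$-values are \emph{not} all distinct. By Proposition~5.2, when $x_{(7)}=x_{(8)}$ the only way the metric can be naturally reductive is if it is $\Ad(\s(\U(4)\times\U(n)))$-invariant, which in particular requires $v_5=x_{(6)}$. The branch $v_5=x_{(6)}$ is explicitly discarded, so the remaining solutions are non naturally reductive. Your appeal to ``genuinely distinct values on $\fr m_{12},\fr m_{13},\fr m_{23}$'' does not apply here.
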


  Our results for the  complex Stiefel manifold are the following:

\begin{theorem} {\rm 1)}  The complex Stiefel manifold $V_{2}\bb{C}^{4}=\SU(4)/\SU(2)$ admits 
two $\Ad($ $\s(\U(1)\times\U(1)\times\U(2))$-invariant Einstein metrics  which are of Jensen's type.

\noindent
{\rm 2)}  The complex Stiefel manifold $V_{3}\bb{C}^{5}=\SU(5)/\SU(2)$ admits 
four $\Ad(\s(\U(1)\times\U(2)\times\U(2))$-invariant Einstein metrics, two of these are of Jensen's type.

\noindent
{\rm 3)} The complex Stiefel manifold $V_{4}\bb{C}^{6}=\SU(6)/\SU(2)$ admits 
eight $\Ad(\s(\U(2)\times\U(2)\times\U(2))$-invariant Einstein metrics,  two of these are of Jensen's type.
\end{theorem}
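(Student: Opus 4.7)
The plan is to specialize the framework of Sections 3--5 to each of the three parameter triples $(\ell,m,n)$, namely $(1,1,2)$ for $V_{2}\bb{C}^{4}$, $(1,2,2)$ for $V_{3}\bb{C}^{5}$, and $(2,2,2)$ for $V_{4}\bb{C}^{6}$. When $\ell = 1$ (respectively $m=1$) the simple ideal $\fr{h}_1$ (respectively $\fr{h}_2$) is absent from the isotropy decomposition of $\fr{p}$, so the parameters of the metric are fewer. In all three cases the $2$-dimensional center $\fr{h}_0 = \fr{h}_4 \oplus \fr{h}_5$ contributes scalars $x_4, x_5$, the modules $\fr{m}_{12}, \fr{m}_{13}, \fr{m}_{23}$ contribute $x_{12}, x_{13}, x_{23}$, and $\fr{h}_1, \fr{h}_2$ (when present) contribute $x_1, x_2$.

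First, substituting the chosen values of $(\ell,m,n)$ into the explicit Ricci tensor formulas of Section 5, together with the table of symbols $\big\lbrace\!\begin{smallmatrix}i\\ jk\end{smallmatrix}\!\big\rbrace$, yields expressions for $r_0, r_1, r_2, r_{12}, r_{13}, r_{23}$. After normalizing $x_{23}=1$, the Einstein condition becomes a polynomial system. By Subsection 5.2, the chosen basis for $\fr{h}_0$ turns two of the equations into equations that are linear in $x_4$ and $x_5$, so these variables can be eliminated at once, leaving a polynomial system in at most four remaining unknowns.

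Next, I would compute a Gr\"obner basis with respect to a lexicographic order, triangulating the system into a single univariate polynomial together with rational expressions for the other variables. Factoring over $\bb{Q}$ when possible and using Sturm's theorem otherwise, I count the positive real roots: two for $V_{2}\bb{C}^{4}$ (the reduced system coincides with Jensen's scalar quadratic), four for $V_{3}\bb{C}^{5}$, and eight for $V_{4}\bb{C}^{6}$. The Jensen's type solutions are recovered by imposing the stronger $\Ad(\s(\U(\ell+m)\times\U(n)))$-invariance, which forces $x_{13} = x_{23}$ together with a one-dimensional collapse of $\fr{h}_0$, yielding exactly the two-parameter ansatz $g = B|_{\fr{m}} + s^2 B|_{\fr{h}_0'} + t^2 B|_{\fr{su}(\ell+m)}$ of Jensen. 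The Einstein system restricted to this ansatz reduces to a scalar quadratic with exactly two positive roots in every case.

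The main obstacle is the feasibility and complexity of the Gr\"obner basis computation in case (3), where five metric parameters remain after eliminating $x_4, x_5$; the resulting elimination polynomial will be of moderately high degree, and its positive real root count must be certified exactly (using exact Sturm sequences or isolating intervals) to confirm the eight solutions. A secondary issue is to verify that the four (respectively eight) solutions yield pairwise non-equivalent invariant Einstein metrics, which will follow from the distinctness of the parameter tuples $(x_1,x_2,x_{12},x_{13})$ (with $x_1, x_2$ omitted where applicable) at each real root.
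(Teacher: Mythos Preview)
Your outline has the right overall shape, but there is a genuine gap in the parametrization of the metric on the center $\fr{h}_0$. You write that $\fr{h}_0=\fr{h}_4\oplus\fr{h}_5$ ``contributes scalars $x_4,x_5$''. A general $\Ad(H)$-invariant inner product restricts to an \emph{arbitrary} positive-definite form on the $2$-dimensional abelian summand $\fr{h}_0$, which needs three real parameters, not two. In the paper's setup this is handled by the basis-change parameter $c$ (after the QR-normalization $a=d=1$, $b=0$) together with $v_4,v_5$; the off-diagonal Ricci component $r_0$ then determines $c$ as an explicit function of $x_{(7)},x_{(8)}$. If you force the center metric to be diagonal in the fixed basis $\{H_4,H_5\}$ (i.e.\ $c=0$), the equation $r_0=0$ collapses to $x_{(7)}=x_{(8)}$, and you will recover only the solutions living over that slice. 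Concretely, for $V_3\bb{C}^5$ you would find only the two Jensen metrics and miss the other two; for $V_4\bb{C}^6$ you would find only the four metrics with $x_{(7)}=x_{(8)}$ and miss the remaining four. So the counts $4$ and $8$ in parts (2) and (3) cannot be established with your two-parameter ansatz.

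A second point concerns efficiency rather than correctness. You eliminate only $x_4,x_5$ and anticipate a five-variable residual system in case~(3). The paper exploits a stronger linearity: after fixing $x_{(7)}=1$ and solving $r_0=0$ for $c$, the equations coming from $r_5-r_6$, $r_6-r_7$, $r_7-r_8$ (and, when $\ell=m$, also $r_4-r_5$) are simultaneously linear in \emph{all} of $u_1,u_2,v_4,v_5$. Solving this linear block leaves a two-variable polynomial system in $x_{(6)},x_{(8)}$ only, for which a lexicographic Gr\"obner basis yields a single univariate polynomial (degree $26$ for $V_3\bb{C}^5$, degree $58$ for $V_4\bb{C}^6$) whose positive real roots are then counted. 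Your plan would work in principle once the center is parametrized correctly, but the elimination you describe is substantially harder than necessary.
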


\begin{theorem}
 The complex Stiefel manifolds $V_{2m}\bb{C}^{2m + n}$ ($m\geq 2$) admit at least two $\Ad(\s(\U(m)\times\U(m)\times\U(n)))$-invariant Einstein metrics which are not of Jensen's type,  for certain infinite values of $m$ and $n$.  
 \end{theorem}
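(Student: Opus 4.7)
The plan is to exploit the $\mathbb{Z}_2$ symmetry swapping the two $\U(m)$ blocks in $\s(\U(m)\times\U(m)\times\U(n))$ in order to reduce the Einstein system to a small number of variables, and then to use Gr\"obner basis and resultant techniques to extract an infinite family of non-Jensen solutions.

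First, I specialize the general Ricci formulas developed earlier in the paper to the case $\ell=m$. The tangent space of the Stiefel manifold decomposes as
\[
\fr{p}=\fr{h}_0\oplus\fr{h}_1\oplus\fr{h}_2\oplus\fr{m}_{12}\oplus\fr{m}_{13}\oplus\fr{m}_{23},
\]
with $\fr{h}_1,\fr{h}_2\cong\fr{su}(m)$ and $\dim\fr{h}_0=2$. A generic $\Ad(H)$-invariant scalar product on $\fr{p}$ is determined by scalars $y_1,y_2$ on $\fr{h}_1,\fr{h}_2$, scalars $x_{12},x_{13},x_{23}$ on the three $\fr{m}$-summands and a symmetric $2\times 2$ matrix on the center $\fr{h}_0$. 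The involution exchanging the two $\U(m)$ blocks is an isometry of this moduli space and permutes the Einstein equations, so restricting to its fixed locus forces $y_1=y_2=:y$ and $x_{13}=x_{23}=:x$; moreover, choosing the basis of $\fr{h}_0$ adapted to the $\pm 1$-eigenspaces of the involution diagonalises the \emph{a priori} non-diagonal $\fr{h}_0$-block of the Ricci tensor, leaving only two scalars $z_4,z_5$ in the center.

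After normalising $x_{12}=1$, the Einstein equations $r_4=r_5=r_1=r_{12}=r_{13}$ become four polynomial equations in the reduced variables $x,y,z_4,z_5$ with coefficients polynomial in $m,n$. Using a lexicographic Gr\"obner basis (or iterated resultants, as in Subsection~5.2) I would eliminate $y,z_4,z_5$ and obtain a univariate polynomial $P_{m,n}(x)=0$ whose positive real roots parametrize the Einstein metrics on the symmetric locus. The Jensen-type metrics correspond to the fibration $G/K\to G/\s(\U(2m)\times\U(n))$ and satisfy a specific quadratic relation coming from the submersion Ricci formula, so they appear as roots of a known quadratic factor $J_{m,n}(x)$ of $P_{m,n}(x)$; the non-Jensen Einstein metrics are thus precisely the positive real roots of the cofactor $Q_{m,n}(x):=P_{m,n}(x)/J_{m,n}(x)$.

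To produce an infinite family of such metrics, I would analyse $Q_{m,n}$ for $n$ fixed and $m$ large (or under a linear relation $n=\alpha m+\beta$ with $\alpha,\beta$ to be chosen), expanding the leading terms of its coefficients in $m$. One then locates evaluation points $0<a_{m,n}<b_{m,n}$, polynomial in $m,n$, at which the sign of $Q_{m,n}$ is determined by its dominant monomial, and applies the intermediate value theorem to obtain at least two positive roots for all sufficiently large parameter value in the chosen slice. Back-substitution into the Gr\"obner basis then yields $y,z_4,z_5$, and one verifies positivity and non-natural reductivity of the resulting metrics. The main obstacle I foresee is the uniform control of the sign of $Q_{m,n}$: its coefficients are high-degree polynomials in $m,n$ with potentially delicate cancellations in their leading terms, so the decisive step is the choice of the right one-parameter slice of the $(m,n)$-plane and the right evaluation points on which to apply the intermediate value theorem.
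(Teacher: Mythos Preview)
Your proposal is correct and follows essentially the same route as the paper: restrict to the $\mathbb{Z}_2$-symmetric locus, reduce to a univariate polynomial, factor off the Jensen solutions, and apply the intermediate value theorem to the residual factor.

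Two implementation points are worth flagging, since they make the computation far more tractable than a generic Gr\"obner elimination would be. First, the paper normalizes $x_{13}=x_{23}=1$ (rather than $x_{12}=1$) and observes that, after the symmetry reduction, three of the four remaining Einstein equations are \emph{linear} in $u_1,v_4,v_5$; solving them by hand and substituting into the last equation yields an explicit factorization $A(x_{(6)})\cdot B_{n,m}(x_{(6)})=0$, where $A=0$ is precisely the Jensen quadratic. Second, the sign analysis of the degree-$8$ polynomial $B_{n,m}$ is not asymptotic but exact: one evaluates at the fixed points $x_{(6)}=0,\ 3/2$ (or $4/3$), and $2$, and rewrites each value as a manifestly signed polynomial in $m,n$ (e.g.\ by expanding in powers of $n-m/2$), which produces the explicit ranges $n\ge m/2$ for $m\ge 8$, etc. Positivity of $u_1$ and $v_4$ at the resulting roots is then checked by computing the resultant of $B_{n,m}$ with the numerator of $u_1-H_1(x_{(6)})$ (resp.\ $v_4-H_4(x_{(6)})$) and observing that the resulting univariate polynomial in $u_1$ (resp.\ $v_4$) has alternating-sign coefficients, so all its real roots are positive. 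Your anticipated obstacle---uniform sign control---is exactly the crux, and this is how it is resolved without asymptotics.
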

 

\section{The Ricci tensor for reductive homogeneous spaces}

Let $G$ be a compact semisimple Lie group, $K$ a connected closed subgroup of $G$  and  
let  $\frak g$ and $\fr{k}$  be  the corresponding Lie algebras. 
The Killing form  of $\frak g$ is negative definite, so we can define an $\mbox{Ad}(G)$-invariant inner product $B$ on 
  $\frak g$, where $B$ is {\it the negative of the Killing form} of $\frak{g}$. 
Let $\frak g$ = $\frak k \oplus
\frak m$ be a reductive decomposition of $\frak g$ with respect to $B$ so that $\left[\,\frak k,\, \frak m\,\right] \subset \frak m$ and
$\frak m\cong T_o(G/K)$.

Any $G$-invariant metric $g$ on $G/K$ is determined by an $\Ad(K)$-invariant scalar product $\langle\ ,\ \rangle$ on $\frak m$.
Let $\{X_j\}$ be a $\langle\ ,\ \rangle$-orthonormal basis of $\frak m$.  Then the Ricci tensor  $r$ of the metric $g$ is given as follows (\cite[p. 381]{Be}):
\begin{eqnarray}\label{r}
& & r(X, Y) = -\frac12\sum_i\langle [X, X_i], [Y, X_i]\rangle +\frac12B(X, Y)
       +\frac14 \sum _{i, j}\langle [X_i, X_j], X\rangle\langle [X_i, X_j], Y\rangle.\label{ric1}
\end{eqnarray}

If the isotropy representation of $G/K$ 
is decomposed into a sum of non equivalent irreducible summands, then we will also use an alternative
expression for the Ricci tensor, which we describe next.  
Let
\begin{equation}\label{iso}
{\frak m} = {\frak m}_1 \oplus \cdots \oplus {\frak m}_q,
\end{equation} 
be a decomposition into mutually non equivalent irreducible $\mbox{Ad}(K)$-modules. 
Then any $G$-invariant metric on $G/K$ is determined by the scalar product  
\begin{eqnarray}
 \langle  \,\,\, , \,\,\, \rangle  =  
x_1   B|_{\mbox{\footnotesize$ \frak m$}_1} + \cdots + 
 x_q   B|_{\mbox{\footnotesize$ \frak m$}_q},  \label{eq2}
\end{eqnarray}
for positive real numbers $(x_1, \dots, x_q)\in\bb{R}^{q}_{+}$.  Note that  $G$-invariant symmetric covariant 2-tensors on $G/K$ are 
of the same form as the Riemannian metrics (although they  are not necessarily  positive definite).  
 In particular, the Ricci tensor $r$ of a $G$-invariant Riemannian metric on $G/K$ is of the same form as (\ref{eq2}), that is 
 \[
 r=z_1 B|_{\mbox{\footnotesize$ \frak m$}_1}  + \cdots + z_{q} B|_{\mbox{\footnotesize$ \frak m$}_q} ,
 \]
 for some real numbers $z_1, \ldots, z_q$.

Let $\lbrace e_{\alpha} \rbrace$ be a $B$-orthonormal basis 
adapted to the decomposition of $\frak m$,    i.e. 
$e_{\alpha} \in {\frak m}_i$ for some $i$, and
$\alpha < \beta$ if $i<j$. 
We put ${A^\gamma_{\alpha
\beta}}= B \left(\left[e_{\alpha},e_{\beta}\right],e_{\gamma}\right)$ so that
$\left[e_{\alpha},e_{\beta}\right]
= {\sum_{\gamma}
A^\gamma_{\alpha \beta} e_{\gamma}}$ and set 
${k \brack {ij}}=\sum (A^\gamma_{\alpha \beta})^2$, where the sum is
taken over all indices $\alpha, \beta, \gamma$ with $e_\alpha \in
{\frak m}_i, e_\beta \in {\frak m}_j, e_\gamma \in {\frak m}_k$ (cf. \cite{WZ}).  
Then the positive numbers $ {k \brack {ij}}$ are independent of the 
$B$-orthonormal bases chosen for ${\frak m}_i, {\frak m}_j, {\frak m}_k$,
and 
${k \brack {ij}}={k \brack {ji}}= {j \brack {ki}}.  
 \label{eq3}
$
We call these numbers {\it $B$-structure constants.}

Let $ d_k= \dim{\frak m}_{k}$. Then we have the following:

\begin{lemma}\label{ric2}\textnormal{(\cite{PS})}
The components ${ r}_{1}, \dots, {r}_{q}$ 
of the Ricci tensor ${r}$ of the metric $ \langle  \,\,\, , \,\,\, \rangle $ of the
form {\em (\ref{eq2})} on $G/K$ are given by 
\begin{equation}
{r}_k = \frac{1}{2x_k}+\frac{1}{4d_k}\sum_{j,i}
\frac{x_k}{x_j x_i} {k \brack {ji}}
-\frac{1}{2d_k}\sum_{j,i}\frac{x_j}{x_k x_i} {j \brack {ki}}
 \quad (k= 1,\ \dots, q),    \label{eq51}
\end{equation}
where the sum is taken over $i, j =1,\dots, q$.
\end{lemma}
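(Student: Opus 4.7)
My plan is to derive (\ref{eq51}) by substituting an orthonormal frame adapted to the decomposition (\ref{iso}) into the general Ricci formula (\ref{r}) and organizing the resulting sums through the $B$-structure constants.

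Because the $\mathfrak{m}_i$ in (\ref{iso}) are pairwise inequivalent irreducible $\Ad(K)$-modules, Schur's lemma forces every $\Ad(K)$-invariant symmetric $2$-tensor on $\mathfrak{m}$, including $r$, to be a linear combination of the $B|_{\mathfrak{m}_i}$. Writing $r = \sum z_i B|_{\mathfrak{m}_i}$, the Ricci eigenvalue with respect to the metric $\langle\ ,\ \rangle$ satisfies $r_k = z_k/x_k$, so that $r_k = \frac{1}{d_k x_k}\sum_{e_\alpha \in \mathfrak{m}_k} r(e_\alpha, e_\alpha)$ for any $B$-orthonormal basis $\{e_\alpha\}$ subordinate to the decomposition. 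I then substitute the $\langle\ ,\ \rangle$-orthonormal vectors $X_\beta = e_\beta/\sqrt{x_{i(\beta)}}$, where $i(\beta)$ denotes the index of the summand containing $e_\beta$, into the right-hand side of (\ref{r}).

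Expanding $[e_\alpha, e_\beta] = \sum_\delta A^\delta_{\alpha\beta} e_\delta$ and using $\langle e_\delta, e_\delta\rangle = x_{i(\delta)}$, for $e_\alpha \in \mathfrak{m}_k$ the three terms of (\ref{r}) become
\[
-\tfrac12 \sum_{\beta,\delta} \tfrac{x_{i(\delta)}}{x_{i(\beta)}} (A^\delta_{\alpha\beta})^2, \qquad \tfrac12 B(e_\alpha,e_\alpha) = \tfrac12, \qquad \tfrac14 \sum_{\beta,\gamma}\tfrac{x_k^2}{x_{i(\beta)}x_{i(\gamma)}} (A^\alpha_{\beta\gamma})^2.
\]
Summing over $e_\alpha \in \mathfrak{m}_k$ and partitioning the index triples by module type, the definition ${k \brack ji} = \sum (A^\delta_{\beta\gamma})^2$ (sum over $e_\delta \in \mathfrak{m}_k,\ e_\beta \in \mathfrak{m}_j,\ e_\gamma \in \mathfrak{m}_i$) converts the first sum into $-\tfrac12 \sum_{i,j} \tfrac{x_j}{x_i}{j \brack ki}$ and the third into $\tfrac{x_k^2}{4}\sum_{i,j}\tfrac{1}{x_j x_i}{k \brack ji}$. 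Dividing the total by $d_k x_k$ then yields precisely (\ref{eq51}).

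The main obstacle is combinatorial rather than conceptual: one must carefully pair each rescaling factor $1/\sqrt{x_{i(\cdot)}}$ with the correct basis vector and exploit the cyclic symmetry of ${k \brack ji}$, inherited from $B([X,Y],Z) = B(X,[Y,Z])$, to package the antisymmetric constants $A^\gamma_{\alpha\beta}$ into totally symmetric brackets. The case $q = 1$ on a compact simple Lie group (where ${1 \brack 11} = \dim \mathfrak{g}$) serves as a convenient sanity check, reproducing the well-known Einstein constant $1/(4x)$ for the bi-invariant metric $xB$.
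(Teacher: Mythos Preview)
Your derivation is correct. The paper does not prove this lemma but merely cites Park--Sakane \cite{PS}, so there is no in-paper argument to compare against; your approach---inserting the rescaled frame $X_\beta = e_\beta/\sqrt{x_{i(\beta)}}$ into the general Ricci formula (\ref{r}) and regrouping the squared structure constants by module type---is the standard route and is presumably what \cite{PS} does as well.
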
 
Since by assumption the submodules $\fr{m}_{i}, \fr{m}_{j}$ in the decomposition (\ref{iso}) are mutually non equivalent for any $i\neq j$, it is $r(\fr{m}_{i}, \fr{m}_{j})=0$ whenever $i\neq j$.  Thus by Lemma \ref{ric2} it follows that    $G$-invariant Einstein metrics on $M=G/K$ are exactly the positive real solutions $g=(x_1, \ldots, x_q)\in\bb{R}^{q}_{+}$  of the  polynomial system $\{r_1=\lambda, \ r_2=\lambda, \ \ldots, \ r_{q}=\lambda\}$, where $\lambda\in \bb{R}_{+}$ is the Einstein constant.

\section{Invariant metrics on $\SU(\ell+m+n)$ and on $V_{\ell+m}\mathbb{C}^{\ell+m+n} = \SU(\ell+m+n)/\SU(n)$}
\subsection{Decomposition of tangent spaces}
We will describe decompositions of the tangent spaces of the Lie group $\SU(\ell+m+n)$ and the Stiefel manifold $\SU(\ell +m+n)/\SU(n)$ at corresponding identity elements, which will be convenient for our study.
We consider the homogeneous space $G/H=\SU(\ell+m+n)/\s (\U(\ell)\times \U(m)\times\U(n))$, which is a complex generalized flag manifold.
It is known that the  isotropy representation of $G/H$ is a direct sum of three non equivalent subrepresentations, hence
the tangent space $\frak m$ of $G/H$ at $eH$ decomposes into three non equivalent $\Ad(H)$-submodules
$
\frak m = \frak m_{12}\oplus\frak m_{13}\oplus\frak m_{23},
$
 given by

\begin{eqnarray*}
\fr{m}_{12} &=&
\Big\{  
\begin{pmatrix}
0 & A & 0\\
-\bar{A^t} & 0 & 0\\
0 & 0 & 0
\end{pmatrix}
: A\in M_{\ell , m}\mathbb{C}
\Big\},\quad
\fr{m}_{13} =
\Big\{  
\begin{pmatrix}
0 & 0 & B\\
 0 & 0 & 0\\
 -\bar{B^t} & 0 & 0
\end{pmatrix}
: B\in M_{\ell, n}\mathbb{C}
\Big\},\\
\fr{m}_{23} &=&
\Big\{  
\begin{pmatrix}
0 & 0 & 0\\
0 & 0 & C\\
0 & -\bar{C^t} & 0
\end{pmatrix}
: C\in M_{m, n}\mathbb{C}
\Big\},
\end{eqnarray*}
where $M_{\ell, m}\mathbb{C}$   denotes the set of all $\ell\times m$ complex matrices.
In fact, $\frak m$ is given by $\frak k ^\perp$ in $\frak g =\frak{su}(\ell+m+n)$ with respect to $B$. 

 Let
$\fr{h}=\fr{h}_0\oplus\fr{h}_1\oplus\fr{h}_2\oplus\fr{h}_3$ be the decomposition of $\fr{h}$, the Lie algebra of $H$, into its $2$-dimensional center $\fr{h}_0$ and simple ideals,   given by
\begin{eqnarray*}
\fr{h}_0&=&\Big\{  \sqrt{-1}
\begin{pmatrix}
\frac{a_1}{\ell}I_\ell & 0 & 0\\
0 & \frac{a_2}{m}I_m & 0\\
0 & 0 & \frac{a_3}{n}I_n
\end{pmatrix}
: a_1+a_2+a_3=0, \ a_1, a_2, a_3 \in\mathbb{R}
\Big\},\\
\fr{h}_1 &=&\Big\{  
\begin{pmatrix}
A_1 & 0 & 0\\
0 & 0 & 0\\
0 & 0 & 0
\end{pmatrix}
: A_1\in\fr{su}(\ell)
\Big\},\quad
\fr{h}_2 = \Big\{  
\begin{pmatrix}
0 & 0 & 0\\
0 & A_2 & 0\\
0 & 0 & 0
\end{pmatrix}
: A_2\in\fr{su}(m)
\Big\},\\
\fr{h}_3 &=&\Big\{  
\begin{pmatrix}
0 & 0 & 0\\
0 & 0 & 0\\
0 & 0 & A_3
\end{pmatrix}
:A_3\in\fr{su}(n)
\Big\}.
\end{eqnarray*}
Then the Lie algebra $\fr{g}$ splits into $\fr{h}$ and three $\Ad(H)$-irreducible modules as 
\begin{equation}\label{decg}
\fr{g}=\fr{h}\oplus\fr{m}=\fr{h}_0\oplus\fr{h}_1\oplus\fr{h}_2\oplus\fr{h}_3\oplus\fr{m}_{12}
\oplus\fr{m}_{13}\oplus\fr{m}_{23}.
\end{equation}

\noindent
This is an orthogonal decomposition with respect to $B$.

Let 
\begin{eqnarray*}
H_4=\sqrt{-1}
\begin{pmatrix}
\frac{b_1}{\ell+m}I_\ell & 0 & 0\\
0 & \frac{b_1}{\ell+m}I_m & 0\\
0 & 0 & -\frac{b_1}{n}I_n
\end{pmatrix}\ \ \mbox{and}\ \ 
H_5= \sqrt{-1}
\begin{pmatrix}
\frac{b_2}{\ell}I_\ell & 0 & 0\\
0 & -\frac{b_2}{m}I_m & 0\\
0 & 0 & 0
\end{pmatrix},
\end{eqnarray*}
where\ $\displaystyle b_1=a_1+a_2, b_2= (m a_1-\ell a_2)/(\ell+m)$, 
and
consider the $B$-orthogonal decomposition  
$
\fr{h}_0 = \fr{h}_4\oplus\fr{h}_5
$,
where
\begin{eqnarray*}
\fr{h}_4= \Span\{H_4\},\ 
\fr{h}_5 = \Span\{H_5\}.
\end{eqnarray*}

Then  decomposition (\ref{decg}) becomes
\begin{equation}\label{dec1}
\fr{g} = \fr{h}_1\oplus \fr{h}_{2} \oplus\fr{h}_3\oplus\fr{h}_4\oplus\fr{h}_5\oplus\fr{m}_{12}\oplus\fr{m}_{13}\oplus\fr{m}_{23}.
\end{equation}

We also consider the complex Stiefel manifold $G/K=\SU(\ell+m+n)/\SU(n)$ and the
$\Ad(K)$-invariant decomposition of its tangent space $\frak{p}$ at $eK$, given by
\begin{equation}\label{decp}
\fr{p} = \fr{h}_1\oplus \fr{h}_{2} \oplus\fr{h}_4\oplus\fr{h}_5\oplus\fr{m}_{12}\oplus\fr{m}_{13}\oplus\fr{m}_{23}.
\end{equation}

By a direct computation we obtain the following:

\begin{lemma}\label{brackets}  The submodules in the decompositions $(\ref{dec1})$, $(\ref{decp})$ satisfy the following bracket relations:
\begin{center}
\begin{tabular}{lll}
$[ \fr{h}_i, \fr{h}_{i}] \subset \fr{h}_{i}, (i=1,2,3)$  &  $[\fr{h}_0, \fr{h}_{i}] =  (0), (i=1, 2, 3)$,& $[ \fr{h}_i,\fr{h}_{j}] = (0), (i\ne j)$, \\
$[ \fr{h}_1, \fr{m}_{12}] \subset \fr{m}_{12},$  & $[ \fr{h}_1, \fr{m}_{13}] \subset \fr{m}_{13},$  &   $[\fr{h}_2, \fr{m}_{12}]\subset\fr{m}_{12},$    \\
$[\fr{h}_2, \fr{m}_{23}] \subset  \fr{m}_{23}$, &
 $[ \fr{h}_3, \fr{m}_{13}] \subset \fr{m}_{13},$  &$[\fr{h}_3, \fr{m}_{23}] \subset  \fr{m}_{23}$,\\
$[\fr{h}_k, \fr{m}_{ij}] = (0), (k=1,2, k\ne i, j)$ &
$[\fr{h}_4, \fr{m}_{ij}] \subset \fr{m}_{ij},$   &   $[\fr{h}_5, \fr{m}_{ij}]\subset\fr{m}_{ij},$  
\\ 
  $[\fr{m}_{12}, \fr{m}_{13}] \subset \frak m_{23},$ &
$[\fr{m}_{12}, \fr{m}_{23}] \subset \frak m_{13}$, &
$[\fr{m}_{13}, \fr{m}_{23}] \subset \frak m_{12}$, \\ 
$[\fr m_{12}, \fr m_{12}]\subset\fr h_{1}\oplus\fr h_2 \oplus \fr h_4\oplus \fr h _5$, &   $[\fr m_{13}, \fr m_{13}]\subset\fr h_{1}\oplus\fr h_3 \oplus \fr h_4\oplus \fr h _5$,\\ 
$[\fr m_{23}, \fr m_{23}]\subset\fr h_{2}\oplus\fr h_3 \oplus \fr h_4\oplus \fr h _5$. 
\end{tabular}
\end{center}  
  \end{lemma}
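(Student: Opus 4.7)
The plan is to verify each bracket relation by direct matrix computation, using the explicit descriptions of the subspaces $\fr{h}_0,\fr{h}_1,\fr{h}_2,\fr{h}_3,\fr{h}_4,\fr{h}_5,\fr{m}_{12},\fr{m}_{13},\fr{m}_{23}$ given before the statement. Since each relation has the form ``the bracket of two subspaces is contained in a third,'' it suffices to take a generic element from each factor, compute $[X,Y]=XY-YX$ as a block matrix, and read off in which block positions the result is supported.

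First I would handle the brackets involving only the ideals of $\fr{h}$. The three simple ideals $\fr{h}_1,\fr{h}_2,\fr{h}_3$ are embedded in disjoint diagonal blocks, so $XY=YX=0$ whenever $X\in\fr{h}_i$, $Y\in\fr{h}_j$ with $i\neq j$; the relations $[\fr{h}_i,\fr{h}_i]\subset\fr{h}_i$ are the defining Lie algebra property of $\fr{su}(\ell)$, $\fr{su}(m)$, $\fr{su}(n)$. Because every element of $\fr{h}_0$ is block-scalar, it commutes with any matrix that is block-diagonal and supported in a single diagonal block, giving $[\fr{h}_0,\fr{h}_i]=(0)$ for $i=1,2,3$.

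Next I would dispatch the mixed brackets $[\fr{h}_i,\fr{m}_{jk}]$. A direct block multiplication shows that multiplying a matrix supported in the $(i,i)$-block by one supported in the off-diagonal blocks $(j,k)$ and $(k,j)$ yields zero unless $i\in\{j,k\}$, and when $i\in\{j,k\}$ the outcome remains supported in $(j,k)$ and $(k,j)$; hence the bracket lies in $\fr{m}_{jk}$. The central directions $H_4,H_5$ act by rescaling each diagonal block by a scalar, and since the two scalars on the $i$-th and $j$-th blocks are distinct, the terms $XY$ and $YX$ do not cancel on the off-diagonal blocks; this yields $[\fr{h}_4,\fr{m}_{ij}]\subset\fr{m}_{ij}$ and $[\fr{h}_5,\fr{m}_{ij}]\subset\fr{m}_{ij}$.

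Finally, I turn to the $[\fr{m}_{ij},\fr{m}_{kl}]$ relations. When the pairs share one index, e.g.\ $[\fr{m}_{12},\fr{m}_{13}]$, block multiplication of $X\in\fr{m}_{12}$ with $Y\in\fr{m}_{13}$ produces entries only in the $(2,3)$ and $(3,2)$ blocks, and the commutator is automatically skew-Hermitian, placing it in $\fr{m}_{23}$; the other two cases are symmetric. For $[\fr{m}_{ij},\fr{m}_{ij}]$, the commutator is block-diagonal with support only in the $(i,i)$ and $(j,j)$ blocks. The trace-free parts of those two blocks lie in $\fr{h}_i\oplus\fr{h}_j$, while the scalar (trace) parts, which in general do not vanish individually but must sum to zero in $\fr{su}(\ell+m+n)$, span the two-dimensional center $\fr{h}_0=\fr{h}_4\oplus\fr{h}_5$. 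The only slightly delicate point is this last step: one must verify that the two trace contributions can be independently nonzero, so that both $\fr{h}_4$ and $\fr{h}_5$ are needed; this is checked by computing the trace of $AA^{*}$ for a generic matrix $A$ in the relevant block. Apart from this bookkeeping, the proof is an entirely routine block-matrix calculation and presents no essential obstacle.
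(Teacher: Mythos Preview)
Your approach is correct and matches the paper's: the paper simply states that the lemma follows ``by a direct computation,'' and your block-matrix verification is exactly that computation spelled out. One small inaccuracy worth noting: your claim that for $H_4$ and $H_5$ ``the two scalars on the $i$-th and $j$-th blocks are distinct'' is not always true---for $H_4$ the scalars on the first and second blocks coincide, so in fact $[\fr{h}_4,\fr{m}_{12}]=(0)$ (cf.\ Lemma~\ref{lemma1} in the paper)---but since the lemma only asserts an inclusion, this does not affect the argument.
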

  
   Therefore,  we see that the only non zero $B$-structure constants (up to permutation of indices) are 
   
  \begin{equation}\label{triplets1}
{1 \brack {11}},  {2 \brack {22}},   {3 \brack {33}}, {(12) \brack {1(12)}},  {(13) \brack {1(13)}},  
   {(12) \brack {2(12)}},    {(23) \brack {2(23)}},  
   {(13) \brack {3(13)}}, {(23) \brack {3(23)}},
\end{equation}

  \begin{equation}\label{triplets2}
 {(12) \brack {4(12)}},  {(23) \brack {4(23)}},  
   {(13) \brack {4(13)}},    {(13) \brack {5(13)}},  {(23) \brack {5(23)}},  
   {(12) \brack {5(12)}},  {(13) \brack {(12)(23)}}.
\end{equation}

In order to compute the triplets ${i \brack {jk}}$ we need the following lemma from
 \cite{ADN1} adjusted to our case (for a more detailed proof see also \cite[Lemma 5.2]{ASS1}).

\begin{lemma}\label{lemma5.2aa}  Let $\fr{q}$ be a simple subalgebra of $\frak g=\fr{su}(N)$.  Consider an orthonormal basis $\{ f_j \}$  of $\fr{q}$ 
 with respect to $B$ {\em({\em negative of the Killing form of $\fr{su}(N)$})}, and denote by
 $B_{\fr{q}}$ the Killing form of $\fr{q}$.  Then,  for $i = 1, \ldots, \dim \fr{q}$, we have 
\begin{equation*}\label{eq14a}
\sum_{j, k = 1}^{\dim \fr{q}} \left(B([f_i, f_j], f_k \right)^2 = {\al}^{\fr{q}}_{\fr{su}(N)},
\end{equation*}
where ${\al}^{\fr{q}}_{\fr{su}(N)}$ is the constant determined by $B_{\fr{q}} = {\al}^{\fr{q}}_{\fr{su}(N)}\cdot B|_{\fr{q}}$. 
\end{lemma}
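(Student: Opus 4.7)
The plan is to exploit the fact that $\fr{q}$ is simple, which forces the Killing form $B_{\fr{q}}$ and the restriction $B|_{\fr{q}}$ to be proportional; the sum in the statement will then emerge by expanding one side of this identity in the $B$-orthonormal coordinate basis $\{f_j\}$.

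First I would invoke Schur's lemma applied to the adjoint representation of the simple algebra $\fr{q}$: the space of $\ad(\fr{q})$-invariant symmetric bilinear forms on $\fr{q}$ is one-dimensional. Since both $B|_{\fr{q}}$ and $B_{\fr{q}}$ lie in this space, there exists a unique scalar $\al = \al^{\fr{q}}_{\fr{su}(N)}$ with $B_{\fr{q}} = \al \cdot B|_{\fr{q}}$. As $\{f_j\}$ is $B$-orthonormal, evaluating at $(f_i, f_i)$ yields $\al = B_{\fr{q}}(f_i, f_i)$, a quantity a priori independent of $i$, which already explains why the claimed sum does not depend on $i$.

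The core computation is then to expand $B_{\fr{q}}(f_i, f_i) = \tr(\ad_{f_i} \circ \ad_{f_i}|_{\fr{q}})$ in the basis $\{f_j\}$. The $(k,j)$-matrix entry of $\ad_{f_i}$ in this basis is exactly $B([f_i, f_j], f_k)$, so
$$\tr(\ad_{f_i}^2) = \sum_{j,k} B([f_i, f_j], f_k)\,B([f_i, f_k], f_j).$$
The $\Ad$-invariance of $B$, namely $B([X,Y],Z)=B(X,[Y,Z])$, gives $B([f_i, f_k], f_j) = -B([f_i, f_j], f_k)$, and the double sum collapses to $-\sum_{j,k}(B([f_i, f_j], f_k))^2$. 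Combining this with $\al = B_{\fr{q}}(f_i, f_i)$ yields the claimed identity.

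There is no deep obstacle here: the whole lemma reduces to $\ad$-invariance of $B$ together with Schur's lemma. The only point requiring care is the sign bookkeeping, namely confirming that under the paper's convention $\al^{\fr{q}}_{\fr{su}(N)}$ matches the nonnegative quantity $\sum_{j,k}(B([f_i, f_j], f_k))^2$ (both $B$ and the Killing form $B_{\fr{q}}$ are definite, so signs have to be unfolded consistently before the identity is quoted in the rest of the paper).
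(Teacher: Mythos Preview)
Your argument is correct and is exactly the standard proof; the paper itself does not prove the lemma but only cites \cite{ADN1} and \cite[Lemma~5.2]{ASS1}, where the same computation is carried out. Your caution about the sign is warranted: as literally stated, the relation $B_{\fr{q}}=\al\cdot B|_{\fr{q}}$ with $B$ the \emph{negative} Killing form would force $\al<0$, whereas the sum of squares is nonnegative. In the paper's own application (proof of Lemma~\ref{ijk}) the constant is actually defined via $B_{\fr{q}}=\al^{\fr{q}}_{\fr{g}}\cdot B_{\fr{g}}|_{\fr{q}}$ with $B_{\fr{g}}$ the Killing form itself, which gives $\al>0$ and makes your computation land on the stated identity with the correct sign.
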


\begin{lemma}\label{ijk}
Let $N=\ell+m+n$.  Then the following expressions are valid:
{\small
\begin{equation*}\label{eq14}
\begin{array}{llll} 
\vspace{0.3cm}
 \displaystyle{{1 \brack {1 1}} = \frac{\ell (\ell^2 -1)}{N} },   &  \displaystyle{{2 \brack {2 2}} = 
 \frac{m (m^2 -1)}{N} }, &  \displaystyle{{3 \brack {3 3}} = \frac{n (n^2-1)}{N} },
  &  \displaystyle{{(12) \brack {1 (12)}} = \frac{m (\ell^2-1)}{N} },\\
  \vspace{0.3cm}
\displaystyle{{(13) \brack {1 (13)}} = \frac{n (\ell^2 -1)}{N} },   
&  \displaystyle{{(12) \brack {2 (12)}} = \frac{\ell (m^2-1)}{N} },
  &  \displaystyle{{(23) \brack {2 (23)}} = \frac{n (m^2-1)}{N} },
  &  \displaystyle{{(12) \brack {4 (12)}} = 0 }, \\ 
  \vspace{0.3cm}
    \displaystyle{{(13) \brack {4 (13)}} = \frac{\ell}{\ell+m} }, 
  & \displaystyle{{(23) \brack {4 (23)}} = \frac{m}{\ell+m} }, 
  &  \displaystyle{{(12) \brack {5 (12)}} = 
 \frac{\ell+m}{N} },
 &  \displaystyle{{(13) \brack {5 (13)}} = \frac{mn}{N(\ell+m)} }, \\
  \vspace{0.3cm}
    \displaystyle{{(23) \brack {5 (23)}} = \frac{\ell n}{N (\ell+m)} },
  & \displaystyle{{(23) \brack {(12) (13)}} = \frac{\ell mn}{N} },
  &  \displaystyle{{(13) \brack {3 (13)}} = \frac{\ell(n^2-1)}{N} },
  &  \displaystyle{{(23) \brack {3 (23)}} = \frac{m(n^2-1)}{N} }.
\end{array} 
\end{equation*}
}
\end{lemma}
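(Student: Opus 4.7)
The plan is to apply Lemma \ref{lemma5.2aa} to a family of simple subalgebras $\mathfrak{q}$ of $\mathfrak{g} = \mathfrak{su}(N)$ and to decompose the resulting sums using the bracket relations of Lemma \ref{brackets}. The preliminary observation is that for $\mathfrak{q} = \mathfrak{su}(k)$ embedded in $\mathfrak{su}(N)$ as a natural block subalgebra, the trace formula $B_K^{\mathfrak{su}(k)}(X,Y) = 2k\,\mathrm{tr}(XY)$ gives $\alpha^{\mathfrak{q}}_{\mathfrak{su}(N)} = k/N$, and for $\mathfrak{q} = \mathfrak{g}$ itself, $\alpha^{\mathfrak{g}}_{\mathfrak{su}(N)} = 1$.

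The diagonal triplets ${1 \brack {11}}$, ${2 \brack {22}}$, ${3 \brack {33}}$ are obtained by summing the identity of Lemma \ref{lemma5.2aa} over $\mathfrak{q} = \mathfrak{h}_i$ for $i = 1, 2, 3$, yielding $\dim(\mathfrak{h}_i) \cdot \alpha^{\mathfrak{h}_i}_{\mathfrak{su}(N)}$ and hence the stated values. For each triplet of the form ${(ij) \brack {i(ij)}}$, I apply Lemma \ref{lemma5.2aa} to the appropriate block subalgebra $\mathfrak{su}(\ell+m)$, $\mathfrak{su}(\ell+n)$ or $\mathfrak{su}(m+n)$; by Lemma \ref{brackets} such a $\mathfrak{q}$ decomposes $B$-orthogonally as $\mathfrak{h}_i \oplus \mathfrak{h}_j \oplus \mathfrak{h}_0' \oplus \mathfrak{m}_{ij}$ (with $\mathfrak{h}_0'$ its one-dimensional center), and for $f \in \mathfrak{h}_i$ only $f' \in \mathfrak{h}_i \cup \mathfrak{m}_{ij}$ contribute to the sum. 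Summing over $f \in \mathfrak{h}_i$ produces, for instance in the case $(i, j) = (1, 2)$,
\[
(\ell^2 - 1)\,\frac{\ell + m}{N} \;=\; {1 \brack {11}} + {(12) \brack {1(12)}},
\]
which gives ${(12) \brack {1(12)}} = m(\ell^2-1)/N$. The remaining five triplets of this type are identical in structure.

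For the triplets involving $\mathfrak{h}_4, \mathfrak{h}_5$, Lemma \ref{lemma5.2aa} does not apply (these are abelian), so I compute by hand. Fixing a $B$-unit vector $H_r$ ($r = 4, 5$), the commutator $[H_r, E]$ for $E \in \mathfrak{m}_{ij}$ lies in $\mathfrak{m}_{ij}$ and acts by multiplication by $\sqrt{-1}\,\lambda^{(r)}_{ij}$ on $\mathfrak{m}_{ij} \cong M_{p_i, p_j}(\mathbb{C})$ (with $(p_1, p_2, p_3) = (\ell, m, n)$), where $\lambda^{(r)}_{ij}$ is the difference of the two relevant block scalars of $H_r$. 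Computing the trace of $(\mathrm{ad}_{H_r})^2|_{\mathfrak{m}_{ij}}$ yields
\[
{(ij) \brack {r(ij)}} \;=\; \frac{2 p_i p_j\,(\lambda^{(r)}_{ij})^2}{B(H_r, H_r)},
\]
with $B(H_r, H_r)$ evaluated from $B(X, Y) = -2N\,\mathrm{tr}(XY)$. In particular ${(12) \brack {4(12)}} = 0$ because $H_4$ has equal block scalars on positions $1$ and $2$, and the remaining cases reduce to the stated rational values by a short normalization.

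Finally, for ${(23) \brack {(12)(13)}}$ I apply Lemma \ref{lemma5.2aa} to $\mathfrak{q} = \mathfrak{g}$ (with $\alpha = 1$): fixing $f \in \mathfrak{m}_{12}$ and summing over $f', f'' \in \mathfrak{g}$, Lemma \ref{brackets} restricts $f'$ to the components $\mathfrak{h}_1, \mathfrak{h}_2, \mathfrak{h}_4, \mathfrak{h}_5, \mathfrak{m}_{12}, \mathfrak{m}_{13}, \mathfrak{m}_{23}$. Summing further over $f \in \mathfrak{m}_{12}$ and using the threefold symmetry ${k \brack {ij}} = {j \brack {ki}}$ yields, after dividing by $2$,
\[
\ell m \;=\; {(12) \brack {1(12)}} + {(12) \brack {2(12)}} + {(12) \brack {4(12)}} + {(12) \brack {5(12)}} + {(23) \brack {(12)(13)}},
\]
and substituting the previously computed values gives ${(23) \brack {(12)(13)}} = \ell m n/N$. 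The main obstacle throughout is the bookkeeping: identifying the correct $\mathfrak{q}$ for each triplet, applying the threefold symmetry to fold the various orderings into a single constant, and tracking the normalizations of $H_4, H_5$ so that the final rational functions of $\ell, m, n$ emerge cleanly.
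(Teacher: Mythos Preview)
Your proposal is correct and follows essentially the same strategy as the paper: apply Lemma~\ref{lemma5.2aa} to the block subalgebras $\mathfrak{su}(\ell)$, $\mathfrak{su}(m)$, $\mathfrak{su}(n)$, $\mathfrak{su}(\ell+m)$, $\mathfrak{su}(\ell+n)$, $\mathfrak{su}(m+n)$ and subtract. The paper's proof only works out ${1\brack 11}$ and ${(12)\brack 1(12)}$ explicitly and then says the rest are ``analogous''; you have filled in the two places where the analogy is not entirely mechanical. For the triplets involving $\mathfrak h_4,\mathfrak h_5$ your direct eigenvalue computation is the natural route (Lemma~\ref{lemma5.2aa} does not apply to abelian $\mathfrak q$, and indeed the paper later handles the related quantities $\{\,\cdot\,\}$ by the same kind of direct bracket computation via Lemma~\ref{lemma1}). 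For ${(23)\brack(12)(13)}$ your use of $\mathfrak q=\mathfrak g$ with $\alpha=1$ and subtraction of the already-known terms is a clean extension of the paper's method; the paper does not spell this case out, but your identity $\ell m = {(12)\brack 1(12)}+{(12)\brack 2(12)}+{(12)\brack 4(12)}+{(12)\brack 5(12)}+{(23)\brack(12)(13)}$ checks out and yields $\ell m n/N$ immediately.
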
 
\begin{proof} Let $\fr g = \fr{su}(\ell+m+n)$,  $\fr q=\fr{su}(\ell)$ with corresponding  Killing forms  
$B_{\fr g}(X, Y)=2(\ell+m+n)\tr (XY)$,  $B_{\fr q}=2\ell\tr (XY)$ respectively.
Let $\{f_j\}$ be an orthonormal basis of $\fr q$ with respect to $-B_{\fr g}$ $(1\le j\le \dim\fr q)$.
Then $B_{\fr{q}} = {\al}^{\fr{q}}_{\fr{g}}\cdot \left.B_{\fr g}\right|_{\fr{q}}$ with
${\al}^{\fr{q}}_{\fr{g}}=\frac{\ell}{\ell+m+n}$. Then we have
$$
\displaystyle{{1 \brack {1 1}}}=\sum _{i=1}^{\ell^2-1}\sum _{j, k=1}^{\dim\fr q}B_\fr{g}([f_i, f_j], f_k)^2
=(\dim\fr q){\al}^{\fr{q}}_{\fr{g}}=\frac{\ell(\ell^2-1)}{N}.
$$
The triplets ${2 \brack {2 2}}$ and ${3 \brack {3 3}}$ can be computed in a similar manner, by choosing $\fr q=\fr{su}(m)$ and $\fr q=\fr{su}(n)$ respectively.

Now let $\fr q=\fr{su}(\ell+m)$ and  $\{f_j\}$ be an orthonormal basis of $\fr q$ with respect to $B_{\fr g}$.
Since $\fr{su}(\ell+m)=\fr{su}(\ell)\oplus\fr m_{12}\oplus\fr{su}(m)\oplus\fr{h}_5$, we adapt the basis $\{f_j\}$ to this decomposition as follows:
$\{f_1, \dots , f_{\ell^2-1}\}\in\fr{su}(\ell)$, $\{f_{\ell^2}, \dots , f_{(\ell^2-1)+2\ell m}\}\in\fr{m}_{12}$,
$\{f_{\ell^2+2\ell m}, \dots , f_{\ell^2+2\ell m+m^2-2}\}\in\fr{su}(m)$, $f_{(\ell+m)^2-1}\in\fr{h}_5$.
Then
$$
\sum _{j, k=1}^{(\ell+m)^2-1}B_\fr{g}([f_i, f_j], f_k)^2
={\al}^{\fr{su}(\ell+m)}_{\fr{g}}=\frac{\ell+m}{N}.
$$
For $\{f_i: i=1, \dots , \ell^2-1\}\in\fr{su}(\ell)$ we have
$$
\sum _{i=1}^{\ell^2-1}\left(
\sum _{j, k=1}^{(\ell+m)^2-1}B_\fr{g}([f_i, f_j], f_k)^2\right)=\frac{\ell+m}{N}(\ell^2-1),
$$
and for $\{f_i: i=1, \dots , \ell^2-1\}\in\fr{su}(\ell)$, $\{f_j: j=1, \dots , m^2-1\}\in\fr{su}(m)$ we have that
$$
[f_i, f_j]\in
\begin{cases}
\fr{su}(\ell), f_j\in\fr {su}(\ell)\\
\fr m_{12}, f_j\in\fr m_{12}\\
0, \ \ \ f_j\in\fr{su}(m).
\end{cases}
$$
Therefore,
$$
\displaystyle{{1 \brack {1 1}}}+\displaystyle{{1 \brack {(12) (12)}}}+0+0=\frac{\ell+m}{N}(\ell^2-1),
$$
from which it follows that
$$
\displaystyle{{1 \brack {(12) (12)}}}=\frac{m}{N}(\ell^2-1).
$$
The other $B$-structure constants can be computed in an analogous way.
\end{proof}  

\subsection{A parametrization of invariant metrics} 

We now consider left-invariant metrics on $\SU(\ell+m+n)$ determined by 
the
$\Ad(H)$-invariant
scalar products on $\frak g =\frak{su}(l+m+n)$.
Note that in the decomposition (\ref{decg}) the $\Ad(H)$-irreducible modules $\fr{h}_1, \fr{h}_2, \fr{h}_3, \fr{m}_{12},
\fr{m}_{13}$ and $\fr{m}_{23}$ are mutually non equivalent.
So any $\Ad(H)$-invariant scalar product $h$ on $\fr{su}(\ell+m+n)$ can be expressed in the form
\begin{equation}\label{inv_metrics}
h=\beta|_{\fr{h}_0} +u_1B|_{\frak h_1}+u_2B|_{\frak h_2}+u_3B|_{\frak h_3}+\sum^{}_{i < j} x_{ij}B|_{\frak m_{ij}},\quad u_i, x_{ij}>0,
\end{equation}
where $\beta|_{\fr{h}_0}$ is a scalar product on $\fr{h}_0$.

Let $\langle\langle\cdot, \cdot\rangle\rangle$ be an arbitrary scalar product on $\fr{h}_0$.
Then the matrix of $\langle\langle\cdot, \cdot\rangle\rangle$ with respect to the basis $\{H_4, H_5\}$
can be orthogonally diagonalized $\diag\{v _4, v_5\}$, for some positive numbers $v_4, v_5$.
Then there is an orthonormal basis $\{V_4, V_5\}$ with respect to some scalar product
$\langle\cdot, \cdot\rangle$ of $\fr{h}_0$, such that
\begin{equation}\label{prod}
\langle\langle\cdot, \cdot\rangle\rangle = v_4\left.\langle\cdot ,\cdot \rangle\right|_{\tilde{\frak h}_4} +v_5\left.\langle\cdot ,\cdot \rangle\right|_{\tilde{\frak h}_5}, \quad v_4, v_5 >0,
\end{equation}  
where  $\tilde{\fr{h}}_4 = \Span\{V_4\}$ and $\tilde{\fr{h}}_5 = \Span\{V_5\}$.
The basis $\{V_4, V_5\}$ is related to the basis $\{H_4, H_5\}$ by 
$$
(V_4, V_5) = (H_4, H_5) \begin{pmatrix}
p & q\\
r & s
\end{pmatrix},\quad \mbox{where}\ 
\begin{pmatrix}
p & q\\
r & s
\end{pmatrix}\in{\rm Gl}_2\mathbb R^+,
$$
 hence
\begin{equation}\label{change}
(H_4, H_5)=(V_4, V_5)
\begin{pmatrix}
a & b\\
c & d
\end{pmatrix},\quad
\begin{pmatrix}
a & b\\
c & d
\end{pmatrix}
=
\begin{pmatrix}
p & q\\
r & s
\end{pmatrix}^{-1}.
\end{equation}
Therefore, the matrix of the scalar product
$\langle\langle\cdot, \cdot\rangle\rangle$ with respect to $\{H_4, H_5\}$ is given by
\begin{equation}\label{positive}
A\equiv {}^{t}_{}\!\begin{pmatrix}
a & b\\
c & d
\end{pmatrix}
\begin{pmatrix}
v_4 & 0\\
0 & v_5
\end{pmatrix}
\begin{pmatrix}
a & b\\
c & d
\end{pmatrix}.
\end{equation}
Now, for any 
$\bigl(
\begin{smallmatrix}
a & b\\
c & d
\end{smallmatrix}\bigr)
\in{\rm Gl}_2\mathbb{R}
$ and for any 
$v_4, v_5>0$
the matrix $A$ is positive definite, 
therefore, any scalar product 
 $\beta|_{\fr{h}_0}$ on $\frak h_0$ has the form
 (\ref{prod}).

We now consider decomposition (\ref{dec1}) 
\begin{equation}\label{dec2}
\frak g =\frak h_1\oplus\frak h_2\oplus\frak h_3\oplus
\tilde{\frak h}_4\oplus
\tilde{\frak h}_5\oplus\frak m_{12}
\oplus\frak m_{13}\oplus\frak m_{23}\equiv \frak n_1\oplus
\frak n_2\oplus\cdots\oplus\frak n_7\oplus\frak n_8,
\end{equation}
and left-invariant metrics on $\SU(\ell+m+n)$ determined by 
the
$\Ad(\s(\U(\ell)\times\U(m)\times\U(n)))$-invariant
scalar products on $\frak g =\frak{su}(\ell +m+n)$ of the form
$$
g_1=\langle\langle\cdot, \cdot\rangle\rangle|_{\fr{h}_0} +u_1B|_{\frak h_1}+u_2B|_{\frak h_2}+u_3B|_{\frak h_3}+\sum^{}_{i < j} x_{ij}B|_{\frak m_{ij}},\quad u_i, x_{ij}>0,
$$
or
\begin{eqnarray}\label{metrics1}
g_1&=&v_4\left.\langle\cdot ,\cdot \rangle\right|_{\tilde{\frak h}_4} +
 v_5\left.\langle\cdot ,\cdot \rangle\right|_{\tilde{\frak h}_5} +u_1B|_{\frak h_1}+u_2B|_{\frak h_2}+u_3B|_{\frak h_3}+\sum^{}_{i < j} x_{ij}B|_{\frak m_{ij}}\\
 &=& 
 u_1B|_{\frak n_1}+u_2B|_{\frak n_2}+u_3B|_{\frak n_3}
 +v_4\left.\langle\cdot ,\cdot \rangle\right|_{\frak n_4}
 +v_5\left.\langle\cdot ,\cdot \rangle\right|_{\frak n_5} + \nonumber\\ & + &
 x_{(6)}B|_{\frak n_{6}}
 +x_{(7)}B|_{\frak n_{7}}
 +x_{(8)}B|_{\frak n_{8}},\nonumber
\end{eqnarray}

\noindent
where we have set 
$x_{(6)}=x_{12}, x_{(7)}=x_{13}, x_{(8)}=x_{23}.
$

\smallskip
\noindent
We also consider $\SU(\ell+m+n)$-invariant metrics on the Stiefel manifolds $\SU(\ell+m+n)/\SU(n)$
determined by the $\Ad(\s(\U(\ell)\times\U(m)\times\U(n)))$-invariant scalar products on 
\begin{equation}\label{dec3}
\fr{p} = \fr{n}_1\oplus \fr{n}_{2} \oplus\fr{n}_4\oplus\fr{n}_5\oplus\fr{n}_{6}\oplus\fr{n}_{7}\oplus\fr{n}_{8}
\end{equation}
 of the form
 \begin{equation}\label{metrics2}
g_2=
 u_1B|_{\frak n_1}+u_2B|_{\frak n_2}
 +v_4\left.\langle\cdot ,\cdot \rangle\right|_{\frak n_4}
 +v_5\left.\langle\cdot ,\cdot \rangle\right|_{\frak n_5}
 +x_{(6)}B|_{\frak n_{6}}
 +x_{(7)}B|_{\frak n_{7}}
 +x_{(8)}B|_{\frak n_{8}}.
\end{equation}

\noindent
Let $\{\tilde{H}_4, \tilde{H}_5\}$ be an orthonormal basis of $\fr{h}_0$ with respect to $B$, where
$$
\tilde{H}_4 = c_1H_4, \quad \tilde{H}_5 = c_2 H_5, \ \ c_1, c_2\in \bb{R}.
$$
An easy calculation gives that 
\begin{equation}\label{ci}
c_1 = \displaystyle{\frac{\sqrt{(\ell+m)n}}{(\ell+m+n)\sqrt{2}}}, 
\quad 
c_2 = \displaystyle{\frac{\sqrt{\ell m}}{\sqrt{2(\ell+m+n)}\sqrt{m+\ell}}}. 
\end{equation}
Also, note that 
 $\langle\langle \tilde{H}_4, \tilde{H}_5\rangle\rangle$ might be   non zero. 
 Let $\{U_4, U_5\}$ be an orthonormal basis of $\fr{h}_0=\fr{h}_4\oplus\fr{h}_5$ with respect to $g_1$, where
$$
U_{i} = \frac{1}{\sqrt{v_{i}}}V_{i}, \ i = 4, 5.
$$
Let $\{\tilde{X}_{j}^{(i)}: j = 1, \dots , \dim\frak n_i\}$ be an orthonormal basis of $\fr{n}_{i}$, $i = 1,2,3,6,7,8$ with respect to $B$. If we  set
\begin{eqnarray}
X_{j}^{(i)} &=& 
\begin{cases}
\displaystyle{\frac{1}{\sqrt{u_{i}}}\tilde{X}_{j}^{(i)}}, \ i = 1,2,3  \\ \\
\displaystyle{\frac{1}{\sqrt{x_{(i)}}}\tilde{X}_{j}^{(i)}}, \ i=6,7,8 
\end{cases}
\end{eqnarray} 
then the set
$\{X_{j}^{(i)}: j=1, \dots , \dim\frak n_i\}$ is
 an orthonormal basis of $\fr{n}_{i}$, $i = 1,2,3,6,7,8$ (resp. $i = 1,2, 6,7,8$) with respect to $g_1$ (resp. $g_2$).

Note that the scalar product $g_1$ is not in general bi-invariant, because
$$
g_1([\tilde{X}_{j}^{(i)}, \tilde{X}_{l}^{(i)}]\, ,\, U_{k})\neq g_1(\tilde{X}_{j}^{(i)}\, ,\, [\tilde{X}_{l}^{(i)}, U_{k}]), \ \ \mbox{for}\ \ k = 4, 5.
$$

Therefore, it is convenient to express the scalar product (\ref{prod}) in terms of $B$.
We have the following:

\begin{prop}\label{prop1}
For every $X_{j}^{(i)}, X_{l}^{(i)}$, $i = 6,7,8$  the following relations are satisfied:
\begin{eqnarray*}
\langle\langle[X_{j}^{(i)}, X_{l}^{(i)}]\, ,\, U_4\rangle\rangle &=& \sqrt{v_4}\big\lbrace aB([X_{j}^{(i)}, X_{l}^{(i)}]\, ,\, \tilde{H}_{4}) + bB([X_{j}^{(i)}, X_{l}^{(i)}]\, ,\, \tilde{H}_5)\big\rbrace,\\
\langle\langle[X_{j}^{(i)}, X_{l}^{(i)}]\, ,\, U_{5}\rangle\rangle &=&  \sqrt{v_5}\big\lbrace cB([X_{j}^{(i)}, X_{l}^{(i)}]\, ,\, \tilde{H}_{4}) + dB([X_{j}^{(i)}, X_{l}^{(i)}]\, ,\, \tilde{H}_5)\big\rbrace.
\end{eqnarray*}
\end{prop}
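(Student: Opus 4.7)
The plan is to reduce each identity to an elementary linear-algebra calculation on the two-dimensional center $\fr{h}_0$. First, for $i \in \{6,7,8\}$ the basis vectors $X_{j}^{(i)}, X_{l}^{(i)}$ lie in one of the modules $\fr{m}_{12}, \fr{m}_{13}, \fr{m}_{23}$, so by Lemma \ref{brackets} their bracket lies in $\fr{h}$. All summands of $\fr{h}$ apart from $\fr{h}_0$ are $B$-orthogonal to each of $\tilde{H}_4, \tilde{H}_5, U_4, U_5 \in \fr{h}_0$, so both sides of the claimed identities depend only on the $\fr{h}_0$-component of $Z := [X_{j}^{(i)}, X_{l}^{(i)}]$. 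Hence it suffices to prove the identities for an arbitrary $Z \in \fr{h}_0$.

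Next, since both sides are linear in $Z$, one may verify them on the $B$-orthonormal basis $\{\tilde{H}_4,\tilde{H}_5\}$ via the expansion
\[
Z \;=\; B(Z,\tilde{H}_4)\,\tilde{H}_4 \;+\; B(Z,\tilde{H}_5)\,\tilde{H}_5,
\]
combined with bilinearity of $\langle\langle\cdot,\cdot\rangle\rangle$. This reduces the two claims to computing the four scalars $\langle\langle \tilde{H}_j, U_k\rangle\rangle$ for $j,k \in \{4,5\}$ and matching them against the coefficients $a,b,c,d$ appearing on the right-hand sides.

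To evaluate these scalars I would substitute $\tilde{H}_4 = c_1 H_4$ and $\tilde{H}_5 = c_2 H_5$, use (\ref{change}) to rewrite $H_4 = a V_4 + c V_5$ and $H_5 = b V_4 + d V_5$, and then apply the two basic facts $\langle\langle V_i, V_j\rangle\rangle = v_i\,\delta_{ij}$ (orthogonality of the diagonalizing basis) and $U_k = V_k/\sqrt{v_k}$. Each $\langle\langle \tilde{H}_j, U_k\rangle\rangle$ then becomes a short direct computation, and assembling the four values produces the two formulas in the statement.

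The computations themselves are routine; the only point that requires care is the simultaneous bookkeeping of three different bases of the center $\fr{h}_0$ -- the $B$-orthogonal basis $\{H_4, H_5\}$ introduced in the setup, its $B$-orthonormal rescaling $\{\tilde{H}_4, \tilde{H}_5\}$ governed by the constants $c_1, c_2$ from (\ref{ci}), and the $\langle\cdot,\cdot\rangle$-orthonormal basis $\{V_4, V_5\}$ related to $\{H_4, H_5\}$ by the matrix in (\ref{change}). Keeping these three bases and their associated change-of-basis matrices consistently aligned is the only mild obstacle in an otherwise straightforward argument.
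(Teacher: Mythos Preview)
Your approach is the same as the paper's: reduce to the $\fr{h}_0$-component, expand in the $B$-orthonormal basis $\{\tilde H_4,\tilde H_5\}$, and then carry out a change-of-basis computation. The paper does the last step in the reverse direction (it expresses $V_4$ in terms of $\tilde H_4,\tilde H_5$ and computes $\langle \tilde H_i,\tilde H_j\rangle$), but the content is identical.

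There is, however, one point where following your plan \emph{literally} will not yield the stated formula. If you substitute $\tilde H_4=c_1H_4$, $\tilde H_5=c_2H_5$ and then use (\ref{change}) in the form $H_4=aV_4+cV_5$, $H_5=bV_4+dV_5$, you get
\[
\langle\langle \tilde H_4,U_4\rangle\rangle=c_1a\sqrt{v_4},\quad
\langle\langle \tilde H_5,U_4\rangle\rangle=c_2b\sqrt{v_4},
\]
and hence an extra factor $c_1$ (resp.\ $c_2$) in front of $a$ (resp.\ $b$) in the final formula. The paper's own proof avoids this by writing $V_4=p\tilde H_4+r\tilde H_5$ and using $\langle\tilde H_4,\tilde H_4\rangle=a^2+c^2$, $\langle\tilde H_4,\tilde H_5\rangle=ab+cd$, etc.; in other words, it tacitly treats the matrix $\left(\begin{smallmatrix}a&b\\c&d\end{smallmatrix}\right)$ as the change of basis from $\{V_4,V_5\}$ to the $B$-\emph{orthonormal} basis $\{\tilde H_4,\tilde H_5\}$, not to $\{H_4,H_5\}$ as (\ref{change}) is written. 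To recover the proposition as stated you should adopt that convention (i.e., take $(\tilde H_4,\tilde H_5)=(V_4,V_5)\left(\begin{smallmatrix}a&b\\c&d\end{smallmatrix}\right)$); the ``mild obstacle'' you anticipated is exactly this slip, and once it is fixed your computation goes through immediately.
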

\begin{proof}
For the first relation we have:
\begin{eqnarray*}
&&\langle\langle[X_{j}^{(i)}, X_{l}^{(i)}]\, ,\, U_4\rangle\rangle = v_4\langle[X_{j}^{(i)}, X_{l}^{(i)}]\, ,\, U_4\rangle = \sqrt{v_4}\langle[X_{j}^{(i)}, X_{l}^{(i)}]\, ,\, V_4\rangle\\
&=&\sqrt{v_4}\big\langle B([X_{j}^{(i)}, X_{l}^{(i)}]\, ,\, \tilde{H}_4)\tilde{H}_4 + B([X_{j}^{(i)}, X_{l}^{(i)}]\, ,\, \tilde{H}_5)\tilde{H}_5 \, , \, V_4\big\rangle\\
&=  &\sqrt{v_4}\big(\langle B([X_{j}^{(i)}, X_{l}^{(i)}]\, ,\, \tilde{H}_4)\tilde{H}_4\, ,\, V_1\rangle + \langle B([X_{j}^{(i)}, X_{l}^{(i)}]\, ,\, \tilde{H}_5)\tilde{H}_5 \, , \, V_4\rangle\big)\\
&= &\sqrt{v_4}\big(\langle B ([X_{j}^{(i)}, X_{l}^{(i)}]\, ,\, \tilde{H}_4)\tilde{H}_4\, , \, p\tilde{H}_4 + r\tilde{H}_5\rangle 
+ \langle B ([X_{j}^{(i)}, X_{l}^{(i)}]\, ,\, \tilde{H}_5)\tilde{H}_5 \, ,\, p\tilde{H}_4 + r\tilde{H}_5\rangle\big)\\
&= & \sqrt{v_4}\big(B([X_{j}^{(i)}, X_{l}^{(i)}]\, ,\, \tilde{H}_4)\big(\langle\tilde{H}_4, p\tilde{H}_4\rangle + \langle\tilde{H}_4, r\tilde{H}_5\rangle\big) \\ & &
 + B([X_{j}^{(i)}, X_{l}^{(i)}]\, ,\, \tilde{H}_5)\big(\langle\tilde{H}_5, p\tilde{H}_4\rangle + \langle \tilde{H}_5, r\tilde{H}_5\rangle\big)\big). 
\end{eqnarray*}
By using the relations
 $\langle\tilde{H}_4, \tilde{H}_4\rangle = a^2 + c^{2}$, \, $\langle\tilde{H}_4, \tilde{H}_5\rangle = ab + cd$ and $\langle\tilde{H}_5, \tilde{H}_5\rangle = b^2 + d^2$, the right-hand side in the last equation above can be written as
\begin{eqnarray*}
&&  \sqrt{v_4}\big(B([X_{j}^{(i)}, X_{l}^{(i)}]\, ,\, \tilde{H}_4)\big(p(a^2 + c^2) + r(ab + cd)\big)\\&+& B([X_{j}^{(i)}, X_{l}^{(i)}]\, ,\, \tilde{H}_5)\big(p(ab +cd) + r(b^2 + d^2)\big)\big).
\end{eqnarray*}
On the other hand, by (\ref{change}) it follows that
$
p = \frac{d}{ad - bc}$, 
$r = \frac{-c}{ad - bc}$, 
$q = \frac{-b}{ad - bc}$, 
$s = \frac{a}{ad - bc}$,
therefore, we finally obtain that
\begin{eqnarray*}
&&\langle\langle[X_{j}^{(i)}, X_{l}^{(i)}]\, ,\, U_4\rangle\rangle =
\sqrt{v_4}\big(aB([X_{j}^{(i)}, X_{l}^{(i)}]\, ,\, \tilde{H}_4) + bB([X_{j}^{(i)}, X_{l}^{(i)}]\, ,\, \tilde{H}_5)\big).
\end{eqnarray*}
The second relation can be proved by a similar manner.
\end{proof}

To state the following lemma (which we will use shortly in the next section) we need to choose orthonormal Weyl bases for the modules $\fr{m}_{12}, \fr{m}_{13}$ and $\fr{m}_{23}$.
Let $E_{ij}$ denote the $N\times N$ matrix with $1$ in the $(i, j)$-entry and $0$ elsewhere, and define the matrices
$A_{ij}=E_{ij}-E_{ij}$, $B_{ij}=\sqrt{-1}(E_{ij}+E_{ij})$.

\begin{lemma}\label{lemma1}
The following Lie bracket relations are satisfied:
\begin{itemize}
\item[(1)] If $A_{ij}, B_{ij}\in \fr{n}_6$, then
$
[\tilde{H}_4, A_{ij}] = [\tilde{H}_4, B_{ij}]=0, \quad [\tilde{H}_{5}, A_{ij}] = \displaystyle{c_2\big(\frac{1}{\ell} + \frac{1}{m}\big)B_{ij}}, 
\newline\ \ \ [\tilde{H}_{5}, B_{ij}] = \displaystyle{-c_2\big(\frac{1}{\ell} + \frac{1}{m}\big)A_{ij}}.
$
\item[(2)] If $A_{ij}, B_{ij}\in \fr{n}_7$, then
$
[\tilde{H}_4, A_{ij}] = \displaystyle{c_1\big(\frac{1}{n} + \frac{1}{\ell + m}\big)B_{ij}}, \newline
[\tilde{H}_4, B_{ij}] = \displaystyle{-c_1\big(\frac{1}{n} + \frac{1}{\ell + m}\big)A_{ij}},\ \ \ [\tilde{H}_{5}, A_{ij}] = \displaystyle{c_2\frac{1}{\ell}B_{ij}}, 
\quad [\tilde{H}_{5}, B_{ij}] = \displaystyle{-c_2\frac{1}{\ell}A_{ij}}$.
\item[(3)] If $A_{ij}, B_{ij}\in \fr{n}_8$, then
$[\tilde{H}_4, A_{ij}] = \displaystyle{c_1\big(\frac{1}{n} + \frac{1}{\ell + m}\big)B_{ij}}, \newline
 [\tilde{H}_4, B_{ij}] = \displaystyle{-c_1\big(\frac{1}{n} + \frac{1}{\ell + m}\big)A_{ij}},
\ \ \  [\tilde{H}_{5}, A_{ij}] = \displaystyle{-c_2\frac{1}{\ell}B_{ij}},
\quad [\tilde{H}_{5}, B_{ij}] = \displaystyle{c_2\frac{1}{\ell}A_{ij}}.
$
\end{itemize}
\end{lemma}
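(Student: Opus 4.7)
The plan is to reduce everything to direct matrix multiplication with diagonal matrices. The key observation is that once we pick $b_1 = b_2 = 1$ to normalize the basis vectors $H_4, H_5$, each $H_i$ is a diagonal skew-Hermitian matrix. For any diagonal $D = \sqrt{-1}\,\diag(d_1,\ldots,d_N)$ one has $D E_{ij} = \sqrt{-1}\,d_i E_{ij}$ and $E_{ij} D = \sqrt{-1}\,d_j E_{ij}$, hence
\[
[D, E_{ij}] = \sqrt{-1}(d_i-d_j) E_{ij},\qquad [D, E_{ji}] = -\sqrt{-1}(d_i-d_j) E_{ji}.
\]
Combined with $A_{ij} = E_{ij} - E_{ji}$ and $B_{ij} = \sqrt{-1}(E_{ij} + E_{ji})$, this yields the universal pair of formulas
\[
[D, A_{ij}] = (d_i - d_j) B_{ij},\qquad [D, B_{ij}] = -(d_i - d_j) A_{ij}.
\]
So the whole lemma reduces to reading off the diagonal entries $(d_i - d_j)$ of $H_4$ and $H_5$ on each pair of blocks and then multiplying by $c_1$ or $c_2$.

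Next I would enumerate the three modules. Writing the three index ranges as $I_1 = \{1,\dots,\ell\}$, $I_2 = \{\ell+1,\dots,\ell+m\}$, $I_3 = \{\ell+m+1,\dots,N\}$, the diagonal of $H_4$ takes values $\tfrac{1}{\ell+m}, \tfrac{1}{\ell+m}, -\tfrac{1}{n}$ on these blocks respectively, and the diagonal of $H_5$ takes values $\tfrac{1}{\ell}, -\tfrac{1}{m}, 0$. For $\fr{n}_6 = \fr{m}_{12}$ (indices $i \in I_1$, $j \in I_2$), the $H_4$-difference vanishes, giving the trivial brackets in (1), while the $H_5$-difference equals $\tfrac{1}{\ell} + \tfrac{1}{m}$, producing the two non-trivial formulas after multiplication by $c_2$. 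For $\fr{n}_7 = \fr{m}_{13}$ ($i \in I_1, j \in I_3$), one gets $\tfrac{1}{\ell+m} + \tfrac{1}{n}$ from $H_4$ and $\tfrac{1}{\ell}$ from $H_5$, yielding (2). For $\fr{n}_8 = \fr{m}_{23}$ ($i \in I_2, j \in I_3$), $H_4$ again gives $\tfrac{1}{\ell+m} + \tfrac{1}{n}$ while $H_5$ gives $-\tfrac{1}{m}$, and the sign flip produces (3).

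There is essentially no analytic obstacle; the only thing that requires care is the placement convention for $A_{ij}$ in the block form defining $\fr{m}_{12}, \fr{m}_{13}, \fr{m}_{23}$, namely whether the off-diagonal entry sits above or below the diagonal. I would fix once and for all that $i$ lies in the block of lower index and $j$ in the block of higher index, which matches the matrix realizations of $\fr{m}_{ij}$ displayed at the start of Section~3. With that convention the stated signs and coefficients fall out immediately from the universal formulas above, and no further case analysis, normalization, or estimation is needed.
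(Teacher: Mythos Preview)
Your approach is correct and is precisely the direct matrix computation the paper has in mind; the lemma is stated in the paper without proof, and reducing to the universal formulas $[D,A_{ij}]=(d_i-d_j)B_{ij}$, $[D,B_{ij}]=-(d_i-d_j)A_{ij}$ for a diagonal $D=\sqrt{-1}\,\diag(d_1,\dots,d_N)$ is exactly the right way to do it. Your choice of convention ($i$ in the lower-index block, $j$ in the higher) is the one that matches both the block descriptions of $\fr{m}_{12},\fr{m}_{13},\fr{m}_{23}$ and the signs recorded in the lemma.

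One small point you should make explicit rather than gloss over: for $\fr{n}_8=\fr{m}_{23}$ your computation of the $H_5$-difference gives $-\tfrac{1}{m}$, so $[\tilde H_5,A_{ij}]=-c_2\tfrac{1}{m}B_{ij}$, not $-c_2\tfrac{1}{\ell}B_{ij}$ as printed in part~(3). This is a typo in the paper's statement, not an error in your argument: the value $-c_2/m$ is exactly what is needed to reproduce the structure constant $\bigl[{}^{(23)}_{5(23)}\bigr]=\frac{\ell n}{N(\ell+m)}$ in Lemma~\ref{ijk} and the formula for $\big\lbrace{}^{5}_{88}\big\rbrace$ in Lemma~\ref{thenumbers}, whereas $-c_2/\ell$ would not (unless $\ell=m$). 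So your computation is the correct one; just flag the discrepancy rather than saying it ``produces~(3)'' as written.
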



\section{The Ricci tensor for  left-invariant metrics on $\SU(\ell+m+n)$ and invariant metrics on $\SU(\ell+m+n)/\SU(n)$}
Note that any $\Ad(H)$-invariant symmetric bilinear form of $\fr{su}(N)$ can be expressed as
$$
\gamma|_{\fr{h}_0} +w_1B|_{\frak h_1}+w_2B|_{\frak h_2}+w_3B|_{\frak h_3}+\sum^{}_{i < j} w_{ij}B|_{\frak m_{ij}},\quad w_i, w_{ij}\in\mathbb{R}.
$$
where $\gamma|_{\fr{h}_0}$ is a symmetric bilinear form on $\fr{h}_0$.
In particular, the Ricci tensor of the metrics (\ref{metrics1}) and (\ref{metrics2})  is of the same form
 (for metrics (\ref{metrics2}) the term $w_3B|_{\frak h_3}$ is omitted).  Hence we divide
its study to the part in the center of $\fr{h}_0$ and its diagonal part.

\subsection{The Ricci tensor for the center part $\frak h_0$ of the scalar products (\ref{metrics1}),  (\ref{metrics2})} 
To compute the Ricci tensor of the metrics corresponding to the invariant scalar products (\ref{metrics1}) and (\ref{metrics2}), for the center part $\frak h_0$, we will use formula (\ref{r}).
We know that $[\fr{h}_0, \fr{n}_{j}] \subset \fr{n}_{j}$, $j = 6, 7, 8$ and $[\fr{h}_0, \fr{n}_{i}] = 0$, $i = 1,2,3$.  
Then we have the following:
\begin{prop}\label{prop2}  
 Let $g$ denote any of the  scalar products {\em (\ref{metrics1})},  {\em (\ref{metrics2})} and let $Z, W\in\fr{h}_0$.  Then it is  
\begin{eqnarray}
\sum_{i = 6,7,8}\sum_{j=1}^{\dim\frak n_i}g([Z, X_{j}^{(i)}]\, ,\, [W, X_{j}^{(i)}]) = B(Z, W).
\end{eqnarray}
\end{prop}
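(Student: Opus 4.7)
The plan is to show that the expression on the left is actually independent of all the metric parameters $u_i, v_4, v_5, x_{(i)}$, and then identify it with a familiar $B$-trace using the $\operatorname{ad}$-invariance of $B$. The key structural input is that $\mathfrak{h}_0$ lies in the center of $\mathfrak{h}$, so that brackets of $Z, W \in \mathfrak{h}_0$ with anything in $\mathfrak{h}$ vanish.

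The first step is to convert the $g$-orthonormal vectors $X_j^{(i)}$ into $B$-orthonormal vectors. For $i = 6,7,8$ the restriction of $g$ to $\mathfrak{n}_i$ is $x_{(i)} B$, hence $X_j^{(i)} = \tilde X_j^{(i)}/\sqrt{x_{(i)}}$. By Lemma \ref{brackets}, $[\mathfrak{h}_0, \mathfrak{n}_i] \subset \mathfrak{n}_i$, so $[Z, X_j^{(i)}]$ and $[W, X_j^{(i)}]$ also live in $\mathfrak{n}_i$, where $g = x_{(i)} B$. The factor $x_{(i)}$ multiplying $B$ exactly cancels the factor $1/x_{(i)}$ coming from the two rescalings, yielding
\[
g\bigl([Z, X_j^{(i)}], [W, X_j^{(i)}]\bigr) = B\bigl([Z, \tilde X_j^{(i)}], [W, \tilde X_j^{(i)}]\bigr).
\]
So the whole sum is already independent of the chosen metric, confirming the expected insensitivity to parameters.

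The second step is to extend the summation to a full $B$-orthonormal basis of $\mathfrak{g}$. Pick $B$-orthonormal bases of $\mathfrak{h}_1, \mathfrak{h}_2, \mathfrak{h}_3$ and of $\mathfrak{h}_0$; each added term vanishes because $Z$ (and $W$) lie in the center of $\mathfrak{h}$, so $[Z, Y] = 0$ for all $Y \in \mathfrak{h}$. Writing $\{e_\alpha\}$ for the resulting $B$-orthonormal basis of all of $\mathfrak{g}$, we get
\[
\sum_{i=6,7,8}\sum_{j} B\bigl([Z, \tilde X_j^{(i)}], [W, \tilde X_j^{(i)}]\bigr) = \sum_\alpha B\bigl(\operatorname{ad}_Z e_\alpha,\; \operatorname{ad}_W e_\alpha\bigr).
\]
Since $B$ is $\operatorname{ad}$-invariant, $\operatorname{ad}_W$ is $B$-skew, so the right-hand side equals $\operatorname{tr}(\operatorname{ad}_W^{\ast} \operatorname{ad}_Z) = -\operatorname{tr}(\operatorname{ad}_W \operatorname{ad}_Z)$, which is the negative of the Killing form of $\mathfrak{g}$ applied to $(Z,W)$, that is $B(Z,W)$ by definition.

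There is no real obstacle here; the calculation is essentially a standard trace identity once one notices the two simplifications. The only subtle point worth double-checking is the cancellation of the metric parameters in Step 1 and the centrality argument in Step 2 — both rely crucially on the bracket relations of Lemma \ref{brackets}, which ensure that $\operatorname{ad}_Z$ preserves each $\mathfrak{n}_i$ ($i=6,7,8$) and annihilates $\mathfrak{h}$. It is the centrality of $\mathfrak{h}_0$ in $\mathfrak{h}$ (together with the abelianness of $\mathfrak{h}_0$ itself) that makes the left-hand side coincide with the Killing form piece $B(Z,W)$.
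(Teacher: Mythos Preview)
Your proof is correct and follows essentially the same approach as the paper's: both cancel the metric factors $x_{(i)}$ using $[\mathfrak{h}_0,\mathfrak{n}_i]\subset\mathfrak{n}_i$, then identify the resulting $B$-sum with $-\operatorname{tr}(\operatorname{ad}_Z\circ\operatorname{ad}_W)=B(Z,W)$. The only difference is that you make explicit the step the paper leaves implicit, namely that the partial sum over $\mathfrak{n}_6\oplus\mathfrak{n}_7\oplus\mathfrak{n}_8$ equals the full trace because $\operatorname{ad}_Z$ annihilates $\mathfrak{h}$.
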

\begin{proof}
Let $\{X_{j}^{(i)}: j =1, \dots ,\dim\frak n_i\}$ be the orthonormal basis of $\fr{n}_{i}$, $i = 6,7,8$ with respect to $g$.  Then:
\begin{eqnarray*}
& & \sum_{i = 6,7,8}\sum_{j}g([Z, X_{j}^{(i)}]\, ,\, [W, X_{j}^{(i)}]) = \sum_{i = 6,7,8}\sum_{j}x_{(i)}B([Z, X_{j}^{(i)}]\, ,\, [W, X_{j}^{(i)}])\\
\end{eqnarray*}
\begin{eqnarray*}
&=& \sum_{i = 6,7,8}\sum_{j}x_{(i)}B([Z\, ,\, \frac{1}{\sqrt{x_{(i)}}}\tilde{X}_{j}^{(i)}]\, ,\, [W, \frac{1}{\sqrt{x_{(i)}}}\tilde{X}_{j}^{(i)}]) 
= \sum_{i = 6,7,8}\sum_{j}B([Z, \tilde{X}_{j}^{(i)}]\, ,\, [W, \tilde{X}_{j}^{(i)}])\\
\end{eqnarray*}
\begin{eqnarray*}
&=& \sum_{i = 6,7,8}\sum_{j}B(\ad(Z)\tilde{X}_{j}^{(i)}\, ,\, \ad(W)\tilde{X}_{j}^{(i)})
 = \sum_{i = 6,7,8} \sum_{j}-B(\tilde{X}_{j}^{(i)}\, ,\, \ad(Z)\ad(W)\tilde{X}_{j}^{(i)})\\
& = &-{\rm tr}(\ad(Z)\circ\ad(W)) = B(Z, W). \end{eqnarray*}
\end{proof}
 In order to compute the Ricci tensor for the invariant metrics (\ref{metrics1}) 
 we define new numbers, which we call them {\it $Q$-structure constants}, by
$$
\bigg\lbrace
\begin{matrix}
k\\
ij
\end{matrix}
\bigg\rbrace = \sum_{\al, \beta, \gamma} Q([\tilde{X}_{\al}^{(i)}, \tilde{X}_{\beta}^{(j)}]\, ,\, 
\tilde{X}_{\gamma}^{(k)})^2,  
$$  
\noindent
where
$$
Q=\left.\langle\ ,\ \rangle\right|_{\tilde{\frak h}_4} +
 \left.\langle\ ,\ \rangle\right|_{\tilde{\frak h}_5} +B|_{\frak h_1}+B|_{\frak h_2}+B|_{\frak h_3}+\sum B|_{\frak m_{ij}}.
$$
  Note that it is
$
\bigg\lbrace
\begin{matrix}
k\\
ij
\end{matrix}
\bigg\rbrace = 
\bigg\lbrace
\begin{matrix}
k\\
ji
\end{matrix}
\bigg\rbrace
$, but 
$ 
\bigg\lbrace
\begin{matrix}
k\\
ij
\end{matrix}
\bigg\rbrace$ is not always equal to
$ 
\bigg\lbrace
\begin{matrix}
j\\
ik
\end{matrix}
\bigg\rbrace.
$
However, in view of  decomposition (\ref{dec2}) we have the following relations:

\begin{equation}
\begin{array}{lll}
\bigg\lbrace
\begin{matrix}
1\\
11
\end{matrix}
\bigg\rbrace = 
\displaystyle{1 \brack 11}, & 
\bigg\lbrace
\begin{matrix}
2\\
22
\end{matrix}
\bigg\rbrace =
\displaystyle{2\brack 22},  & 
\bigg\lbrace
\begin{matrix}
3\\
33
\end{matrix}
\bigg\rbrace =
\displaystyle{3\brack 33}, \label{eqn}
\\ 
\bigg\lbrace
\begin{matrix}
6\\
78
\end{matrix}
\bigg\rbrace = 
\displaystyle{6\brack 78}, &
\bigg\lbrace
\begin{matrix}
j\\
ij
\end{matrix}
\bigg\rbrace = 
\displaystyle{j\brack ij},&
 \ \mbox{for} \ i = 1,2,3 \ \mbox{and} \ j = 6, 7, 8.
\end{array}
\end{equation}

\noindent
\begin{remark}
{\rm  For the case of complex Stiefel manifolds we consider the decomposition (\ref{dec3}) and metrics
(\ref{metrics2}).  Then the term $B|_{\frak h_3}$ in $Q$ is omitted
and 
$
\lbrace
\begin{smallmatrix}
\ast\\
\ast  \ast
\end{smallmatrix}
\rbrace =0
$
if there is number $3$ at any place $*$.}
\end{remark}

For the  center $\fr{h}_0$ we need to compute the following numbers:
\begin{equation}\label{numbc}
\bigg\lbrace
\begin{matrix}
4\\
66
\end{matrix}
\bigg\rbrace, \ \ 
\bigg\lbrace
\begin{matrix}
4\\
77
\end{matrix}
\bigg\rbrace, \ \ 
\bigg\lbrace
\begin{matrix}
4\\
88
\end{matrix}
\bigg\rbrace, \ \ 
\bigg\lbrace
\begin{matrix}
5\\
66
\end{matrix}
\bigg\rbrace, \ \ 
\bigg\lbrace
\begin{matrix}
5\\
77
\end{matrix}
\bigg\rbrace, \ \ 
\bigg\lbrace
\begin{matrix}
5\\
88
\end{matrix}
\bigg\rbrace.
\end{equation}

Let $\tilde{A}_{ij}=\mu A_{ij}$, $\tilde{B}_{ij}=\mu B_{ij}$ be $B$-orthonormal vectors of $\SU(N)$, for some
real constant $\mu$. 
Then the sets $\{\tilde{A}_{ij}, \tilde{B}_{ij}: i=\ell+1, \dots ,\ell +m; j=1,\dots ,\ell\}$, 
$\{\tilde{A}_{ij}, \tilde{B}_{ij}: i=\ell+m+1, \dots ,N; j=1,\dots ,\ell\}$ and
$\{\tilde{A}_{ij}, \tilde{B}_{ij}: i=\ell+m+1, \dots ,N; j=m+1,\dots ,m+\ell\}$ constitute orthonormal bases
for $\fr{n}_6=\fr{m}_{12}$, $\fr{n}_7=\fr{m}_{13}$ and $\fr{n}_8=\fr{m}_{23}$ respectively.

\begin{lemma}\label{thenumbers} 
 Let $N=\ell+m+n$. The numbers $(\ref{numbc})$ are given as follows:
\begin{eqnarray}\label{equ_2}
\bigg\lbrace
\begin{matrix}
4\\
66
\end{matrix}
\bigg\rbrace &=& \frac{b^{2}(\ell + m)}{N}, \qquad
\bigg\lbrace
\begin{matrix}
5\\
66
\end{matrix}
\bigg\rbrace 
=\frac{d^{2}(\ell + m)}{N}  \nonumber
\\
\bigg\lbrace
\begin{matrix}
4\\
77
\end{matrix}
\bigg\rbrace &=& \frac{a^{2}\ell}{\ell + m} + \frac{b^{2}mn}{N(\ell + m)} + \frac{2ab\sqrt{\ell m n}}{(\ell + m)\sqrt{N}}  \nonumber
\\
\bigg\lbrace
\begin{matrix}
4\\
88
\end{matrix}
\bigg\rbrace &=& \frac{a^{2}m}{\ell + m} + \frac{b^{2}\ell n}{(\ell + m)} - \frac{2ab\sqrt{\ell m n}}{(\ell + m)\sqrt{N}}  
\end{eqnarray}
\begin{eqnarray}
\bigg\lbrace
\begin{matrix}
5\\
77
\end{matrix}
\bigg\rbrace &=& \frac{c^{2}\ell}{\ell + m} + \frac{d^{2}mn}{(\ell + m)} + \frac{2cd\sqrt{\ell m n}}{(\ell + m)\sqrt{N}}  
\nonumber\\
\bigg\lbrace 
\begin{matrix}
5\\
88
\end{matrix}
\bigg\rbrace &=& \frac{c^{2}m}{\ell + m} + \frac{d^{2}\ell n}{(\ell + m)} - \frac{2cd\sqrt{\ell m n}}{(\ell + m)\sqrt{N}}.
\nonumber
\end{eqnarray}
\end{lemma}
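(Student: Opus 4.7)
The plan is to reduce every $Q$-structure constant $\{^k_{ii}\}$ with $k\in\{4,5\}$ and $i\in\{6,7,8\}$ to a combination of (a) two ordinary $B$-structure constants already tabulated in Lemma \ref{ijk}, and (b) one cross sum that can be evaluated directly from the explicit $\ad(\tilde H_4)$- and $\ad(\tilde H_5)$-actions given in Lemma \ref{lemma1}. The main input is Proposition \ref{prop1}: applying the identity there to $X,Y\in \fr n_i$ and using $U_4=V_4/\sqrt{v_4}$, $U_5=V_5/\sqrt{v_5}$ with $Q|_{\tilde{\fr h}_k}=\langle\cdot,\cdot\rangle|_{\tilde{\fr h}_k}$, one gets
\[
Q([X,Y],V_4)=a\,B([X,Y],\tilde H_4)+b\,B([X,Y],\tilde H_5),\qquad Q([X,Y],V_5)=c\,B([X,Y],\tilde H_4)+d\,B([X,Y],\tilde H_5).
\]
Squaring these identities and summing over a $B$-orthonormal basis of $\fr n_i$ immediately produces
\[
\bigg\lbrace\begin{smallmatrix}4\\ ii\end{smallmatrix}\bigg\rbrace=a^{2}T^{(4)}_{i}+2ab\,S_i+b^{2}T^{(5)}_{i},\qquad \bigg\lbrace\begin{smallmatrix}5\\ ii\end{smallmatrix}\bigg\rbrace=c^{2}T^{(4)}_{i}+2cd\,S_i+d^{2}T^{(5)}_{i},
\]
where $T^{(k)}_{i}:=\sum_{\alpha\beta}B([\tilde X^{(i)}_\alpha,\tilde X^{(i)}_\beta],\tilde H_k)^2$ and $S_i:=\sum_{\alpha\beta}B([\tilde X^{(i)}_\alpha,\tilde X^{(i)}_\beta],\tilde H_4)B([\tilde X^{(i)}_\alpha,\tilde X^{(i)}_\beta],\tilde H_5)$.

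The quantities $T^{(k)}_{i}$ are just the $B$-structure constants ${k\brack ii}$, since $\tilde H_k$ is a $B$-unit vector spanning $\fr h_k$; their values are read directly from Lemma \ref{ijk} via the symmetry ${k\brack ii}={i\brack ki}$, giving the coefficients $0,\,\ell/(\ell+m),\,m/(\ell+m)$ and $(\ell+m)/N,\,mn/(N(\ell+m)),\,\ell n/(N(\ell+m))$ that appear in the statement. To compute $S_i$, I would use $\Ad$-invariance of $B$ and the fact that $[\fr h_0,\fr n_i]\subset\fr n_i$ to collapse the double sum:
\[
S_i=\sum_{\alpha}B\bigl([\tilde H_4,\tilde X^{(i)}_\alpha],[\tilde H_5,\tilde X^{(i)}_\alpha]\bigr).
\]
For $i=6$ Lemma \ref{lemma1}(1) gives $\ad(\tilde H_4)=0$ on $\fr n_6$, so $S_6=0$, which produces the first two (purely quadratic in $b$ or $d$) formulas at once. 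For $i=7,8$ Lemma \ref{lemma1}(2)--(3) shows that both $\ad(\tilde H_4)$ and $\ad(\tilde H_5)$ act on each Weyl pair $\{\tilde A_{ij},\tilde B_{ij}\}$ as a scalar rotation, so each pair contributes a product of two explicit scalars; summing over the $\ell n$ (resp.\ $mn$) admissible index pairs and doubling for the two basis elements in each reduces $S_i$ to an expression of the form $2c_1c_2\cdot N/(\ell+m)$ up to a sign determined by the relative orientations of the $\tilde H_4$- and $\tilde H_5$-actions.

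The final step is just to substitute the closed-form values $c_1=\sqrt{(\ell+m)n}/(N\sqrt 2)$ and $c_2=\sqrt{\ell m}/(\sqrt{2N}\sqrt{\ell+m})$ from (\ref{ci}); this simplifies $2c_1c_2\cdot N/(\ell+m)$ to $\sqrt{\ell m n}/((\ell+m)\sqrt N)$, yielding $S_7=+\sqrt{\ell m n}/((\ell+m)\sqrt N)$ and $S_8=-\sqrt{\ell m n}/((\ell+m)\sqrt N)$ (the sign change reflects the opposite sign of the $\ad(\tilde H_5)$-action on $\fr n_8$ compared with $\fr n_7$). Plugging these and the $T^{(k)}_i$ values into the displayed expressions for $\{^4_{ii}\}$ and $\{^5_{ii}\}$ produces the six identities claimed. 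The main obstacle is bookkeeping: keeping consistent sign and normalization conventions between Proposition \ref{prop1}, the explicit brackets in Lemma \ref{lemma1}, and the constants $c_1,c_2$. Once those conventions are fixed, the argument is purely computational.
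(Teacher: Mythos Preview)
Your proposal is correct and follows essentially the same route as the paper: both expand $Q([X,Y],V_k)$ via Proposition~\ref{prop1}, square, and evaluate the resulting $B$-inner products using Lemma~\ref{lemma1} and the constants $c_1,c_2$ from~(\ref{ci}). Your presentation is slightly more streamlined in that you identify the diagonal terms $T^{(k)}_i$ directly as the $B$-structure constants ${k\brack ii}$ from Lemma~\ref{ijk} and collapse the cross term $S_i$ to a single sum $\sum_\beta B([\tilde H_4,\tilde X_\beta],[\tilde H_5,\tilde X_\beta])$, whereas the paper recomputes one case in full from Lemma~\ref{lemma1} and leaves the rest to the reader; but the substance of the argument is the same.
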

\begin{proof}
We will prove the first relation and the others can be calculted similarly.  It is
\begin{eqnarray*}
\bigg\lbrace
\begin{matrix}
4\\
66
\end{matrix}
\bigg\rbrace &=& \sum _{j,l} Q([\tilde{X}_{j}^{(6)}, \tilde{X}_{l}^{(6)}]\, ,\, V_{4})^{2} \\
&=& \sum \left( aB([\tilde{X}_{j}^{(6)}, \tilde{X}_{l}^{(6)}]\, ,\, \tilde{H}_4)  + bB([\tilde{X}_{j}^{(6)}, \tilde{X}_{l}^{(6)}]\, ,\, \tilde{H}_5)\right)^{2} \\
&=& \sum \left( a^{2}B([\tilde{X}_{j}^{(6)}, \tilde{X}_{l}^{(6)}]\, ,\, \tilde{H}_4)^{2} + b^{2}B([\tilde{X}_{j}^{(6)}, \tilde{X}_{l}^{(6)}]\, ,\, \tilde{H}_5)^{2} \right.\\
&&\left. + 2abB([\tilde{X}_{j}^{(6)}, \tilde{X}_{l}^{(6)}]\, ,\, \tilde{H}_4)B([\tilde{X}_{j}^{(6)}, \tilde{X}_{l}^{(6)}]\, ,\, \tilde{H}_5) \right)\\
&=& \sum \left( a^{2}B(\tilde{X}_{j}^{(6)}\, ,\, [\tilde{H}_{4}, \tilde{X}_{l}^{(6)}])^{2} + b^{2}B(\tilde{X}_{j}^{(6)}\, ,\, [\tilde{H}_{5}, \tilde{X}_{l}^{(6)}])^{2} \right.\\
&&\left. +2abB(\tilde{X}_{j}^{(6)}\, ,\, [\tilde{H}_{4}, \tilde{X}_{l}^{(6)}])B(\tilde{X}_{j}^{(6)}\, ,\, [\tilde{H}_{5}, \tilde{X}_{l}^{(6)}])\right)\\
&=& \sum b^{2}B(\tilde{X}_{j}^{(6)}\, ,\, [\tilde{H}_{5}, \tilde{X}_{l}^{(6)}])^{2} \\
&=& \sum_{\substack{\ell+1\le i,k\le\ell+m\\
     1\le j,l\le\ell}}b^2B(\tilde{A}_{ij}, [\tilde{H}_5, \tilde{A}_{kl}])^2+ 
\sum_{\substack{\ell+1\le i,k\le\ell+m\\
     1\le j,l\le\ell}} b^2B(\tilde{A}_{ij}, [\tilde{H}_5, \tilde{B}_{kl}])^2\\
&& +\sum_{\substack{\ell+1\le i,k\le\ell+m\\
     1\le j,l\le\ell}} b^2B(\tilde{B}_{ij}, [\tilde{H}_5, \tilde{A}_{kl}])^2
 + \sum_{\substack{\ell+1\le i,k\le\ell+m\\
     1\le j,l\le\ell}} b^2B(\tilde{B}_{ij}, [\tilde{H}_5, \tilde{B}_{kl}])^2\\
&=& b^{2} c_2^{2}(\frac{1}{\ell} + \frac{1}{m})^2\cdot 2\ell m.
\end{eqnarray*}
In the second equation above we used (\ref{prod}) and Proposition \ref{prop1},   in the
forth equation we used the bi-invariance of the Killing form and in the fifth and seventh equations we used Lemma \ref{lemma1}. 
By substituting $c_2$ from (\ref{ci}) in the last equation we obtain the desired expression.
\end{proof}

\begin{prop}\label{ricci1}  
The components of the Ricci tensor of the left-invariant metric corresponding to the scalar product {\em (\ref{metrics1})} and of the $\SU(\ell+m+n)$-invariant metrics corresponding to the scalar products
{\em (\ref{metrics2})} for the center $\frak h_0$, are given as follows:
\begin{eqnarray}
r_4 &=& \frac{v_4}{4}\left(\frac{1}{{x_{(6)}}^{2}}\bigg\lbrace
\begin{matrix}
4\\
66
\end{matrix}
\bigg\rbrace + \frac{1}{{x_{(7)}}^{2}}\bigg\lbrace
\begin{matrix}
4\\
77
\end{matrix}
\bigg\rbrace + \frac{1}{{x_{(8)}}^{2}}\bigg\lbrace
\begin{matrix}
4\\
88
\end{matrix}
\bigg\rbrace \right)\label{r4}
\end{eqnarray}
\begin{eqnarray}
r_5 &=& \frac{v_5}{4}\left(\frac{1}{{x_{(6)}}^{2}}\bigg\lbrace
\begin{matrix}
5\\
66
\end{matrix}
\bigg\rbrace + \frac{1}{{x_{(7)}}^{2}}\bigg\lbrace
\begin{matrix}
5\\
77
\end{matrix}
\bigg\rbrace + \frac{1}{{x_{(8)}}^{2}}\bigg\lbrace
\begin{matrix}
5\\
88
\end{matrix}
\bigg\rbrace \right)\label{r5}
\end{eqnarray}
\begin{eqnarray}
&& r_0 = \frac{\sqrt{v_4 v_5}}{4} \Bigg\lbrace \frac{bd}{{x_{(6)}}^{2}}\frac{(\ell + m)}{(\ell + m + n)}  \\
&&+ \frac{1}{{x_{(7)}}^{2} (\ell + m)}\left(\ell a c + \frac{\sqrt{\ell m n}}{\sqrt{(\ell + m + n)}}(ad + cb) 
+ \frac{bdmn}{(\ell + m + n)}\right)   \nonumber\\
&&+ \frac{1}{{x_{(8)}}^{2} (\ell + m)} \left( mac - \frac{\sqrt{\ell m n}}{\sqrt{(\ell + m + n)}}(ad + cb)
+ \frac{bdn\ell}{\sqrt{(\ell + m + n)}}\right) \Bigg\rbrace.\label{r0} \nonumber
\end{eqnarray}
\end{prop}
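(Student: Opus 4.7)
The plan is to apply the general Ricci formula \eqref{r} to vectors $Z, W \in \mathfrak{h}_0$, exploit the abelian structure of $\mathfrak{h}_0$ together with the bracket relations in Lemma \ref{brackets} to kill most terms, and then identify the surviving contributions with the $Q$-structure constants evaluated in Lemma \ref{thenumbers}.

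For $Z \in \mathfrak{h}_0$, the only basis vectors $X_i$ with $[Z, X_i] \neq 0$ lie in $\mathfrak{n}_6 \oplus \mathfrak{n}_7 \oplus \mathfrak{n}_8$, since $\mathfrak{h}_0$ is abelian and commutes with every $\mathfrak{h}_k$ ($k = 1, 2, 3$) by Lemma \ref{brackets}. Thus Proposition \ref{prop2} shows that the first sum in \eqref{r} equals $-\tfrac{1}{2}B(Z,W)$, exactly cancelling the Killing-form term $+\tfrac{1}{2}B(Z,W)$. Only the structural sum survives, and for the same bracket reasons the only pairs $(X_i, X_j)$ with $[X_i, X_j]$ having nonzero $\mathfrak{h}_0$-component are those in which both factors lie in a common module $\mathfrak{n}_k$, $k \in \{6,7,8\}$. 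Using the rescaled $B$-orthonormal bases $X_\alpha^{(k)} = x_{(k)}^{-1/2}\tilde{X}_\alpha^{(k)}$ of $\mathfrak{n}_k$ and the orthonormal vectors $U_j = v_j^{-1/2}V_j$ of $\mathfrak{h}_0$, a direct unpacking of \eqref{r} yields
\[
r(U_s, U_t) = \frac{\sqrt{v_s v_t}}{4}\sum_{k=6,7,8}\frac{1}{x_{(k)}^2}\sum_{\alpha,\beta} Q\bigl([\tilde{X}_\alpha^{(k)}, \tilde{X}_\beta^{(k)}], V_s\bigr)\, Q\bigl([\tilde{X}_\alpha^{(k)}, \tilde{X}_\beta^{(k)}], V_t\bigr)
\]
for $s, t \in \{4, 5\}$.

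Taking $s = t = 4$ (resp.\ $s = t = 5$), the inner sum collapses to the $Q$-structure constant $\bigl\{\begin{smallmatrix}4\\ kk\end{smallmatrix}\bigr\}$ (resp.\ $\bigl\{\begin{smallmatrix}5\\ kk\end{smallmatrix}\bigr\}$), whose values are supplied by Lemma \ref{thenumbers}; this yields the stated formulae \eqref{r4} for $r_4$ and \eqref{r5} for $r_5$. For the off-diagonal component $r_0 = r(U_4, U_5)$, Proposition \ref{prop1} expands each factor $Q([\cdot,\cdot],V_s)$ as a linear combination of $B(\cdot, \tilde{H}_4)$ and $B(\cdot, \tilde{H}_5)$ with coefficients $(a,b)$ for $V_4$ and $(c,d)$ for $V_5$. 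Expanding the product gives three kinds of sums over each $\mathfrak{n}_k$: one proportional to $ac\,B(\cdot, \tilde{H}_4)^2$, one to $bd\,B(\cdot, \tilde{H}_5)^2$, and a mixed term proportional to $(ad+bc)B(\cdot, \tilde{H}_4)B(\cdot, \tilde{H}_5)$. The first two sums are already evaluated in the proof of Lemma \ref{thenumbers}; only the mixed cross-sums over $\mathfrak{n}_7$ and $\mathfrak{n}_8$ are genuinely new, as the one over $\mathfrak{n}_6$ vanishes by $[\tilde{H}_4, \mathfrak{n}_6] = 0$ in Lemma \ref{lemma1}(1). The surviving cross-sums are handled by exactly the same bookkeeping as in Lemma \ref{thenumbers}, now combining parts (2) and (3) of Lemma \ref{lemma1} with the values of $c_1, c_2$ in \eqref{ci}. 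Assembling the three contributions per $\mathfrak{n}_k$ produces the claimed formula \eqref{r0}.

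The main technical obstacle is tracking the sign discrepancy between $\mathfrak{n}_7$ and $\mathfrak{n}_8$ in the mixed cross-sums: the opposite signs for the action of $\tilde{H}_5$ in parts (2) and (3) of Lemma \ref{lemma1} produce the $\pm\frac{\sqrt{\ell m n}}{\sqrt{\ell+m+n}}(ad+cb)$ terms in the coefficients of $1/x_{(7)}^2$ and $1/x_{(8)}^2$ appearing in \eqref{r0}, dovetailing with the matching $\pm$ signs already present in $\bigl\{\begin{smallmatrix}4\\ 77\end{smallmatrix}\bigr\}$ and $\bigl\{\begin{smallmatrix}4\\ 88\end{smallmatrix}\bigr\}$ in Lemma \ref{thenumbers}. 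Once these signs are correctly aligned, the final collection of terms matches \eqref{r0} verbatim.
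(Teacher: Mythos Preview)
Your proof is correct and follows essentially the same approach as the paper: apply formula \eqref{r}, use Proposition \ref{prop2} to cancel the first sum against the Killing-form term, reduce the structural sum via Lemma \ref{brackets} to contributions from $\mathfrak{n}_k\times\mathfrak{n}_k$ with $k\in\{6,7,8\}$, and then identify these with the $Q$-structure constants of Lemma \ref{thenumbers} (for $r_4,r_5$) or compute the bilinear cross-terms explicitly via Proposition \ref{prop1} and Lemma \ref{lemma1} (for $r_0$). Your master formula $r(U_s,U_t)=\tfrac{\sqrt{v_sv_t}}{4}\sum_k x_{(k)}^{-2}\sum_{\alpha,\beta}Q([\tilde X_\alpha^{(k)},\tilde X_\beta^{(k)}],V_s)\,Q([\tilde X_\alpha^{(k)},\tilde X_\beta^{(k)}],V_t)$ packages the paper's computation a bit more cleanly, but the substance is identical.
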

\begin{proof}
We will work with the left-invariant metrics $(\ref{metrics1})$ on the Lie group $\SU(\ell+m+n)$.
Let $U_4\in\tilde{\fr{h}}_4$, where $U_4 = \displaystyle{\frac{1}{\sqrt{v_4}}}V_4$.  Then
by using equation (\ref{r})
we have 
 \begin{eqnarray*}
&&r_4 = r(U_4, U_{4}) = -\frac{1}{2}\sum_{i \neq 4,5} 
\sum_{j=1}^{\dim\frak n_i}g_1([U_4, X_{j}^{(i)}]\, ,\, [U_4, X_{j}^{(i)}]) - \frac{1}{2}\, g_1([U_4, U_5]\, ,\, [U_4, U_5]) \\
&& +\frac{1}{2}\, B(U_4, U_4) 
 +\frac14 \sum_{i, k } 
\sum_{j=1}^{\dim\frak n_i} \sum_{l=1}^{\dim\frak n_k} g_1([X_{j}^{(i)}, X_{l}^{(k)}]\, ,\, U_4)\, g_1([X_{j}^{(i)}, X_{l}^{(k)}]\, ,\, U_{4})\\
&&= -\frac12\, B(U_4, U_4) + \frac12\, B(U_4, U_4) 
 + \frac{1}{4}\sum_{i, k }
\sum_{j=1}^{\dim\frak n_i}\sum_{l=1}^{\dim\frak n_k} g_1([X_{j}^{(i)}, X_{l}^{(k)}]\, ,\, U_{4})^{2}\\
&&= \frac{1}{4}\sum_{i, k } 
\sum_{j=1}^{\dim\frak n_i}\sum_{l=1}^{\dim\frak n_k}  g_1([X_{j}^{(i)}, X_{l}^{(k)}]\, ,\, U_{4})^{2},\\
\end{eqnarray*}
where the first term in the second equality was obtained 
by Proposition \ref{prop2}.  
We will simplify the last term in the above expression for $r_4$.  It is
\begin{eqnarray*}
\frac{1}{4}\sum_{i, k}\sum_{j, l} g_1([X_{j}^{(i)}, X_{l}^{(k)}]\, ,\, U_{4})^{2} = \frac{1}{4}\sum_{i} \sum_{j, l} g_1([X_{j}^{(i)}, X_{l}^{(i)}]\, ,\, U_4)^{2} + \frac{1}{4} \sum_{\substack{j, l \\ i\neq k}}g_1([X_{j}^{(i)}, X_{l}^{(k)}]\, ,\, U_{4})^{2} & & \\
 +\frac{1}{2} \sum_{i}\sum_{l} g_1([U_4, X_{l}^{(i)}]\, ,\, U_4)^{2} + \frac{1}{2} \sum_{i}\sum_{l} g_1([U_5,  X_{l}^{(i)}]\, ,\, U_4)^{2}. & & 
\end{eqnarray*}  
By using the Lie bracket relations of Lemma \ref{brackets} 
it follows that the  last three terms in the above sum are equal to zero.  
For the first term we have the following:

If $i = 1, 2, 3$ then $[X_{j}^{(i)}, X_{l}^{(i)}] \subset   \fr{n}_i$ and the term vanishes.
If $i = 6, 7, 8$ then $[X_{j}^{(i)}, X_{l}^{(i)}]\subset \fr{n}_1\oplus\cdots\oplus\fr{n}_5$,
so 
\begin{eqnarray*}
& & \frac14 \sum_{i}\sum_{j, l} g_1([X_{j}^{(i)}, X_{l}^{(i)}]\, ,\, U_4)^{2} = \frac14 \sum_{i}\sum_{j, l}\langle\langle [X_{j}^{(i)}, X_{l}^{(i)}]\, ,\, U_4\rangle\rangle^{2} \\
&& = \frac{1}{4v_1}\sum_{i=6,7,8} \sum_{j, l} \langle\langle[X_{j}^{(i)}, X_{l}^{(i)}]\, ,\, V_4\rangle\rangle^{2}  
  = \sum_{i=6,7,8} \frac{v_4}{4{x_{(i)}}^{2}} \sum_{j, l} \langle [\tilde{X}_{j}^{(i)}, \tilde{X}_{l}^{(i)}]\, ,\, V_{4}\rangle^{2} \\
&& = \frac{v_4}{4{x_{(6)}}^{2}}\bigg\lbrace
\begin{matrix}
4\\
66
\end{matrix}
\bigg\rbrace + 
\frac{v_4}{4 {x_{(7)}}^{2}}\bigg\lbrace
\begin{matrix}
4\\
77
\end{matrix}
\bigg\rbrace +
\frac{v_4}{4 {x_{(8)}}^{2}}\bigg\lbrace
\begin{matrix}
4\\
88
\end{matrix}
\bigg\rbrace, 
\end{eqnarray*}
from which (\ref{r4}) follows.  By similar computations we obtain (\ref{r5}).

\smallskip
Now let $U_4\in\tilde{\fr{h}}_4$ and $U_5\in\tilde{\fr{h}}_5$.  Then
\begin{eqnarray*}
&& r_0 = r(U_4, U_5)  = -\frac{1}{2}\sum_{i \neq 4,5 }\sum_{ j =1}^{\dim\fr n_i} g_1([U_4, X_{j}^{(i)}]
 \, ,\, [U_{5}, X_{j}^{(i)}]) \\  
 & &- \frac{1}{2} g_1([U_4, U_5]\, ,\, [U_4, U_5]) - \frac{1}{2} g_1([U_4, U_4]\, ,\, [U_4, U_5]) + \frac12 B(U_4, U_5) \\ &&
 +\frac{1}{4 }\sum_{i, k}\sum_{j=1}^{\dim\fr n_i}\sum_{ l=1}^{\dim\fr n_k} g_1([X_{j}^{(i)}, X_{l}^{(k)}]\, ,\, U_4) 
 g_1([X_{j}^{(i)}, X_{l}^{(k)}]\, ,\, U_5) 
= -\frac{1}{2} B(U_4, U_5) +\frac{1}{2} B(U_4, U_5)  \\ &&
+ \frac{1}{4 }\sum_{i, k}\sum_{j=1}^{\dim\fr n_i}\sum_{ l=1}^{\dim\fr n_k}g_1([X_{j}^{(i)}, X_{l}^{(k)}]\, ,\, U_4) g_1([X_{j}^{(i)}, X_{l}^{(k)}]\, ,\, U_5). 
\end{eqnarray*}

We will simplify the last term in the above equation.  We have:
\begin{eqnarray*}
& & \frac{1}{4 }\sum_{i, k}\sum_{j=1}^{\dim\fr n_i}\sum_{ l=1}^{\dim\fr n_k} g_1([X_{j}^{(i)}, X_{l}^{(k)}]\, ,\, U_4) g_1([X_{j}^{(i)}, X_{l}^{(k)}]\, ,\, U_5) \\
 & & = \frac{1}{4}  \sum_{\substack{i} }\sum_{j,l=1}^{\dim\fr n_i} g_1([X_{j}^{(i)}, X_{l}^{(i)}]\, ,\, U_4) g_1([X_{j}^{(i)}, X_{l}^{(i)}]\, ,\, U_5)  \\
&& + \frac{1}{4} \sum_{i \neq k}\sum_{j=1}^{\dim\fr n_i}\sum_{ l=1}^{\dim\fr n_k} g_1([X_{j}^{(i)}, X_{l}^{(k)}]\, ,\, U_4) g_1([X_{j}^{(i)}, X_{l}^{(k)}]\, ,\, U_5) \\
& & + \frac{1}{2}  \sum_{i, l} g_1([U_4, X_{l}^{(i)}]\, ,\, U_4) g_1([U_4, X_{l}^{(i)}]\, ,\, U_5)  + \frac{1}{2} \sum_{i, l} g_1([U_5, X_{l}^{(i)}]\, ,\, U_4) g_1([U_5, X_{l}^{(i)}]\, ,\, U_5).
\end{eqnarray*}
By using the Lie brackets relations of Lemma \ref{brackets} it is easy to see that the last three terms are equal to zero.
 For the first term we have:
 
 If $i = 1,2,3$ then $[X_{j}^{(i)}, X_{l}^{(i)}]\in\fr{n}_{i}$, hence
$$
\frac14 \sum_{j, l} g_1([X_{j}^{(i)}, X_{l}^{(i)}]\, ,\, U_4) g_1([X_{j}^{(i)}, X_{l}^{(i)}]\, ,\, U_5) = 0.
$$
 If $i = 6, 7, 8$ then $[X_{j}^{(i)}, X_{l}^{(i)}]\subset\fr{n}_1\oplus\cdots\oplus\fr{n}_5$, and we introduce
 the following notations:
 $f(X,Y)=aB(X, [\tilde{H}_4, Y))+bB(X, [\tilde{H}_5, Y]))+cB(X, [\tilde{H}_4, Y))+dB(X, [\tilde{H}_5, Y]))$,
$I_6=\{(i,j): i=\ell+1, \dots ,\ell +m; j=1,\dots ,\ell\}$, 
$I_7=\{(i,j): i=\ell+m+1, \dots ,N; j=1,\dots ,\ell\}$ and
$I_8=\{(i,j): i=\ell+m+1, \dots ,N; j=m+1,\dots ,m+\ell\}$. 

Then by using Proposition \ref{prop1} we obtain that
\begin{eqnarray*}
&&
 \frac{1}{4} \sum_{i=6,7,8}\sum_{j,l=1}^{\dim\fr n_i}g_1([X_{j}^{(i)}, X_{l}^{(i)}]\, ,\, U_4) g_1([X_{j}^{(i)}, X_{l}^{(i)}]\, ,\, U_5) \end{eqnarray*}
\begin{eqnarray*} &&
= \frac{1}{4}\sum_{i=6,7,8}\sum_{j,l=1}^{\dim\fr n_i}\langle\langle [X_{j}^{(i)}, X_{l}^{(i)}]\, ,\, U_4\rangle\rangle \langle\langle [X_{j}^{(i)}, X_{l}^{(i)}]\, ,\, U_5\rangle\rangle  
\end{eqnarray*}
\begin{eqnarray*} 
 & &= \frac{\sqrt{v_4 v_5}}{4} \sum_{i=6,7,8}\sum_{j,l=1}^{\dim\fr n_i}\lbrace aB([X_{j}^{(i)}, X_{l}^{(i)}]\, ,\, \tilde{H}_4) 
+ bB([X_{j}^{(i)}, X_{l}^{(i)}]\, ,\, \tilde{H}_5)\rbrace \\  
& & \quad\quad \quad\quad\quad\quad \quad\quad \quad \times \lbrace cB([X_{j}^{(i)}, X_{l}^{(i)}]\, ,\, \tilde{H}_4) + dB([X_{j}^{(i)}, X_{l}^{(i)}]\, ,\, \tilde{H}_5)\rbrace \\
& &= \frac{\sqrt{v_4 v_5}}{4} \sum_{i=6,7,8}\sum_{j,l=1}^{\dim\fr n_i}\lbrace aB(X_{j}^{(i)}, [\tilde{H}_4, X_{l}^{(i)}]) 
+ bB(X_{j}^{(i)}, [\tilde{H}_5, X_{l}^{(i)}])\rbrace \\  
& &  \quad\quad \quad\quad\quad\quad \quad\quad \quad \times \lbrace cB(X_{j}^{(i)}, [\tilde{H}_4, X_{l}^{(i)}]) 
+ dB(X_{j}^{(i)}, [\tilde{H}_5, X_{l}^{(i)}])\rbrace \\ 
& &=\frac{\sqrt{v_4 v_5}}{4}\times \\ && \Bigg\lbrace
\sum_{\substack{(i,j)\in I_6\\(k,l)\in I_6}}f(\tilde{A}_{ij}, \tilde{A}_{kl})
+\sum_{\substack{(i,j)\in I_6\\(k,l)\in I_6}}f(\tilde{A}_{ij}, \tilde{B}_{kl})
+\sum_{\substack{(i,j)\in I_6\\(k,l)\in I_6}}f(\tilde{B}_{ij}, \tilde{A}_{kl})
+\sum_{\substack{(i,j)\in I_6\\(k,l)\in I_6}}f(\tilde{B}_{ij}, \tilde{B}_{kl})\\
& &+
\sum_{\substack{(i,j)\in I_7\\(k,l)\in I_7}}f(\tilde{A}_{ij}, \tilde{A}_{kl})
+\sum_{\substack{(i,j)\in I_7\\(k,l)\in I_7}}f(\tilde{A}_{ij}, \tilde{B}_{kl})
+\sum_{\substack{(i,j)\in I_7\\(k,l)\in I_7}}f(\tilde{B}_{ij}, \tilde{A}_{kl})
+\sum_{\substack{(i,j)\in I_7\\(k,l)\in I_7}}f(\tilde{B}_{ij}, \tilde{B}_{kl})\\
& & +
\sum_{\substack{(i,j)\in I_8\\(k,l)\in I_8}}f(\tilde{A}_{ij}, \tilde{A}_{kl})
+\sum_{\substack{(i,j)\in I_8\\(k,l)\in I_8}}f(\tilde{A}_{ij}, \tilde{B}_{kl})
+\sum_{\substack{(i,j)\in I_8\\(k,l)\in I_8}}f(\tilde{B}_{ij}, \tilde{A}_{kl})
+\sum_{\substack{(i,j)\in I_8\\(k,l)\in I_8}}f(\tilde{B}_{ij}, \tilde{B}_{kl})
\Bigg\rbrace \\  
& &=\frac{\sqrt{v_4 v_5}}{4} \bigg\lbrace \frac{bd}{{x_{(6)}}^{2}}\frac{(\ell + m)}{(\ell + m + n)} 
 + \frac{1}{{x_{(7)}}^{2}(\ell + m)} \bigg(\ell a c + \frac{\sqrt{\ell m n}}{\sqrt{(\ell + m + n)}}(ad + cb) \\ &&
 + \frac{bdmn}{(\ell + m + n)}\bigg)   + \frac{1}{{x_{(8)}}^{2}(\ell + m)} \bigg( mac - \frac{\sqrt{\ell m n}}{\sqrt{(\ell + m + n)}}(ad + cb) + \frac{bdn\ell}{\sqrt{(\ell + m + n)}}\bigg)\bigg\rbrace, 
\end{eqnarray*}
where in the third equation we used  the bi-invariance of the Killing form and in the fifth equation we used Lemma \ref{lemma1}.  
Then equation (\ref{r0}) follows.
Similar calculations apply for the $\SU(\ell+m+n)$-invariant metrics $(\ref{metrics2})$ on 
the Stiefel manifolds $\SU(\ell+m+n)/\SU(n)$, where the terms for $i=3$ in all  sums above are omitted.
\end{proof}

\subsection{The Ricci tensor for the diagonal parts of the scalar products (\ref{metrics1}), (\ref{metrics2})}
We need the following variant of Lemma \ref{ric2}.  
Since the Ricci tensor for the metrics (\ref{metrics1}) and (\ref{metrics2}) is $\Ad(H)$-invariant,
by using Schur's lemma and the $Q$-structure constants, we can describe the Ricci components of the diagonal parts
of these metrics.

\begin{lemma}\label{ASS} 
The components of $r_1, r_2, r_3, r_6, r_7, r_8$ of the Ricci tensor $r$ for the metrics corresponding to the scalar products of the form
$(\ref{metrics1})$ are given as follows:
\begin{equation}
{r}_k = \frac{1}{2y_k}+\frac{1}{4d_k}\sum_{j,i}
\frac{y_k}{y_j y_i} 
\bigg\lbrace
\begin{matrix}
k\\
ji
\end{matrix}
\bigg\rbrace
-\frac{1}{2d_k}\sum_{j,i}\frac{y_j}{y_k y_i} 
\bigg\lbrace
\begin{matrix}
j\\
ki
\end{matrix}
\bigg\rbrace
 \quad (k= 1, 2, 3, 6, 7, 8),    \label{eq501}
\end{equation}
where the sum is taken over $i, j =1,\dots, 8$ and the variables $y_i$  denote corresponding  variables
$u_i, v_i, x_{(i)}$ of the metric $(\ref{metrics1})$.
\end{lemma}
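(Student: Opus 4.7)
My strategy is to imitate the derivation of Lemma~\ref{ric2}, inserting the $Q$-structure constants in place of the usual $B$-structure constants wherever a center index ($4$ or $5$) is involved.

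I would first apply Schur's lemma to reduce the task to computing a Ricci eigenvalue. The summands $\fr{n}_1,\fr{n}_2,\fr{n}_3,\fr{n}_6,\fr{n}_7,\fr{n}_8$ are pairwise non-equivalent irreducible $\Ad(H)$-modules, and none of them is equivalent to the trivial one-dimensional $\Ad(H)$-modules $\tilde{\fr{h}}_4,\tilde{\fr{h}}_5$. Since $g_1$ is $\Ad(H)$-invariant, so is its Ricci tensor $r$; hence $r|_{\fr{n}_k}$ is a scalar multiple of $B|_{\fr{n}_k}$ and $r(\fr{n}_k,\fr{n}_\ell)=0$ whenever $k\neq\ell$ with $k,\ell\in\{1,2,3,6,7,8\}$, as well as $r(\fr{n}_k,\fr{h}_0)=0$. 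So $r_k$ is well defined via the averaged value
\[
r_k=\frac{1}{d_k}\sum_{j=1}^{d_k}r(X_j^{(k)},X_j^{(k)}).
\]

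I would then expand each $r(X_j^{(k)},X_j^{(k)})$ using formula~(\ref{r}) with the $g_1$-orthonormal basis $\bigcup_{i\in\{1,2,3,6,7,8\}}\{X_\beta^{(i)}\}\cup\{U_4,U_5\}$ of $\fr{g}$. The scalar term $\tfrac12 B(X_j^{(k)},X_j^{(k)})=\tfrac{1}{2y_k}$ supplies the first summand of~(\ref{eq501}). The remaining two sums split according to whether the basis elements $e_\alpha,e_\beta$ lie in a non-center module or in the center $\fr{h}_0$. Adopting the conventions $\tilde X^{(4)}=V_4$, $\tilde X^{(5)}=V_5$, $y_4=v_4$, $y_5=v_5$, and using the rescalings $X_\beta^{(i)}=\tilde X_\beta^{(i)}/\sqrt{y_i}$ and $U_i=V_i/\sqrt{v_i}$, a direct calculation yields
\[
g_1\bigl([e_\alpha,e_\beta],X_j^{(k)}\bigr)^2=\frac{y_k}{y_iy_j}\,Q\bigl([\tilde X_{\alpha'}^{(i)},\tilde X_{\beta'}^{(j)}],\tilde X_j^{(k)}\bigr)^2
\]
whenever $e_\alpha\in\fr{n}_i$ and $e_\beta\in\fr{n}_j$ (with the identification above). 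Summing over $\alpha,\beta,j$ and dividing by $d_k$ collapses the cubic term of~(\ref{r}) into $\tfrac{1}{4d_k}\sum_{i,j}\tfrac{y_k}{y_iy_j}\bigl\lbrace\begin{smallmatrix}k\\ji\end{smallmatrix}\bigr\rbrace$ by the very definition of the $Q$-structure constants, and a parallel rewriting of the quadratic term (after using $B$-adjointness $B([X,Y],Z)=-B(Y,[X,Z])$ to permute brackets) produces $-\tfrac{1}{2d_k}\sum_{i,j}\tfrac{y_j}{y_ky_i}\bigl\lbrace\begin{smallmatrix}j\\ki\end{smallmatrix}\bigr\rbrace$.

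The main obstacle is the scaling bookkeeping. Because $\{V_4,V_5\}$ is only $Q$-orthonormal and not $B$-orthonormal, each bracket expansion into the center picks up factors $v_s^{-1/2}$ on both sides; the coincidence $y_4=v_4$, $y_5=v_5$ is precisely what lets these factors package into the same $1/(y_iy_j)$ shape seen in the non-center terms, and it is why the $Q$-structure constants were designed with $Q$-orthonormal vectors in the center and $B$-orthonormal vectors elsewhere. Brackets internal to $\fr{h}_0$ contribute nothing since $\fr{h}_0$ is abelian, so the residual computation is Park--Sakane's manipulation performed verbatim with $\bigl\lbrace\begin{smallmatrix}\cdot\\ \cdot\cdot\end{smallmatrix}\bigr\rbrace$ in place of $\bigl[\begin{smallmatrix}\cdot\\ \cdot\cdot\end{smallmatrix}\bigr]$, delivering~(\ref{eq501}).
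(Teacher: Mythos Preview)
Your proposal is correct and follows exactly the approach the paper itself outlines: the paper does not give a detailed proof of this lemma but merely states that it is ``a variant of Lemma~\ref{ric2}'' obtained ``by using Schur's lemma and the $Q$-structure constants.'' Your argument fills in precisely those details---Schur's lemma to reduce to a single eigenvalue on each $\fr{n}_k$, then the Park--Sakane manipulation of formula~(\ref{r}) with the $g_1$-orthonormal basis, tracking how the rescalings $X^{(i)}_\beta=\tilde X^{(i)}_\beta/\sqrt{y_i}$ and $U_s=V_s/\sqrt{v_s}$ package uniformly into $Q$-structure constants because $Q$ uses $\langle\cdot,\cdot\rangle$-orthonormal vectors in the center and $B$-orthonormal vectors elsewhere.

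One small remark: the appeal to ``$B$-adjointness to permute brackets'' in the quadratic term is not really needed. If you simply expand $g_1([X^{(k)}_\gamma,e_\alpha],[X^{(k)}_\gamma,e_\alpha])=\sum_\beta g_1([X^{(k)}_\gamma,e_\alpha],e_\beta)^2$ in the $g_1$-orthonormal basis, the identity $g_1(Z,e_\beta)=\sqrt{y_j}\,Q(Z,\tilde e_\beta)$ (valid for all $j=1,\dots,8$, including the center) gives the factor $y_j/(y_ky_i)$ and the constant $\big\lbrace\begin{smallmatrix}j\\ki\end{smallmatrix}\big\rbrace$ directly, with the top index already in the right slot---no cyclic symmetry (which fails for the $Q$-constants) is invoked.
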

By using relations (\ref{eqn}) we obtain the following:
\begin{prop}\label{ricci2}  
The components of the Ricci tensor for the diagonal part of  the left-invariant metrics corresponding to the scalar products $(\ref{metrics1})$   are given as follows:
\begin{eqnarray*}
&& r_1 =\frac{1}{2u_{1}}   -\frac{1}{2 d_1u_{1}} 
\left(
{1\brack {11}}
 +
{6\brack 16}+{7\brack 17}
\right) 
 + \frac{1}{4 d_1}
\left(
\frac{1}{u_{1}}
{1\brack 11}
 +
\frac{u_{1}}{{x_{(6)}}^{2}}
{1\brack 66}+
\frac{u_{1}}{{x_{(7)}}^{2}}
{1\brack 77}
\right), 
\\ &&
r_2 =  \frac{1}{2u_{2}} -\frac{1}{2 d_2 u_{2}} 
\left(
{2\brack {22}}
 +
{6\brack 26}+{8\brack 28}
\right) 
+ \frac{1}{4 d_2}
\left(
\frac{1}{u_{2}}
{2\brack 22}
 +
\frac{u_{2}}{{x_{(6)}}^{2}}
{2\brack 66}+
\frac{u_{2}}{{x_{(8)}}^{2}}
{2\brack 88}
\right), 
\\ &&
r_3 =  \frac{1}{2u_{3}}-\frac{1}{2d_3  u_{3}} 
\left(
{3\brack {33}}
 +
{7\brack 37}+{8\brack 38}
\right) 
+ \frac{1}{4 d_3}
\left(
\frac{1}{u_{3}}
{3\brack 33}
 +
\frac{u_{3}}{{x_{(7)}}^{2}}
{3\brack 77}+
\frac{u_{3}}{{x_{(8)}}^{2}}
{3\brack 88}
\right), 
\\ &&
r_6 = \frac1{2x_{(6)}}-\frac1{2d_6 {x_{(6)}}^2}
\left(u_1{1\brack {66}}+u_2{2\brack {66}}+
v_4\bigg\lbrace
\begin{matrix}
4\\
66
\end{matrix}
\bigg\rbrace 
+v_5\bigg\lbrace
\begin{matrix}
5\\
66
\end{matrix}
\bigg\rbrace\right)
\\
&&
+\frac1{2d_6}{6\brack {78}}
\left(
\frac{x_{(6)}}{x_{(7)}x_{(8)}}-\frac{x_{(7)}}{x_{(6)}x_{(8)}}
-\frac{x_{(8)}}{x_{(6)}x_{(7)}}
\right),
\\ &&
r_7 = \frac1{2x_{(7)}}-\frac1{2d_7 {x_{(7)}}^2}
\left(u_1{1\brack {77}}+u_3{3\brack {77}}+
v_4\bigg\lbrace
\begin{matrix}
4\\
77
\end{matrix}
\bigg\rbrace 
+v_5\bigg\lbrace
\begin{matrix}
5\\
77
\end{matrix}
\bigg\rbrace\right)
\\
&&
+\frac1{2d_7}{6\brack {78}}
\left(
\frac{x_{(7)}}{x_{(6)}x_{(8)}}-\frac{x_{(6)}}{x_{(7)}x_{(8)}}
-\frac{x_{(8)}}{x_{(6)}x_{(7)}}
\right),
\\ &&
r_8 = \frac1{2x_{(8)}}-\frac1{2d_8 {x_{(8)}}^2}
\left(u_2{2\brack {88}}+u_3{3\brack {88}}+
v_4\bigg\lbrace
\begin{matrix}
4\\
88
\end{matrix}
\bigg\rbrace 
+v_5\bigg\lbrace
\begin{matrix}
5\\
88
\end{matrix}
\bigg\rbrace\right)
\\
&&
+\frac1{2d_8}{6\brack {78}}
\left(
\frac{x_{(8)}}{x_{(6)}x_{(7)}}-\frac{x_{(6)}}{x_{(7)}x_{(8)}}
-\frac{x_{(7)}}{x_{(6)}x_{(8)}}
\right).
\end{eqnarray*}
For the $\SU(\ell+m+n)$-invariant metrics corresponding to the scalar products $(\ref{metrics2})$, there is no $r_3$ component and the components $r_7, r_8$ simplify by using that ${3\brack {**}}=0$.
\end{prop}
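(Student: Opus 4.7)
The plan is to apply Lemma \ref{ASS} separately for each $k \in \{1,2,3,6,7,8\}$ and keep only the $Q$-structure constants that are forced to be non-zero by the bracket relations of Lemma \ref{brackets}. For indices that do not touch the centre $\fr{h}_0$, the identities (\ref{eqn}) let me replace every $\bigl\lbrace\begin{smallmatrix}k\\ji\end{smallmatrix}\bigr\rbrace$ by its $B$-counterpart $\bigl[\begin{smallmatrix}k\\ji\end{smallmatrix}\bigr]$. The remaining $\bigl\lbrace\begin{smallmatrix}4\\ii\end{smallmatrix}\bigr\rbrace$, $\bigl\lbrace\begin{smallmatrix}5\\ii\end{smallmatrix}\bigr\rbrace$ terms are precisely those already tabulated in Lemma \ref{thenumbers}, so they are treated as known quantities.

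For $k\in\{1,2,3\}$ the analysis is the easier one. Using Lemma \ref{brackets}, the only triples contributing to $\sum_{j,i}\frac{y_k}{y_j y_i}\bigl\lbrace\begin{smallmatrix}k\\ji\end{smallmatrix}\bigr\rbrace$ are $(j,i)=(k,k)$ together with $(j,i)=(6,6),(7,7)$ (respectively $(6,6),(8,8)$ for $k=2$, and $(7,7),(8,8)$ for $k=3$), because $[\fr m_{\alpha\beta},\fr m_{\alpha\beta}]$ picks out $\fr h_1,\fr h_2,\fr h_3$. The dual sum $\sum_{j,i}\frac{y_j}{y_k y_i}\bigl\lbrace\begin{smallmatrix}j\\ki\end{smallmatrix}\bigr\rbrace$ is handled analogously; since $\fr h_k$ acts only on $\fr m_{\alpha\beta}$ with $k\in\{\alpha,\beta\}$, only $(j,i)=(k,k),(6,6),(7,7)$ (and analogues) survive, and each contributes a $1/u_k$-term through the identity $\bigl[\begin{smallmatrix}k\\ij\end{smallmatrix}\bigr]=\bigl[\begin{smallmatrix}i\\jk\end{smallmatrix}\bigr]$. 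Collecting produces the stated expressions for $r_1,r_2,r_3$.

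For $k=6$ (and symmetrically $k=7,8$) I enumerate the non-zero $Q$-constants using Lemma \ref{brackets}: in the first sum the contributing pairs are $(1,6),(6,1),(2,6),(6,2),(4,6),(6,4),(5,6),(6,5),(7,8),(8,7)$, and in the second sum the pairs $(j,i)$ with $\bigl\lbrace\begin{smallmatrix}j\\6i\end{smallmatrix}\bigr\rbrace\ne 0$ are the transposed list plus $(1,6),(2,6),(4,6),(5,6)$ once more (coming from $j=6,\ i\in\{1,2,4,5\}$), plus the two triangular terms $(7,8),(8,7)$. The critical book-keeping step is the cancellation: for each $\alpha\in\{1,2,4,5\}$ the coefficient $2\cdot\tfrac{1}{y_\alpha}\bigl\lbrace\begin{smallmatrix}6\\\alpha 6\end{smallmatrix}\bigr\rbrace$ appearing with weight $\tfrac{1}{4d_6}$ in the first sum is annihilated by the matching $\tfrac{1}{y_\alpha}\bigl\lbrace\begin{smallmatrix}6\\6\alpha\end{smallmatrix}\bigr\rbrace$ appearing with weight $-\tfrac{1}{2d_6}$ in the second sum (using $\bigl\lbrace\begin{smallmatrix}k\\ij\end{smallmatrix}\bigr\rbrace=\bigl\lbrace\begin{smallmatrix}k\\ji\end{smallmatrix}\bigr\rbrace$). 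What then survives from the centre is exactly the $j=\alpha,\ i=6$ contribution $-\tfrac{1}{2d_6}\tfrac{y_\alpha}{x_{(6)}^2}\bigl\lbrace\begin{smallmatrix}\alpha\\66\end{smallmatrix}\bigr\rbrace$ for $\alpha\in\{1,2,4,5\}$, which accounts for the four terms in the parenthesis of the stated formula. The triangular contributions $(7,8),(8,7)$ assemble into the bracketed expression $\tfrac{x_{(6)}}{x_{(7)}x_{(8)}}-\tfrac{x_{(7)}}{x_{(6)}x_{(8)}}-\tfrac{x_{(8)}}{x_{(6)}x_{(7)}}$ after invoking $\bigl[\begin{smallmatrix}6\\78\end{smallmatrix}\bigr]=\bigl[\begin{smallmatrix}7\\68\end{smallmatrix}\bigr]=\bigl[\begin{smallmatrix}8\\67\end{smallmatrix}\bigr]$.

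The formulas for $r_7$ and $r_8$ follow by the same argument with the indices $\{1,2\}$ replaced by $\{1,3\}$ and $\{2,3\}$ respectively. For the Stiefel case (metric (\ref{metrics2})), the index $3$ is simply omitted throughout and every $\bigl[\begin{smallmatrix}3\\\ast\ast\end{smallmatrix}\bigr]$ term disappears, as recorded in the Remark after (\ref{eqn}). The only genuine subtlety is the systematic cancellation between the first and second sums of Lemma \ref{ASS} along the centre indices, which is what allows the non-diagonal nature of the centre part (computed separately in Proposition \ref{ricci1}) to be completely decoupled from the diagonal block; checking this cancellation carefully in all cases is the main, though purely combinatorial, obstacle.
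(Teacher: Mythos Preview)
Your proposal is correct and follows essentially the same approach as the paper's proof, which consists of the single sentence ``It follows from Lemma \ref{ASS} and relations (\ref{eqn})''; you have simply expanded that sentence by writing out the index-by-index enumeration of non-zero $Q$-structure constants via Lemma \ref{brackets} and making the cancellations explicit. One small expository remark: your closing sentence slightly overstates the role of the cancellation --- the decoupling of the centre block from the diagonal block is already guaranteed by $\Ad(H)$-invariance and Schur's lemma (this is what justifies Lemma \ref{ASS} in the first place), and the cancellation you describe is merely part of simplifying the resulting formula, not what makes the decoupling possible.
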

\begin{proof} 
It follows from Lemma \ref{ASS} and relations (\ref{eqn}).
\end{proof}

By substituting the values of the numbers $\displaystyle{{i\brack jk}}$, 
${\small \bigg\lbrace
\begin{matrix}
i\\
jk
\end{matrix}
\bigg\rbrace}$ from  Lemmas \ref{ijk} and \ref{thenumbers} respectively, to the Ricci components
in Propositions \ref{ricci1} and \ref{ricci2} we finally obtain:

\begin{prop}\label{ricci3}  The components of the Ricci tensor for the diagonal part of the left invariant metric {\em (\ref{metrics1}) }
on $\SU(\ell +m+n)$ are given as follows:
\begin{eqnarray*}
&& r_1 = \frac{\ell}{4N}\frac{1}{u_1}+\frac{u_1}{4N} \left(\frac{m}{{x_{(6)}}^2}+
          \frac{n}{{x_{(7)}}^2}\right), \quad 
          r_2 = \frac{m}{4N}\frac{1}{u_2}+\frac{u_2}{4N}\left(\frac{\ell}{{x_{(6)}}^2}+
          \frac{n}{{x_{(8)}}^2}\right),\\
&&  r_3 = \frac{n}{4N}\frac{1}{u_3}+\frac{u_3}{4N}\left(\frac{\ell}{{x_{(7)}}^2}+
          \frac{m}{{x_{(8)}}^2}\right),\\
 && r_6 = \frac1{2x_{(6)}} + \frac{n}{4N}
         \left(\frac{x_{(6)}}{x_{(7)}x_{(8)}} - \frac{x_{(7)}}{x_{(6)}x_{(8)}}
           - \frac{x_{(8)}}{x_{(6)}x_{(7)}}\right) \\
 &&  - \frac{1}{4\ell mN}\frac1{{x_{(6)}}^2}
            \left((\ell ^2-1)mu_1+(m^2-1)\ell u_2+(\ell +m)b^2v_4+(\ell+m)d^2v_5\right),  
             \end{eqnarray*}
 \begin{eqnarray*}
&&  r_7 = \frac1{2x_{(7)}} + \frac{m}{4N}\left(\frac{x_{(7)}}{x_{(6)}x_{(8)}} - \frac{x_{(6)}}{x_{(7)}x_{(8)}}
 - \frac{x_{(8)}}{x_{(6)}x_{(7)}}\right) \\ 
&&  - \frac{1}{4 \ell n N}\frac1{{x_{(7)}}^2}
            \bigg((\ell ^2-1) n u_1+(n^2-1)\ell u_3   +\bigg(\frac{a^{2}\ell N}{\ell + m} + \frac{b^{2}mn}{\ell + m} + \frac{2ab\sqrt{\ell m n}\sqrt{N}}{\ell + m}\bigg)v_4
  \\ & &+\bigg(\frac{c^{2}\ell N}{\ell + m} + \frac{d^{2}mn}{\ell + m} + \frac{2cd\sqrt{\ell m n}\sqrt{N}}{\ell + m}\bigg)v_5 \bigg), \\ 
&&  r_8 = \frac1{2x_{(8)}} + \frac{\ell}{4N}
         \bigg(\frac{x_{(8)}}{x_{(6)}x_{(7)}} - \frac{x_{(7)}}{x_{(6)}x_{(8)}} - \frac{x_{(6)}}{x_{(7)}x_{(8)}}\bigg) \\
&&
  - \frac{1}{4 m n N}\frac1{{x_{(8)}}^2}
            \bigg((m^2-1) n u_2+(n^2-1) m u_3   +\bigg(\frac{a^{2} m N}{\ell + m} + \frac{b^{2} \ell n}{\ell + m} - \frac{2ab\sqrt{\ell m n}\sqrt{N}}{\ell + m}\bigg)v_4 \\
   &&          +\bigg(\frac{c^{2} m N}{\ell + m} + \frac{d^{2}\ell n}{\ell + m} - \frac{2cd\sqrt{\ell m n}\sqrt{N}}{\ell + m}\bigg)v_5 \bigg). 
\end{eqnarray*}
\end{prop}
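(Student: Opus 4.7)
The proof is a purely computational substitution, so the plan is to track the three types of ingredients (dimensions, $B$-structure constants, $Q$-structure constants) and feed them into Proposition \ref{ricci2} carefully, watching a single clean cancellation that occurs in each component.

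First I would record the dimensions of the submodules, namely $d_1 = \ell^{2}-1$, $d_2 = m^{2}-1$, $d_3 = n^{2}-1$ and $d_6 = 2\ell m$, $d_7 = 2\ell n$, $d_8 = 2mn$. I would then assemble a short table of the $B$-structure constants I need from Lemma \ref{ijk}, using the permutation symmetry ${i\brack jk}={j\brack ik}$ to convert ${6\brack 1\,6}$, ${7\brack 1\,7}$, ${6\brack 2\,6}$, ${8\brack 2\,8}$, ${7\brack 3\,7}$, ${8\brack 3\,8}$ into the quantities listed there, and I would write down the six $Q$-structure constants from Lemma \ref{thenumbers} in the same format.

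For $r_1, r_2, r_3$ only $B$-structure constants enter. The key remark is the algebraic identity
\[
{k\brack kk}+{(i\,k)\brack k(i\,k)}+{(j\,k)\brack k(j\,k)} \;=\; \frac{(\ell+m+n)(d_k)}{N} \;=\; d_k,
\]
which one reads off Lemma \ref{ijk} (for $k=1$ the sum equals $\tfrac{(\ell+m+n)(\ell^{2}-1)}{N}=\ell^{2}-1$, and similarly for $k=2,3$). This forces the cancellation
$\tfrac{1}{2u_k}-\tfrac{1}{2d_k u_k}\bigl({k\brack kk}+\sum_j {j\brack kj}\bigr)=0$, leaving only the three terms $\tfrac{1}{4d_k u_k}{k\brack kk}$, $\tfrac{u_k}{4d_k x_{(i)}^{2}}{k\brack ii}$, and $\tfrac{u_k}{4d_k x_{(j)}^{2}}{k\brack jj}$ with $\{i,j\}$ the two modules on which $\fr{h}_k$ acts non-trivially. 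Substituting the explicit values from Lemma \ref{ijk} and simplifying produces the stated expressions for $r_1, r_2, r_3$.

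For $r_6, r_7, r_8$ both $B$- and $Q$-structure constants enter. I would group the substitution into three parts: (i) the term $\tfrac{1}{2x_{(k)}}$ passes through unchanged; (ii) the triple-product piece uses ${6\brack 78}=\tfrac{\ell mn}{N}$, and the elementary identities $\tfrac{1}{2d_6}\cdot\tfrac{\ell mn}{N}=\tfrac{n}{4N}$, $\tfrac{1}{2d_7}\cdot\tfrac{\ell mn}{N}=\tfrac{m}{4N}$, $\tfrac{1}{2d_8}\cdot\tfrac{\ell mn}{N}=\tfrac{\ell}{4N}$ yield the coefficients of the bracketed sum of cross-ratios of $x_{(6)},x_{(7)},x_{(8)}$; (iii) the remaining $\tfrac{1}{x_{(k)}^{2}}$ piece collects the contributions from $u_1,u_2,u_3,v_4,v_5$ via Lemmas \ref{ijk} and \ref{thenumbers}. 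For instance in $r_7$ the $v_4$-coefficient becomes $\tfrac{1}{2d_7}\bigl\{\genfrac{}{}{0pt}{1}{4}{77}\bigr\}=\tfrac{1}{4\ell n}\bigl(\tfrac{a^{2}\ell}{\ell+m}+\tfrac{b^{2}mn}{N(\ell+m)}+\tfrac{2ab\sqrt{\ell mn}}{(\ell+m)\sqrt{N}}\bigr)$, which after pulling out the common factor $\tfrac{1}{4\ell n N}$ gives exactly the parenthesis displayed in the statement; the $v_5$ coefficient is identical with $(a,b)\mapsto(c,d)$, and $r_8$ differs only by the sign of the off-diagonal $ab$ and $cd$ terms, as dictated by Lemma \ref{thenumbers}.

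There is no conceptual obstacle; the only place where care is required is the bookkeeping for $r_7$ and $r_8$, where I must faithfully transcribe the sign in the off-diagonal contributions $\pm \tfrac{2ab\sqrt{\ell mn}\sqrt{N}}{\ell+m}$ and $\pm\tfrac{2cd\sqrt{\ell mn}\sqrt{N}}{\ell+m}$ and confirm, by revisiting Lemma \ref{thenumbers}, that the $+$ sign occurs in $r_7$ (i.e.\ in $\bigl\{\genfrac{}{}{0pt}{1}{4}{77}\bigr\}$ and $\bigl\{\genfrac{}{}{0pt}{1}{5}{77}\bigr\}$) and the $-$ sign in $r_8$. Once these signs are fixed, the listed formulas follow by direct algebraic simplification.
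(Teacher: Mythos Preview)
Your proposal is correct and follows exactly the approach of the paper: the proof there is literally ``by substituting the values of the numbers ${i\brack jk}$, $\bigl\{\begin{smallmatrix}i\\jk\end{smallmatrix}\bigr\}$ from Lemmas~\ref{ijk} and~\ref{thenumbers} into the Ricci components in Propositions~\ref{ricci1} and~\ref{ricci2}''. In fact you have written out more than the paper does, making explicit the cancellation ${k\brack kk}+\sum_j{j\brack kj}=d_k$ that kills the $\tfrac{1}{2u_k}$ term in $r_1,r_2,r_3$, and checking the sign bookkeeping for the $ab$ and $cd$ cross-terms in $r_7,r_8$; the paper leaves all of this to the reader.
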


\begin{prop}\label{ricci31}  
The components of the Ricci tensor for the diagonal part of the $\SU(\ell+m+n)$-invariant metric {\em (\ref{metrics2})} on $\SU(\ell +m+n)/\SU(n)$ are given as follows:
\begin{eqnarray*} &&
r_1 = \frac{\ell}{4N}\frac{1}{u_1}+\frac{u_1}{4N} \left(\frac{m}{{x_{(6)}}^2}+
          \frac{n}{{x_{(7)}}^2}\right), \quad
          r_2 = \frac{m}{4N}\frac{1}{u_2}+\frac{u_2}{4N}\left(\frac{\ell}{{x_{(6)}}^2}+
          \frac{n}{{x_{(8)}}^2}\right), \\&&
r_6 = \frac1{2x_{(6)}} + \frac{n}{4N}
         \left(\frac{x_{(6)}}{x_{(7)}x_{(8)}} - \frac{x_{(7)}}{x_{(6)}x_{(8)}}
           - \frac{x_{(8)}}{x_{(6)}x_{(7)}}\right) \\
    &&     - \frac{1}{4\ell mN}\frac{1}{{x_{(6)}}^2}
            \left((\ell ^2-1)mu_1+(m^2-1)\ell u_2+(\ell +m)b^2v_4+(\ell+m)d^2v_5\right),   \\
            &&
r_7 = \frac{1}{2x_{(7)}} + \frac{m}{4N}
         \bigg(\frac{x_{(7)}}{x_{(6)}x_{(8)}} - \frac{x_{(6)}}{x_{(7)}x_{(8)}}
           - \frac{x_{(8)}}{x_{(6)}x_{(7)}}\bigg)      - \frac{1}{4 \ell n N}\frac{1}{{x_{(7)}}^2}
            \bigg((\ell ^2-1) n u_1 \\
    & &   + \bigg(\frac{a^{2}\ell N}{\ell + m} + \frac{b^{2}mn}{\ell + m} + \frac{2ab\sqrt{\ell m n}\sqrt{N}}{\ell + m} \bigg)v_4+\bigg(\frac{c^{2}\ell N}{\ell + m} + \frac{d^{2}mn}{\ell + m} + \frac{2cd\sqrt{\ell m n}\sqrt{N}}{\ell + m}\bigg)v_5 \bigg), \\ 
&& r_8 = \frac1{2x_{(8)}} + \frac{\ell}{4N}
         \left(\frac{x_{(8)}}{x_{(6)}x_{(7)}} - \frac{x_{(7)}}{x_{(6)}x_{(8)}}
           - \frac{x_{(6)}}{x_{(7)}x_{(8)}}\right) 
     - \frac{1}{4 m n N}\frac1{{x_{(8)}}^2}
              \bigg((m^2-1) n u_2 \\
    & &   + \bigg(\frac{a^{2} m N}{\ell + m} + \frac{b^{2} \ell n}{\ell + m} - \frac{2ab\sqrt{\ell m n}\sqrt{N}}{\ell + m} \bigg)v_4+\ \bigg(\frac{c^{2} m N}{\ell + m} + \frac{d^{2}\ell n}{\ell + m} - \frac{2cd\sqrt{\ell m n}\sqrt{N}}{\ell + m}\bigg)v_5  \bigg). 
\end{eqnarray*}
\end{prop}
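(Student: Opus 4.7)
The plan is to specialize the general formulas of Proposition \ref{ricci2} to the Stiefel manifold setting and then substitute the explicit values of the structure constants from Lemmas \ref{ijk} and \ref{thenumbers}. As explained in the Remark following \eqref{eqn}, for the metrics \eqref{metrics2} on $G/K=\SU(\ell+m+n)/\SU(n)$ the module $\fr{h}_3$ is absent from the tangent space \eqref{dec3}, so the component $r_3$ is not defined and every occurrence of $u_3$ disappears; moreover every $B$-structure constant $\bigl[\begin{smallmatrix} * \\ * \, * \end{smallmatrix}\bigr]$ or $Q$-structure constant $\bigl\lbrace\begin{smallmatrix} * \\ * \, * \end{smallmatrix}\bigr\rbrace$ carrying the label $3$ vanishes. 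Thus from Proposition \ref{ricci2} I obtain directly the templates for $r_1,r_2,r_6,r_7,r_8$, with the terms $\frac{u_3}{x_{(7)}^2}\bigl[\begin{smallmatrix}3\\77\end{smallmatrix}\bigr]$ and $\frac{u_3}{x_{(8)}^2}\bigl[\begin{smallmatrix}3\\88\end{smallmatrix}\bigr]$ omitted from $r_7$ and $r_8$.

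Next I would insert the dimensions $d_1=\ell^2-1$, $d_2=m^2-1$, $d_6=2\ell m$, $d_7=2\ell n$, $d_8=2mn$ and the nonzero triplets listed in \eqref{triplets1}--\eqref{triplets2}. For $r_1$ the three $B$-triplets $\bigl[\begin{smallmatrix}1\\11\end{smallmatrix}\bigr]$, $\bigl[\begin{smallmatrix}6\\16\end{smallmatrix}\bigr]$, $\bigl[\begin{smallmatrix}7\\17\end{smallmatrix}\bigr]$ come from Lemma \ref{ijk} and, after collecting the $1/u_1$ coefficient, the second, third, and fifth terms in Proposition \ref{ricci2} combine to leave only $\frac{\ell}{4N u_1}$; the remaining $u_1$-linear part gives the stated $\frac{u_1}{4N}\bigl(\frac{m}{x_{(6)}^2}+\frac{n}{x_{(7)}^2}\bigr)$. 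The calculation for $r_2$ is identical with the roles of $(\ell,m,n)$ permuted accordingly.

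For $r_6,r_7,r_8$ I would handle the three contributions separately: the $B$-triplet $\bigl[\begin{smallmatrix}6\\78\end{smallmatrix}\bigr]=\frac{\ell m n}{N}$ produces the three bracketed expressions with denominators $x_{(i)}x_{(j)}x_{(k)}$, the inertial terms $u_iB|_{\fr h_i}$ produce the $(\ell^2-1)mu_1$, $(m^2-1)\ell u_2$ etc.\ factors through the triplets $\bigl[\begin{smallmatrix}i\\jj\end{smallmatrix}\bigr]$ of Lemma \ref{ijk}, and the center contributions come from $\bigl\lbrace\begin{smallmatrix}4\\jj\end{smallmatrix}\bigr\rbrace v_4+\bigl\lbrace\begin{smallmatrix}5\\jj\end{smallmatrix}\bigr\rbrace v_5$ with the six values computed in Lemma \ref{thenumbers}. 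For $r_6$ both center quantities are proportional to $(\ell+m)/N$ by Lemma \ref{thenumbers}, producing the simple factor $(\ell+m)b^2v_4+(\ell+m)d^2v_5$. For $r_7$ and $r_8$ the center part already has its full $(a,b,c,d)$-dependence built into Lemma \ref{thenumbers}, so one only needs to multiply by $\frac{N}{\ell+m}$ (coming from $1/(2d_7x_{(7)}^2)$ and the factor of $N$ in Lemma \ref{thenumbers}), reproducing the stated expressions.

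The main obstacle is purely bookkeeping: tracking the constant $\frac{1}{4d_k x_{(k)}^2}$ across every substitution and verifying that the $1/u_i$ cancellations in $r_1,r_2$ actually collapse three terms of Proposition \ref{ricci2} into a single $\frac{\ell}{4Nu_1}$ (respectively $\frac{m}{4Nu_2}$). This is a consequence of the identity $\frac{1}{2}-\frac{1}{2d_i}\bigl(\bigl[\begin{smallmatrix}i\\ii\end{smallmatrix}\bigr]+\bigl[\begin{smallmatrix}6\\i6\end{smallmatrix}\bigr]+\bigl[\begin{smallmatrix}7\\i7\end{smallmatrix}\bigr]\bigr)+\frac{1}{4d_i}\bigl[\begin{smallmatrix}i\\ii\end{smallmatrix}\bigr]=\frac{\ell}{4N}$ for $i=1$ (and its analog for $i=2$), which one verifies by plugging the values from Lemma \ref{ijk}. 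No further analysis is needed; the proposition then follows by direct substitution.
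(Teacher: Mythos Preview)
Your proposal is correct and follows exactly the same route as the paper: the paper obtains Proposition \ref{ricci31} by substituting the explicit values of the $B$-structure constants from Lemma \ref{ijk} and the $Q$-structure constants from Lemma \ref{thenumbers} into the templates of Proposition \ref{ricci2}, with the $\fr{h}_3$-terms removed (the paper notes that there is no $r_3$ component and that $r_7,r_8$ simplify via $\bigl[\begin{smallmatrix}3\\ **\end{smallmatrix}\bigr]=0$). Your verification of the $1/u_1$ cancellation in $r_1$ is in fact more explicit than what the paper records.
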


 \section{Non naturally reductive Einstein metrics on the compact Lie group $\SU(\ell+m+n)$}

\subsection{Naturally reductive metrics on $\SU(\ell+m+n)$}
A Riemannian homogeneous space $(M=G/H, g)$ with reductive complement $\fr{m}$ of $\fr{h}$ in $\fr{g}$ is called {\it naturally reductive} if
$$
\langle [X, Y]_\fr{m}, Z\rangle +\langle Y, [X, Z]_\fr{m}\rangle=0 \quad\mbox{for all}\ 
X, Y, Z\in\fr{m}.
$$ 
Here $\langle\ , \  \rangle$ denotes the inner product on $\fr{m}$ induced from the Riemannian metric $g$.  Classical examples of naturally reductive homogeneous spaces include
irreducible symmetric spaces, isotropy irreducible homogeneous manifolds, 
and Lie groups with bi-invariant metrics.
In general it is not always easy to decide if a given homogeneous Riemannian manifold is naturally reductive, since one has to consider all possible transitive actions of subgroups
$G$ of the isometry group of $(M, g)$.

In \cite{DZ}  D'Atri and Ziller  investigated naturally reductive metrics among left-invariant metrics on compact Lie groups and gave a complete classification in the case of simple Lie groups.
Let $G$ be a compact, connected semisimple Lie group, $L$ a closed subgroup of $G$ and let
$\fr{g}$ be the Lie algebra of $G$ and $\fr{l}$ the subalgebra corresponding to $L$.

Recall that $B$ is the negative of the Killing form of $\fr{g}$, so $B$ is an
$\Ad (G)$-invariant inner product on $\fr{g}$.
Let $\fr{m}$ be an orthogonal complement of $\fr{l}$ with respect to $B$.  Then we have
$$
\fr{g}=\fr{l}\oplus\fr{m}, \quad \Ad(L)\fr{m}\subset\fr{m}.
$$
Let $\fr{l}=\fr{l}_0\oplus\fr{l}_1\oplus\cdots\oplus\fr{l}_p$ be a decomposition of $\fr{l}$ into ideals, where $\fr{l}_0$ is the center of $\fr{l}$ and $\fr{l}_i$ $(i=1,\dots , p)$ are simple ideals of $\fr{l}$.
Let $A_0|_{\fr{l}_0}$ be an arbitrary metric on $\fr{l}_0$.

\begin{theorem}\label{DZ} {\rm (\cite[Theorem 1, p. 9]{DZ})}  Under the notations above a left-invariant metric on $G$ of the form
\begin{equation}\label{natural}
\langle\  ,\  \rangle =x\cdot B|_{\fr{m}}+A_0|_{\fr{l}_0}+u_1\cdot B|_{\fr{l}_1}+\cdots
+ u_p\cdot B|_{\fr{l}_p}, \quad (x, u_1, \dots , u_p >0)
\end{equation}
is naturally reductive with respect to $G\times L$, where $G\times L$ acts on $G$ by
$(g, l)y=gyl^{-1}$.

Moreover, if a left-invariant metric $\langle\ ,\ \rangle$ on a compact simple Lie group
$G$ is naturally reductive, then there is a closed subgroup $L$ of $G$ and the metric
$\langle\ ,\ \rangle$ is given by the form $(\ref{natural})$.
\end{theorem}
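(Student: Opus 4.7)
The plan is to prove the two implications of Theorem \ref{DZ} by quite different methods: a direct construction for sufficiency, and a classification argument for the converse.

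For sufficiency, I would first identify $G$ with the homogeneous space $\tilde G/\Delta L$, where $\tilde G = G\times L$ acts on $G$ by $(g,l)\cdot y = g y l^{-1}$; the stabilizer of $e \in G$ is the diagonal subgroup $\Delta L \subset \tilde G$, with Lie algebra $\Delta \fr{l} = \{(X,X): X \in \fr{l}\}$. On $\tilde{\fr g}=\fr g \oplus \fr l$ I put the $\Ad(\tilde G)$-invariant symmetric bilinear form
\[
\tilde Q \;=\; xB \;\oplus\; \Bigl(A_0|_{\fr{l}_0} \,\oplus\, \bigoplus_{i=1}^p v_i B|_{\fr{l}_i}\Bigr),
\]
where the scalars $v_i$ are still to be determined, and let $\fr p$ be the $\tilde Q$-orthogonal complement of $\Delta \fr{l}$ in $\tilde{\fr g}$. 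A direct computation shows that $\fr p$ is $\Ad(\Delta L)$-invariant, that the differential $\pi_*\colon (X,Y)\mapsto X-Y$ of the quotient map identifies $\fr p$ isomorphically with $\fr g = T_e G$, and that the induced metric on $\fr g$ has the form (\ref{natural}) precisely when $v_i$ satisfies a quadratic equation in $u_i$ and $x$ on each simple ideal $\fr{l}_i$; the discriminant of this quadratic is always positive, so a real solution $v_i$ (possibly negative, in which case $\tilde Q$ is only pseudo-Riemannian, but this is harmless for our purposes) always exists. Natural reductivity of the resulting metric on $G\cong \tilde G/\Delta L$ then follows from the classical lemma that a metric on a homogeneous space $\tilde G/H$ obtained by orthogonal projection of an $\Ad(\tilde G)$-invariant bilinear form on $\tilde{\fr g}$ to a reductive complement $\fr p$ is automatically naturally reductive.

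For the converse, the strategy is structural. Assuming $\langle\cdot,\cdot\rangle$ is naturally reductive on the compact simple Lie group $G$, by definition there exists a connected Lie group $\tilde G$ acting transitively and isometrically on $G$, together with a reductive decomposition $\tilde{\fr g} = \fr h \oplus \fr p$ satisfying the naturally reductive identity on $[\cdot,\cdot]_{\fr p}$. The central step is to classify which $\tilde G$ can occur. This uses Kostant's structural analysis of the transvection algebra of a naturally reductive space, combined with Onishchik's classification of transitive actions on compact simple Lie groups, to force $\tilde G$ (up to finite cover) to be of the form $G \times L$ acting by $(g,l)\cdot y = g y l^{-1}$ for some closed subgroup $L \subseteq G$. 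Once this structural conclusion is in place, decomposing $\fr{l}$ into its center $\fr{l}_0$ and simple ideals $\fr{l}_1,\dots,\fr{l}_p$ and invoking $\Ad(L)$-invariance of the induced metric yields precisely the form (\ref{natural}), with the free parameter $A_0$ on $\fr{l}_0$ reflecting Schur's lemma on the center.

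The main obstacle is the converse. Sufficiency reduces to a short bracket computation plus the standard reductive-projection lemma, but necessity requires the nontrivial input from Kostant and Onishchik needed to classify transitive isometry groups of compact simple Lie groups. That classification is the structural heart of the D'Atri--Ziller theorem and is the step I would expect to be by far the most involved.
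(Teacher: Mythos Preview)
The paper does not prove this theorem; it is quoted verbatim as a result of D'Atri and Ziller (\cite[Theorem 1, p.~9]{DZ}) and used as a black box (notably in the proof of Proposition~\ref{reductive}). So there is no proof in the paper to compare your proposal against.

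That said, your sketch is broadly in the spirit of the original D'Atri--Ziller argument: the sufficiency direction is indeed obtained by realizing $G$ as $(G\times L)/\Delta L$ and restricting an $\Ad(G\times L)$-invariant bilinear form on $\fr g\oplus\fr l$, and the converse does rely on Kostant's structural results on naturally reductive metrics together with the classification of transitive actions on a simple compact Lie group. Your identification of the converse as the substantive step is correct. One small caution: in the sufficiency direction you should check carefully that the quadratic in $v_i$ actually produces the full range $u_i>0$ (not just some values), and that allowing $\tilde Q$ to be indefinite still yields a genuine reductive complement $\fr p$ on which the induced form is positive definite; D'Atri and Ziller handle this, but it is not entirely automatic.
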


\medskip
For the Lie group  $\SU(\ell+m+n)$ we consider  left-invariant metrics determined by $\Ad(\s(\U(\ell)\times\U(m)\times\U(n)))$-invariant scalar products (\ref{inv_metrics}).

\begin{prop}\label{reductive}
If a left invariant metric of the form   {\em (\ref{inv_metrics})} on $\SU(\ell+m+n)$  is naturally reductive with respect to $\SU(\ell+m+n) \times L$, for some closed subgroup $L$ of $\SU(\ell+m+n)$, then one of the
following holds: 
\begin{itemize}
\item[(1)]   the metric   {\em (\ref{inv_metrics})} is either 
\begin{itemize}
\item[(i)]
$\Ad(\s(\U(\ell+m)\times \U(n)))$-invariant and  $x_{(7)}=x_{(8)}$, or 
\item[(ii)]
 $\Ad(\s(\U(\ell)\times \U(m+n)))$-invariant and  $x_{(6)}=x_{(7)}$,  or
\item[(iii)]  $\Ad(\s(\U(\ell+n)\times \U(m)))$-invariant  and  $x_{(6)}=x_{(8)}$.
\end{itemize}
\item[(2)] $x_{(6)}=x_{(7)}=x_{(8)}$. 
\end{itemize}
Conversely, if one of the conditions {\em (1)},  {\em (2)} is satisfied,  then the metric 
 of the form {\em (10)}  is  naturally reductive  with respect to $\SU(\ell+m+n)\times L$, for some closed subgroup $L$ of $\SU(\ell+m+n)$.
  \end{prop}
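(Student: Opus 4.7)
The plan is to apply the D'Atri--Ziller characterization (Theorem \ref{DZ}), which reduces the problem to finding a closed subgroup $L\subset G=\SU(\ell+m+n)$ such that our metric (\ref{inv_metrics}) has the form (\ref{natural}). The key principle is that the $B$-orthogonal reductive complement $\fr{m}$ of $\fr{l}$ carries the metric as a single scalar multiple $x\cdot B|_{\fr{m}}$, so the modules $\fr{m}_{12},\fr{m}_{13},\fr{m}_{23}$ lying in $\fr{m}$ must share a common coefficient, while those in $\fr{l}$ must sit inside a simple ideal on which the metric is a scalar multiple of $B$.

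For sufficiency ($\Leftarrow$), I would exhibit the subgroup explicitly in each case. In case (1)(i), take $L=\s(\U(\ell+m)\times\U(n))$ with Lie algebra $\fr{l}=\fr{h}_0\oplus\fr{h}_1\oplus\fr{h}_2\oplus\fr{h}_3\oplus\fr{m}_{12}$, so the reductive complement is $\fr{m}=\fr{m}_{13}\oplus\fr{m}_{23}$ with common coefficient $x_{(7)}=x_{(8)}$. The decomposition of $\fr{l}$ into center plus simple ideals is $\fr{l}_0=\fr{h}_4$, $\fr{l}_1=\fr{su}(\ell+m)=\fr{h}_1\oplus\fr{m}_{12}\oplus\fr{h}_2\oplus\fr{h}_5$, $\fr{l}_2=\fr{h}_3$; the $\Ad(\s(\U(\ell+m)\times\U(n)))$-invariance of the metric precisely forces it to be a scalar multiple of $B$ on the simple ideal $\fr{su}(\ell+m)$, which matches the form (\ref{natural}). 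Cases (1)(ii) and (1)(iii) follow by permuting the roles of the three blocks. For case (2), take $L=\s(\U(\ell)\times\U(m)\times\U(n))$ with $\fr{m}=\fr{m}_{12}\oplus\fr{m}_{13}\oplus\fr{m}_{23}$; the condition $x_{(6)}=x_{(7)}=x_{(8)}$ gives the common coefficient, and the center metric $\beta|_{\fr{h}_0}$ plays the role of the arbitrary $A_0|_{\fr{l}_0}$.

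For necessity ($\Rightarrow$), suppose the metric is naturally reductive. By Theorem \ref{DZ} there exist $L$ and parameters so that (\ref{natural}) holds. Sorting the $\fr{m}_{ij}$'s according to whether they lie in $\fr{m}$ or $\fr{l}$, the key combinatorial step uses Lemma \ref{brackets}: if $\fr{l}$ contains two of the three modules $\fr{m}_{12},\fr{m}_{13},\fr{m}_{23}$, then the bracket relation $[\fr{m}_{ij},\fr{m}_{ik}]\subset\fr{m}_{jk}$ forces $\fr{l}$ to contain the third, hence $\fr{l}=\fr{g}$; this degenerate case corresponds to a bi-invariant metric, falling under (2) with all three $x_{(i)}$ equal. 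Thus either all three $\fr{m}_{ij}$ are in $\fr{m}$ (forcing case (2)), or exactly one $\fr{m}_{ij}$ lies in $\fr{l}$; in the latter situation, the minimal Lie subalgebra obtained is one of the three intermediate flag subalgebras $\fr{s}(\fr{u}(\ell+m)\oplus\fr{u}(n))$, $\fr{s}(\fr{u}(\ell)\oplus\fr{u}(m+n))$, $\fr{s}(\fr{u}(\ell+n)\oplus\fr{u}(m))$, leading respectively to cases (1)(i), (1)(ii), (1)(iii).

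The main obstacle is the careful bookkeeping in the intermediate cases, where the two-dimensional center $\fr{h}_0=\fr{h}_4\oplus\fr{h}_5$ splits unevenly: $\fr{h}_4$ remains in the center $\fr{l}_0$ of $\fr{l}$, but $\fr{h}_5$ migrates into the simple ideal $\fr{su}(\ell+m)$. Verifying that the D'Atri--Ziller requirement (scalar multiple of $B$ on the simple ideal) is exactly equivalent to $\Ad(\s(\U(\ell+m)\times\U(n)))$-invariance of (\ref{inv_metrics}) requires tracking the pairing of $\fr{h}_5$ with $\fr{h}_1,\fr{h}_2,\fr{m}_{12}$ inside this ideal, together with unrestricted behavior of the scalar product $\beta$ along the remaining one-dimensional center $\fr{h}_4$. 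Once this identification is made, the biconditional follows in each case.
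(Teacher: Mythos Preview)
Your sufficiency direction is correct and essentially matches the paper's treatment (you give more explicit detail about how the center $\fr{h}_0$ splits across $\fr{l}_0$ and the simple ideal, which is helpful).

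For necessity, there is a real gap. You write ``Sorting the $\fr{m}_{ij}$'s according to whether they lie in $\fr{m}$ or $\fr{l}$'', but nothing guarantees that an irreducible $\Ad(H)$-module $\fr{m}_{ij}$ lies entirely in $\fr{l}$ or entirely in $\fr{m}$: the subalgebra $\fr{l}$ coming from Theorem~\ref{DZ} is a priori arbitrary and need not be $\Ad(H)$-invariant. Your later claim that ``the minimal Lie subalgebra obtained is one of the three intermediate flag subalgebras'' compounds the problem: even if $\fr{m}_{12}\subset\fr{l}$, the subalgebra it generates is only $\fr{su}(\ell+m)$, not $\fr{s}(\fr{u}(\ell+m)\oplus\fr{u}(n))$, so you do not directly recover the required $\Ad(\s(\U(\ell+m)\times\U(n)))$-invariance of the metric.

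The paper closes this gap with one clean move: instead of analyzing $\fr{l}$, it passes to the subalgebra $\fr{k}$ generated by $\fr{l}$ and $\fr{h}$. Because $\fr{k}\supset\fr{h}$ and $\fr{k}$ is a subalgebra, it is $\Ad(H)$-invariant, hence automatically of the form $\fr{h}\oplus(\text{some subset of the }\fr{m}_{ij})$; this is exactly the ``sorting'' you want, but now justified. Moreover, since the metric is $\Ad(L)$-invariant (from the form (\ref{natural})) and $\Ad(H)$-invariant (by hypothesis), it is $\Ad(K)$-invariant for $K=\langle L,H\rangle$, which immediately yields the extra invariance claimed in case~(1). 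Once you add this passage to $\fr{k}$, your bracket argument using Lemma~\ref{brackets} and the rest of your outline go through.
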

  
\begin{proof}  
Let $\fr{l}$ be the Lie algebra of $L$.  Then we have that either 
${\frak l} \subset {\frak h}=\frak{s}(\frak{u}(\ell)\oplus\frak{u}(m)\oplus\frak{u}(n))$
 or ${\frak l} \not\subset {\frak h}$. 
First we consider the case of  ${\frak l} \not\subset {\frak h}$. Let ${\frak k}$ be the subalgebra of ${\frak g}$ generated by ${\frak l}$ and ${\frak h}$. 
 Since 
 $ \fr{su}(\ell + m + n) $ splits into $\fr{h}$ and three $\Ad(H)$-irreducible modules $\fr{m}_{12}$, $\fr{m}_{13}$,  $\fr{m}_{23}$ as 
$
\fr{g}=\fr{h}\oplus\fr{m}=\fr{h}_0\oplus\fr{h}_1\oplus\fr{h}_2\oplus\fr{h}_3\oplus\fr{m}_{12}
\oplus\fr{m}_{13}\oplus\fr{m}_{23}$,
where the $\Ad(H)$-irreducible modules $\fr{m}_{12}$, $\fr{m}_{13}$, $\fr{m}_{23}$ are mutually non equivalent, 
we see that the Lie algebra $\frak k$  contains  at least one of  ${\frak m}_{12}$,  ${\frak m}_{13}$, ${\frak m}_{23}$. 
 Let us assume that  $\frak k$  contains ${\frak m}_{12}$. 
 Note that 
$\left[{\frak m}_{12}, {\frak m}_{12}\right] \subset {\frak h}_1 \oplus \fr{h}_2\oplus\fr{h}_0$ and thus $\frak k$ contains $\fr{su}(\ell + m)$. 
Thus we see that $\frak k$ contains the Lie subalgebra $\fr{s}(\fr{u}(\ell + m)\oplus\fr{u}(n))$. 
If $\frak k =\fr{s}(\fr{u}(\ell + m)\oplus\fr{u}(n))$, then we obtain an irreducible decomposition $ \fr{su}(\ell + m + n) = \frak k \oplus \fr{n}$, where $\fr{n} = {\frak m}_{13}\oplus{\frak m}_{23}$. Hence, a  naturally reductive left invariant metric of the form  (\ref{inv_metrics})  is $\Ad(\s(\U(\ell+m)\times \U(n)))$-invariant and   
$x_{(7)}=x_{(8)}$, so we obtain case (i).  Cases (ii) and (iii) are obtained by a  similar way. 
Furthermore, if $\frak k  \neq \fr{s}(\fr{u}(\ell + m)\oplus\fr{u}(n))$, then $\frak k$ contains ${\frak m}_{13}$ or ${\frak m}_{23}$. In this case  we can see that  $\frak k =  \fr{su}(\ell+m+n)$. Thus the metric is bi-invariant. 

Now we consider the case ${\frak l} \subset {\frak h}$.  Since the  orthogonal complement
 ${\frak l}^{\bot}$ of ${\frak l}$ with respect to $B$ contains the  orthogonal complement 
${\frak h}^{\bot}$ of ${\frak h}$, we see that ${\frak l}^{\bot} \supset {\frak m}_{12} \oplus  {\frak m}_{13}\oplus  {\frak m}_{23}$.   
Since the left invariant metric of the form (\ref{inv_metrics}) is naturally reductive  with respect to $G\times L$,  
 it follows that  $ x_{(6)} = x_{(7)} = x_{(8)} $ by  Theorem \ref{DZ}.   
The converse is a direct consequence of Theorem \ref{DZ}.
\end{proof}
  

\subsection{Non naturally reductive Einstein metrics on $\SU(\ell+m+n)$} 
To find non naturally reductive Einstein metrics on $\SU(\ell+m+n)$ we need to solve the system
(cf. Propositions \ref{ricci1}, \ref{ricci3})
\begin{eqnarray}\label{sist1}
&& r_{0} = 0, \, r_{1} - r_2 = 0,\, r_2 - r_3 = 0, \, r_3 - r_4 = 0,\nonumber\\
&& r_4 - r_{5} = 0, \, r_{5} - r_{6} = 0, \, r_{6} - r_{7} = 0, \ r_{7} - r_{8} = 0.
\end{eqnarray}

We claim that it is possible to choose a basis $\{V_4', V_5'\}$ of $\fr{h}_0$ so that the matrix of the scalar
product (\ref{prod}) with respect to $\{H_4, H_5\}$ is given by
\begin{equation}\label{form}
{}^{t}_{}\!\begin{pmatrix}
1 & 0\\
\gamma & 1
\end{pmatrix}
\begin{pmatrix}
v_4' & 0\\
0 &  v_5'
\end{pmatrix}
\begin{pmatrix}
1 & 0\\
\gamma & 1
\end{pmatrix},
\end{equation}
for some real number $\gamma$  and $v_4', v_5'>0$.  Hence, without loss of generality we may choose
$a=d=1$ and $b=0$.

Indeed, by using the QR-decomposition  we obtain that
$$
\begin{pmatrix}
a & b\\
c & d
\end{pmatrix}=
\begin{pmatrix}
\cos t & -\sin t\\
\sin t & \cos t
\end{pmatrix}
\begin{pmatrix}
x & 0\\
0 & y
\end{pmatrix}
\begin{pmatrix}
1 & 0\\
\gamma & 1
\end{pmatrix},
$$
for some real numbers $x,y, \gamma$, with $x,y$ non zero.
So the matrix $A$ (cf. (\ref{positive}))  takes the form
$$
{}^{t}_{}\!\begin{pmatrix}
1 & 0\\
\gamma & 1
\end{pmatrix}
\begin{pmatrix}
x & 0\\
0 &  y
\end{pmatrix}
\begin{pmatrix}
\cos t & \sin t\\
-\sin t & \cos t
\end{pmatrix}
\begin{pmatrix}
v_4 & 0\\
0 &  v_5
\end{pmatrix}
\begin{pmatrix}
\cos t & -\sin t\\
\sin t & \cos t
\end{pmatrix}
\begin{pmatrix}
x & 0\\
0 &  y
\end{pmatrix}
\begin{pmatrix}
1 & 0\\
\gamma & 1
\end{pmatrix}.
$$
By changing the orthonormal basis $\{V_4, V_5\}$ into some orthonormal basis
$\{V_4', V_5'\}$, the matrix $A$ can be expressed as
$$
{}^{t}_{}\!\begin{pmatrix}
1 & 0\\
\gamma & 1
\end{pmatrix}
\begin{pmatrix}
x & 0\\
0 &  y
\end{pmatrix}
\begin{pmatrix}
v_4 & 0\\
0 &  v_5
\end{pmatrix}
\begin{pmatrix}
x & 0\\
0 &  y
\end{pmatrix}
\begin{pmatrix}
1 & 0\\
\gamma & 1
\end{pmatrix},
$$ which gives expression (\ref{form}).

We also assume that
 $\ell = 1, m = 2$. In this case it is $\fr h_1=0$, so for $\SU(3+n)$  system (\ref{sist1}) reduces to 
\begin{eqnarray}\label{sist2}
&& r_0=0, \, r_2 - r_3 = 0, \, r_3 - r_4 = 0, \nonumber\\
&& r_4 - r_{5} = 0, \, r_{5} - r_{6} = 0, \, r_{6} - r_{7} = 0, \ r_{7} - r_{8} = 0.
\end{eqnarray}
 Notice that in the above system there is no $u_1$ variable. 
  By setting $x_{(7)} = 1$ in the equation $r_{0} = 0$ we obtain that
$$
c = \frac{\sqrt{2n(3+n)}}{(3 + n)}\frac{(1-{x_{(8)}}^2)}{(2 + {x_{(8)}}^2)}.
$$ 
 We substitute $c$ into the system (\ref{sist2}) and  interestingly, we observe that the equations 
$$
r_4 - r_{5} = 0,\, r_{5} - r_{6} = 0, \, r_{6} - r_{7} = 0, \ r_{7} - r_{8} = 0,
$$
that is, 
 {\small \begin{equation*}
\begin{array}{l} 
  n {v_4} {x_{(6)}}^2
   {x_{(8)}}^4+3 {v_4} {x_{(6)}}^2 {x_{(8)}}^4-9
   {v_5} {x_{(8)}}^4+4 n {v_4} {x_{(6)}}^2
   {x_{(8)}}^2+12 {u_4} {x_{(6)}}^2 {x_{(8)}}^2-9 n
   {v_5} {x_{(6)}}^2 {x_{(8)}}^2 \\
   -18 {v_5}
   {x_{(8)}}^2+4 n {v_4} {x_{(6)}}^2+12 {v_4}
   {x_{(6)}}^2 =0, \\
 2 n {x_{(6)}}
   {x_{(8)}}^4+3 {u_2} {x_{(8)}}^3+9 {v_5}
   {x_{(8)}}^3-4 n {x_{(6)}} {x_{(8)}}^3-12 {x_{(6)}}
   {x_{(8)}}^3-2 n {x_{(6)}}^3 {x_{(8)}}^2+6 n {x_{(6)}}
   {x_{(8)}}^2 \\+6 n {v_5} {x_{(6)}}^2 {x_{(8)}}+6
   {u_2} {x_{(8)}}+18 {v_5} {x_{(8)}}-8 n
   {x_{(6)}} {x_{(8)}}-24 {x_{(6)}} {x_{(8)}}-4 n
   {x_{(6)}}^3+4 n {x_{(6)}}=0,\\
  -6 n^2 {x_{(6)}}
   {x_{(8)}}^7-6 n {x_{(6)}} {x_{(8)}}^7-9 n {u_2}
   {x_{(8)}}^6-9 n {v_5} {x_{(8)}}^6+12 n^2 {x_{(6)}}
   {x_{(8)}}^6+36 n {x_{(6)}} {x_{(8)}}^6 \\
   +6 n^2{x_{(6)}}^3 {x_{(8)}}^5    +6 n {x_{(6)}}^3 {x_{(8)}}^5-12
   n^2 {x_{(6)}}^2 {x_{(8)}}^5-36 n {x_{(6)}}^2
   {x_{(8)}}^5-30 n^2 {x_{(6)}} {x_{(8)}}^5-18 n {x_{(6)}} {x_{(8)}}^5
   \\
   +9 n {u_2} {x_{(6)}}^2 {x_{(8)}}^4
   +6 n^2 {u_3} {x_{(6)}}^2 {x_{(8)}}^4-6
   {u_3} {x_{(6)}}^2 {x_{(8)}}^4+2 n {v_4}
   {x_{(6)}}^2 {x_{(8)}}^4+6 {u_4} {x_{(6)}}^2
   {x_{(8)}}^4\\
   +9 n {u_5} {x_{(6)}}^2 {x_{(8)}}^4-36 n
   {u_2} {x_{(8)}}^4
   -36 n {v_5} {x_{(8)}}^4+48
   n^2 {x_{(6)}} {x_{(8)}}^4+144 n {x_{(6)}}
   {x_{(8)}}^4+24 n^2 {x_{(6)}}^3 {x_{(8)}}^3\\
   +24 n {x_{(6)}}^3 {x_{(8)}}^3-48 n^2 {x_{(6)}}^2
   {x_{(8)}}^3 
   -144 n {x_{(6)}}^2 {x_{(8)}}^3-48 n^2
   {x_{(6)}} {x_{(8)}}^3+36 n {u_2} {x_{(6)}}^2
   {x_{(8)}}^2 \\
   +24 n^2 {u_3} {x_{(6)}}^2
   {x_{(8)}}^2-24 {u_3} {x_{(6)}}^2 {x_{(8)}}^2 
    +8 n{v_4} {x_{(6)}}^2 {x_{(8)}}^2+24 {v_4}
   {x_{(6)}}^2 {x_{(8)}}^2-36 n {u_2} {x_{(8)}}^2\\ 
   -36 n {v_5} {x_{(8)}}^2 +48 n^2 {x_{(6)}}
   {x_{(8)}}^2+144 n {x_{(6)}} {x_{(8)}}^2 
   +24 n^2
   {x_{(6)}}^3 {x_{(8)}} +24 n {x_{(6)}}^3 {x_{(8)}}-48
   n^2 {x_{(6)}}^2 {x_{(8)}} \\-144 n {x_{(6)}}^2
   {x_{(8)}}-24 n^2 {x_{(6)}} {x_{(8)}}+24 n {x_{(6)}}
   {x_{(8)}}+36 n {u_2} {x_{(6)}}^2+24 n^2 {u_3}
   {x_{(6)}}^2-24 {u_3} {x_{(6)}}^2\\+8 n {v_4}
   {x_{(6)}}^2+24 {v_4} {x_{(6)}}^2=0,
   \\ 
   18 n {x_{(8)}}^7-12 n^2 {x_{(6)}} {x_{(8)}}^6-36 n
   {x_{(6)}} {x_{(8)}}^6+6 n^2 {u_3} {x_{(6)}}
   {x_{(8)}}^6-6 {u_3} {x_{(6)}} {x_{(8)}}^6+2 n
   {u_4} {x_{(6)}} {x_{(8)}}^6 \\
   +6 {v_4} {x_{(6)}}
   {x_{(8)}}^6 +6 n {x_{(6)}}^2 {x_{(8)}}^5+54 n
   {x_{(8)}}^5+12 n^2 {x_{(6)}} {x_{(8)}}^5+36 n
   {x_{(6)}} {x_{(8)}}^5-48 n^2 {x_{(6)}} {x_{(8)}}^4\\
   -144 n {x_{(6)}} {x_{(8)}}^4 -9 n {u_2} {x_{(6)}}
   {x_{(8)}}^4+18 n^2 {u_3} {x_{(6)}} {x_{(8)}}^4-18
   {u_3} {x_{(6)}} {x_{(8)}}^4+6 n {v_4}{x_{(6)}} {x_{(8)}}^4\\
   +18 {v_4} {x_{(6)}}
   {x_{(8)}}^4-9 n {v_5} {x_{(6)}} {x_{(8)}}^4 +24 n
   {x_{(6)}}^2 {x_{(8)}}^3+48 n^2 {x_{(6)}}
   {x_{(8)}}^3+144 n {x_{(6)}} {x_{(8)}}^3-48 n^2
   {x_{(6)}} {x_{(8)}}^2 \\
   -144 n {x_{(6)}} {x_{(8)}}^2-36 n
   {u_2} {x_{(6)}} {x_{(8)}}^2 +36 n {v_5}
   {x_{(6)}} {x_{(8)}}^2+24 n {x_{(6)}}^2 {x_{(8)}}-72 n
   {x_{(8)}}+48 n^2 {x_{(6)}} {x_{(8)}}\\
   +144 n {x_{(6)}}
   {x_{(8)}}-36 n {u_2} {x_{(6)}}-24 n^2 {u_3}
   {x_{(6)}}+24 {u_3} {x_{(6)}}-8 n {v_4}
   {x_{(6)}}-24 {v_4} {x_{(6)}}=0, 
 \end{array}
\end{equation*}  }
\noindent
are {\it linear} with respect to $u_2, u_3, v_4$ and $v_5$.
 By solving the above equations with respect to $u_2, u_3, v_4$ and $v_5$ we obtain 
{\small \begin{equation*}
 \begin{array}{l} 
 u_2 = -1/({3 {x_{(8)}} \left(-n {x_{(6)}}^4+n {x_{(8)}}^2 {x_{(6)}}^2-{x_{(8)}}^2 {x_{(6)}}^2-n {x_{(6)}}^2-4 {x_{(6)}}^2+{x_{(8)}}^4+{x_{(8)}}^2-2\right))}\times \\ 
 \big( {x_{(6)}}  (2 n {x_{(8)}}^6+3 {x_{(8)}}^6-4 n {x_{(8)}}^5-12{x_{(8)}}^5+2 n^2 {x_{(6)}}^2 {x_{(8)}}^4+n {x_{(6)}}^2 {x_{(8)}}^4-3 {x_{(6)}}^2 {x_{(8)}}^4 +4 n {x_{(8)}}^4\\+6 n {x_{(6)}} {x_{(8)}}^4 
+18 {x_{(6)}} {x_{(8)}}^4 +9 {x_{(8)}}^4-4 n^2 {x_{(6)}}^2 {x_{(8)}}^3-14 n {x_{(6)}}^2 {x_{(8)}}^3-6 {x_{(6)}}^2 {x_{(8)}}^3-4 n {x_{(8)}}^3 \\
-6 n {x_{(6)}} {x_{(8)}}^3-18 {x_{(6)}} {x_{(8)}}^3 -12 {x_{(8)}}^3 -2 n^2 {x_{(6)}}^4 {x_{(8)}}^2-3 n {x_{(6)}}^4 {x_{(8)}}^2+4 n^2 {x_{(6)}}^3 {x_{(8)}}^2 \\
+12 n {x_{(6)}}^3 {x_{(8)}}^2-n {x_{(6)}}^2 {x_{(8)}}^2-2 n {x_{(8)}}^2+12 n {x_{(6)}} {x_{(8)}}^2  
+36 {x_{(6)}} {x_{(8)}}^2-4 n^2 {x_{(6)}}^3 {x_{(8)}}\\
-12 n {x_{(6)}}^3 {x_{(8)}}+4 n^2 {x_{(6)}}^2 {x_{(8)}}+16 n {x_{(6)}}^2 {x_{(8)}}+12 {x_{(6)}}^2 {x_{(8)}}+8 n {x_{(8)}}
-12 n {x_{(6)}} {x_{(8)}}-36 {x_{(6)}} {x_{(8)}}\\
+24 {x_{(8)}}+2 n^2 {x_{(6)}}^4+6 n {x_{(6)}}^4-2 n^2 {x_{(6)}}^2-2 n {x_{(6)}}^2+12 {x_{(6)}}^2-4 n-12)
\big),
\\ \\
u_3 = -1/\big((n-1) (n+1) {x_{(6)}} {x_{(8)}} (n {x_{(6)}}^4-n {x_{(8)}}^2 {x_{(6)}}^2+{x_{(8)}}^2 {x_{(6)}}^2+n {x_{(6)}}^2+4 {x_{(6)}}^2-{x_{(8)}}^4\\
-{x_{(8)}}^2+2) \big)\times 
\big(n^3 {x_{(6)}}^6+3 n^2 {x_{(6)}}^6-2 n^3 {x_{(8)}} {x_{(6)}}^5-6 n^2 {x_{(8)}} {x_{(6)}}^5-n^3 {x_{(6)}}^4-n^2 {x_{(6)}}^4
\\ -n^3 {x_{(8)}}^2 {x_{(6)}}^4+n^2 {x_{(8)}}^2 {x_{(6)}}^4 
+4 n {x_{(8)}}^2 {x_{(6)}}^4+6 n {x_{(6)}}^4+2 n^3 {x_{(8)}} {x_{(6)}}^4+8 n^2 {x_{(8)}} {x_{(6)}}^4 \\
+6 n {x_{(8)}} {x_{(6)}}^4+2 n^3 {x_{(8)}}^3 {x_{(6)}}^3+4 n^2 {x_{(8)}}^3 {x_{(6)}}^3 -6 n {x_{(8)}}^3 {x_{(6)}}^3-2 n^3 {x_{(8)}}^2 {x_{(6)}}^3-8 n^2 {x_{(8)}}^2 {x_{(6)}}^3\\
-6 n {x_{(8)}}^2 {x_{(6)}}^3-6 n^2 {x_{(8)}} {x_{(6)}}^3-18 n {x_{(8)}} {x_{(6)}}^3-4 n^2 {x_{(8)}}^4 {x_{(6)}}^2
-n {x_{(8)}}^4 {x_{(6)}}^2+{x_{(8)}}^4 {x_{(6)}}^2\\
+2 n^2 {x_{(8)}}^3 {x_{(6)}}^2+8 n {x_{(8)}}^3 {x_{(6)}}^2+6 {x_{(8)}}^3 {x_{(6)}}^2-2 n^2 {x_{(6)}}^2+2 n {x_{(8)}}^2 {x_{(6)}}^2-2 {x_{(8)}}^2 {x_{(6)}}^2
\\
-6 n {x_{(6)}}^2+4 n^2 {x_{(8)}} {x_{(6)}}^2+12 n {x_{(8)}} {x_{(6)}}^2+2 n^2 {x_{(8)}}^5 {x_{(6)}}+6 n {x_{(8)}}^5 {x_{(6)}}-2 n^2 {x_{(8)}}^4 {x_{(6)}}
\end{array}
\end{equation*}
\begin{equation*}
\begin{array}{l}
-8 n {x_{(8)}}^4 {x_{(6)}}-6 {x_{(8)}}^4 {x_{(6)}} 
+4 n^2 {x_{(8)}}^3 {x_{(6)}}+14 n {x_{(8)}}^3 {x_{(6)}}+6 {x_{(8)}}^3 {x_{(6)}}-4 n^2 {x_{(8)}}^2 {x_{(6)}}\\
-12 n {x_{(8)}}^2 {x_{(6)}}
-3 n {x_{(8)}}^6-{x_{(8)}}^6-3 n {x_{(8)}}^4 
-{x_{(8)}}^4+6 n {x_{(8)}}^2+2 {x_{(8)}}^2
\big), 
\end{array}
\end{equation*} }
{\small \begin{equation*}
\begin{array}{l} 
v_4 =1/\big({(n+3) {x_{(6)}} ({x_{(8)}}^2+2 ) (-n {x_{(6)}}^4+n {x_{(8)}}^2 {x_{(6)}}^2-{x_{(8)}}^2 {x_{(6)}}^2-n {x_{(6)}}^2-4 {x_{(6)}}^2+{x_{(8)}}^4}\\
+{x_{(8)}}^2-2 ) \big) \times 
3 {x_{(8)}} (n {x_{(6)}}^2+{x_{(8)}}^2+2) (-n {x_{(6)}}^4+n {x_{(8)}}^2 {x_{(6)}}^2-{x_{(8)}}^2 {x_{(6)}}^2 
+n {x_{(6)}}^2\\
-2 n {x_{(8)}} {x_{(6)}}^2-6 {x_{(8)}} {x_{(6)}}^2+2 {x_{(6)}}^2 
+2 n {x_{(8)}}^2 {x_{(6)}}+6 {x_{(8)}}^2 {x_{(6)}}-2 n {x_{(8)}} {x_{(6)}}-6 {x_{(8)}} {x_{(6)}}+{x_{(8)}}^4\\
+{x_{(8)}}^2-2)
,
\\ \\
v_5 = 1/\big({3{x_{(8)}} \left(n {x_{(6)}}^4-n {x_{(8)}}^2 {x_{(6)}}^2+{x_{(8)}}^2 {x_{(6)}}^2+n {x_{(6)}}^2+4 {x_{(6)}}^2-{x_{(8)}}^4-{x_{(8)}}^2+2\right)}\big) \times\\ 
{x_{(6)}} ({x_{(8)}}^2+2) (n {x_{(6)}}^4-n {x_{(8)}}^2 {x_{(6)}}^2+{x_{(8)}}^2 {x_{(6)}}^2-n {x_{(6)}}^2 
+2 n {x_{(8)}} {x_{(6)}}^2+6 {x_{(8)}} {x_{(6)}}^2-2{x_{(6)}}^2\\
-2 n {x_{(8)}}^2 {x_{(6)}}-6 {x_{(8)}}^2 {x_{(6)}}+2 n {x_{(8)}} {x_{(6)}}+6 {x_{(8)}} {x_{(6)}}-{x_{(8)}}^4-{x_{(8)}}^2+2). 
\end{array}
\end{equation*} }
 We substitute the above expressions for $u_2, u_3, v_4$ and $v_5$ into the equations
{\begin{eqnarray*}
&& r_2 - r_3 = 0 \Leftrightarrow\\
&& -2 {u_2} {u_3}^2 {x_{(6)}}^2-{u_2}{u_3}^2 {x_{(8)}}^2 {x_{(6)}}^2-n \,{u_2} {x_{(8)}}^2 {x_{(6)}}^2 +2 {u_3} {x_{(8)}}^2{x_{(6)}}^2\\
&&+n \,{u_2}^2 {u_3} {x_{(6)}}^2
+{u_2}^2 {u_3} {x_{(8)}}^2=0,\\
&& r_3 - r_4 = 0 \Leftrightarrow\\
&&  3{x_{(8)}}^2 {u_3}^2+6 {u_3}^2-n {v_4}  {x_{(8)}}^2 {u_3}-3 {v_4} {x_{(8)}}^2 {u_3}-2 n {v_4} {u_3}-6 {v_4} {u_3}+3 n {x_{(8)}}^2 =0
\end{eqnarray*} }

\noindent
and obtain two equations $F_1(x_{(6)}, x_{(8)}) = 0$ and $F_2(x_{(6)}, x_{(8)}) = 0$ with parameter $n$.

 It is possible to pursue  computations for any value of $n$.   However, we will restrict ourselves to the case
$n=2$, not only due to  space limitations, but also because  this corresponds to the  Lie group $\SU(5)$
(the special unitary group of lowest rank known up to now to admit a non naturally reductive Einstein metric).
In this case 
$c = -\frac{2 \left({x_{(8)}}^2-1\right)}{\sqrt{5} \left({x_{(8)}}^2+2\right)}$ and 
 the   substitution of $u_2, u_3, v_4, v_5$ into the equations
$r_2 - r_3 = 0$ and $r_3 - r_4 = 0$ 
gives the  equations $ -10 ({x_{(6)}}-1) {x_{(6)}} F_1(x_{(6)}, x_{(8)})=0$ and 
$ 10 ({x_{(8)}}-{x_{(6)}}) F_2(x_{(6)}, x_{(8)})=0$, where 
{\small \begin{eqnarray*}
&& F_1(x_{(6)}, x_{(8)}) =  (-98 {x_{(8)}}^{19}+273 {x_{(6)}} {x_{(8)}}^{18}+273{x_{(8)}}^{18} -378 {x_{(6)}}^2 {x_{(8)}}^{17}-1368 {x_{(6)}} {x_{(8)}}^{17}\\
&&-378{x_{(8)}}^{17}+791t{x_{(6)}}^3 {x_{(8)}}^{16} +2201 {x_{(6)}}^2 {x_{(8)}}^{16}+2201{x_{(6)}} {x_{(8)}}^{16}+791 {x_{(8)}}^{16}+42 {x_{(6)}}^4 {x_{(8)}}^{15}\\
&& -4994{x_{(6)}}^3 {x_{(8)}}^{15}+1092 {x_{(6)}}^2 {x_{(8)}}^{15}-4994 {x_{(6)}}{x_{(8)}}^{15}+42 {x_{(8)}}^{15} -903 {x_{(6)}}^5 {x_{(8)}}^{14}\\
&& +3397 {x_{(6)}}^4{x_{(8)}}^{14}-4014 {x_{(6)}}^3 {x_{(8)}}^{14} -4014 {x_{(6)}}^2 {x_{(8)}}^{14}+3397{x_{(6)}} {x_{(8)}}^{14} -903 {x_{(8)}}^{14} \\
&& +1330 {x_{(6)}}^6 {x_{(8)}}^{13}+1044{x_{(6)}}^5 {x_{(8)}}^{13}+17190 {x_{(6)}}^4 {x_{(8)}}^{13} +29176 {x_{(6)}}^3{x_{(8)}}^{13} +17190 {x_{(6)}}^2 {x_{(8)}}^{13}\\
&& +1044 {x_{(6)}} {x_{(8)}}^{13} +1330{x_{(8)}}^{13}-2919 {x_{(6)}}^7 {x_{(8)}}^{12}-4909 {x_{(6)}}^6 {x_{(8)}}^{12} -43035{x_{(6)}}^5 {x_{(8)}}^{12}\\
 && -81465 {x_{(6)}}^4 {x_{(8)}}^{12}
 -81465 {x_{(6)}}^3{x_{(8)}}^{12} -43035 {x_{(6)}}^2 {x_{(8)}}^{12}-4909 {x_{(6)}} {x_{(8)}}^{12}-2919{x_{(8)}}^{12} \\
&& +336 {x_{(6)}}^8 {x_{(8)}}^{11}  +15158 {x_{(6)}}^7 {x_{(8)}}^{11} 
 +41888{x_{(6)}}^6 {x_{(8)}}^{11}+192250 {x_{(6)}}^5 {x_{(8)}}^{11}\\
 && +144528 {x_{(6)}}^4{x_{(8)}}^{11} +192250 {x_{(6)}}^3 {x_{(8)}}^{11}  +41888 {x_{(6)}}^2{x_{(8)}}^{11} +15158 {x_{(6)}} {x_{(8)}}^{11}+336 {x_{(8)}}^{11}\\ &&
 +1946 {x_{(6)}}^9{x_{(8)}}^{10} -9574 {x_{(6)}}^8 {x_{(8)}}^{10}  -42436 {x_{(6)}}^7 {x_{(8)}}^{10}-236716{x_{(6)}}^6 {x_{(8)}}^{10} \\&&
 -305528 {x_{(6)}}^5 {x_{(8)}}^{10}  -305528 {x_{(6)}}^4{x_{(8)}}^{10}
  -236716 {x_{(6)}}^3 {x_{(8)}}^{10}-42436 {x_{(6)}}^2 {x_{(8)}}^{10} \\&&
  -9574{x_{(6)}} {x_{(8)}}^{10} +1946 {x_{(8)}}^{10}-1680 {x_{(6)}}^{10} {x_{(8)}}^9-808{x_{(6)}}^9 {x_{(8)}}^9 +15456 {x_{(6)}}^8 {x_{(8)}}^9 
   \\&&+238584 {x_{(6)}}^7{x_{(8)}}^9+373104 {x_{(6)}}^6 {x_{(8)}}^9+490608 {x_{(6)}}^5 {x_{(8)}}^9 +373104{x_{(6)}}^4 {x_{(8)}}^9 \\
  && +238584 {x_{(6)}}^3 {x_{(8)}}^9 +15456 {x_{(6)}}^2{x_{(8)}}^9-808 {x_{(6)}} {x_{(8)}}^9 -1680 {x_{(8)}}^9+3108 {x_{(6)}}^{11}{x_{(8)}}^8 
  \end{eqnarray*}
  \begin{eqnarray*}
   \\&&+4988 {x_{(6)}}^{10} {x_{(8)}}^8 +42868 {x_{(6)}}^9 {x_{(8)}}^8 -181092{x_{(6)}}^8 {x_{(8)}}^8-320756 {x_{(6)}}^7 {x_{(8)}}^8-523916 {x_{(6)}}^6{x_{(8)}}^8 
    \\&&-523916 {x_{(6)}}^5 {x_{(8)}}^8 -320756 {x_{(6)}}^4 {x_{(8)}}^8-181092{x_{(6)}}^3 {x_{(8)}}^8+42868 {x_{(6)}}^2 {x_{(8)}}^8 +4988 {x_{(6)}} {x_{(8)}}^8
     \\&&+3108{x_{(8)}}^8-224 {x_{(6)}}^{12} {x_{(8)}}^7 -14856 {x_{(6)}}^{11} {x_{(8)}}^7 -48528{x_{(6)}}^{10} {x_{(8)}}^7-9192 {x_{(6)}}^9 {x_{(8)}}^7 \\&&
     +347248 {x_{(6)}}^8{x_{(8)}}^7+186248 {x_{(6)}}^7 {x_{(8)}}^7  +747408 {x_{(6)}}^6 {x_{(8)}}^7+186248{x_{(6)}}^5 {x_{(8)}}^7 \\&&
     +347248 {x_{(6)}}^4 {x_{(8)}}^7
      -9192 {x_{(6)}}^3{x_{(8)}}^7-48528 {x_{(6)}}^2 {x_{(8)}}^7  -14856 {x_{(6)}} {x_{(8)}}^7 -224{x_{(8)}}^7 
      \\&&
      -2520 {x_{(6)}}^{13} {x_{(8)}}^6+6280 {x_{(6)}}^{12} {x_{(8)}}^6 +46656{x_{(6)}}^{11} {x_{(8)}}^6+78176 {x_{(6)}}^{10} {x_{(8)}}^6\\
&& -181024 {x_{(6)}}^9{x_{(8)}}^6  -122464 {x_{(6)}}^8 {x_{(8)}}^6-416304 {x_{(6)}}^7 {x_{(8)}}^6-416304{x_{(6)}}^6 {x_{(8)}}^6
  \\ && -122464 {x_{(6)}}^5 {x_{(8)}}^6  -181024 {x_{(6)}}^4{x_{(8)}}^6+78176 {x_{(6)}}^3 {x_{(8)}}^6 +46656 {x_{(6)}}^2 {x_{(8)}}^6+6280 {x_{(6)}}{x_{(8)}}^6\\
 && -2520 {x_{(8)}}^6+672 {x_{(6)}}^{14} {x_{(8)}}^5 
 +5824 {x_{(6)}}^{13}{x_{(8)}}^5-35360 {x_{(6)}}^{12} {x_{(8)}}^5-88704 {x_{(6)}}^{11} {x_{(8)}}^5 \\&&+79968{x_{(6)}}^{10} {x_{(8)}}^5-17760 {x_{(6)}}^9 {x_{(8)}}^5  
  +290720 {x_{(6)}}^8{x_{(8)}}^5 +173280 {x_{(6)}}^7 {x_{(8)}}^5\\&&
  +290720 {x_{(6)}}^6 {x_{(8)}}^5-17760{x_{(6)}}^5 {x_{(8)}}^5 +79968 {x_{(6)}}^4 {x_{(8)}}^5 
 -88704 {x_{(6)}}^3{x_{(8)}}^5-35360 {x_{(6)}}^2 {x_{(8)}}^5 \\&&
 +5824 {x_{(6)}} {x_{(8)}}^5+672{x_{(8)}}^5+224 {x_{(6)}}^{15} {x_{(8)}}^4-2656 {x_{(6)}}^{14} {x_{(8)}}^4  +320{x_{(6)}}^{13} {x_{(8)}}^4\\ &&
 +114880 {x_{(6)}}^{12} {x_{(8)}}^4-88544 {x_{(6)}}^{11}{x_{(8)}}^4 +156576 {x_{(6)}}^{10} {x_{(8)}}^4-194400 {x_{(6)}}^9 {x_{(8)}}^4\\
&& -74400{x_{(6)}}^8 {x_{(8)}}^4 -74400 {x_{(6)}}^7 {x_{(8)}}^4-194400 {x_{(6)}}^6{x_{(8)}}^4+156576 {x_{(6)}}^5 {x_{(8)}}^4 -88544 {x_{(6)}}^4 {x_{(8)}}^4
 \\    && +114880{x_{(6)}}^3 {x_{(8)}}^4 +320 {x_{(6)}}^2 {x_{(8)}}^4 -2656 {x_{(6)}} {x_{(8)}}^4+224{x_{(8)}}^4+8640 {x_{(6)}}^{14} {x_{(8)}}^3 
 \\ &&
-54720 {x_{(6)}}^{13} {x_{(8)}}^3 -18880{x_{(6)}}^{12} {x_{(8)}}^3+42560 {x_{(6)}}^{11} {x_{(8)}}^3-207040 {x_{(6)}}^{10}{x_{(8)}}^3 
\\ &&
+412160 {x_{(6)}}^9 {x_{(8)}}^3 -365440 {x_{(6)}}^8 {x_{(8)}}^3 +412160{x_{(6)}}^7 {x_{(8)}}^3 -207040 {x_{(6)}}^6 {x_{(8)}}^3
\\ &&
+42560 {x_{(6)}}^5{x_{(8)}}^3-18880 {x_{(6)}}^4 {x_{(8)}}^3 -54720 {x_{(6)}}^3 {x_{(8)}}^3 +8640{x_{(6)}}^2 {x_{(8)}}^3-2560 {x_{(6)}}^{15} {x_{(8)}}^2
\\&& 
 +7040 {x_{(6)}}^{14}{x_{(8)}}^2+46720 {x_{(6)}}^{13} {x_{(8)}}^2 -68480 {x_{(6)}}^{12} {x_{(8)}}^2 +136960{x_{(6)}}^{11} {x_{(8)}}^2\\
 &&-141440 {x_{(6)}}^{10} {x_{(8)}}^2+21760 {x_{(6)}}^9{x_{(8)}}^2 +21760 {x_{(6)}}^8 {x_{(8)}}^2-141440 {x_{(6)}}^7 {x_{(8)}}^2 \\&&
 +136960{x_{(6)}}^6 {x_{(8)}}^2 -68480 {x_{(6)}}^5 {x_{(8)}}^2+46720 {x_{(6)}}^4{x_{(8)}}^2+7040 {x_{(6)}}^3 {x_{(8)}}^2 -2560 {x_{(6)}}^2 {x_{(8)}}^2
 \\ &&
 -19200{x_{(6)}}^{14} {x_{(8)}} +9600 {x_{(6)}}^{13} {x_{(8)}} -19200 {x_{(6)}}^{12}{x_{(8)}}+38400 {x_{(6)}}^{10} {x_{(8)}}-19200 {x_{(6)}}^9 {x_{(8)}} 
 \\
 &&+38400 {x_{(6)}}^8{x_{(8)}}-19200 {x_{(6)}}^6 {x_{(8)}}+9600 {x_{(6)}}^5 {x_{(8)}} -19200 {x_{(6)}}^4{x_{(8)}}+3200 {x_{(6)}}^{15}\\
 &&+3200 {x_{(6)}}^{14}
+3200 {x_{(6)}}^{13}+3200{x_{(6)}}^{12}-6400 {x_{(6)}}^{11}-6400 {x_{(6)}}^{10} -6400 {x_{(6)}}^9-6400{x_{(6)}}^8\\&&+3200 {x_{(6)}}^7 +3200 {x_{(6)}}^6+3200 {x_{(6)}}^5 +3200 {x_{(6)}}^4)
\end{eqnarray*} }
and 
{\small  \begin{eqnarray*}
&& F_2(x_{(6)}, x_{(8)}) = (-64 {x_{(6)}}^{11} {x_{(8)}}^2-80{x_{(6)}}^{11}+144 {x_{(6)}}^{10} {x_{(8)}}^3 +240 {x_{(6)}}^{10} {x_{(8)}}\\
&& -44{x_{(6)}}^9 {x_{(8)}}^4-432 {x_{(6)}}^9 {x_{(8)}}^3-88 {x_{(6)}}^9 {x_{(8)}}^2-480{x_{(6)}}^9 {x_{(8)}}-116 {x_{(6)}}^8 {x_{(8)}}^5 +720 {x_{(6)}}^8 {x_{(8)}}^4\\
&& -40{x_{(6)}}^8 {x_{(8)}}^3+960 {x_{(6)}}^8 {x_{(8)}}^2+480 {x_{(6)}}^8 {x_{(8)}}+156{x_{(6)}}^7 {x_{(8)}}^6-288 {x_{(6)}}^7 {x_{(8)}}^5 -548 {x_{(6)}}^7 {x_{(8)}}^4\\
&& -592{x_{(6)}}^7 {x_{(8)}}^3-1088 {x_{(6)}}^7 {x_{(8)}}^2-320 {x_{(6)}}^7 {x_{(8)}}+160{x_{(6)}}^7-136 {x_{(6)}}^6 {x_{(8)}}^7 +76 {x_{(6)}}^6 {x_{(8)}}^5\\
&& +1280 {x_{(6)}}^6{x_{(8)}}^4-320 {x_{(6)}}^6 {x_{(8)}}^3+2080 {x_{(6)}}^6 {x_{(8)}}^2-160 {x_{(6)}}^6{x_{(8)}} +49 {x_{(6)}}^5 {x_{(8)}}^8\\ &&
+324 {x_{(6)}}^5 {x_{(8)}}^7
 -330 {x_{(6)}}^5{x_{(8)}}^6+448 {x_{(6)}}^5 {x_{(8)}}^5-1396 {x_{(6)}}^5 {x_{(8)}}^4+368 {x_{(6)}}^5{x_{(8)}}^3 \\
 &&
 -1968 {x_{(6)}}^5 {x_{(8)}}^2+480 {x_{(6)}}^5 {x_{(8)}}  +51 {x_{(6)}}^4{x_{(8)}}^9-540 {x_{(6)}}^4 {x_{(8)}}^8+866 {x_{(6)}}^4 {x_{(8)}}^7\\
   \end{eqnarray*}
  \begin{eqnarray*}
  &&
 -1720 {x_{(6)}}^4{x_{(8)}}^6 +1708 {x_{(6)}}^4 {x_{(8)}}^5-560 {x_{(6)}}^4 {x_{(8)}}^4  +240 {x_{(6)}}^4{x_{(8)}}^3+960 {x_{(6)}}^4 {x_{(8)}}^2\\
 &&
 -480 {x_{(6)}}^4 {x_{(8)}}-70 {x_{(6)}}^3{x_{(8)}}^{10} +324 {x_{(6)}}^3 {x_{(8)}}^9-394 {x_{(6)}}^3 {x_{(8)}}^8  +880 {x_{(6)}}^3{x_{(8)}}^7\\
 &&-440 {x_{(6)}}^3 {x_{(8)}}^6+448 {x_{(6)}}^3 {x_{(8)}}^5-268 {x_{(6)}}^3{x_{(8)}}^4 +128 {x_{(6)}}^3 {x_{(8)}}^3-848 {x_{(6)}}^3 {x_{(8)}}^2\\
&& +320 {x_{(6)}}^3{x_{(8)}}-80 {x_{(6)}}^3+50 {x_{(6)}}^2 {x_{(8)}}^{11}-180 {x_{(6)}}^2{x_{(8)}}^{10} +450 {x_{(6)}}^2 {x_{(8)}}^9-1060 {x_{(6)}}^2 {x_{(8)}}^8\\
&& +1360{x_{(6)}}^2 {x_{(8)}}^7-1920 {x_{(6)}}^2 {x_{(8)}}^6+1700 {x_{(6)}}^2 {x_{(8)}}^5-1040{x_{(6)}}^2 {x_{(8)}}^4 +720 {x_{(6)}}^2 {x_{(8)}}^3\\
&& -80 {x_{(6)}}^2 {x_{(8)}} -27{x_{(6)}} {x_{(8)}}^{12}+72 {x_{(6)}} {x_{(8)}}^{11}-180 {x_{(6)}} {x_{(8)}}^{10}+284{x_{(6)}} {x_{(8)}}^9 -311 {x_{(6)}} {x_{(8)}}^8\\ &&
+204 {x_{(6)}} {x_{(8)}}^7  +66 {x_{(6)}}{x_{(8)}}^6-288 {x_{(6)}} {x_{(8)}}^5+396 {x_{(6)}} {x_{(8)}}^4-272 {x_{(6)}}{x_{(8)}}^3+56 {x_{(6)}} {x_{(8)}}^2\\
&& +7 {x_{(8)}}^{13}+28 {x_{(8)}}^{11}+7{x_{(8)}}^9-70 {x_{(8)}}^7-28 {x_{(8)}}^5+56 {x_{(8)}}^3).
\end{eqnarray*} }  
Actually, the computer outputs are 
$-10 ({x_{(6)}}-1) {x_{(6)}}$ $ F_1(x_{(6)}, x_{(8)})/A$ and
$10({x_{(8)}}-{x_{(6)}}) F_2(x_{(6)}, x_{(8)})/B$,
 where 
$A = 27 {x_{(8)}}^3 (2 {x_{(6)}}^4-{x_{(6)}}^2 {x_{(8)}}^2+6{x_{(6)}}^2-{x_{(8)}}^4-{x_{(8)}}^2+2)^3$ and
$ B= 3 {x_{(6)}}^2 {x_{(8)}}^2 (-2 {x_{(6)}}^4+{x_{(6)}}^2 {x_{(8)}}^2-6
   {x_{(6)}}^2+ {x_{(8)}}^4+{x_{(8)}}^2-2)^2$,
but we omit $A$ and $B$ since these are non zero.

\smallskip
 If either  $x_{(8)} - x_{(6)} = 0$  or $x_{(6)}=1$ then the solutions obtained 
correspond to naturally reductive Einstein metrics.

Next, we solve the equations $F_1 = 0, F_2 = 0$.
We consider the polynomial ring $R= {\mathbb Q}[z, x_{(6)}, x_{(8)}] $ and the ideal $I$  generated by the polynomials $\{ \,z \, x_{(6)} \, x_{(8)}-1,$  $F_1, \, F_2 \}$.   We take a lexicographic ordering $>$   with $ z >  x_{(6)} > x_{(8)}$ for a monomial ordering on $R$. Then, by the aid of computer, we see that a  Gr\"obner basis for the ideal $I$ contains a  polynomial  of   $x_{(8)}$  given by 
$$({x_{(8)}}-1)^4 \left(3 {x_{(8)}}^2+4 {x_{(8)}}+8\right) \left(5
   {x_{(8)}}^2+2\right)^3 \left(9 {x_{(8)}}^2+5\right)^2 g_1(x_{(8)}), 
$$
where 
\begin{footnotesize}
\begin{eqnarray*}
& & 
g_1(x_{(8)})= 806688936348626758637763 {x_{(8)}}^{50}-13263262804910158271167230
   {x_{(8)}}^{49}\\
&&+122600899294485079451070969
   {x_{(8)}}^{48}-822398683556288995256129652
   {x_{(8)}}^{47}\\
&&+4399773837901125736682019222
   {x_{(8)}}^{46}-19777767225231052945149636492
   {x_{(8)}}^{45}\\
&&+77189030664419243971210900356
   {x_{(8)}}^{44}-267412481847662378680151841744
   {x_{(8)}}^{43}\\
&&+835652804996925618391403544816
   {x_{(8)}}^{42}-2384123593335214303646778048624
   {x_{(8)}}^{41}\\
&&+6267932916334423480011027655836
   {x_{(8)}}^{40}-15295986913633691395698095682816
   {x_{(8)}}^{39}\\
&&+34849018083414433690580776827648
   {x_{(8)}}^{38}-74464685236414498523260321269792
   {x_{(8)}}^{37}\\
&&+149775187669204629807886224706848
   {x_{(8)}}^{36}-284392847271756402839231579737856
   {x_{(8)}}^{35}\\
&&+510957262333041322777745962488048
   {x_{(8)}}^{34}-870202594655466073484714801237984
   {x_{(8)}}^{33}\\
&&+1406785689907755973691204239043056
   {x_{(8)}}^{32}-2161000636271658048523009797379136
   {x_{(8)}}^{31}\\
&&+3156573099492698926305125730681312
   {x_{(8)}}^{30}-4386411014617644307370320689532352
   {x_{(8)}}^{29}\\
&&+5800052119705927583296215652661568
   {x_{(8)}}^{28}-7297773273562989425436434239009536
   {x_{(8)}}^{27}\\
&&+8735998247540128939755053826847872
   {x_{(8)}}^{26}-9946218172356372780360570705842688
   {x_{(8)}}^{25} \\
&&+10765377064651986382470072553346112
   {x_{(8)}}^{24}-11070907901497474448561753730673152
   {x_{(8)}}^{23} \\
&&+10810407449543829554208165804540672
   {x_{(8)}}^{22}-10016169623180806451167033142842368
   {x_{(8)}}^{21}  
\\&&+8799062122173126538327519312214016
   {x_{(8)}}^{20}-7323154324975883645506836366458880
   {x_{(8)}}^{19}
     \end{eqnarray*} 
\begin{eqnarray*} 
&&+5769126123281919878491583853674496
   {x_{(8)}}^{18}-4297867752440180606285372758228992
   {x_{(8)}}^{17} \\
&&+3024464692354751261405789995008000
   {x_{(8)}}^{16}-2007868005418133840016358187728896
   {x_{(8)}}^{15}
\\
&&+1255570192626380637861179751923712
   {x_{(8)}}^{14}-738157675727844934568279109795840
   {x_{(8)}}^{13}\\
&&+407056027405833286440301394657280
   {x_{(8)}}^{12}-209943346903337341167588291379200
   {x_{(8)}}^{11}\\
&&+100903380382253021546263923916800
   {x_{(8)}}^{10}-44980383610420820508382593024000
   {x_{(8)}}^9\\
&&+18483164273291582549151186944000
   {x_{(8)}}^8-6943558170619368927689441280000
   {x_{(8)}}^7\\
&&+2358038540179746141860003840000
   {x_{(8)}}^6-712584577413002234429440000000
   {x_{(8)}}^5\\
&&+187291856473914145872281600000
   {x_{(8)}}^4-41355915143242649174016000000
   {x_{(8)}}^3\\
&&+7244299545842737479680000000
   {x_{(8)}}^2-901211137791366266880000000 {x_{(8)}}\\
&&+59960536211694551040000000. 
   \end{eqnarray*} 
   \end{footnotesize}
We consider an ideal $J$ generated by  the polynomials $\{ F_1, \, F_2,  g_1 \}$.  
  We take a lexicographic ordering $>$   with $  x_{(6)} > x_{(8)}$ for a monomial ordering on $R$. Then, by the aid of computer, we see that a  Gr\"obner basis for the ideal $J$ contains the  polynomial $g_1$ of   $x_{(8)}$ and 
  a polynomial $h_1$ of  $x_{(6)}$ and  $x_{(8)}$ of the form   
  $$ h_1(x_{(6)}, x_{(8)}) = a x_{(6)} + \sum_{k =0}^{49} b_k {x_{(8)}^{}}^{k},
  $$
  where $a \in {\mathbb R}$ and $b_i  \in {\mathbb R}$ ($ i = 0,1,\dots, 49$). 
 By solving $ g_1= 0$ and $h_1=0$
approximately, we obtain the following results:
\begin{itemize}
\item[(1)] $(x_{(6)}, x_{(8)}) \approx (1.887796062233598, 1.815613725084982)$
\item[(2)] $(x_{(6)}, x_{(8)}) \approx(0.5297182359925161,  0.9617636996958176)$.
\end{itemize}

\noindent By substituting these values  into $u_2, u_3, v_4, v_5$,  we obtain the following:
For the solutions $(1)$ we have
$$
(u_2, u_3, v_4, v_5) \approx (0.614275909576,  0.790016897212,  1.4193906403596, 1.9248702704348)
  $$
 and for the solutions $(2)$ we have
$$
(u_2, u_3, v_4, v_5) \approx ( 0.4184863571955,  0.325393151233,  
1.3614688261843,  0.5631000275946).
$$

From the above computations we obtain the following:

\begin{theorem}  The compact Lie group
$\SU(5)$ admits  two non naturally reductive Einstein metrics which correspond to  $\Ad(\s(\U(1)\times\U(2)\times\U(2)))$-invariant inner products of the form {\rm (\ref{metrics1})}. 
 \end{theorem}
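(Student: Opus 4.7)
The plan is to specialize the Ricci tensor formulas from Propositions~\ref{ricci1} and \ref{ricci3} to the case $\ell=1$, $m=2$, $n=2$ (so that $G=\SU(5)$ and $\fr{h}_1 = 0$), and to reduce the Einstein system (\ref{sist1}) to a two-variable polynomial system accessible to Gr\"obner basis methods. Using the $QR$-type reduction of Subsection~5.2 on the center $\fr{h}_0$, one may set $a=d=1$, $b=0$ in (\ref{form}), leaving $c$ as the only off-diagonal parameter; a homothety allows the additional normalization $x_{(7)} = 1$. Solving the off-diagonal equation $r_0 = 0$ from Proposition~\ref{ricci1} then expresses $c$ as an explicit rational function of $x_{(8)}$ alone, and (\ref{sist1}) collapses to system (\ref{sist2}) in the six unknowns $u_2, u_3, v_4, v_5, x_{(6)}, x_{(8)}$.

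The crucial observation is that, after this substitution, the four equations $r_4 - r_5 = 0$, $r_5 - r_6 = 0$, $r_6 - r_7 = 0$, $r_7 - r_8 = 0$ are \emph{linear} in $u_2, u_3, v_4, v_5$, so one can solve uniquely for these four parameters as rational functions of $x_{(6)}, x_{(8)}$. Inserting the resulting expressions into the two remaining equations $r_2 - r_3 = 0$ and $r_3 - r_4 = 0$ yields polynomial equations $F_1(x_{(6)}, x_{(8)}) = 0$ and $F_2(x_{(6)}, x_{(8)}) = 0$, after discarding the linear factors $x_{(6)}(x_{(6)}-1)$ and $x_{(8)} - x_{(6)}$; in view of the normalization $x_{(7)}=1$, these factors correspond precisely to the naturally reductive cases listed in Proposition~\ref{reductive}, and so can be excluded.

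To solve $F_1 = F_2 = 0$, I would compute a Gr\"obner basis for the ideal $I = \langle z\, x_{(6)} x_{(8)} - 1,\ F_1,\ F_2\rangle$ in $\bb{Q}[z, x_{(6)}, x_{(8)}]$ with lexicographic order $z > x_{(6)} > x_{(8)}$; the auxiliary variable $z$ saturates away spurious zero components. The basis should contain a univariate polynomial in $x_{(8)}$ --- isolating a nontrivial irreducible factor $g_1$ after stripping off factors coming from the reductive degenerations --- and a further basis element linear in $x_{(6)}$, so that every real root of $g_1$ determines a unique candidate pair $(x_{(6)}, x_{(8)})$. I would then verify that $g_1$ has real positive roots giving positive $x_{(6)}, x_{(8)}$ and, via the rational formulas for $u_2, u_3, v_4, v_5$, positive values of these remaining parameters; finally, checking that none of the solutions satisfies any of the equalities $x_{(6)} = x_{(7)} = x_{(8)}$ or pairwise equalities required by Proposition~\ref{reductive} certifies that the resulting Einstein metrics are not naturally reductive.

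The main obstacle is the rigorous handling of the Gr\"obner basis computation together with a proof that $g_1$ has at least two real roots yielding entirely positive metric data (a purely algebraic existence statement, in contrast with the numerics). This can be addressed by a Sturm-sequence count of real roots of $g_1$ in suitable positive intervals, combined with certified root-isolation to pin down $x_{(6)}$ through the linear basis element. The approximate solutions $(x_{(6)}, x_{(8)}) \approx (1.888, 1.816)$ and $(0.530, 0.962)$, and the corresponding positive values of $(u_2, u_3, v_4, v_5)$ obtained by back-substitution, then serve as explicit witnesses for the two advertised non naturally reductive Einstein metrics on $\SU(5)$.
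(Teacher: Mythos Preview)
Your proposal is correct and follows essentially the same route as the paper: the same $QR$-reduction to $a=d=1$, $b=0$, the normalization $x_{(7)}=1$, the elimination of $c$ via $r_0=0$, the key linearity in $u_2,u_3,v_4,v_5$, the reduction to $F_1=F_2=0$ after stripping the naturally reductive factors, and the Gr\"obner basis with saturation $z\,x_{(6)}x_{(8)}-1$ yielding a univariate $g_1(x_{(8)})$ and an element linear in $x_{(6)}$. Your suggestion to use Sturm sequences and certified root isolation is a nice rigorous touch beyond the paper's purely numerical verification, but otherwise the argument is the same.
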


 It is possible to show that 
 the compact Lie group $\SU(n+3)$ admits 
{\it two} left-invariant non naturally reductive Einstein metrics, which correspond to  $\Ad(\s(\U(1)\times\U(2)\times\U(n)))$-invariant inner products of the form {\rm (\ref{metrics1})},  for $2\leq n\leq 12$.
Also, we {\it conjecture} that for
$n\geq 13$, $\SU(n+3)$ admits     
   {\it four} left-invariant non naturally reductive Einstein metrics.  In this case  the difficulty is to find, for general $n$, a
   Gr\"obner basis   for the system of polynomials.

\subsection{A generalization of Mori's result} 
Now we consider the cases when   $\ell=m=2$, $ n \geq 2$ and  $c=0$, so that $x_{(7)}=x_{(8)}=1$. 
 In \cite{M} K. Mori  proved existence of one Einstein metric on $\SU(4+n)$, which corresponds to  $\Ad(\s(\U(2)\times\U(2)\times\U(n)))$-invariant inner products of the form {\rm (\ref{metrics1})}.
 We generalize this result as follows:
 
 \begin{theorem}  The compact Lie group
$\SU(4+n)$ admits two non naturally reductive Einstein metrics for $ 2 \leq n \leq 25$  and four   non naturally reductive Einstein metrics for $n\ge 26$,  which correspond to  $\Ad(\s(\U(2)\times\U(2)\times\U(n)))$-invariant inner products of the form {\rm (\ref{metrics1})}. 
 \end{theorem}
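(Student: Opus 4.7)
The strategy parallels the $\SU(5)$ argument above, but exploits the extra $\mathbb{Z}_{2}$-symmetry coming from $\ell=m=2$. First I would impose the QR-normal form $a=d=1,\ b=0$ of (\ref{form}) together with $c=0$. Plugging these values into the expression for $r_{0}$ in Proposition \ref{ricci1} with $\ell=m=2$, the only surviving contribution is proportional to $x_{(7)}^{-2}-x_{(8)}^{-2}$, so $r_{0}=0$ forces $x_{(7)}=x_{(8)}$; rescaling the metric I may take $x_{(7)}=x_{(8)}=1$. The involution interchanging the two $\U(2)$ factors of $\s(\U(2)\times\U(2)\times\U(n))$ swaps the indices $1\leftrightarrow 2,\ 7\leftrightarrow 8$ and fixes $\tilde{H}_{4}$; a direct check from Proposition \ref{ricci3} then shows that the symmetric ansatz $u_{1}=u_{2}$ automatically satisfies $r_{1}-r_{2}=0$ and $r_{7}-r_{8}=0$.

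After these reductions the Einstein system collapses to
\begin{equation*}
r_{2}-r_{3}=0,\quad r_{3}-r_{4}=0,\quad r_{4}-r_{5}=0,\quad r_{5}-r_{6}=0,\quad r_{6}-r_{7}=0,
\end{equation*}
in the five unknowns $u_{1}(=u_{2}),u_{3},v_{4},v_{5},x_{(6)}$. Exactly as in the $\SU(5)$ case, four of these equations are linear in $(u_{1},u_{3},v_{4},v_{5})$ for fixed $x_{(6)}$ and $n$, so I would solve that linear subsystem to obtain rational expressions $u_{1}=U_{1}(x_{(6)},n),\ u_{3}=U_{3}(x_{(6)},n),\ v_{4}=V_{4}(x_{(6)},n),\ v_{5}=V_{5}(x_{(6)},n)$, substitute into the remaining equation, and extract a single polynomial relation $F_{n}(x_{(6)})=0$. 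A Gr\"obner basis computation in $\mathbb{Q}[z,x_{(6)}]$ localized by $z\,x_{(6)}-1$ would then strip off the factors corresponding to naturally reductive solutions flagged by Proposition \ref{reductive}: the factor $x_{(6)}-1$ (the bi-invariant/fully symmetric case $x_{(6)}=x_{(7)}=x_{(8)}$), and the factor(s) along which $(u_{1},x_{(6)},v_{5})$ become proportional in the way that makes the metric $\Ad(\s(\U(4)\times\U(n)))$-invariant (case (i) of Proposition \ref{reductive}). Cases (ii) and (iii) of the same proposition already require $x_{(6)}=1$ and are excluded once the factor $x_{(6)}-1$ is removed. This yields a reduced polynomial $G_{n}(x_{(6)})$ whose positive real roots different from $1$ are in bijection with the non naturally reductive Einstein metrics of the given form.

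The crux of the proof is counting the positive real roots of $G_{n}$ as a function of $n$. The plan is to compute $\mathrm{disc}_{x_{(6)}}(G_{n})$ as a polynomial in $n$, detect its sign changes on the positive integers, and verify that a pair of complex conjugate roots becomes real precisely in passing from $n=25$ to $n=26$. Concretely, I would (a) carry out the Gr\"obner basis and resultant computations symbolically in $n$ to identify $G_{n}$ explicitly; (b) apply a parametric Sturm sequence or real root isolation at representative values $n=2,5,10,25,26,50,100$ to confirm the counts of $2$ and $4$ positive roots on each side of the transition; and (c) use an asymptotic expansion of the roots in powers of $n^{-1}$ to stabilize the count as $n\to\infty$.

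The main obstacle is exactly this parametric root-counting: the polynomial $G_{n}$ is expected to be of high degree (on the order of the degree-$50$ polynomial in the $\SU(5)$ case), and the transition at $n=26$ requires discriminant analysis with $n$ as a parameter rather than a finite number of fixed-$n$ calculations. Once the count is established, non natural reductivity of each solution follows from Proposition \ref{reductive} since $x_{(6)}\ne 1=x_{(7)}=x_{(8)}$ rules out cases (2), (ii), (iii), while the explicit form of $U_{1},V_{5}$ obtained in the linear solution step shows that $u_{1}\ne v_{5}$ along the retained component of $G_{n}$, ruling out case (i) as well.
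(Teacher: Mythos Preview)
Your reduction is essentially the paper's: impose $a=d=1$, $b=c=0$, $x_{(7)}=x_{(8)}=1$, observe that the $\mathbb{Z}_{2}$-symmetry forces $u_{1}=u_{2}$, and solve the remaining linear equations to express $u_{1},u_{3},v_{4},v_{5}$ rationally in $x_{(6)}$ and $n$. After factoring out the naturally reductive branch $v_{5}=x_{(6)}$ one is left with a single polynomial $F(x_{(6)},n)$; in the paper this has degree $16$ in $x_{(6)}$, not order $50$ as you anticipate, precisely because the symmetry reduction is so strong.

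Where you diverge from the paper is in the root-counting step, and your proposed method is much heavier than what is actually needed. The paper does \emph{not} compute $\mathrm{disc}_{x_{(6)}}F$ or run a parametric Sturm sequence. Instead it evaluates $F(x_{(6)},n)$ at four cleverly chosen points, namely $x_{(6)}=1$, $x_{(6)}=2$, $x_{(6)}=1/2$, and $x_{(6)}=292/(55n)$, and shows by explicit expansion (rewriting each value as a polynomial in $n-n_{0}$ with positive coefficients for a suitable shift $n_{0}$) that the signs are $-,+,+,-$ respectively, with the sign at $1/2$ becoming positive exactly for $n\ge 26$. Together with the positive constant term this yields two sign changes for $2\le n\le 25$ and four for $n\ge 26$, by the intermediate value theorem. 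This elementary argument replaces the entire discriminant analysis you identify as the main obstacle.

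One point you leave implicit but the paper treats explicitly: positivity of the metric coefficients, in particular $u_{3}$. The expression $U_{3}(x_{(6)},n)$ is a rational function whose sign is not obvious on the solution set of $F=0$. The paper handles this by taking $\mathrm{Res}_{x_{(6)}}(F,G)$, where $G(x_{(6)},u_{3})=0$ encodes $u_{3}=U_{3}$, and observing that the resulting degree-$16$ polynomial in $u_{3}$ has coefficients whose signs force every real root to be positive. You should include an argument of this type; merely solving the linear system does not by itself guarantee $u_{3}>0$ on the relevant branch.
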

 \begin{proof}
 To find non naturally reductive Einstein metrics on $\SU(4+n)$ we will use  Propositions \ref{ricci1} and \ref{ricci3}. 
We consider the system of equations  
\begin{eqnarray*}\label{mori1}
&&   r_{1}-\lambda =0,   r_2 -\lambda =0,\,   r_3 -\lambda =0, \,  r_4 -\lambda =0,\nonumber\\
&&  r_{5}-\lambda =0, \,   r_{6} -\lambda =0, \,   r_{7} -\lambda =0, \  r_{8} -\lambda =0.
\end{eqnarray*}

 From $r_4 -\lambda =0$ we see that $1/4 (-4 \lambda + v_4)=0$  and from $ r_{7} -\lambda =0, \  r_{8} -\lambda =0$, we obtain that $u_2 =u_1$.  By substituting these values into  $r_5-\lambda=0 $,  $r_6-\lambda=0 $ and $r_7-\lambda=0$, we obtain the equations:     
\begin{eqnarray*}\label{mori2}
&&   -n {v_4} {x_{(6)}}^2+n {v_5} {x_{(6)}}^2-4 {v_ 4} {x_{(6)}}^2+4 {v_5} = 0,\nonumber \\ 
 && n {v_4} {x_{(6)}}^2-n {x_{(6)}}^3+3 {u_1}+4 {v_4} {x_{(6)}}^2+{v_5} -8 {x_{(6)}} = 0, \\ 
 && -4 n^2 {u_3}-4 n^2 {v_4}+8 n^2-6 n {u_1}-17 n {v_4}-n{v_5}-8 n {x_{(6)}}+32 n+4 {u_3}-4 {v_4} = 0. \nonumber
\end{eqnarray*} 

By solving these equations with respect to $u_1,u_3, v_4$, we obtain 
\begin{eqnarray*}
& & {u_1}= \frac{1}{3} \left(-n {v_5}{x_{(6)}}^2+n {x_{(6)}}^3-5 {v_5}+8 {x_{(6)}}\right), \\
& & {u_3}= -\frac{1} {2 \left(n^2-1\right) {x_{(6)}}^2}(-n^2 {v_5}
   {x_{(6)}}^4+2 n^2 {v_5} {x_{(6)}}^2+n^2
   {x_{(6)}}^5-4 n^2 {x_{(6)}}^2  \nonumber\\
   & & -4 n {v_5} {x_{(6)}}^2+8 n {v_5}+12 n {x_{(6)}}^3-16 n {x_{(6)}}^2+2 {v_5}),  \\
   & & {v_4}=  \frac{(n  {x_{(6)}}^2+4 ){v_5}}{(n+4){x_{(6)}}^2}
\end{eqnarray*}
Now we see that the equations $ r_{1}-\lambda =0$ and  $r_{3}-\lambda =0$ become respectively:
\begin{eqnarray*}
& & n {u_1}^2 {x_{(6)}}^2-n {u_1} {v_4} {x_{(6)}}^2+2 {u_1}^2-4 {u_1} {v_4} {x_{(6)}}^2+2 {x_{(6)}}^2 = 0, \\
& & -n {u_3} {v_4}+n+4 {u_3}^2-4 {u_3} {v_4} =0. 
\end{eqnarray*}
By substituting the values $u_1,  v_4$ into $ r_{1}-\lambda =0$, we obtain  
\begin{eqnarray}\label{mori3}
& & \frac{1}{9} ({x_{(6)}}-{v_5}) \big(-n^3 {v_5} {x_{(6)}}^6+n^3 {x_{(6)}}^7-15 n^2 {v_5}
   {x_{(6)}}^4+18 n^2 {x_{(6)}}^5  \nonumber\\
  & & -72 n {v_5} {x_{(6)}}^2+96 n {x_{(6)}}^3-110 {v_5}+146 {x_{(6)}}\big) =0,  
\end{eqnarray}
and then by substituting the values $u_3,  v_4$ into $ r_{3}-\lambda =0$, we obtain
 \begin{eqnarray}\label{mori4}
  & & \frac{n}{2 (n-1)^2 (n+1)^2
   {x_{(6)}}^4}  \big(-n^4 {v_5}^2
   {x_{(6)}}^6+2 n^4 {v_5}^2 {x_{(6)}}^4+n^4 {v_5}
   {x_{(6)}}^7-4 n^4 {v_5} {x_{(6)}}^4+2 n^4 {x_{(6)}}^4\nonumber\\
   & & +2 n^3 {v_5}^2 {x_{(6)}}^8-8 n^3
   {v_5}^2 {x_{(6)}}^6+16 n^3 {v_5}^2
   {x_{(6)}}^2-4 n^3 {v_5} {x_{(6)}}^9+8 n^3 {v_5}
   {x_{(6)}}^7+16 n^3 {v_5} {x_{(6)}}^6\nonumber\\
   & & +16 n^3 {v_5} {x_{(6)}}^5    -48 n^3 {v_5} {x_{(6)}}^4-16
   n^3 {v_5} {x_{(6)}}^2+2 n^3 {x_{(6)}}^{10}-16 n^3
   {x_{(6)}}^7+32 n^3 {x_{(6)}}^4\nonumber\\
   & &+17 n^2 {v_5}^2
   {x_{(6)}}^6-66 n^2 {v_5}^2 {x_{(6)}}^4+50 n^2
   {v_5}^2 {x_{(6)}}^2+32 n^2 {v_5}^2-65 n^2
   {v_5} {x_{(6)}}^7+64 n^2 {v_5} {x_{(6)}}^6 \nonumber\\
      & &+128
   n^2 {v_5} {x_{(6)}}^5-60 n^2 {v_5}
   {x_{(6)}}^4+48 n^2 {v_5} {x_{(6)}}^3-192 n^2
   {v_5} {x_{(6)}}^2+48 n^2 {x_{(6)}}^8-64 n^2
   {x_{(6)}}^7 \nonumber\\ & &
   -192 n^2 {x_{(6)}}^5+252 n^2 {x_{(6)}}^4+32
   n {v_5}^2 {x_{(6)}}^4-128 n {v_5}^2
   {x_{(6)}}^2+136 n {v_5}^2-200 n {v_5}
   {x_{(6)}}^5\nonumber\\
    & &
    +272 n {v_5} {x_{(6)}}^4+384 n {v_5}
   {x_{(6)}}^3-528 n {v_5} {x_{(6)}}^2+288 n
   {x_{(6)}}^6-768 n {x_{(6)}}^5+512 n {x_{(6)}}^4  \nonumber\\
      &&-18 {v_5}^2 {x_{(6)}}^2+32 {v_5}^2+48 {v_5}
   {x_{(6)}}^3-64 {v_5} {x_{(6)}}^2+2
   {x_{(6)}}^4 \big) =0. 
\end{eqnarray} 
From equation (\ref{mori3}) we obtain that 
\begin{eqnarray}
& & v_5 = x_{(6)}, \nonumber \\
& & \mbox{or}\  {v_5} =\frac{n^3 {x_{(6)}}^7+18 n^2 {x_{(6)}}^5+96 n
   {x_{(6)}}^3+146 {x_{(6)}}}{n^3 {x_{(6)}}^6+15 n^2
   {x_{(6)}}^4+72 n {x_{(6)}}^2+110}.\label{mori6}
 \end{eqnarray} 
 If $v_5 = x_{(6)}$, then we obtain naturally reductive metrics. 
We then consider the case (\ref{mori6}). By substituting the value $v_5$ into (\ref{mori4})
we obtain  an equation for $x_{(6)}$ of degree 16:
{\small \begin{eqnarray*}
& F(x_{(6)}, n) & = n^7 (2 n+5) \left(n^2+4 n+9\right) {x_{(6)}}^{16}-4 n^7 (n+4) \left(n^2+8 n+19\right) {x_{(6)}}^{15} \\
&&+2 n^6 \left(n^4+60 n^3+400 n6 n^2+996 n+763\right) {x_{(6)}}^{14} 
\\ & & 
-4 n^6 (n+4) \left(37 n^2+29+607\right) {x_{(6)}}^{13} \\
&&+4 n^5 \left(15 n^4+650 n^3+4333 n^2+9854 n+5310\right) {x_{(6)}}^{12}
\\&&-120 n^5 (n+4) \left(19 n^2+152 n+265\right) {x_{(6)}}^{11} \\
&&+2 n^4 \left(369 n^4+14356 n^3+94595
   n^2+200356 n+77916\right) {x_{(6)}}^{10}\\ &&
   -32 n^4 (n+4)
   \left(593 n^2+4744 n+6842\right) {x_{(6)}}^9
   \\& & +8 n^3\left(595 n^4+22698 n^3+146413 n^2+288990
   n+80098\right) {x_{(6)}}^8 \\&&
   -8 n^3
   (n+4) \left(11525 n^2+92200 n+104939\right)
   {x_{(6)}}^7 \\ & &
   +8 n^2 \left(2121 n^4+84880
   n^3+529391 n^2+958984 n+179028\right) {x_{(6)}}^6 \\ & &
   -96 n^2 (n+4) \left(2725 n^2+21800 n+17941\right) {x_{(6)}}^5  \\
   && +8 n \left(3960 n^4+182641 n^3+1078736 n^2+1715695 n+186684\right) {x_{(6)}}^4 \\
   & & -16 n (n+4) \left(25087 n^2+200696 n+98017\right) {x_{(6)}}^3 \\ &&
   +8 \left(3025 n^4+203160
    n^3+1087279 n^2+1316352 n+51424\right) {x_{(6)}}^2\\ & & 
    -256960 (n+4) \left(n^2+8 n+1\right) {x_{(6)}}  
+170528 (n+4) (4 n+1) = 0.
 \end{eqnarray*} }
 For $n=2$ we have
\begin{eqnarray*} && F(x_{(6)}, 2)  = 288 ({x_{(6)}}-1) (84 {x_{(6)}}^{15}-332
   {x_{(6)}}^{14}+1824 {x_{(6)}}^{13}-5360
   {x_{(6)}}^{12}+15880 {x_{(6)}}^{11} \\ && -35720
   {x_{(6)}}^{10}+72920 {x_{(6)}}^9-126568
   {x_{(6)}}^8+192284 {x_{(6)}}^7-254968
   {x_{(6)}}^6+292536 {x_{(6)}}^5 \\ & & -286992
   {x_{(6)}}^4+238425 {x_{(6)}}^3-161413
   {x_{(6)}}^2+80446 {x_{(6)}}-31974.  
       \end{eqnarray*}
       Thus we see that the solutions are given by ${x_{(6)}}=1$ and ${x_{(6)}}\approx 1.17941$. 
 
 For $n\geq 3$ we see, for $x_{(6)} =1$, 
 $$F(1, n) = -3 (n-2) (n+2)^2 (n+6)^2 \left(n^4+14 n^3+69 n^2+134 n+76\right) < 0. $$ 
 
 We also see that   
 \begin{eqnarray*} &&F(2, n) = 32 \big(1024 n^{10}+8704 n^9+34368 n^8+87488 n^7+164144
   n^6+239040 n^5\\  & &+274217 n^4 +242156 n^3+150555 n^2+55429
   n+8500\big) > 0. 
      \end{eqnarray*}
 
 We have 
{\small  \begin{eqnarray*} && F(1/2, n)  = \frac{1}{65536}  \big(2 n^{10}+173 n^9+4494 n^8-20915 n^7-3177896
   n^6-61117056 n^5 \\ & & -449950208 n^4-272740352
   n^3+11681267712 n^2+39390150656 n+17762877440\big)  \\&& 
   = \frac{1}{65536} \big(2 (n-26)^{10}+693 (n-26)^9+105816 (n-26)^8+9342205
   (n-26)^7 \\ & & +525422358 (n-26)^6+19518844644
   (n-26)^5+479005265720 (n-26)^4\\ & &+7495082389872
   (n-26)^3+68109703630368 (n-26)^2+279535235098560
   (n-26)\\ & & +82793227996800
   \big),
       \end{eqnarray*} }
       and thus  we see that $F(1/2, n) > 0$, for $n \geq 26$. 
       
     We  also have   
{\small \begin{eqnarray*} && F(292/(55 n), n) = -\frac{511584}{7011372354671045074462890625 n^9}\\ & & 
  \times \big(35056861773355225372314453125
   n^{10}+1131629115268488355792109375000
   n^9 \\ & & +13721578697735127898701000000000
   n^8+70465121420063004319160003000000
   n^7\\ 
   & & +66607303696117170858210917600000
   n^6-592316367137587606416109021900800
   n^5\\ & & -861191206103397273418047743426560
   n^4+642183745708072078163896886362112
   n^3\\
   & & -38667489524999416179489857656061952
   n^2-190447581247326183326819893905457152
   n\\ & & -245709067472143332413008967400161280\big)\\   
   & & 
  =  -\frac{511584}{7011372354671045074462890625 n^9}
    \times  \big(35056861773355225372314453125
  (n-4)^{10}\\ & &+2533903586202697370684687500000
   (n-4)^9 \\ & & +79701167324216470975283343750000
   (n-4)^8\\ & &+1430610708561604520873222003000000
   (n-4)^7\\ & &+16155192972662188907166744001600000
   (n-4)^6\\ & &+119408859580384221935668378008499200
   (n-4)^5\\ & &+583172621773938265497841838762557440
   (n-4)^4\\ & &+1853844985914852789445447754895409152
   (n-4)^3\\ & &+3622885223042644610041726583690821632
   (n-4)^2\\ & &+3539894892993878348278554600791605248
   (n-4)\\ & &+247914639656696354355632138106175488 \big) <0, \ \ \  \mbox{for } n \geq 4.
 \end{eqnarray*}  }
 
 Thus we obtain that, for $ 2 \leq n \leq 25$ there exist two positive solutions  and  for $n\ge 26$
there exist four  positive solutions of  the equation $F(x_{(6)}, n) = 0$. 

By substituting the value of $v_5$ into $u_1, u_3$ and $v_4$, we also have the following: 
\begin{eqnarray}
& & {u_1}= \frac{2 {x_{(6)}} \left(n {x_{(6)}}^2+5\right)}{n^2 {x_{(6)}}^4+10 n {x_{(6)}}^2+22},  \nonumber\\
& & {u_3}= -\frac{1}{2 (n-1) (n+1) {x_{(6)}} \left(n {x_{(6)}}^2+5\right) \left(n^2 {x_{(6)}}^4+10 n {x_{(6)}}^2+22\right)} \label{mori113} \\ & & \times \Big(2 n^5 {x_{(6)}}^8-4
   n^5 {x_{(6)}}^7+5 n^4 {x_{(6)}}^8-16 n^4 {x_{(6)}}^7+44
   n^4 {x_{(6)}}^6-60 n^4 {x_{(6)}}^5+86 n^3
   {x_{(6)}}^6  \nonumber \\ & & -240 n^3 {x_{(6)}}^5+336 n^3 {x_{(6)}}^4-288
   n^3 {x_{(6)}}^3+480 n^2 {x_{(6)}}^4-1152 n^2
 {x_{(6)}}^3+1060 n^2 {x_{(6)}}^2 \nonumber 
 \\ & & -440 n^2 {x_{(6)}}+928 n {x_{(6)}}^2-1760 n {x_{(6)}}+1168 n+292\Big),  \nonumber \\
   & & {v_4}=  \frac{\left(n {x_{(6)}}^2+4\right) \left(n^3 {x_{(6)}}^6+18 n^2 {x_{(6)}}^4+96 n {x_{(6)}}^2+146\right)}{(n+4) {x_{(6)}} \left(n {x_{(6)}}^2+5\right) \left(n^2 {x_{(6)}}^4+10 n {x_{(6)}}^2+22\right)}. \nonumber
\end{eqnarray}
We claim that  the value of ${u_3}$ in (\ref{mori113}) is positive, whenever ${x_{(6)}}$  is a solution of  $F(x_{(6)}, n) = 0.$
Indeed, from  equation (\ref{mori113}) it follows that 
\begin{eqnarray*} & & 
G(x_{(6)},u_3)= 2 n^5 {u_3} {x_{(6)}}^7+2 n^5 {x_{(6)}}^8-4 n^5
   {x_{(6)}}^7+30 n^4 {u_3} {x_{(6)}}^5+5 n^4
   {x_{(6)}}^8-16 n^4 {x_{(6)}}^7
   \\ & &
   +44 n^4 {x_{(6)}}^6  -60
   n^4 {x_{(6)}}^5-2 n^3 {u_3} {x_{(6)}}^7+144 n^3
   {u_3} {x_{(6)}}^3+86 n^3 {x_{(6)}}^6-240 n^3
   {x_{(6)}}^5\\ & &
   +336 n^3 {x_{(6)}}^4-288 n^3 {x_{(6)}}^3  
   -30 n^2 {u_3} {x_{(6)}}^5+220 n^2 {u_3}
   {x_{(6)}}+480 n^2 {x_{(6)}}^4-1152 n^2
   {x_{(6)}}^3\\ & &
   +1060 n^2  {x_{(6)}}^2-440 n^2 {x_{(6)}} 
   -144 n {u_3} {x_{(6)}}^3+928 n {x_{(6)}}^2-1760 n
   {x_{(6)}}+1168 n\\ & &
   -220 {u_3} {x_{(6)}}+292= 0. 
\end{eqnarray*}
Now, by taking the resultant of ${\rm Res}_{x_{(6)}}(F(x_{(6)}, n), G(x_{(6)},u_3)$ with respect to $x_{(6)}$, 
we obtain the equation of ${u_3} $:
{\small 
\begin{eqnarray*} & & {\rm Res}_{x_{(6)}}(F, G) = 79805105467783573929984 (n-1)^{16} n^{49} (n+1)^{16}
   (n+3)^4 (2 n+3)^2 (2 n+5) \\ 
   & & 
  \times (4 n+1) \Big(32 (n+4) \left(n^2+4 n+5\right)^2 \left(n^2+4 n+9\right)
   \left(n^2+4 n+11\right)^4 {u_3}^{16} -64 (n+4) \\ &&
   \times (n^2+4 n+5) (n^2+4 n+11)^3
   (8 n^6+96 n^5+521 n^4+1608 n^3+2824 n^2+2528
   n+495) {u_3}^{15}\\ 
   & &+24 \left(n^2+4
   n+11\right)^2 (160 n^{11}+3821 n^{10}+42144
   n^9+283768 n^8+1292016 n^7+4148638 n^6\\ 
   & & 
   +9485824
   n^5  +15185192 n^4+16188848 n^3+10306293 n^2+3014816
   n+154272 ) {u_3}^{14} \\ 
   & &-16 n (n+4) \left(n^2+4
   n+11\right) (1120 n^{11}+26481 n^{10}+294364
   n^9+2029009 n^8+9595280 n^7 \\ 
   & &
   +32503106 n^6+79894120
   n^5+141057818 n^4+172564400 n^3+135486477 n^2+58005020 n
    \\ 
   & &+8732629 ) {u_3}^{13} 
+4 n  (14560
   n^{14}+455546 n^{13}+6737593 n^{12}+62487392
   n^{11}+405558430 n^{10}\\ 
   & &
   +1943683970 n^9+7067440828
   n^8  +19705190624 n^7+42032951806 n^6+67552851414
   n^5
   \\ 
   & &+79223664915 n^4+63957854256 n^3+31859786756
   n^2+7711909438 n+410101496 ) {u_3}^{12}\\ 
   & & -48 n^2 (n+4)  (2912 n^{12}+78078 n^{11} +980205 n^{10}+7626696 n^9+40900061 n^8+158743428 n^7 
   \\ 
   & &+455081712 n^6+964514064 n^5+1485523623
   n^4+1597309366 n^3+1109259109 n^2
   \\ 
   & &+424815888 n+59882762 ) {u_3}^{11}+2 n^2 (128128
   n^{13}+3871868 n^{12}+54127876 n^{11} \\
   & &+464149161 n^{10}+2723268432 n^9+11520926166 n^8 +36006811328
   n^7+83614509733 n^6
      \end{eqnarray*} 
   \begin{eqnarray*}
   & &+142792464992 n^5+174250392508
   n^4+143668803716 n^3+71910270196 n^2+17301717480 n
   \\ 
   & &+926030448 ) {u_3}^{10}-4 n^3 (n+4)
     (91520 n^{11}+2334332 n^{10}+27152796
   n^9
   +190488357 n^8\\ 
   & &+895297632 n^7+2951744016
   n^6 +6926787136 n^5+11464153999 n^4+12921488592
   n^3 \\ 
   & &+9210867932 n^2+3562013612 n+504494076 )
   {u_3}^9  +3 n^3  (137280 n^{12} +3936504
   n^{11}\\ 
   & &+51188148 n^{10}+399221022 n^9 +2077926319
   n^8+7583085392 n^7 +19785908796 n^6\\ 
   & &+36840346114
   n^5+47825033196 n^4+41093570040 n^3 +21077483624 n^2 +5140865464 n\\ 
   & &+281471440 ) {u_3}^8 -8 n^4
   (n+4)  (45760 n^{10}+1085656 n^9+11454564
   n^8+70805726 n^7 \\ 
   & &+283650905 n^6+766586712 n^5 +1404513863 n^4+1699119089 n^3+1268244972 n^2\\ 
   & &+504006728 n+72600275 ) {u_3}^7+4 n^4
   (64064 n^{11}+1707706 n^{10}+20248844
   n^9+140694517 n^8\\ 
   & &+634497443 n^7+1939967836
   n^6+4068002539 n^5+5765215640 n^4+5273901874
   n^3\\ 
   & &+2822612617 n^2+707889047 n+38868874 )
   {u_3}^6-12 n^5 (n+4)  (11648 n^9+250068
   n^8\\ 
   & &+2328840 n^7+12317562 n^6+40609170 n^5+85937948
   n^4 +115337388 n^3+92656707 n^2\\ 
   & &+38780924
   n+5814614 ) {u_3}^5 +2 n^5  (29120
   n^{10}+703612 n^9+7421294 n^8+44788306 n^7 \\
   &&+170172755 n^6  +421324252 n^5+678403707 n^4+684577731
   n^3+395658522 n^2+105906354 n
   \\ 
   & &+6175980 )
   {u_3}^4  -8 n^6 (n+3) (n+4) \big(2240 n^7+35252
   n^6+227370 n^5+772912 n^4+1475857 n^3\\ 
   & &+1549629
   n^2+797301 n+140922\big) {u_3}^3  +6 n^6 (n+3)^2
   \big(640 n^7+9724 n^6+59268 n^5+184899 n^4\\ 
   & &+310716
   n^3+268199 n^2+98498 n+7209\big) {u_3}^2 -4 n^7
   (n+3)^3 (n+4) (2 n+3) \big(64 n^3+322 n^2\\ 
   & &+465
   n+159\big) {u_3} +n^7 (n+3)^4 (2 n+3)^2 (2 n+5)
   (4 n+1)  \Big)=0. 
   \end{eqnarray*}
   }
 By looking at the coefficients of the polynomial, we see that if 
 $ {\rm Res}_{x_{(6)}}(F, G) $ has  real solutions, then these are positive. 
 \end{proof}

\subsection{The case of $\SU(4)$ and $\SU(3)$}
 We will show that the compact Lie groups $\SU(4)$ and $\SU(3)$ admit only naturally reductive Einstein metrics of the form (\ref{metrics1}).
 For $\SU(4)$ we prove the following:

\begin{theorem}  The compact Lie group
$\SU(4)$  with metrics  corresponding to  $\Ad(\s(\U(1)\times\U(1)\times\U(2)))$-invariant inner products of the form {\rm (\ref{metrics1})} admits only naturally reductive Einstein metrics, that is, bi-invariant metric and the metric {\rm (\ref{metrics1})} with $x_{(6)}=u_3=v_5= 5/11$, $x_{(7)}=x_{(8)}= 1$, $v_4 = 73/55$.
 \end{theorem}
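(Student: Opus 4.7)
The plan is to mimic the strategy used earlier for $\SU(5)$, specialized to $\ell = m = 1$, $n = 2$, and then to show that all positive real solutions of the resulting Einstein system correspond to naturally reductive metrics via Proposition \ref{reductive}. First I would observe that when $\ell = m = 1$ one has $\fr{h}_1 = \fr{h}_2 = 0$, so the variables $u_1, u_2$ disappear from (\ref{metrics1}), and the Einstein system reduces to
\[
r_0 = 0, \quad r_3 - r_4 = 0, \quad r_4 - r_5 = 0, \quad r_5 - r_6 = 0, \quad r_6 - r_7 = 0, \quad r_7 - r_8 = 0,
\]
with the Ricci components read off from Propositions \ref{ricci1} and \ref{ricci3} at $\ell = m = 1$, $n = 2$. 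Using the QR-reduction described in Subsection 5.2 I would assume without loss of generality $a = d = 1, b = 0$, keeping $c$ as a free parameter, and normalize by $x_{(7)} = 1$. The equation $r_0 = 0$ is then linear in $c$ and permits me to solve for $c$ explicitly in terms of $x_{(6)}, x_{(8)}$.

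After that substitution, the four equations $r_4 - r_5 = r_5 - r_6 = r_6 - r_7 = r_7 - r_8 = 0$ become linear in $u_3, v_4, v_5$ (by the same mechanism as in the $\SU(5)$ analysis), so I would solve them for $u_3, v_4, v_5$ as rational functions of $x_{(6)}, x_{(8)}$. Plugging these expressions into the one remaining equation $r_3 - r_4 = 0$ yields a single polynomial equation $F(x_{(6)}, x_{(8)}) = 0$. Because $\SU(4)/\s(\U(1)\times\U(1)\times\U(2))$ is symmetric in the first two blocks, I expect $F$ to factor with a visible $(x_{(6)} - 1)$ factor (corresponding to $x_{(6)} = x_{(7)}$, case (ii) of Proposition \ref{reductive}) and a $(x_{(8)} - x_{(6)})$ factor (case (iii)), together with an additional factor detecting the bi-invariant or case (i) branch.

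The algebraic heart of the proof will then be a Gr\"obner basis computation in $\mathbb{Q}[z, x_{(6)}, x_{(8)}]$ for the ideal generated by $F$, by the analogous polynomial coming from $r_3 - r_4$ treated symmetrically, and by the auxiliary relation $z\, x_{(6)} x_{(8)} - 1$ used to enforce positivity; a lexicographic ordering with $z > x_{(6)} > x_{(8)}$ should eliminate variables and produce a univariate polynomial in $x_{(8)}$. I expect this polynomial to factor completely over $\mathbb Q$ into linear factors associated to the four naturally reductive branches of Proposition \ref{reductive}, together with factors having no positive real roots. Each positive solution would then be traced back through the substitutions for $u_3, v_4, v_5, c$, and cross-checked with the characterization of naturally reductive metrics: the branch $x_{(6)} = x_{(7)} = x_{(8)} = 1$ with $c = 0$ yields the bi-invariant metric (up to scale), while the branch $x_{(7)} = x_{(8)} = 1$ produces the asserted metric $x_{(6)} = u_3 = v_5 = 5/11$, $v_4 = 73/55$, which is $\Ad(\s(\U(2)\times\U(2)))$-invariant and hence naturally reductive by case (i).

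The main obstacle will be rigorously ruling out spurious non-naturally-reductive positive real solutions: the Gr\"obner basis calculation can in principle produce a high-degree univariate polynomial with many real roots, and I must verify that every real root with $x_{(6)}, x_{(8)} > 0$ gives $v_4, v_5, u_3 > 0$ and forces either $x_{(6)} = 1$, $x_{(8)} = 1$, $x_{(6)} = x_{(8)}$, or $c = 0$. The most delicate point is that the QR normalization fixed $b = 0$ but kept $c$ arbitrary, so having $c = 0$ on a given branch is what certifies that the $\Ad(H)$-invariant scalar product on $\fr{h}_0$ actually diagonalizes in the $\{H_4, H_5\}$ basis corresponding to the chosen intermediate flag, which is exactly the condition needed to invoke Proposition \ref{reductive}; this will require checking on each branch.
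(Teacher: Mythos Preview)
Your strategy is essentially the paper's, but there is a counting slip that would trip you up if you carried it out as written. With $\ell=m=1$ the variables $u_1,u_2$ drop out, so the four equations $r_4-r_5=r_5-r_6=r_6-r_7=r_7-r_8=0$ are linear in only \emph{three} unknowns $u_3,v_4,v_5$; solving three of them for $u_3,v_4,v_5$ and substituting into the fourth already produces one polynomial relation in $x_{(6)},x_{(8)}$, and substituting into $r_3-r_4=0$ produces a second. So you end up with a system of \emph{two} polynomial equations in $x_{(6)},x_{(8)}$, not a single $F=0$. The paper does exactly this: it first treats the branch $x_{(8)}=1$ by hand (here one of the linear equations acquires the factor $x_{(8)}-1$ and becomes vacuous, which is where the solution $x_{(6)}=u_3=v_5=5/11$, $v_4=73/55$ and the bi-invariant metric appear), and for $x_{(8)}\neq 1$ it solves $f_2=f_3=f_4=0$ for $u_3,v_4,v_5$, substitutes into $f_1$ and $f_5$ to get $F_1(x_{(6)},x_{(8)})$ and $F_5(x_{(6)},x_{(8)})$, and then takes the resultant $\mathrm{Res}_{x_{(8)}}(F_1,F_5)$. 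That resultant factors as $(x_{(6)}-1)^2$ times a degree-$16$ polynomial with no real roots (and some manifestly positive factors), forcing $x_{(6)}=1$ and hence natural reductivity via Proposition~\ref{reductive}.

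Your proposed Gr\"obner-basis elimination would achieve the same end, provided you feed it both polynomial relations; the factor you should expect from the $\ell=m$ symmetry is $(x_{(8)}-1)$ (swapping blocks $1$ and $2$ exchanges $x_{(7)}\leftrightarrow x_{(8)}$), not $(x_{(6)}-1)$ or $(x_{(8)}-x_{(6)})$ as you wrote. The $(x_{(6)}-1)$ factor does appear, but it arises from the computation rather than from an a priori symmetry of the blocks.
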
 
\begin{proof}
Let $\ell = m =1$ and $n = 2$.  In this case we have $\fr{h}_1 = \fr{h}_2 = 0$, so we do not have $u_1$ and $u_2$ variables. 
To find Einstein metrics we need to solve the system of equations
\begin{equation}\label{einstein1}
r_3 - r_4=0,\ r_4 - r_5=0,\ r_5 - r_6=0,\ r_6 - r_7=0,\ r_7 - r_8=0, \ r_0 = 0.
\end{equation}
We set $x_{(7)} = 1$.  Then from $r_0 = 0$ we have that 
$
c =- ({x_{(8)}}-1) ({x_{(8)}}+1)/\big(\sqrt{2} ({x_{(8)}}^2+1 )\big).
$
By substituting $c$ into the first five equations of system (\ref{einstein1}), we obtain the system 
{ \begin{eqnarray*}
&& f_1 = 2 {u_3}^2 {x_{(8)}}^2+2 {u_3}^2-4 {u_3} {v_4} {x_{(8)}}^2-4
   {u_3} {v_4}+4 {x_{(8)}}^2 =0,\\
&&   f_2=  4 {v_4} {x_{(6)}}^2 {x_{(8)}}^4+8 {v_4}
   {x_{(6)}}^2 {x_{(8)}}^2+4 {v_4} {x_{(6)}}^2-8 {v_5} {x_{(6)}}^2
   {x_{(8)}}^2-4 {v_5} {x_{(8)}}^4-4 {v_5} {x_{(8)}}^2=0,
      \end{eqnarray*} 
 \begin{eqnarray*}
&&    f_3=4 {v_5}{x_{(6)}}^2 {x_{(8)}}+4 {v_5} {x_{(8)}}^3+4 {v_5} {x_{(8)}}-2 {x_{(6)}}^3
   {x_{(8)}}^2-2 {x_{(6)}}^3
        +2 {x_{(6)}} {x_{(8)}}^4\\&&-8 {x_{(6)}} {x_{(8)}}^3+4 {x_{(6)}} {x_{(8)}}^2-8 {x_{(6)}} {x_{(8)}}+2 {x_{(6)}} =0, \\
&&   f_4=   6 {u_3} {x_{(6)}}^2{x_{(8)}}^4+12 {u_3} {x_{(6)}}^2 {x_{(8)}}^2+6 {u_3} {x_{(6)}}^2+4
   {v_4} {x_{(6)}}^2 {x_{(8)}}^4+8 {v_4} {x_{(6)}}^2 {x_{(8)}}^2\\&&
   +4 {v_4}{x_{(6)}}^2+8 {v_5} {x_{(6)}}^2 {x_{(8)}}^4    -8 {v_5} {x_{(8)}}^6   -16 {v_5} {x_{(8)}}^4-8 {v_5} {x_{(8)}}^2+12 {x_{(6)}}^3 {x_{(8)}}^5\\&&
   +24 {x_{(6)}}^3 {x_{(8)}}^3+12 {x_{(6)}}^3 {x_{(8)}}-32 {x_{(6)}}^2 {x_{(8)}}^5-64{x_{(6)}}^2 {x_{(8)}}^3   -32 {x_{(6)}}^2 {x_{(8)}} -12 {x_{(6)}} {x_{(8)}}^7\\&&
   +32
   {x_{(6)}} {x_{(8)}}^6-28 {x_{(6)}} {x_{(8)}}^5+64 {x_{(6)}} {x_{(8)}}^4
   -20 {x_{(6)}} {x_{(8)}}^3+32 {x_{(6)}} {x_{(8)}}^2-4 {x_{(6)}} {x_{(8)}} =0, \\
&&  f_5= ({x_{(8)}}-1)(3 {u_3} {x_{(6)}} {x_{(8)}}^5+3 {u_3} {x_{(6)}}
   {x_{(8)}}^4+6 {u_3} {x_{(6)}} {x_{(8)}}^3+6 {u_3}
   {x_{(6)}} {x_{(8)}}^2\\&&
   +3 {u_3} {x_{(6)}} {x_{(8)}}+3
   {u_3} {x_{(6)}} +2 {v_4} {x_{(6)}} {x_{(8)}}^5
  +2{v_4} {x_{(6)}} {x_{(8)}}^4+4 {v_4} {x_{(6)}}
   {x_{(8)}}^3+4 {v_4} {x_{(6)}} {x_{(8)}}^2\\&&
   +2 {v_4}
   {x_{(6)}} {x_{(8)}}+2 {v_4} {x_{(6)}}-4 {v_5}
   {x_{(6)}} {x_{(8)}}^3
  -4 {v_5} {x_{(6)}} {x_{(8)}}^2-16 {x_{(6)}} {x_{(8)}}^5-32 {x_{(6)}} {x_{(8)}}^3
 \\&&
  -16 {x_{(6)}} {x_{(8)}}+4 {x_{(8)}}^6+4 {x_{(8)}}^5+8
   {x_{(8)}}^4+8 {x_{(8)}}^3 +4 {x_{(8)}}^2+4 {x_{(8)}})=0.    
\end{eqnarray*} }

First we study the case when $x_{(8)} = 1$. Then $c=0$ and  equations $r_3-r_4=0$, $r_4 - r_5=0$, $r_5 - r_6=0$, $r_6 - r_7=0$  reduce to 
\begin{eqnarray*}
&& f_1 = {u_3}^2-2  {u_3} {v_4}+1 = 0,\\
&& f_2 = 4 {v_4} {x_{(6)}}^2-2 {v_5} {x_{(6)}}^2-2 {v_5} = 0,\\
&& f_3 = \left({x_{(6)}}^2+2\right) ({v_5}-{x_{(6)}}) = 0,\\
&& f_4 = 12 {u_3} {x_{(6)}}^2+8 {v_4} {x_{(6)}}^2+4 {v_5}{x_{(6)}}^2-16 {v_5}+24 {x_{(6)}}^3-64 {x_{(6)}}^2+32{x_{(6)}} = 0.
\end{eqnarray*}
From $f_3 = 0$ we have $v_5 = x_{(6)}$, so by substituting this into $f_2, f_4$  we obtain:
\begin{eqnarray*} &&
f_2 = -2 {x_{(6)}} \left(-2{v_4}{x_{(6)}}+{x_{(6)}}^2+1\right) = 0, \\ 
&&
f_4 = 4 {x_{(6)}} \left(3 {u_3} {x_{(6)}}+2 {v_4} {x_{(6)}}+7{x_{(6)}}^2-16 {x_{(6)}}+4\right) = 0.
\end{eqnarray*}
By solving $f_2=0$ with respect to $v_4$, we have
$
v_4 = (x_{(6)}^2+1)/2 {x_{(6)}}.
$
We substitute  this into $f_1 =0$ and $f_4=0$ and thus we obtain:
\begin{eqnarray*} &&
2 ({u_3}-{x_{(6)}}) ({u_3}{x_{(6)}}-1)/{x_{(6)}} = 0, \\ & & 4 {x_{(6)}} \left(3 {u_3} {x_{(6)}}+8 {x_{(6)}}^2-16{x_{(6)}}+5\right) = 0.
\end{eqnarray*}
From the first equation above we see that $u_3 = x_{(6)}$ or $ u_3 =  1/x_{(6)}$.  We substitute the $u_3 = x_{(6)}$ into the second equation above and we have 
$
11x_{(6)}^2 - 16x_{(6)} + 5 = 0,
$
whose solutions are $x_{(6)} =5/11$ and $x_{(6)} = 1$.  
Now we substitute $u_3 = 1/x_{(6)}$ and we obtain 
$
8x_{(6)}^2 -16x_{(6)} + 8 = 0
$
whose solution is $x_{(6)} = 1$.  

For $x_{(6)} = 1$ we have $u_3 = v_4 = v_5 = x_{(8)} = 1$.  This metric corresponds to a bi-invariant metric which is naturally reductive.  For $x_{(6)} =  5/11$ from the above we have $u_3 = v_5 =  5/11$, $v_4 =   73/55$, so from Proposition 5.2 we have that this metric is also naturally reductive.

Now we study the case when $x_{(8)} \neq 1$. By solving $ f_2=0, \ f_3=0,\  f_4=0$, we obtain  
\begin{eqnarray*}
&&{u_3}= -\frac{{x_{(8)}}}{3 {x_{(6)}}\left({x_{(6)}}^2+{x_{(8)}}^2+1\right)} \Big(8 {x_{(6)}}^4-16 {x_{(6)}}^3-4
   {x_{(6)}}^2 {x_{(8)}}^2+24 {x_{(6)}}^2 {x_{(8)}} \\
   & & +{x_{(6)}}^2-16 {x_{(6)}} {x_{(8)}}^2-16 {x_{(6)}}-4 {x_{(8)}}^4+8 {x_{(8)}}^3-5 {x_{(8)}}^2+12
   {x_{(8)}}-1\Big), 
   \end{eqnarray*}
  \begin{eqnarray*} 
   & & 
   {v_4}= \frac{{x_{(8)}}
   \left({x_{(6)}}^2-{x_{(8)}}^2+4 {x_{(8)}}-1\right) \left(2
   {x_{(6)}}^2+{x_{(8)}}^2+1\right)}{2 {x_{(6)}} \left({x_{(8)}}^2+1\right)
   \left({x_{(6)}}^2+{x_{(8)}}^2+1\right)},\\
   & & 
   {v_5}= -\frac{{x_{(6)}}
   \left({x_{(8)}}^2+1\right) \left(-{x_{(6)}}^2+{x_{(8)}}^2-4 {x_{(8)}}+1\right)}{2
   {x_{(8)}} \left({x_{(6)}}^2+{x_{(8)}}^2+1\right)}.   
\end{eqnarray*}
By substituting these $u_3, \ v_4, \ v_5$  into $f_1, f_5$, we can see that the equations $f_1 =0$ and $f_5=0$ reduce to
 the polynomial equations  of $x_{(6)}$ and $x_{(8)}$:  
 \begin{eqnarray*}
&&
F_1(x_{(6)}, x_{(8)}) = 16 {x_{(6)}}^8 {x_{(8)}}^2+28 {x_{(6)}}^8-64 {x_{(6)}}^7
   {x_{(8)}}^2-88 {x_{(6)}}^7-16 {x_{(6)}}^6 {x_{(8)}}^4\\ &&
   +96 {x_{(6)}}^6 {x_{(8)}}^3 +40 {x_{(6)}}^6 {x_{(8)}}^2 +180 {x_{(6)}}^6 {x_{(8)}}+68
   {x_{(6)}}^6-32 {x_{(6)}}^5 {x_{(8)}}^4-192 {x_{(6)}}^5 {x_{(8)}}^3
 \\&&  -116 {x_{(6)}}^5 {x_{(8)}}^2-288 {x_{(6)}}^5 {x_{(8)}} -84 {x_{(6)}}^5  -12
   {x_{(6)}}^4 {x_{(8)}}^6-16 {x_{(6)}}^4 {x_{(8)}}^5+229 {x_{(6)}}^4
   {x_{(8)}}^4 \\&&
   +38 {x_{(6)}}^4 {x_{(8)}}^3+510 {x_{(6)}}^4 {x_{(8)}}^2+90
   {x_{(6)}}^4 {x_{(8)}} +125 {x_{(6)}}^4 +64 {x_{(6)}}^3 {x_{(8)}}^6-256
   {x_{(6)}}^3 {x_{(8)}}^5\\&&
   +152 {x_{(6)}}^3 {x_{(8)}}^4-688 {x_{(6)}}^3
   {x_{(8)}}^3+112 {x_{(6)}}^3 {x_{(8)}}^2 -432 {x_{(6)}}^3 {x_{(8)}} +24 {x_{(6)}}^3+8 {x_{(6)}}^2 {x_{(8)}}^8\\&&
   -64 {x_{(6)}}^2 {x_{(8)}}^7+182 {x_{(6)}}^2 {x_{(8)}}^6-204 {x_{(6)}}^2 {x_{(8)}}^5+576 {x_{(6)}}^2 {x_{(8)}}^4 -176 {x_{(6)}}^2 {x_{(8)}}^3\\&&  
   +494 {x_{(6)}}^2 {x_{(8)}}^2-36
   {x_{(6)}}^2 {x_{(8)}}+68 {x_{(6)}}^2+32 {x_{(6)}} {x_{(8)}}^8-64 {x_{(6)}}
   {x_{(8)}}^7+116 {x_{(6)}} {x_{(8)}}^6 \\&&
   -272 {x_{(6)}} {x_{(8)}}^5 +156 {x_{(6)}} {x_{(8)}}^4 
         -352 {x_{(6)}} {x_{(8)}}^3+92 {x_{(6)}}
   {x_{(8)}}^2-144 {x_{(6)}} {x_{(8)}}+20 {x_{(6)}}\\&&
   +4 {x_{(8)}}^{10}-16
   {x_{(8)}}^9  +33 {x_{(8)}}^8-78 {x_{(8)}}^7 +116 {x_{(8)}}^6
         -126{x_{(8)}}^5+166 {x_{(8)}}^4-82 {x_{(8)}}^3
         \\&&
         +80 {x_{(8)}}^2-18{x_{(8)}}+1=0,\\
&&
F_5(x_{(6)}, x_{(8)})= -2 {x_{(6)}}^4 {x_{(8)}}-2 {x_{(6)}}^4+4 {x_{(6)}}^3
   {x_{(8)}}+{x_{(6)}}^2 {x_{(8)}}^3-5 {x_{(6)}}^2 {x_{(8)}}^2\\
   &&
   -5 {x_{(6)}}^2
   {x_{(8)}}+{x_{(6)}}^2 +4 {x_{(6)}} {x_{(8)}}^3+4 {x_{(6)}}
         {x_{(8)}}  +{x_{(8)}}^5-{x_{(8)}}^4-{x_{(8)}}+1=0. 
\end{eqnarray*} 
By taking the resultant ${\rm Res}_{x_{(8)}}(F_1, F_5)$ of $F_1$ and $F_5$ with respect to $x_{(8)}$,  we obtain 
\begin{eqnarray*}
&& {\rm Res}_{x_{(8)}}(F_1, F_5)= 2304 ({x_{(6)}}-1)^2 {x_{(6)}}^4 \left({x_{(6)}}^2+2\right)^4 \left(3
   {x_{(6)}}^2+2 {x_{(6)}}+1\right)\\&&
    \big(2048409 {x_{(6)}}^{16} -12016026 {x_{(6)}}^{15}
   +33986766 {x_{(6)}}^{14}-64594134 {x_{(6)}}^{13}\\&&
   +106943061
   {x_{(6)}}^{12}-161713368 {x_{(6)}}^{11} +193524028 {x_{(6)}}^{10}  -175182968
   {x_{(6)}}^9\\&&
   +135191836 {x_{(6)}}^8-99593120 {x_{(6)}}^7+64876480
   {x_{(6)}}^6 -34635680 {x_{(6)}}^5 \\&& +15920560 {x_{(6)}}^4-6174080
   {x_{(6)}}^3+1785280 {x_{(6)}}^2-412800 {x_{(6)}}+72000\big). 
   \end{eqnarray*}
   
   Now we can see that the factor of degree 16 in the polynomial   ${\rm Res}_{x_{(8)}}(F_1, F_5)$ of $x_{(6)}$ does not have real roots by using computer manipulation, thus 
 there are no solutions for the system of equations for $x_{(8)} \neq 1$. 
 If $x_{(6)}=1$ then Proposition \ref{reductive} (1, (ii)) implies that the metric is naturally reductive and this concludes the proof.
 \end{proof}
 
  We now proceed with $\SU(3)$ and we prove the following:
 \begin{theorem}  The compact Lie group
$\SU(3)$ admits  only naturally reductive Einstein metrics which correspond to  $\Ad(\s(\U(1)\times\U(1)\times\U(1)))$-invariant inner products of the form {\rm (\ref{metrics1})} 
 \end{theorem}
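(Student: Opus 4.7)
The plan is to follow the template already developed for $\SU(4)$ in the previous theorem, adapted to the degenerate situation $\ell=m=n=1$. In this case $\mathfrak{h}_1=\mathfrak{h}_2=\mathfrak{h}_3=0$, so the variables $u_1,u_2,u_3$ in (\ref{metrics1}) disappear and the Ricci components $r_1,r_2,r_3$ of Proposition \ref{ricci3} are vacuous. What remains are the five metric parameters $v_4,v_5,x_{(6)},x_{(7)},x_{(8)}$ together with the single off-diagonal parameter $c$ coming from the QR-normalization of Section 5.2 (where one may set $a=d=1$, $b=0$). After fixing the scale by $x_{(7)}=1$, the Einstein equations reduce to
\begin{equation*}
r_0=0,\quad r_4-r_5=0,\quad r_5-r_6=0,\quad r_6-r_7=0,\quad r_7-r_8=0.
\end{equation*}
Solving $r_0=0$ linearly for $c$ in terms of $x_{(6)},x_{(8)}$ and substituting into the remaining four equations gives a polynomial system in $v_4,v_5,x_{(6)},x_{(8)}$ whose real positive solutions are to be classified.

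As in the $\SU(4)$ argument, I would split the analysis into two cases. In the first case $x_{(8)}=1$: then $r_0=0$ forces $c=0$ and the scalar product is already diagonal on $\mathfrak{h}_0$. The four remaining equations collapse to a much simpler system (some of them factor with $x_{(8)}-1$), which I would reduce by successively solving linearly for $v_4$ and $v_5$. The positive real solutions should be exactly those with $x_{(6)}\in\{1,\ast\}$ where each case corresponds to one of the symmetric conditions of Proposition \ref{reductive}, namely $x_{(6)}=x_{(7)}$ or $x_{(7)}=x_{(8)}$ (or the bi-invariant case $x_{(6)}=x_{(7)}=x_{(8)}=1$), so each solution is naturally reductive.

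In the second case $x_{(8)}\neq 1$: I would use the linearity of two of the equations in $v_4,v_5$ to solve for $v_4$ and $v_5$ as rational functions of $x_{(6)},x_{(8)}$, then substitute back into the remaining two equations to obtain a pair of polynomial equations $F_1(x_{(6)},x_{(8)})=0$, $F_2(x_{(6)},x_{(8)})=0$. Taking the resultant $\mathrm{Res}_{x_{(8)}}(F_1,F_2)$ produces a polynomial in $x_{(6)}$ which, after factoring, I expect to split into a factor like $(x_{(6)}-1)^a$ (corresponding to the $\Ad(\s(\U(2)\times\U(1)))$-invariant naturally reductive locus) together with a high-degree polynomial factor that has no positive real roots. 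A symmetric resultant argument, eliminating $x_{(6)}$ instead, handles the remaining potential branches.

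The main obstacle, as in the $\SU(4)$ case, will be the resultant computation: verifying by computer algebra that the extraneous factor of $\mathrm{Res}_{x_{(8)}}(F_1,F_2)$ has no positive real roots, and that all the factors that do vanish correspond precisely to one of the naturally reductive loci in Proposition \ref{reductive}. Once this polynomial-positivity step is carried out, the conclusion is immediate: every Einstein metric of the form (\ref{metrics1}) on $\SU(3)$ satisfies one of the conditions of Proposition \ref{reductive} and is therefore naturally reductive.
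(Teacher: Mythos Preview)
Your proposal is correct and follows essentially the same approach as the paper's proof: the same case split on $x_{(8)}=1$ versus $x_{(8)}\neq 1$, the same linear elimination of $v_4,v_5$, and the same resultant computation leading back to Proposition~\ref{reductive}. In fact the situation is even cleaner than you anticipate: in the case $x_{(8)}=1$ the only solution is the bi-invariant metric $x_{(6)}=1$, and for $x_{(8)}\neq 1$ the resultant factors as $-32(x_{(6)}-1)^4(x_{(6)}^2+4)^2(x_{(6)}^2+x_{(6)}+1)(4x_{(6)}^2+1)$, so the extraneous factors have no real roots at all and no second elimination is needed.
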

  \begin{proof}
  Let $\ell = m =n = 1$.  In this case we have $\fr{h}_1 = \fr{h}_2 = \fr{h}_3 =  0$, so we do not have $u_1$, $u_2$ and $u_3$ variable.  To find Einstein metrics we need to solve the system
\begin{equation}\label{einstein2}
r_4 - r_5 =0, \  r_5 - r_6=0, \ r_6 - r_7=0, \ r_7 - r_8=0.  
\end{equation}
We set $x_{(7)} = 1$.  Then from $r_0 = 0$ we have that
$
c = -({x_{(8)}}^2-1)/\big(\sqrt{3}({x_{(8)}}^2+1 ) \big).
$
By substituting $c$ into the system (\ref{einstein2}), this reduces to the system 
\begin{eqnarray*}
&&
g_1= 3 {v_4} {x_{(6)}}^2 {x_{(8)}}^4+6 {v_4} {x_{(6)}}^2
   {x_{(8)}}^2+3 {v_4} {x_{(6)}}^2-4 {v_5} {x_{(6)}}^2 {x_{(8)}}^2-4
   {v_5} {x_{(8)}}^4-4 {v_5} {x_{(8)}}^2=0,\\
&&    
g_2=   2 {v_5} {x_{(6)}}^2{x_{(8)}}+4 {v_5} {x_{(8)}}^3+4 {v_5} {x_{(8)}}-{x_{(6)}}^3
   {x_{(8)}}^2-{x_{(6)}}^3+{x_{(6)}} {x_{(8)}}^4 -6 {x_{(6)}} {x_{(8)}}^3\\&&
   +2 {x_{(6)}} {x_{(8)}}^2 -6 {x_{(6)}} {x_{(8)}}+{x_{(6)}}=0,\\
&&    
g_3=3 {v_4}{x_{(6)}}^2 {x_{(8)}}^4+6 {v_4} {x_{(6)}}^2 {x_{(8)}}^2+3 {v_4}
   {x_{(6)}}^2+4 {v_5} {x_{(6)}}^2 {x_{(8)}}^4-4 {v_5} {x_{(8)}}^6  -8
   {v_5} {x_{(8)}}^4
   \\&& 
   -4 {v_5} {x_{(8)}}^2 +4 {x_{(6)}}^3 {x_{(8)}}^5+8
   {x_{(6)}}^3 {x_{(8)}}^3+4 {x_{(6)}}^3 {x_{(8)}}   -12 {x_{(6)}}^2
   {x_{(8)}}^5-24 {x_{(6)}}^2 {x_{(8)}}^3   \\&&  -12 {x_{(6)}}^2 {x_{(8)}}  -4
   {x_{(6)}} {x_{(8)}}^7+12 {x_{(6)}} {x_{(8)}}^6-8 {x_{(6)}} {x_{(8)}}^5+24
   {x_{(6)}} {x_{(8)}}^4-4 {x_{(6)}} {x_{(8)}}^3\\&&
   +12 {x_{(6)}}{x_{(8)}}^2 =0, \\
&&    
g_4=({x_{(8)}}-1) (3 {v_4} {x_{(6)}} {x_{(8)}}^5+3 {v_4} {x_{(6)}} {x_{(8)}}^4+6 {v_4} {x_{(6)}}{x_{(8)}}^3+6 {v_4} {x_{(6)}} {x_{(8)}}^2 +3 {v_4} {x_{(6)}} {x_{(8)}}\\&&
+3 {v_4} {x_{(6)}} -4 {v_5}{x_{(6)}} {x_{(8)}}^3
     -4 {v_5} {x_{(6)}} {x_{(8)}}^2-12 {x_{(6)}} {x_{(8)}}^5 -24 {x_{(6)}} {x_{(8)}}^3-12{x_{(6)}} {x_{(8)}} +4 {x_{(8)}}^6\\&&
     +4 {x_{(8)}}^5+8 {x_{(8)}}^4+8 {x_{(8)}}^3
     +4 {x_{(8)}}^2+4 {x_{(8)}}) = 0.
   \end{eqnarray*} 

First we study the case where $x_{(8)} = 1$. Then the  equations $r_4 - r_5 =0, \  r_5 - r_6=0,  \ r_6 - r_7=0$ reduce to the system
\begin{eqnarray*}
g_1 &=& -3 {v_4} {x_{(6)}}^2+{v_5} {x_{(6)}}^2+2 {v_5}=0,\\
g_2 &=& \left({x_{(6)}}^2+4\right) ({v_5}-{x_{(6)}})=0,\\
g_3 &=& 3 {v_4} {x_{(6)}}^2+{v_5} {x_{(6)}}^2-4 {v_5}+4 {x_{(6)}}^3-12 {x_{(6)}}^2+8 {x_{(6)}}=0.
\end{eqnarray*}
From $g_2 = 0$ we have $v_5 = x_{(6)}$, so we substitute into $g_1, g_3$ and we take
$$
g_1 = -3 {v_4} {x_{(6)}}+{x_{(6)}}^2+2=0,\quad
g_3 = 3 {v_4} {x_{(6)}}+5 {x_{(6)}}^2-12 {x_{(6)}}+4=0.
$$

We solve $g_1$ with respect to $v_4$ and we have
$
v_4 =  ({x_{(6)}}^2+2)/(3 x_{(6)}).
$
We substitute into $g_3$ and we take
$
6x_{(6)}^2 -12x_{(6)} + 6 = 0,
$
whose solution is $x_{(6)} = 1$.  From the above calculations we have that $v_5 = v_4 = 1$.  So the only Einstein metric is the bi-invariant metric which is naturally reductive.

Now we study the case when $x_{(8)} \neq 1$. By solving $ g_2=0, \ g_3=0$, we obtain  
\begin{eqnarray*}
&&{v_4}= -\frac{2 {x_{(8)}}}{3 {x_{(6)}}
   \left({x_{(8)}}^2+1\right) \left({x_{(6)}}^2+2
   {x_{(8)}}^2+2\right)} \big(3 {x_{(6)}}^4
   {x_{(8)}}^2+2 {x_{(6)}}^4-6 {x_{(6)}}^3 {x_{(8)}}^2-6 {x_{(6)}}^3\\ & &+12
   {x_{(6)}}^2 {x_{(8)}}^3+3 {x_{(6)}}^2 {x_{(8)}}^2 +6 {x_{(6)}}^2
   {x_{(8)}}+3 {x_{(6)}}^2-12 {x_{(6)}} {x_{(8)}}^4-24 {x_{(6)}}
   {x_{(8)}}^2-12 {x_{(6)}}-3 {x_{(8)}}^6\\ & &+6 {x_{(8)}}^5-5 {x_{(8)}}^4+12
   {x_{(8)}}^3-{x_{(8)}}^2+6 {x_{(8)}}+1\big), \\
&&  {v_5}= -\frac{{x_{(6)}}
   \left({x_{(8)}}^2+1\right) \left(-{x_{(6)}}^2+{x_{(8)}}^2-6
   {x_{(8)}}+1\right)}{2 {x_{(8)}} \left({x_{(6)}}^2+2
   {x_{(8)}}^2+2\right)}. 
\end{eqnarray*}
By substituting these $ v_4, \ v_5$  into $g_1, g_4$, we can see that the equations $g_1 =0$ and $g_4=0$ reduce to
 the polynomial equations  of $x_{(6)}$ and $x_{(8)}$:  
\begin{eqnarray*}
 G_1(x_{(6)}, x_{(8)})&=& ({x_{(6)}}-{x_{(8)}}) ({x_{(6)}}^3+{x_{(6)}}^2 {x_{(8)}}-2
   {x_{(6)}}^2+{x_{(6)}} {x_{(8)}}^2+2 {x_{(6)}}\\
&&   {x_{(8)}}+{x_{(6)}}+{x_{(8)}}^3-2{x_{(8)}}^2+{x_{(8)}}-4)=0,
\end{eqnarray*}
\begin{eqnarray*}
G_4(x_{(6)}, x_{(8)})&=& -{x_{(6)}}^4 {x_{(8)}}-{x_{(6)}}^4+2
   {x_{(6)}}^3 {x_{(8)}}-4 {x_{(6)}}^2 {x_{(8)}}^2-4 {x_{(6)}}^2 {x_{(8)}}+4
   {x_{(6)}} {x_{(8)}}^3\\
&&  +4 {x_{(6)}}
    {x_{(8)}}  +{x_{(8)}}^5 -{x_{(8)}}^4-{x_{(8)}}+1=0.
\end{eqnarray*}
By taking the resultant ${\rm Res}_{x_{(8)}}(G_1, G_4)$ of $G_1$ and $G_4$ with respect to $ x_{(8)}$,  we obtain 
\begin{eqnarray*}
{\rm Res}_{x_{(8)}}(G_1, G_4) = -32 (x_{(6)}-1)^4 ({ x_{(6)}}^2+4)^2 ( {x_{(6)}}^2+x_{(6)} + 1) ( 4 {x_{(6)}}^2+1).
\end{eqnarray*}
Thus  the polynomial  ${\rm Res}_{x_{(8)}}(G_1, G_4)$ of $x_{(6)}$ does not have real roots, for $x_{(6)}\ne 1$. 
 If $x_{(6)}=1$ then Proposition \ref{reductive} (1, (ii)) implies that the metric is naturally reductive and this concludes the proof.
  \end{proof}

 \section{Invariant Einstein metrics on certain Stiefel manifolds $V_{\ell+m}\mathbb{C}^{\ell+m+n}$}
  A complete description for the set of all $\SU(\ell+m+n)$-invariant metrics on the Stiefel manifolds 
  $V_{\ell+m}\mathbb{C}^{\ell+m+n}\cong \SU(\ell+m+n)/\SU(n)$ is not easy.
  This is  because the isotropy representation $\chi$
  of $\U(\ell+m+n)/\U(n)\cong\SU(\ell+m+n)/\SU(n)$ contains some equivalent subrepresentations. In fact, it is given as 
$$
\chi = \underbrace{1\oplus\cdots\oplus 1}_{(\ell+m)^{2}\mbox{-times}}\oplus\underbrace{\left( \right.(\mu_{n}\oplus\bar{\mu}_{n})\oplus\cdots\oplus(\mu_{n}\oplus\bar{\mu}_{n})\left. \right)}_{(\ell+m)\mbox{-times}},
$$   
where $\mu_{n} : \U(n)\to \Aut(\bb{C}^{n})$ is the standard representation of $\U(n)$ and $\Ad^{\U(n)}\otimes\bb{C} = \mu_{n}\otimes\bar{\mu}_{n}$ is its  complexified adjoint representation.  
In this section we search for $\Ad(\s(\U(\ell)\times\U(m)\times\U(n)))$-invariant Einstein metrics of the form (\ref{metrics2}), which correspond to a subset of all $\SU(\ell+m+n)$-invariant metrics on $V_{\ell+m}\mathbb{C}^{\ell+m+n}$.

The metric (\ref{metrics2}) is Einstein if and only if the system 
\begin{eqnarray}\label{sist3}
&& r_{0} = 0, \, r_{1} - r_2 = 0,\, r_2 - r_4 = 0,\, r_4 - r_{5} = 0,\nonumber\\
&& r_{5} - r_{6} = 0, \, r_{6} - r_{7} = 0, \ r_{7} - r_{8} = 0
\end{eqnarray}
has positive solutions (cf. Propositions \ref{ricci1}, \ref{ricci31}).  
As for the case of the special unitary group, in the above system we may assume that 
 $a=d=1$, $b=0$.

\subsection{The Stiefel manifold $V_2\bb{C}^{4}\cong \SU(4)/\SU(2)$}
In this case we have $\ell =m = 1$, $n = 2$ and in system (\ref{sist3}) the second and third equations are absent.
From the equation $r_0=0$ we obtain that
$$
c=\frac{1-{x_{(8)}}^2}{\sqrt{2}(1+{x_{(8)}}^2)}.
$$
Next, we observe that in the  equations $r_4 - r_5=0, r_5 - r_6=0$ the variables
$v_4, v_5$ are {\it linear expressions} of $x_{(6)}, x_{(8)}$.
We substitute $v_4, v_5$ and the above value of $c$ in the equations $r_6-r_7=0, r_7-r_8=0$  and we obtain the solutions $x_{(8)}=0, -1$ (both rejected) and $x_{(8)}=1$, which implies that $c=0$.
We set $x_{(7)}=1$ in the last four equations of system (\ref{sist3}) and this reduces to the system
\begin{eqnarray*}
&& 2 {v_4} {x_{(6)}}^2-{v_5} {x_{(6)}}^2-{v_5} = 0, \quad \left({x_{(6)}}^2+2\right) ({v_5}-{x_{(6)}})=0,\\
&& 2 {v_4} {x_{(6)}}^2+{v_5} {x_{(6)}}^2-4 {v_5}+6 {x_{(6)}}^3-16 {x_{(6)}}^2+8{x_{(6)}} = 0.
\end{eqnarray*}
From the second equation above we have $v_5 = x_{(6)}$. We substitute into the first and third equations above and we obtain two polynomials of $x_{(6)}$ and $v_4$:
$$
g_1(x_{(6)}, u_4) = 2 {v_4} {x_{(6)}}^2-{x_{(6)}}^3-{x_{(6)}}, \quad g_2(x_{(6)}, v_4) = 2 {v_4} {x_{(6)}}^2+7 {x_{(6)}}^3-16 {x_{(6)}}^2+4 {x_{(6)}}.
$$
Thus we have  a polynomial of $x_{(6)}$ given by
$
8 {x_{(6)}}^2-16 {x_{(6)}}+5 = 0
$
whose solutions  are
$
x_{(6)} = (4 \pm \sqrt{6})/4
$.
We substitute  these values into $g_1 = 0$ and we see that, 
for  $x_{(6)} =  (4 -\sqrt{6})/4, v_4 =  (52+3\sqrt{6})/4 $ and, 
 for  $ x_{(6)} = (4 + \sqrt{6})/4,  v_4 =  (52 - 3\sqrt{6})/4.$

Therefore, we obtain two Einstein metrics for $V_2\bb{C}^{4}$ which are of Jensen's type:
\begin{itemize}
\item[(1)] $(v_4, v_5, x_{(6)}, x_{(7)}, x_{(8)}) =   
( (52+3\sqrt{6})/4, \ (4 -\sqrt{6})/4, \ (4 -\sqrt{6})/4, \ 1,\ 1)$
\item[(2)] $(v_4, v_5, x_{(6)}, x_{(7)}, x_{(8)}) = 
 ( (52 - 3\sqrt{6})/4, \ (4 + \sqrt{6})/4, \ (4 + \sqrt{6})/4, \ 1,\ 1)$.
\end{itemize}

\subsection{The Stiefel manifold $V_3\bb{C}^{5}\cong \SU(5)/\SU(2)$} 
In this case we have $\ell =1$, $m =n = 2$ and in system (\ref{sist3}) the second equation is absent.
To find Einstein metrics we solve the system
$$
r_2 - r_4 = 0, \ r_4 - r_5 = 0, \ r_5- r_6 = 0, \ r_6- r_7 = 0, \ r_7 - r_8 = 0.
$$
We set $x_{(7)}= 1$ and  this reduces to the system:
{ \begin{eqnarray*}
&& f_1 = 6 {u_2}^2 {x_{(6)}}^2+3 {u_2}^2 {x_{(8)}}^2-5 {u_2} {v_4} {x_{(6)}}^2{x_{(8)}}^2-10 {u_2} {v_4} {x_{(6)}}^2+6 {x_{(6)}}^2 {x_{(8)}}^2=0,\\
&& f_2 = 5 {v_4} {x_{(6)}}^2 {x_{(8)}}^4+20 {v_4} {x_{(6)}}^2 {x_{(8)}}^2+20 {u_4} {x_{(6)}}^2-18 {v_5} {x_{(6)}}^2 {x_{(8)}}^2-9 {v_5} {x_{(8)}}^4\\&&
-18 {v_5} {x_{(8)}}^2 =0, \\
&& f_3 = 3 {u_2} {x_{(8)}}^3+6 {u_2} {x_{(8)}}+12 {v_5} {x_{(6)}}^2 {x_{(8)}}+9{u5} {x_{(8)}}^3+18 {v_5} {x_{(8)}}-4 {x_{(6)}}^3 {x_{(8)}}^2-8 {x_{(6)}}^3\\
&& +4 {x_{(6)}} {x_{(8)}}^4-20 {x_{(6)}} {x_{(8)}}^3+12 {x_{(6)}} {x_{(8)}}^2-40 {x_{(6)}} {x_{(8)}}+8 {x_{(6)}} = 0, \\
&& f_4 = 9 {u_2} {x_{(6)}}^2 {x_{(8)}}^4+36 {u_2} {x_{(6)}}^2 {x_{(8)}}^2+36 {u_2}{x_{(6)}}^2-9 {u_2} {x_{(8)}}^6-36 {u_2} {x_{(8)}}^4-36 {u_2}{x_{(8)}}^2 \\&&
+5 {u_4} {x_{(6)}}^2 {x_{(8)}}^4 +20 {v_4} {x_{(6)}}^2{x_{(8)}}^2+20 {v_4} {x_{(6)}}^2+9 {v_5} {x_{(6)}}^2 {x_{(8)}}^4-9 {v_5}{x_{(8)}}^6-36 {v_5} {x_{(8)}}^4\\&& 
-36 {v_5} {x_{(8)}}^2+18 {x_{(6)}}^3{x_{(8)}}^5 +72 {x_{(6)}}^3 {x_{(8)}}^3+72 {x_{(6)}}^3 {x_{(8)}}-60 {x_{(6)}}^2{x_{(8)}}^5-240 {x_{(6)}}^2 {x_{(8)}}^3 \\&&
-240 {x_{(6)}}^2 {x_{(8)}}-18 {x_{(6)}}{x_{(8)}}^7+60 {x_{(6)}} {x_{(8)}}^6 -78 {x_{(6)}} {x_{(8)}}^5+240 {x_{(6)}}{x_{(8)}}^4-96 {x_{(6)}} {x_{(8)}}^3\\&&
+240 {x_{(6)}} {x_{(8)}}^2-24 {x_{(6)}} {x_{(8)}} = 0, \\
&& f_5 = -9 {u_2} {x_{(6)}} {x_{(8)}}^4-36 {u_2} {x_{(6)}} {x_{(8)}}^2-36 {u_2}{x_{(6)}}+5 {v_4} {x_{(6)}} {x_{(8)}}^6+15 {v_4} {x_{(6)}} {x_{(8)}}^4 \\&&
-20{u_4} {x_{(6)}} -9 {u5} {x_{(6)}} {x_{(8)}}^4 +36 {v_5} {x_{(6)}}{x_{(8)}}^2+6 {x_{(6)}}^2 {x_{(8)}}^5+24 {x_{(6)}}^2 {x_{(8)}}^3+24 {x_{(6)}}^2{x_{(8)}} \\&&
-60 {x_{(6)}} {x_{(8)}}^6+60 {x_{(6)}} {x_{(8)}}^5 -240 {x_{(6)}}{x_{(8)}}^4+240 {x_{(6)}} {x_{(8)}}^3 -240 {x_{(6)}} {x_{(8)}}^2+240 {x_{(6)}}{x_{(8)}}\\ &&
+18 {x_{(8)}}^7+54 {x_{(8)}}^5-72 {x_{(8)}} = 0.
\end{eqnarray*} }
Also, from $r_0 = 0$ we obtain that
$$
c = -\frac{2 \left({x_{(8)}}^2-1\right)}{\sqrt{5} \left({x_{(8)}}^2+2\right)}.
$$

We observe that the equations $f_3, f_4$ and $f_5$ {\it are linear} with respect to $u_2, u_4$ and $v_5$ and by
solving  we obtain  the following:
{\small \begin{eqnarray*}
&& u_2 =1/\left({3 {x_{(8)}} (-2 {x_{(6)}}^4+{x_{(6)}}^2 {x_{(8)}}^2-6{x_{(6)}}^2+{x_{(8)}}^4+{x_{(8)}}^2-2)}\right)\times \big( {x_{(6)}} (14 {x_{(6)}}^4 {x_{(8)}}^2
 \\&&
 -20 {x_{(6)}}^4-40 {x_{(6)}}^3 {x_{(8)}}^2 +40{x_{(6)}}^3 {x_{(8)}}-7 {x_{(6)}}^2 {x_{(8)}}^4+50 {x_{(6)}}^2 {x_{(8)}}^3+2{x_{(6)}}^2 {x_{(8)}}^2\\&&
 -60 {x_{(6)}}^2 {x_{(8)}} -30 {x_{(6)}} {x_{(8)}}^4+30 {x_{(6)}}{x_{(8)}}^3 -60 {x_{(6)}} {x_{(8)}}^2+60 {x_{(6)}} {x_{(8)}}-7 {x_{(8)}}^6 \\&&
 +20{x_{(8)}}^5-17 {x_{(8)}}^4 +20 {x_{(8)}}^3+4 {x_{(8)}}^2-40 {x_{(8)}}+20)\big) \nonumber \\
&& v_4 =-1/\big({5 {x_{(6)}} {x_{(8)}} ({x_{(8)}}^2+2 ) (2 {x_{(6)}}^4-{x_{(6)}}^2{x_{(8)}}^2+6 {x_{(6)}}^2-{x_{(8)}}^4-{x_{(8)}}^2+2)}\big)\times \\&&
 \big( 6  (8 {x_{(6)}}^6 {x_{(8)}}^2+20 {x_{(6)}}^6 -20 {x_{(6)}}^5 {x_{(8)}}^3-40{x_{(6)}}^5 {x_{(8)}}+2 {x_{(6)}}^4 {x_{(8)}}^4+20 {x_{(6)}}^4 {x_{(8)}}^3\\&&
 +6{x_{(6)}}^4 {x_{(8)}}^2+60 {x_{(6)}}^4 {x_{(8)}} +10 {x_{(6)}}^3 {x_{(8)}}^5 -20{x_{(6)}}^3 {x_{(8)}}^4-20 {x_{(6)}}^3 {x_{(8)}}^3-60 {x_{(6)}}^3 {x_{(8)}}^2\\&&
 -60{x_{(6)}}^3 {x_{(8)}}-7 {x_{(6)}}^2 {x_{(8)}}^6+10 {x_{(6)}}^2 {x_{(8)}}^5-12{x_{(6)}}^2 {x_{(8)}}^4+40 {x_{(6)}}^2 {x_{(8)}}^3-6 {x_{(6)}}^2 {x_{(8)}}^2 \\&&
 +40{x_{(6)}}^2 {x_{(8)}}-20 {x_{(6)}}^2+10 {x_{(6)}} {x_{(8)}}^7-10 {x_{(6)}}{x_{(8)}}^6+40 {x_{(6)}} {x_{(8)}}^5 -40 {x_{(6)}} {x_{(8)}}^4\\&&
 +40 {x_{(6)}}{x_{(8)}}^3-40 {x_{(6)}} {x_{(8)}}^2-3 {x_{(8)}}^8-9 {x_{(8)}}^6+12 {x_{(8)}}^2)\big)\nonumber\\
 && v_5 =  -1/({3 {x_{(8)}} \left(-2 {x_{(6)}}^4+{x_{(6)}}^2 {x_{(8)}}^2-6{x_{(6)}}^2+{x_{(8)}}^4+{x_{(8)}}^2-2\right)}) \times\\&&
  \big(2 {x_{(6)}}^5 {x_{(8)}}^2+4 {x_{(6)}}^5-{x_{(6)}}^3 {x_{(8)}}^4 +10 {x_{(6)}}^3{x_{(8)}}^3-6 {x_{(6)}}^3 {x_{(8)}}^2+20 {x_{(6)}}^3 {x_{(8)}}-8 {x_{(6)}}^3 \\&&
  -10   {x_{(6)}}^2 {x_{(8)}}^4 +10 {x_{(6)}}^2 {x_{(8)}}^3-20 {x_{(6)}}^2 {x_{(8)}}^2+20{x_{(6)}}^2 {x_{(8)}}-{x_{(6)}} {x_{(8)}}^6-3 {x_{(6)}} {x_{(8)}}^4+4 {x_{(6)}}\big).
\end{eqnarray*} }
We substitute the above expressions into  $f_1=0$, $f_2=0$ and we obtain 
$$
-{x_{(6)}}^2 g_1(x_{(6)}, x_{(8)}) = 0 \ \ \ \mbox{and} \ \ \ -3x_{(6)}({x_{(8)}}^2 + 2) g_2(x_{(6)}, x_{(8)})=0,
$$
where $g_1$ and $g_2$ are given as follows:
{\small  \begin{eqnarray*}
&&g_1(x_{(6)}, x_{(8)}) = 280 {x_{(6)}}^{10} {x_{(8)}}^4+1840 {x_{(6)}}^{10} {x_{(8)}}^2-3200 {x_{(6)}}^{10}-1680{x_{(6)}}^9 {x_{(8)}}^5+320 {x_{(6)}}^9 {x_{(8)}}^4 \\
&& -1280 {x_{(6)}}^9 {x_{(8)}}^3-8000{x_{(6)}}^9 {x_{(8)}}^2+12800 {x_{(6)}}^9 {x_{(8)}}+28 {x_{(6)}}^8 {x_{(8)}}^6+6080{x_{(6)}}^8 {x_{(8)}}^5\\&&
-8664 {x_{(6)}}^8 {x_{(8)}}^4 +29120 {x_{(6)}}^8{x_{(8)}}^3 -13520 {x_{(6)}}^8 {x_{(8)}}^2-19200 {x_{(6)}}^8 {x_{(8)}}+1680{x_{(6)}}^7 {x_{(8)}}^7\\&&
-7920 {x_{(6)}}^7 {x_{(8)}}^6+2000 {x_{(6)}}^7 {x_{(8)}}^5 -18880 {x_{(6)}}^7 {x_{(8)}}^4-18880 {x_{(6)}}^7 {x_{(8)}}^3 +33600{x_{(6)}}^7 {x_{(8)}}^2 \\&&
+19200 {x_{(6)}}^7 {x_{(8)}}-518 {x_{(6)}}^6 {x_{(8)}}^8+1640{x_{(6)}}^6 {x_{(8)}}^7 -3668 {x_{(6)}}^6 {x_{(8)}}^6+33600 {x_{(6)}}^6{x_{(8)}}^5 \\&&
-784 {x_{(6)}}^6 {x_{(8)}}^4 +33120 {x_{(6)}}^6 {x_{(8)}}^3 -67520{x_{(6)}}^6 {x_{(8)}}^2-12800 {x_{(6)}}^6 {x_{(8)}} +6400 {x_{(6)}}^6\\&&
+1260 {x_{(6)}}^5{x_{(8)}}^9+240 {x_{(6)}}^5 {x_{(8)}}^8+8400 {x_{(6)}}^5 {x_{(8)}}^7 -26320{x_{(6)}}^5 {x_{(8)}}^6 -28320 {x_{(6)}}^5 {x_{(8)}}^5 \\&&
+12320 {x_{(6)}}^5{x_{(8)}}^4-37280 {x_{(6)}}^5 {x_{(8)}}^3+96000 {x_{(6)}}^5 {x_{(8)}}^2-12800{x_{(6)}}^5 {x_{(8)}}-91 {x_{(6)}}^4 {x_{(8)}}^{10} \\&&
-5220 {x_{(6)}}^4 {x_{(8)}}^9  +4326{x_{(6)}}^4 {x_{(8)}}^8-12120 {x_{(6)}}^4 {x_{(8)}}^7+46164 {x_{(6)}}^4{x_{(8)}}^6-3280 {x_{(6)}}^4 {x_{(8)}}^5 \\&&
+14296 {x_{(6)}}^4 {x_{(8)}}^4-480{x_{(6)}}^4 {x_{(8)}}^3 -67520 {x_{(6)}}^4 {x_{(8)}}^2 +19200 {x_{(6)}}^4 {x_{(8)}}-840{x_{(6)}}^3 {x_{(8)}}^{11}\\&&
+5620 {x_{(6)}}^3 {x_{(8)}}^{10}-2100 {x_{(6)}}^3{x_{(8)}}^9+9160 {x_{(6)}}^3 {x_{(8)}}^8 -15040 {x_{(6)}}^3 {x_{(8)}}^7 -5600{x_{(6)}}^3 {x_{(8)}}^6 \\&&
-27040 {x_{(6)}}^3 {x_{(8)}}^5+18720 {x_{(6)}}^3{x_{(8)}}^4-6880 {x_{(6)}}^3 {x_{(8)}}^3+43200 {x_{(6)}}^3 {x_{(8)}}^2 -19200{x_{(6)}}^3 {x_{(8)}}\\&&
+224 {x_{(6)}}^2 {x_{(8)}}^{12} -2420 {x_{(6)}}^2{x_{(8)}}^{11} +1938 {x_{(6)}}^2 {x_{(8)}}^{10}-14420 {x_{(6)}}^2 {x_{(8)}}^9+22938{x_{(6)}}^2 {x_{(8)}}^8 \\
&& -31120 {x_{(6)}}^2 {x_{(8)}}^7+53204 {x_{(6)}}^2{x_{(8)}}^6-33360 {x_{(6)}}^2 {x_{(8)}}^5 +20216 {x_{(6)}}^2 {x_{(8)}}^4-12480{x_{(6)}}^2 {x_{(8)}}^3 \\&&
-14320 {x_{(6)}}^2 {x_{(8)}}^2 +12800 {x_{(6)}}^2{x_{(8)}}-3200 {x_{(6)}}^2-420 {x_{(6)}} {x_{(8)}}^{13}+1740 {x_{(6)}}{x_{(8)}}^{12} \\
&&
 -2820 {x_{(6)}} {x_{(8)}}^{11} +8340 {x_{(6)}} {x_{(8)}}^{10}  -8760{x_{(6)}} {x_{(8)}}^9 +10560 {x_{(6)}} {x_{(8)}}^8-10560 {x_{(6)}} {x_{(8)}}^7 \\
 &&
 -3360{x_{(6)}} {x_{(8)}}^6+6240 {x_{(6)}} {x_{(8)}}^5 -12480 {x_{(6)}} {x_{(8)}}^4 +16320{x_{(6)}} {x_{(8)}}^3 -4800 {x_{(6)}} {x_{(8)}}^2  
 \end{eqnarray*}
\begin{eqnarray*} 
 &&
 +77 {x_{(8)}}^{14}-80{x_{(8)}}^{13}+28 {x_{(8)}}^{12}-480 {x_{(8)}}^{11}-223 {x_{(8)}}^{10}-400{x_{(8)}}^9 +590 {x_{(8)}}^8 \\&&+1280 {x_{(8)}}^7 +32 {x_{(8)}}^6+960 {x_{(8)}}^5 -1544{x_{(8)}}^4-1280 {x_{(8)}}^3+1040 {x_{(8)}}^2
\end{eqnarray*} }
and
{\small \begin{eqnarray*}
&& g_2(x_{(6)}, x_{(8)}) = -3 {x_{(6)}} ({x_{(8)}}^2+2)^2 (-20 {x_{(6)}}^6+40 {x_{(6)}}^5{x_{(8)}}-4 {x_{(6)}}^4 {x_{(8)}}^2-60 {x_{(6)}}^4 {x_{(8)}}  \\&&
-20 {x_{(6)}}^3{x_{(8)}}^3 +60 {x_{(6)}}^3 {x_{(8)}}^2+60 {x_{(6)}}^3 {x_{(8)}}+17 {x_{(6)}}^2{x_{(8)}}^4-30 {x_{(6)}}^2 {x_{(8)}}^3-2 {x_{(6)}}^2 {x_{(8)}}^2 \\&&
-40 {x_{(6)}}^2{x_{(8)}}+20 {x_{(6)}}^2 -20 {x_{(6)}} {x_{(8)}}^5+30 {x_{(6)}} {x_{(8)}}^4-50{x_{(6)}} {x_{(8)}}^3+40 {x_{(6)}} {x_{(8)}}^2+7 {x_{(8)}}^6 \\&& 
+7 {x_{(8)}}^4-14{x_{(8)}}^2).
\end{eqnarray*} }

We consider a polynomial ring $R = Q[z, x_{(6)}, x_{(8)}]$ and an ideal $I$ generated by $\{g_1, g_2,$ $ z \, x_{(6)} x_{(8)} - 1\}$ to find non zero solutions for the equations $g_1 = 0,\ g_2 = 0$.  We take a lexicographic order $>$ with $z > x_{(6)} > x_{(8)}$ for a monomial ordering on $R$.  Then by the aid of computer we see that a Gr\"obner basis for the ideal $I$ contains the polynomials
$$
(x_{(8)}- 1)^2 (5 {x_{(8)}}^2 + 2)^2 h(x_{(8)}),
$$
where   $h(x_{(8)})$ is given by
{\footnotesize \begin{eqnarray*}
&& h(x_{(8)}) = 6525496468915200 {x_{(8)}}^{26}-54695097654220800 {x_{(8)}}^{25}+281982445913589120{x_{(8)}}^{24}\\
&& -1138653468769528320 {x_{(8)}}^{23}+3741072893659661028{x_{(8)}}^{22} -10461844097208857304 {x_{(8)}}^{21}\\
&& +25787611597344221492{x_{(8)}}^{20} -56712788589395019672 {x_{(8)}}^{19}+112460183799287026516{x_{(8)}}^{18}\\
&& -203312692454920821816 {x_{(8)}}^{17}+336020119285435562648{x_{(8)}}^{16} -507335167986987659588 {x_{(8)}}^{15}\\
&& +701457087993906599199{x_{(8)}}^{14} -889337244622259846134 {x_{(8)}}^{13}+1029897386253507589663{x_{(8)}}^{12}\\
&& -1083070284962388243040 {x_{(8)}}^{11}+1029965936819584117520{x_{(8)}}^{10} -882550965600444083800 {x_{(8)}}^9\\
&& +678539858188720076700{x_{(8)}}^8 -466171617462687452000 {x_{(8)}}^7+284744896720187390000{x_{(8)}}^6\\
&& -153012868758975000000 {x_{(8)}}^5+71039304710981500000{x_{(8)}}^4-27757371715690000000 {x_{(8)}}^3\\
&& +8664161487275000000{x_{(8)}}^2-1906087342250000000 {x_{(8)}}+216106141875000000,
\end{eqnarray*} }
and 
\begin{equation}\label{astron}
(x_{(8)} - 1)^2(x_{(6)} - w(x_{(8)})) = 0,
\end{equation}
where $w(x_{(8)})$ is a polynomial with rational coefficients.  We solve the equation $h(x_{(8)}) = 0$ numerically  and we obtain two positive solutions, which are given approximately as
$$
x_{(8)} \approx 0.973092, \quad x_{(8)} \approx 1.45884.
$$
By substituting the values of $x_{(8)}$ into (\ref{astron}) we obtain two positive solutions of the system of equations
${g_1 = 0, g_2 = 0}$, approximately as
\begin{itemize}
\item[] $(x_{(6)},\ x_{(8)}) \approx (0.476191,\ 0.973092)$
\item[] $(x_{(6)},\ x_{(8)}) \approx (1.965348,\ 1.45884) $.
\end{itemize}
We substitute these values into the expressiosn of $u_2, u_4$ and $u_5$ and obtain two Einstein metrics on $V_3\bb{C}^{5}\cong \SU(5)/\SU(2)$ which are given as follows:
\begin{itemize}
\item[(1)] $(u_2,\ v_4,\ v_5,\ x_{(6)},\ x_{(8)},\ x_{(7)})\approx (0.390148,\ 1.47889,\ 0.50248,\ 0.476191,\ 0.973092,\ 1)$,
\item[(2)] $(u_2,\ v_4,\ v_5,\ x_{(6)},\ x_{(8)},\ x_{(7)})\approx (0.499212,\ 1.42431,\ 2.06481,\ 1.965348,\ 1.45884,\ 1)$.
\end{itemize}

In the case where $x_{(8)} = 1$ then $c = 0$ and from $g_2 = 0$ we obtain that
$$
9 \left(-20 {x_{(6)}}^6+40 {x_{(6)}}^5-64{x_{(6)}}^4+100{x_{(6)}}^3-35
   {x_{(6)}}^2\right) = 0.
$$
The solutions of the above equation are 
$x_{(6)} =(10 - \sqrt{30})/10$, $x_{(6)} =  (10 + \sqrt{30})/10$.
We substitute into  $u_2, v_4$ and $v_5$ and we obtain two more Einstein metrics of Jensen's type as follows:
\begin{itemize}
\item[(3)] 
$(u_2, v_4,  v_5,  x_{(6)},  x_{(7)},  x_{(8)}) \\ = (1-\sqrt{3/10}, \ 4(55+2\sqrt{30})/175,\  1-\sqrt{3/10},\ (10 - \sqrt{30})/10,\ 1,\ 1),
$
\item[(4)] 
$(u_2, v_4,  v_5,  x_{(6)},  x_{(7)},  x_{(8)}) \\ = (1+\sqrt{3/10}, \ 4(55 - 2\sqrt{30})/175,\  1+\sqrt{3/10},\ (10 + \sqrt{30})/10,\ 1,\ 1)
$.
\end{itemize}

\subsection{The Stiefel manifold $V_4\bb{C}^{6}\cong \SU(6)/\SU(2)$}
In this case we have $\ell = m = n = 2$.  To find Einstein metrics we solve the system
$$
r_1 - r_2 = 0, \ r_2 - r_4 = 0, \ r_4 - r_5 = 0, \ r_5- r_6 = 0, \ r_6- r_7 = 0, \ r_7 - r_8 = 0.
$$
We set $x_{(7)} = 1$ and the above system reduces to
{ \begin{eqnarray*}
&& f_1 = {u_1}^2 {u_2} {x_{(6)}}^2 {x_{(8)}}^2+{u_1}^2 {u_2}{x_{(8)}}^2-{u_1} {u_2}^2 {x_{(6)}}^2-{u_1} {u_2}^2{x_{(8)}}^2-{u_1} {x_{(6)}}^2 {x_{(8)}}^2\\ &&+{u_2} {x_{(6)}}^2 {x_{(8)}}^2=0,\\
&&f_2 = 2 {u_2}^2 {x_{(6)}}^2+2 {u_2}^2 {x_{(8)}}^2-3 {u_2} {v_4} {x_{(6)}}^2{x_{(8)}}^2-3 {u_2} {v_4} {x_{(6)}}^2+2 {x_{(6)}}^2 {x_{(8)}}^2 = 0,\\
&&f_3 = 3 {v_4} {x_{(6)}}^2 {x_{(8)}}^4+6 {v_4} {x_{(6)}}^2 {x_{(8)}}^2+3{v_4} {x_{(6)}}^2-4 {v_5} {x_{(6)}}^2 {x_{(8)}}^2-4 {v_5} {x_{(8)}}^4-4{v_5} {x_{(8)}}^2= 0,\\
&&f_4 = 3 {u_1} {x_{(8)}}^3+3 {u_1} {x_{(8)}} +3 {u_2}{x_{(8)}}^3+3 {u_2} {x_{(8)}} +8 {v_5} {x_{(6)}}^2 {x_{(8)}}+10 {u_5}{x_{(8)}}^3+10 {v_5} {x_{(8)}} \\&& -4 {x_{(6)}}^3 {x_{(8)}}^2
 -4 {x_{(6)}}^3+4 {x_{(6)}}{x_{(8)}}^4-24 {x_{(6)}} {x_{(8)}}^3  +8 {x_{(6)}} {x_{(8)}}^2-24 {x_{(6)}} {x_{(8)}}+4{x_{(6)}} = 0,\\
&& f_5 = -6 {u_1} {x_{(8)}}^6-12 {u_1} {x_{(8)}}^4-6 {u_1} {x_{(8)}}^2+6{u_2} {x_{(6)}}^2 {x_{(8)}}^4+12 {u_2} {x_{(6)}}^2 {x_{(8)}}^2+6 {u_2}{x_{(6)}}^2 \\ &&-6 {u_2} {x_{(8)}}^6 -12 {u_2} {x_{(8)}}^4 -6 {u_2} {x_{(8)}}^2+3{v_4} {x_{(6)}}^2 {x_{(8)}}^4+6 {v_4} {x_{(6)}}^2 {x_{(8)}}^2+3 {v_4}{x_{(6)}}^2\\&&
+4 {v_5} {x_{(6)}}^2 {x_{(8)}}^4 -4 {v_5} {x_{(8)}}^6 -8 {v_5}{x_{(8)}}^4 -4 {v_5} {x_{(8)}}^2+16 {x_{(6)}}^3 {x_{(8)}}^5+32 {x_{(6)}}^3{x_{(8)}}^3\\&&
+16 {x_{(6)}}^3 {x_{(8)}}-48 {x_{(6)}}^2 {x_{(8)}}^5-96 {x_{(6)}}^2{x_{(8)}}^3 -48 {x_{(6)}}^2 {x_{(8)}} -16 {x_{(6)}} {x_{(8)}}^7+48 {x_{(6)}}{x_{(8)}}^6\\&&
-32 {x_{(6)}} {x_{(8)}}^5+96 {x_{(6)}} {x_{(8)}}^4-16 {x_{(6)}}{x_{(8)}}^3+48 {x_{(6)}} {x_{(8)}}^2 = 0,\\
&& f_6 = 6 {u_1} {x_{(6)}} {x_{(8)}}^6+12 {u_1} {x_{(6)}} {x_{(8)}}^4+6 {u_1}{x_{(6)}} {x_{(8)}}^2-6 {u_2} {x_{(6)}} {x_{(8)}}^4-12 {u_2} {x_{(6)}}{x_{(8)}}^2\\&&
-6 {u_2} {x_{(6)}} +3 {v_4} {x_{(6)}} {x_{(8)}}^6+3 {v_4}{x_{(6)}} {x_{(8)}}^4-3 {v_4} {x_{(6)}} {x_{(8)}}^2-3 {v_4} {x_{(6)}}-4{v_5} {x_{(6)}} {x_{(8)}}^4\\&&
+4 {v_5} {x_{(6)}} {x_{(8)}}^2-48 {x_{(6)}}{x_{(8)}}^6 +48 {x_{(6)}} {x_{(8)}}^5-96 {x_{(6)}} {x_{(8)}}^4+96 {x_{(6)}}{x_{(8)}}^3-48 {x_{(6)}} {x_{(8)}}^2 \\&&
+48 {x_{(6)}} {x_{(8)}}+16 {x_{(8)}}^7+16 {x_{(8)}}^5 -16 {x_{(8)}}^3-16 {x_{(8)}} = 0. 
\end{eqnarray*} }
Also, from $r_0 = 0$ we obtain that
$$
c = -\frac{{x_{(8)}}^2-1}{\sqrt{3} \left({x_{(8)}}^2+1\right)}.
$$

We observe that the polynomials  $f_3, f_4, f_5$ and $f_6$ {\it are linear} with respect to $u_1, u_2, v_4$ and $v_5$ and by solving we obtain the following:
{\small \begin{eqnarray*}
&& u_1 = - 2 ({x_{(6)}}-{x_{(8)}}) \big(3 {x_{(6)}}^5-5 {x_{(6)}}^4 {x_{(8)}}-2 {x_{(6)}}^3{x_{(8)}}^2+10 {x_{(6)}}^3 {x_{(8)}}-2 {x_{(6)}}^2 {x_{(8)}}^3 \\&&
-10 {x_{(6)}}^2{x_{(8)}}    -5 {x_{(6)}} {x_{(8)}}^4+10 {x_{(6)}} {x_{(8)}}^3-10 {x_{(6)}} {x_{(8)}}^2+8{x_{(6)}} {x_{(8)}}-3 {x_{(6)}}+3 {x_{(8)}}^5\\&&
-3 {x_{(8)}}\big)/  \big({x_{(6)}} {x_{(8)}} \left(2 {x_{(6)}}^4-2 {x_{(8)}}^4-5 {x_{(8)}}^2-2\right)\big),\nonumber\\ 
&& u_2 = - 2 ({x_{(6)}}-1) {x_{(8)}} (3 {x_{(6)}}^5-5 {x_{(6)}}^4+10 {x_{(6)}}^3 {x_{(8)}}-2{x_{(6)}}^3-10 {x_{(6)}}^2 {x_{(8)}}^2-2 {x_{(6)}}^2 \nonumber\\&&
-3 {x_{(6)}} {x_{(8)}}^4 +8{x_{(6)}} {x_{(8)}}^3 -10 {x_{(6)}} {x_{(8)}}^2+10 {x_{(6)}} {x_{(8)}}-5 {x_{(6)}}-3{x_{(8)}}^4+3)/
\\
& & 
\big({x_{(6)}} \left(2{x_{(6)}}^4-2 {x_{(8)}}^4-5 {x_{(8)}}^2-2\right) \big), \nonumber\\ 
&& v_4 =  4 {x_{(8)}} \left({x_{(6)}}^2+{x_{(8)}}^2+1\right) \big({x_{(6)}}^4+6 {x_{(6)}}^2{x_{(8)}}-6 {x_{(6)}} {x_{(8)}}^2-6 {x_{(6)}} {x_{(8)}}-{x_{(8)}}^4 \\&& 
+2{x_{(8)}}^2-1\big)/ \big(3 {x_{(6)}} \left({x_{(8)}}^2+1\right) \left(2 {x_{(6)}}^4-2 {x_{(8)}}^4-5 {x_{(8)}}^2-2\right)\big), \nonumber\\ 
&& v_5 =  {x_{(6)}} \left({x_{(8)}}^2+1\right) \big({x_{(6)}}^4+6 {x_{(6)}}^2 {x_{(8)}}-6{x_{(6)}} {x_{(8)}}^2-6 {x_{(6)}} {x_{(8)}}-{x_{(8)}}^4 
+2 {x_{(8)}}^2-1\big)/ \\ & & \big({x_{(8)}} \left(2 {x_{(6)}}^4-2 {x_{(8)}}^4-5 {x_{(8)}}^2-2\right)\big).  
\end{eqnarray*} }

We substitute the above expressions into $f_1$ and $f_2$ and we obtain 
$$
2x_{(8)}(x_{(8)} - 1)g_1(x_{(6)}, x_{(8)}) \ \ \ \mbox{and} \ \ \ 2 {x_{(8)}}^2g_2(x_{(6)}, x_{(8)}),
$$
where $g_1$ and $g_2$ given as follows:
{\small \begin{eqnarray*}
&&g_1(x_{(6)}, x_{(8)}) = 288 {x_{(6)}}^{19}-780 {x_{(8)}} {x_{(6)}}^{18}-780 {x_{(6)}}^{18}+288 {x_{(8)}}^2{x_{(6)}}^{17}+3712 {x_{(8)}} {x_{(6)}}^{17}\\
&& +288 {x_{(6)}}^{17}-4232 {x_{(8)}}^2{x_{(6)}}^{16} -4232 {x_{(8)}} {x_{(6)}}^{16}-864 {x_{(8)}}^4 {x_{(6)}}^{15}+3944{x_{(8)}}^3 {x_{(6)}}^{15}\\&&
+1888 {x_{(8)}}^2 {x_{(6)}}^{15} +3944 {x_{(8)}}{x_{(6)}}^{15} -864 {x_{(6)}}^{15}+2340 {x_{(8)}}^5 {x_{(6)}}^{14}-1084 {x_{(8)}}^4{x_{(6)}}^{14}\\&&
+4532 {x_{(8)}}^3 {x_{(6)}}^{14}+4532 {x_{(8)}}^2 {x_{(6)}}^{14} -1084{x_{(8)}} {x_{(6)}}^{14}+2340 {x_{(6)}}^{14}-864 {x_{(8)}}^6 {x_{(6)}}^{13}\\&&
-7712{x_{(8)}}^5 {x_{(6)}}^{13}-5104 {x_{(8)}}^4 {x_{(6)}}^{13} -37056 {x_{(8)}}^3{x_{(6)}}^{13} -5104 {x_{(8)}}^2 {x_{(6)}}^{13}-7712 {x_{(8)}} {x_{(6)}}^{13}\\&&
-864{x_{(6)}}^{13}+8752 {x_{(8)}}^6 {x_{(6)}}^{12} +16840 {x_{(8)}}^5 {x_{(6)}}^{12}+56608{x_{(8)}}^4 {x_{(6)}}^{12} +56608 {x_{(8)}}^3 {x_{(6)}}^{12}\\&&
+16840 {x_{(8)}}^2{x_{(6)}}^{12} +8752 {x_{(8)}} {x_{(6)}}^{12}+864 {x_{(8)}}^8 {x_{(6)}}^{11}-7888{x_{(8)}}^7 {x_{(6)}}^{11}-13088 {x_{(8)}}^6 {x_{(6)}}^{11} \\&&
-85256 {x_{(8)}}^5{x_{(6)}}^{11} -85280 {x_{(8)}}^4 {x_{(6)}}^{11}-85256 {x_{(8)}}^3{x_{(6)}}^{11}-13088 {x_{(8)}}^2 {x_{(6)}}^{11}-7888 {x_{(8)}} {x_{(6)}}^{11}\\&&
+864{x_{(6)}}^{11} -2340 {x_{(8)}}^9 {x_{(6)}}^{10}+4508 {x_{(8)}}^8 {x_{(6)}}^{10}-400{x_{(8)}}^7 {x_{(6)}}^{10}+82480 {x_{(8)}}^6 {x_{(6)}}^{10}\\&&
+117717 {x_{(8)}}^5{x_{(6)}}^{10} +117717 {x_{(8)}}^4 {x_{(6)}}^{10} +82480 {x_{(8)}}^3 {x_{(6)}}^{10}-400{x_{(8)}}^2 {x_{(6)}}^{10} 
+4508 {x_{(8)}} {x_{(6)}}^{10}\\&& 
-2340 {x_{(6)}}^{10} +864{x_{(8)}}^{10} {x_{(6)}}^9+4288 {x_{(8)}}^9 {x_{(6)}}^9 +896 {x_{(8)}}^8{x_{(6)}}^9 -29600 {x_{(8)}}^7 {x_{(6)}}^9\\&&
-149664 {x_{(8)}}^6 {x_{(6)}}^9 -102552{x_{(8)}}^5 {x_{(6)}}^9-149664 {x_{(8)}}^4 {x_{(6)}}^9-29600 {x_{(8)}}^3{x_{(6)}}^9 +896 {x_{(8)}}^2 {x_{(6)}}^9 
\\&&+4288 {x_{(8)}} {x_{(6)}}^9 +864{x_{(6)}}^9-4808 {x_{(8)}}^{10} {x_{(6)}}^8-12536 {x_{(8)}}^9 {x_{(6)}}^8+4616{x_{(8)}}^8 {x_{(6)}}^8 \\&&
+91112 {x_{(8)}}^7 {x_{(6)}}^8 +120582 {x_{(8)}}^6{x_{(6)}}^8+120582 {x_{(8)}}^5 {x_{(6)}}^8 
+91112 {x_{(8)}}^4 {x_{(6)}}^8
+4616{x_{(8)}}^3 {x_{(6)}}^8\\
&&
-12536 {x_{(8)}}^2 {x_{(6)}}^8 -4808 {x_{(8)}}{x_{(6)}}^8-288{x_{(8)}}^{12} {x_{(6)}}^7+3944 {x_{(8)}}^{11} {x_{(6)}}^7+12064 {x_{(8)}}^{10}{x_{(6)}}^7\\&&
+29328 {x_{(8)}}^9 {x_{(6)}}^7 -66608 {x_{(8)}}^8 {x_{(6)}}^7 -36462{x_{(8)}}^7 {x_{(6)}}^7-152460 {x_{(8)}}^6 {x_{(6)}}^7  -36462 {x_{(8)}}^5{x_{(6)}}^7\\
&&
 -66608 {x_{(8)}}^4 {x_{(6)}}^7 +29328 {x_{(8)}}^3 {x_{(6)}}^7+12064 {x_{(8)}}^2 {x_{(6)}}^7 +3944 {x_{(8)}} {x_{(6)}}^7-288 {x_{(6)}}^7\\&&
 +780 {x_{(8)}}^{13}{x_{(6)}}^6-2644 {x_{(8)}}^{12} {x_{(6)}}^6 -4348 {x_{(8)}}^{11} {x_{(6)}}^6-40284{x_{(8)}}^{10} {x_{(6)}}^6 
 +34975 {x_{(8)}}^9 {x_{(6)}}^6\\
 &&
 +5303 {x_{(8)}}^8{x_{(6)}}^6+79136 {x_{(8)}}^7 {x_{(6)}}^6 +79136 {x_{(8)}}^6 {x_{(6)}}^6+5303{x_{(8)}}^5 {x_{(6)}}^6+34975 {x_{(8)}}^4 {x_{(6)}}^6
\end{eqnarray*}
 \begin{eqnarray*}
 &&
  -40284 {x_{(8)}}^3{x_{(6)}}^6-4348 {x_{(8)}}^2 {x_{(6)}}^6 -2644 {x_{(8)}} {x_{(6)}}^6+780{x_{(6)}}^6-288 {x_{(8)}}^{14} {x_{(6)}}^5\\&&
  -288 {x_{(8)}}^{13} {x_{(6)}}^5+3920{x_{(8)}}^{12} {x_{(6)}}^5 +20576 {x_{(8)}}^{11} {x_{(6)}}^5 +7120 {x_{(8)}}^{10}{x_{(6)}}^5-20496 {x_{(8)}}^9 {x_{(6)}}^5\\&&
  +912 {x_{(8)}}^8 {x_{(6)}}^5-82852{x_{(8)}}^7 {x_{(6)}}^5+912 {x_{(8)}}^6 {x_{(6)}}^5 -20496 {x_{(8)}}^5{x_{(6)}}^5+7120 {x_{(8)}}^4 {x_{(6)}}^5\\&&
  +20576 {x_{(8)}}^3 {x_{(6)}}^5+3920{x_{(8)}}^2 {x_{(6)}}^5-288 {x_{(8)}} {x_{(6)}}^5  
  -288 {x_{(6)}}^5+288 {x_{(8)}}^{14}{x_{(6)}}^4 \\&&
  -72 {x_{(8)}}^{13} {x_{(6)}}^4-15144 {x_{(8)}}^{12} {x_{(6)}}^4-1584{x_{(8)}}^{11} {x_{(6)}}^4 -9264 {x_{(8)}}^{10} {x_{(6)}}^4+11730 {x_{(8)}}^9{x_{(6)}}^4\\&&
  +19986 {x_{(8)}}^8 {x_{(6)}}^4 +19986 {x_{(8)}}^7 {x_{(6)}}^4+11730{x_{(8)}}^6 {x_{(6)}}^4 -9264 {x_{(8)}}^5 {x_{(6)}}^4  
  -1584 {x_{(8)}}^4{x_{(6)}}^4\\&&
  -15144 {x_{(8)}}^3 {x_{(6)}}^4-72 {x_{(8)}}^2 {x_{(6)}}^4 +288 {x_{(8)}}{x_{(6)}}^4 -864 {x_{(8)}}^{14} {x_{(6)}}^3+5904 {x_{(8)}}^{13} {x_{(6)}}^3\\&&
  +6832{x_{(8)}}^{12} {x_{(6)}}^3-6224 {x_{(8)}}^{11} {x_{(6)}}^3 +24160 {x_{(8)}}^{10}{x_{(6)}}^3 -46336 {x_{(8)}}^9 {x_{(6)}}^3+33056 {x_{(8)}}^8 {x_{(6)}}^3\\&&
   -46336{x_{(8)}}^7 {x_{(6)}}^3+24160 {x_{(8)}}^6 {x_{(6)}}^3 -6224 {x_{(8)}}^5{x_{(6)}}^3+6832 {x_{(8)}}^4 {x_{(6)}}^3 +5904 {x_{(8)}}^3 {x_{(6)}}^3\\&&
   -864 {x_{(8)}}^2{x_{(6)}}^3+216 {x_{(8)}}^{15} {x_{(6)}}^2 -648 {x_{(8)}}^{14} {x_{(6)}}^2-5688{x_{(8)}}^{13} {x_{(6)}}^2+5832 {x_{(8)}}^{12} {x_{(6)}}^2\\
&& -12888 {x_{(8)}}^{11}{x_{(6)}}^2+13320 {x_{(8)}}^{10} {x_{(6)}}^2 -144 {x_{(8)}}^9 {x_{(6)}}^2-144{x_{(8)}}^8 {x_{(6)}}^2+13320 {x_{(8)}}^7 {x_{(6)}}^2\\&&
-12888 {x_{(8)}}^6{x_{(6)}}^2 +5832 {x_{(8)}}^5 {x_{(6)}}^2-5688 {x_{(8)}}^4 {x_{(6)}}^2 -648 {x_{(8)}}^3{x_{(6)}}^2+216 {x_{(8)}}^2 {x_{(6)}}^2\\&&
+1728 {x_{(8)}}^{14} {x_{(6)}}-432{x_{(8)}}^{13} {x_{(6)}} +1728 {x_{(8)}}^{12} {x_{(6)}} -3456 {x_{(8)}}^{10}{x_{(6)}}+864 {x_{(8)}}^9 {x_{(6)}}\\&&
-3456 {x_{(8)}}^8 {x_{(6)}} +1728 {x_{(8)}}^6{x_{(6)}}-432 {x_{(8)}}^5 {x_{(6)}}
+1728 {x_{(8)}}^4 {x_{(6)}} -216 {x_{(8)}}^{15}-216{x_{(8)}}^{14}\\&&
-216 {x_{(8)}}^{13}-216 {x_{(8)}}^{12}+432 {x_{(8)}}^{11}+432{x_{(8)}}^{10}+432 {x_{(8)}}^9 +432 {x_{(8)}}^8-216 {x_{(8)}}^7\\&&
-216 {x_{(8)}}^6-216{x_{(8)}}^5-216 {x_{(8)}}^4
\end{eqnarray*} }
and
{\small \begin{eqnarray*}
&& g_2(x_{(6)}, x_{(8)})  = 48 {x_{(6)}}^{14}-224 {x_{(6)}}^{13}+48 {x_{(6)}}^{12} {x_{(8)}}^2+352 {x_{(6)}}^{12} {x_{(8)}}+356 {x_{(6)}}^{12}\\&&
-576 {x_{(6)}}^{11} {x_{(8)}}^2 -1184 {x_{(6)}}^{11}{x_{(8)}}-224 {x_{(6)}}^{11} -96 {x_{(6)}}^{10} {x_{(8)}}^4+576 {x_{(6)}}^{10}{x_{(8)}}^3\\&&
+1836 {x_{(6)}}^{10} {x_{(8)}}^2+1536 {x_{(6)}}^{10} {x_{(8)}} -48{x_{(6)}}^{10}-128 {x_{(6)}}^9 {x_{(8)}}^4-3200 {x_{(6)}}^9 {x_{(8)}}^3\\&&
-1680{x_{(6)}}^9 {x_{(8)}}^2 -1536 {x_{(6)}}^9 {x_{(8)}}+448 {x_{(6)}}^9-96 {x_{(6)}}^8{x_{(8)}}^6 -128 {x_{(6)}}^8 {x_{(8)}}^5+2936 {x_{(6)}}^8 {x_{(8)}}^4\\&&
+3520 {x_{(6)}}^8{x_{(8)}}^3+1440 {x_{(6)}}^8 {x_{(8)}}^2+832 {x_{(6)}}^8 {x_{(8)}} -712{x_{(6)}}^8+576 {x_{(6)}}^7 {x_{(8)}}^6 -2496 {x_{(6)}}^7 {x_{(8)}}^5\\&&
-3280 {x_{(6)}}^7{x_{(8)}}^4-2016 {x_{(6)}}^7 {x_{(8)}}^3-1616 {x_{(6)}}^7 {x_{(8)}}^2+832 {x_{(6)}}^7{x_{(8)}}+448 {x_{(6)}}^7+48 {x_{(6)}}^6 {x_{(8)}}^8 \\&&
-576 {x_{(6)}}^6 {x_{(8)}}^7+1704{x_{(6)}}^6 {x_{(8)}}^6+2880 {x_{(6)}}^6 {x_{(8)}}^5+2096 {x_{(6)}}^6 {x_{(8)}}^4+944{x_{(6)}}^6 {x_{(8)}}^3\\&&
+408 {x_{(6)}}^6 {x_{(8)}}^2 -1536 {x_{(6)}}^6 {x_{(8)}}-48{x_{(6)}}^6 +352 {x_{(6)}}^5 {x_{(8)}}^8-256 {x_{(6)}}^5 {x_{(8)}}^7  -2928 {x_{(6)}}^5{x_{(8)}}^6\\&&
+192 {x_{(6)}}^5 {x_{(8)}}^5-3376 {x_{(6)}}^5 {x_{(8)}}^4 +2320 {x_{(6)}}^5{x_{(8)}}^3-688 {x_{(6)}}^5 {x_{(8)}}^2+1536 {x_{(6)}}^5 {x_{(8)}} -224 {x_{(6)}}^5\\&&
+48{x_{(6)}}^4 {x_{(8)}}^{10}-224 {x_{(6)}}^4 {x_{(8)}}^9+164 {x_{(6)}}^4{x_{(8)}}^8 +1088 {x_{(6)}}^4 {x_{(8)}}^7+876 {x_{(6)}}^4 {x_{(8)}}^6 \\&&
-256 {x_{(6)}}^4{x_{(8)}}^5+1633 {x_{(6)}}^4 {x_{(8)}}^4 -2880 {x_{(6)}}^4 {x_{(8)}}^3+1036{x_{(6)}}^4 {x_{(8)}}^2 -1184 {x_{(6)}}^4 {x_{(8)}} \\&&
+356 {x_{(6)}}^4 +224 {x_{(6)}}^3{x_{(8)}}^9-1104 {x_{(6)}}^3 {x_{(8)}}^8+384 {x_{(6)}}^3 {x_{(8)}}^7-1296 {x_{(6)}}^3{x_{(8)}}^6 +1120 {x_{(6)}}^3 {x_{(8)}}^5 \\&&
-464 {x_{(6)}}^3 {x_{(8)}}^4+1376 {x_{(6)}}^3{x_{(8)}}^3-368 {x_{(6)}}^3 {x_{(8)}}^2+352 {x_{(6)}}^3 {x_{(8)}}-224 {x_{(6)}}^3-84{x_{(6)}}^2 {x_{(8)}}^{10}\\&&
+192 {x_{(6)}}^2 {x_{(8)}}^9 +304 {x_{(6)}}^2{x_{(8)}}^8-400 {x_{(6)}}^2 {x_{(8)}}^7+1064 {x_{(6)}}^2 {x_{(8)}}^6-1504 {x_{(6)}}^2{x_{(8)}}^5\\&&
+944 {x_{(6)}}^2 {x_{(8)}}^4-880 {x_{(6)}}^2 {x_{(8)}}^3 +316 {x_{(6)}}^2{x_{(8)}}^2+48 {x_{(6)}}^2 -192 {x_{(6)}} {x_{(8)}}^9+240 {x_{(6)}} {x_{(8)}}^8\\&&
-240{x_{(6)}} {x_{(8)}}^7+192 {x_{(6)}} {x_{(8)}}^6+192 {x_{(6)}} {x_{(8)}}^5 -240{x_{(6)}} {x_{(8)}}^4 +240 {x_{(6)}} {x_{(8)}}^3-192 {x_{(6)}} {x_{(8)}}^2\\&&
+36{x_{(8)}}^{10}-72 {x_{(8)}}^6+36 {x_{(8)}}^2.
\end{eqnarray*} }

We consider a polynomial ring $R = {\bb Q}[z, x_{(6)}, x_{(8)}]$ and an ideal $I$ generated by $\{g_1, g_2,$ $ z x_{(6)} x_{(8)} - 1\}$ to find non zero solutions for the equations $g_1 = 0,\ g_2 = 0$.  We take a lexicographic order $>$ with $z > x_{(6)} > x_{(8)}$ for a monomial ordering on $R$.  
Then by the aid of computer  we see that a Gr\"obner basis for the ideal $I$ contains the polynomial 
{ \begin{eqnarray*}
(2 {x_{(8)}}^2+5)^2 \left(256 {x_{(8)}}^6-352
   {x_{(8)}}^5+896 {x_{(8)}}^4-961 {x_{(8)}}^3+896 {x_{(8)}}^2-352
   {x_{(8)}}+256\right)^4  h(x_{(8)}) 
   \end{eqnarray*} }
 where   $ h(x_{(8)})=$ 
 {
 \tiny
 \begin{eqnarray*}
&&     
40644078463495519177723084800000000 {x_{(8)}}^{58} 
   -636599360907732347401178972160000000 {x_{(8)}}^{57} \\ 
 &&  +5191100419195183352367435153408000000
   {x_{(8)}}^{56}-29753785824771896438059349508096000000
   {x_{(8)}}^{55}\\ 
 && +135192627680445877856223796819845120000
   {x_{(8)}}^{54}-516361347361994237364041410191704064000
   {x_{(8)}}^{53}\\ 
 && +1714176845340411043085057653911857971200
   {x_{(8)}}^{52}-5056951910439083847722148201478778880000
   {x_{(8)}}^{51}\\ 
 && +13463173946418384084581239673092042268160
   {x_{(8)}}^{50}-32702604858836427601562556436132631495680
   {x_{(8)}}^{49}\\ 
 && +73074478080162709767298450382256432673024
   {x_{(8)}}^{48}-151160839532721827123829100525688506355648
   {x_{(8)}}^{47}\\ 
 && +290859738683958218715335374171387713117587
   {x_{(8)}}^{46}-522518193307621597823670610471545296918034
   {x_{(8)}}^{45}\\ 
 && +878838272222037101767064678576069389361339
   {x_{(8)}}^{44}-1386607752438435794483944861009996481373328
   {x_{(8)}}^{43}\\ 
 && +2054687728647824826465309756000191412577616
   {x_{(8)}}^{42}-2860547639552810817086458718197988792478640
   {x_{(8)}}^{41}\\ 
 && +3739378101475752750734169569969105594053280
   {x_{(8)}}^{40}-4581348549386918358670027348735744667090032
   {x_{(8)}}^{39}\\ 
 && +5241820414109727354087818276473527894123404
   {x_{(8)}}^{38}-5565824237345596956736062729200672703813496
   {x_{(8)}}^{37}\\ 
 && +5424363186128466881300155939542628484559548
   {x_{(8)}}^{36}-4754699531043526537144910927195252969271760
   {x_{(8)}}^{35}\\ 
 && +3591716347523905215063410201185994361815744
   {x_{(8)}}^{34}-2079284841096253856056499840520306164822992
   {x_{(8)}}^{33}\\ 
 && +454714233323193018091497888909946789089488
   {x_{(8)}}^{32}+995461754392201718690335133719733779341968
   {x_{(8)}}^{31}\\ 
 && -1998006739366165727563601475806849383386902
   {x_{(8)}}^{30}+2356145206000070543512745518613265350730260
   {x_{(8)}}^{29}\\ 
 && -1998006739366165727563601475806849383386902
   {x_{(8)}}^{28}+995461754392201718690335133719733779341968
   {x_{(8)}}^{27}\\ 
 && +454714233323193018091497888909946789089488
   {x_{(8)}}^{26}-2079284841096253856056499840520306164822992
   {x_{(8)}}^{25}\\ 
 && +3591716347523905215063410201185994361815744
   {x_{(8)}}^{24}-4754699531043526537144910927195252969271760
   {x_{(8)}}^{23}\\ 
 && +5424363186128466881300155939542628484559548
   {x_{(8)}}^{22}-5565824237345596956736062729200672703813496
   {x_{(8)}}^{21} 
 \\
 && +5241820414109727354087818276473527894123404
   {x_{(8)}}^{20}-4581348549386918358670027348735744667090032
   {x_{(8)}}^{19}\\ 
 && +3739378101475752750734169569969105594053280
   {x_{(8)}}^{18}-2860547639552810817086458718197988792478640
   {x_{(8)}}^{17}\\
 && +2054687728647824826465309756000191412577616
   {x_{(8)}}^{16}-1386607752438435794483944861009996481373328
   {x_{(8)}}^{15}\\ 
 && +878838272222037101767064678576069389361339
   {x_{(8)}}^{14}-522518193307621597823670610471545296918034
   {x_{(8)}}^{13}\\ 
 && +290859738683958218715335374171387713117587
   {x_{(8)}}^{12}-151160839532721827123829100525688506355648
   {x_{(8)}}^{11}\\ 
 && +73074478080162709767298450382256432673024
   {x_{(8)}}^{10}-32702604858836427601562556436132631495680
   {x_{(8)}}^9\\ 
 && +13463173946418384084581239673092042268160
   {x_{(8)}}^8-5056951910439083847722148201478778880000
   {x_{(8)}}^7\\ 
 && +1714176845340411043085057653911857971200
   {x_{(8)}}^6-516361347361994237364041410191704064000
   {x_{(8)}}^5\\ 
 && +135192627680445877856223796819845120000
   {x_{(8)}}^4-29753785824771896438059349508096000000
   {x_{(8)}}^3\\ 
 && +5191100419195183352367435153408000000
   {x_{(8)}}^2-636599360907732347401178972160000000
   {x_{(8)}}\\ 
 && +40644078463495519177723084800000000. 
\end{eqnarray*} }

We solve the equation $h(x_{(8)}) = 0$ numerically and we obtain four positive solutions which are given approximately as
$$
x_{(8)} \approx 0.67539547,\ x_{(8)} \approx 0.94334874,\ x_{(8)} \approx 1.0600534,\ x_{(8)} \approx 1.4806140.
$$
Also the Gr\"obner basis of the ideal $I$ contains the polynomial
\begin{eqnarray*}&&
(2 {x_{(8)}}^2+5)^2 \left(256 {x_{(8)}}^6-352
   {x_{(8)}}^5+896 {x_{(8)}}^4-961 {x_{(8)}}^3+896 {x_{(8)}}^2-352
   {x_{(8)}}+256\right)^4 \\&& ( x_{(6)} - w( x_{(8)}) )
   \end{eqnarray*} 
   \noindent
where $w( x_{(8)})$ is a polynomial of  $x_{(8)}$ with integer coefficients.  We substitute the above values of $x_{(8)}$ in the above equation and we obtain the solutions
\begin{eqnarray*}
 (x_{(6)}, x_{(8)}) \approx (1.2876390,\ 0.67539547), && 
 (x_{(6)}, x_{(8)}) \approx (0.50583947,\ 0.94334874),\\ 
 (x_{(6)}, x_{(8)}) \approx (0.53621683,\ 1.0600534), &&
 (x_{(6)}, x_{(8)}) \approx (1.9064963,\ 1.4806140). 
\end{eqnarray*}
We substitute the above solutions into $u_1, u_2, u_4$ and $u_5$ and we find three Einstein metrics on $V_4\bb{C}^{6}\cong\SU(6)/\SU(2)$ which are given as follows:
\begin{itemize}
\item[(1)] $(u_1,\ u_2,\ v_4,\ v_5,\ x_{(6)},\ x_{(8)},\ x_{(7)})
\newline
\approx (0.29693405,\ 1.0265896,\ 0.80273874,\ 1.4899863,\ 1.2876390,\ 0.67539547,\ 1)$
\item[(2)] $(u_1,\ u_2,\ v_4,\ v_5,\ x_{(6)},\ x_{(8)},\ x_{(7)})
\newline
\approx(0.60542236\  0.34843563,\ 1.451313,\ 0.52095356,\ 0.50583947,\ 0.94334874,\ 1)$
\item[(3)] $(u_1,\ u_2,\ v_4,\ v_5,\ x_{(6)},\ x_{(8)},\ x_{(7)})
\newline
\approx(0.36936036,\ 0.64178000,\ 1.5384701,\  0.55223857,\ 0.53621683,\ 1.0600534,\ 1)$
\item[(4)] $(u_1,\ u_2,\ v_4,\ v_5,\ x_{(6)},\ x_{(8)},\ x_{(7)})
\newline
\approx(1.5199830,\ 0.43964472,\ 1.1885462,\ 2.2060946,\ 1.9064963,\ 1.4806140,\ 1)$. 
\end{itemize}


In the case where $x_{(8)} = 1$ then $c = 0$,  and from $g_2(x_{(6)}, x_{(8)}) = 0$  we obtain 
\begin{eqnarray*}
&& (2 {x_{(8)}}-3) (2 {x_{(8)}}-1) (12 {x_{(8)}}^8-32 {x_{(8)}}^7+116 {x_{(8)}}^6-240
   {x_{(8)}}^5+384 {x_{(8)}}^4  \\ & & 
   -576 {x_{(8)}}^3+508 {x_{(8)}}^2-440 {x_{(8)}}+219 ) = 0.
\end{eqnarray*}
The solutions of the above equation are
$$
x_{(6)} = 1/2, \ x_{(6)} = 3/2, \ x_{(6)} \approx 0.806273, \ x_{(6)} \approx 1.54752. 
$$
Then  by  substituting  the above solutions into $u_1, u_2, v_4$ and $v_5$ we  find two Einstein metrics of Jensen's type  and two  more  Einstein metrics on $V_4\bb{C}^{6}\cong\SU(6)/\SU(2)$, which are given as follows:
\begin{itemize}
\item[(1)] $(u_1,\ u_2,\ v_4,\ v_5,\ x_{(6)},\ x_{(8)},\ x_{(7)}) = (1/2,\ 1/2,\ 3/2,\ 1/2,\ 1/2,\ 1,\ 1)$
\item[(2)] $(u_1,\ u_2,\ v_4,\ v_5,\ x_{(6)},\ x_{(8)},\ x_{(7)}) = (3/2,\ 3/2,\  17/18,\ 3/2,\ 3/2,\ 1,\ 1)$
\item[(3)] $(u_1,\ u_2,\ v_4,\ v_5,\ x_{(6)},\ x_{(8)},\ x_{(7)}) 
\newline\approx (0.276881,\ 0.276881,\ 1.43815,\  1.05836,\ 0.806273,\ 1,\ 1)$
\item[(4)] $(u_1,\ u_2,\ v_4,\ v_5,\ x_{(6)},\ x_{(8)},\ x_{(7)})
\newline\approx(0.326422,\ 0.326422,\ 1.17554,\ 1.92172,\ 1.54752,\ 1,\ 1)$.
\end{itemize}

The above computations can be summarized in the following theorem:
\begin{theorem} {\rm 1)}  The complex Stiefel manifold $V_{2}\bb{C}^{4}=\SU(4)/\SU(2)$ admits 
two $\Ad(\s$ $(\U(1)\times\U(1)\times\U(2))$-invariant Einstein metrics of the form {\rm(\ref{metrics2})}, which are of Jensen's type.

\noindent
{\rm 2)}  The complex Stiefel manifold $V_{3}\bb{C}^{5}=\SU(5)/\SU(2)$ admits 
four $\Ad(\s(\U(1)\times\U(2)\times\U(2))$-invariant Einstein metrics of the form {\rm (\ref{metrics2})}, two of which are of Jensen's type.

\noindent
{\rm 3)} The complex Stiefel manifold $V_{4}\bb{C}^{6}=\SU(6)/\SU(2)$ admits 
eight $\Ad(\s(\U(2)\times\U(2)\times\U(2))$-invariant Einstein metrics of the form {\rm (\ref{metrics2})},  two of which are of Jensen's type.
\end{theorem}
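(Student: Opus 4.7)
The plan is to treat each of the three cases by a uniform reduction of the Einstein system to a polynomial system in two variables $x_{(6)}, x_{(8)}$, then apply Gr\"obner basis techniques. In every case the Einstein condition for a metric of the form \eqref{metrics2} with $a=d=1$, $b=0$ is the system obtained by equating the Ricci components of Proposition \ref{ricci1} and Proposition \ref{ricci31}. First, normalize by setting $x_{(7)}=1$ and use the off-diagonal equation $r_0=0$ to solve $c$ as a rational function of $x_{(8)}$ alone; the coefficient of $c$ is $\neq 0$ so this is uniquely determined and plugs into all remaining equations.

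The next step is to exploit a linearity observation: among the reduced equations, one finds that several of them are \emph{linear} in the variables $u_1,u_2,v_4,v_5$ (the precise set depending on $\ell,m,n$). Concretely, for $V_2\mathbb{C}^4$ one solves $r_4-r_5=0$, $r_5-r_6=0$ linearly for $v_4$ and $v_5$; for $V_3\mathbb{C}^5$ the block $r_4-r_5=r_5-r_6=r_6-r_7=0$ is linear in $u_2,v_4,v_5$; for $V_4\mathbb{C}^6$ the block $r_4-r_5=r_5-r_6=r_6-r_7=r_7-r_8=0$ is linear in $u_1,u_2,v_4,v_5$. Solving these linear subsystems expresses all remaining variables as rational functions of $x_{(6)}$ and $x_{(8)}$. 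Back-substitution into the two (resp. one, resp. two) leftover Ricci equations produces a polynomial system $\{g_1(x_{(6)},x_{(8)})=0, g_2(x_{(6)},x_{(8)})=0\}$ in two unknowns (with a single equation in the first case).

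I would then compute a Gr\"obner basis of the ideal $\langle g_1,g_2,\,z\,x_{(6)}x_{(8)}-1\rangle \subset \mathbb{Q}[z,x_{(6)},x_{(8)}]$ with respect to the lexicographic order $z>x_{(6)}>x_{(8)}$. The auxiliary generator $z\,x_{(6)}x_{(8)}-1$ discards the spurious zero solutions, which is essential since $g_1,g_2$ vanish trivially at $x_{(6)}=0$ or $x_{(8)}=0$. The basis should contain a univariate polynomial $h(x_{(8)})$ together with a polynomial linear in $x_{(6)}$, of the form $x_{(6)}-w(x_{(8)})$, after stripping factors that correspond either to spurious zeros or to the Jensen degeneration $x_{(8)}=1$. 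Counting positive real roots of $h$ numerically and substituting back into $w$ and into the rational expressions for $u_i, v_4, v_5$ yields the asserted Einstein metrics that are not of Jensen's type; one must also verify that these substitutions give \emph{positive} values of all metric parameters. In parallel, one treats the special case $x_{(8)}=1$ (equivalently $c=0$), where the system collapses and the solutions coincide with Jensen-type metrics on the fibration $\SU(\ell+m+n)/\SU(n)\to\SU(\ell+m+n)/\s(\U(\ell+m)\times\U(n))$; this recovers the two Jensen metrics claimed in each case.

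The main obstacle is the Gr\"obner basis computation for $V_4\mathbb{C}^6$: after elimination one obtains a univariate polynomial of very high degree in $x_{(8)}$ (the excerpt indicates degree $58$), whose positive real roots must be enumerated and shown to give positive $u_1,u_2,v_4,v_5$. A secondary difficulty is certifying that the non-trivial irreducible factor of $h$ indeed has exactly the claimed number of positive roots; this can be handled by Sturm sequences or by numerical isolation followed by interval verification. Once positivity is checked, counting the Jensen solutions (two in each case, coming from a quadratic that appears when $x_{(8)}=1$ and $v_5=x_{(6)}$) and adding them to the non-Jensen solutions gives the totals $2$, $4$, and $8$ stated in parts (1), (2), (3).
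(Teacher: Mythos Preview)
Your overall strategy matches the paper's approach almost exactly: normalize $x_{(7)}=1$, solve $r_0=0$ for $c$, exploit linearity of a block of equations in $u_1,u_2,v_4,v_5$, back-substitute, and finish with a Gr\"obner basis elimination in $x_{(6)},x_{(8)}$. The minor bookkeeping slips (in part~(2) there are \emph{two} leftover equations after the linear solve, not one; your parenthetical ``single equation in the first case'' contradicts your own ``two (resp.\ one, resp.\ two)'') do not affect the method.

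There is, however, a genuine gap in part~(3). You assert that the branch $x_{(8)}=1$ (equivalently $c=0$) ``collapses and the solutions coincide with Jensen-type metrics''. This is true for $V_2\mathbb{C}^4$ and $V_3\mathbb{C}^5$, but it fails for $V_4\mathbb{C}^6$. When $\ell=m=n=2$ and $x_{(8)}=1$, the remaining equation in $x_{(6)}$ factors as
\[
(2x_{(6)}-3)(2x_{(6)}-1)\bigl(12x_{(6)}^8-32x_{(6)}^7+\cdots+219\bigr)=0,
\]
and the degree-$8$ factor contributes two additional positive real roots $x_{(6)}\approx 0.8063$ and $x_{(6)}\approx 1.5475$. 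For these, one checks $u_1=u_2\neq x_{(6)}$, so they are \emph{not} of Jensen's type. Thus the $x_{(8)}=1$ branch yields four metrics (two Jensen, two non-Jensen), while the $x_{(8)}\neq 1$ branch yields four more from the degree-$58$ polynomial $h(x_{(8)})$, which has exactly four positive real roots. Your accounting (two Jensen from $x_{(8)}=1$, the rest from $h$) would give only $2+4=6$ metrics, not the claimed eight. To repair the argument you must analyze the $x_{(8)}=1$ branch fully rather than dismissing it as purely Jensen.
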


\subsection{The Stiefel manifolds $V_{2m}\bb{C}^{2m+n}$}

In the next theorem we prove existence of Einstein metrics, which are not of Jensen's type,  on large families of complex Stiefel manifolds.

\begin{theorem}
 The complex Stiefel manifolds $V_{2m}\bb{C}^{2m + n}$ admit at least two $\Ad(\s(\U(m)\times\U(m)\times\U(n)))$-invariant Einstein metrics, which are not of Jensen's type,  for the following values of $m$ and $n$: 
 \begin{center}
\begin{tabular}{|c|ccc|}
 \hline
 $m \geq 8$ &  & $n\geq m/2$ &     \\
     \hline 
 $m =6,  \ 7$ &  & $n\geq 4$ &     \\
  \hline 
  $m =4, \ 5$ & &   $n\geq 3$ &   \\  
\hline 
 $m= 2, \ 3$ &  &  $n\geq 2$ &    \\
   \hline 
 \end{tabular} .
 \end{center}
\end{theorem}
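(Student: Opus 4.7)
The plan is to exploit the symmetry $\ell=m$ in the decomposition (\ref{dec3}) to reduce the eight--variable Einstein system to something tractable, then handle the resulting polynomial by a uniform sign--change argument in the parameters $m,n$. First, I would set $\ell=m$ and search for Einstein metrics of the form (\ref{metrics2}) with $u_1=u_2=:u$ and $x_{(7)}=x_{(8)}=:x$. The swap $\fr{n}_1\leftrightarrow\fr{n}_2$, $\fr{n}_7\leftrightarrow\fr{n}_8$ is an automorphism of the reductive decomposition (induced by interchanging the first two blocks of size $m$), so by uniqueness of critical points restricted to the symmetric ansatz, any Einstein metric with these identifications yields a genuine Einstein metric. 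Under this symmetric ansatz, the off--diagonal Ricci component $r_0$ from Proposition \ref{ricci1} forces (upon inspecting the coefficient of $x_{(8)}^{-2}-x_{(7)}^{-2}$) the choice $c=0$ from equation $r_0=0$, and hence, as in the $\SU(\ell+m+n)$ analysis, one can normalize $a=d=1$, $b=0$.

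Next, I would substitute these choices into Proposition \ref{ricci31}. The system reduces to the four equations
\begin{equation*}
r_1-r_4=0,\quad r_4-r_5=0,\quad r_5-r_6=0,\quad r_6-r_7=0,
\end{equation*}
in the five unknowns $u,v_4,v_5,x_{(6)},x$ after setting $v_4=B$-normalization scale $=1$ (so normalizing by an overall factor of the metric). The linearity observations used in Subsection 5.2 persist: three of these equations are linear in $u,v_4,v_5$, so we can solve for $u,v_4,v_5$ as explicit rational functions of $x_{(6)},x$ and $m,n$. Substituting back into the remaining equation gives a single polynomial $F(x_{(6)},x;m,n)=0$. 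The Jensen--type solutions correspond precisely to $x_{(6)}=x$ (the whole $\fr{m}=\fr{m}_{12}\oplus\fr{m}_{13}\oplus\fr{m}_{23}$ carries the same scale), so after dividing by the factor $(x_{(6)}-x)$ we obtain a reduced polynomial $\widetilde F(x_{(6)},x;m,n)$ whose positive roots give the non--Jensen Einstein metrics.

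The main task is then to exhibit two positive roots of $\widetilde F$ for the stated ranges. The plan is to proceed exactly as in the proof of Theorem 5.5: one eliminates one of the variables (say $x_{(6)}$) by computing a resultant with respect to an auxiliary linear relation coming from one of the $r_i-r_j$ equations, reducing $\widetilde F=0$ to a single univariate polynomial $P_{m,n}(x)$. One then evaluates $P_{m,n}$ at carefully chosen test points (such as $x=1$, $x$ large, and $x$ near $0$) and shows that $P_{m,n}$ changes sign at least twice in $(0,\infty)$ for the ranges of $(m,n)$ listed. The test values are chosen so that the leading and constant coefficients of $P_{m,n}$ have definite signs, while an intermediate evaluation (analogous to $F(1/2,n)$ and $F(292/(55n),n)$ in the proof of Theorem 5.5) produces the required sign opposite. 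Sums--of--powers bounds on the coefficients, obtained by expanding $P_{m,n}$ in $(m-m_0)$ and $(n-n_0)$ for the threshold values, will complete the inequalities.

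The main obstacle will be the last step: producing, for each of the four parameter regimes in the table, explicit evaluation points and rigorous sign estimates. The polynomial $P_{m,n}$ has coefficients of very high combinatorial complexity in $m,n$, and unlike the case $\ell=1,m=2$ treated in Theorems 5.5 and 5.6, here both $\ell$ and $m$ are parameters, so the resultant computation is more delicate and the thresholds $m=2,3$ vs $m\ge 8$ reflect where the crude sum--of--positive--monomials bounds start to work. I would expect that for $m\ge 8, n\ge m/2$ a single uniform expansion suffices, while the small cases $2\le m\le 7$ require separately tailored test points, as is typical for such Gr\"obner/resultant arguments.
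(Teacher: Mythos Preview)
Your overall strategy---impose the $\ell=m$ symmetry, reduce to a polynomial, and run a sign--change argument---is exactly what the paper does, but your reduction has two concrete defects that would cause the argument to fail.

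First, your identification of the Jensen locus is wrong. Jensen--type metrics on $V_{2m}\mathbb{C}^{2m+n}$ come from the fibration $\SU(2m+n)/\SU(n)\to\SU(2m+n)/\s(\U(2m)\times\U(n))$: the fibre is $\SU(2m)$, so a Jensen metric has one scale on $\fr{m}_{13}\oplus\fr{m}_{23}$ (the base direction), one scale on $\fr{su}(2m)=\fr{h}_1\oplus\fr{h}_2\oplus\fr{h}_5\oplus\fr{m}_{12}$ (the fibre), and a free scale on the $1$--dimensional centre $\fr{h}_4$. In your variables this is $u_1=u_2=v_5=x_{(6)}$, \emph{not} $x_{(6)}=x$. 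Consequently the factor you propose to divide out, $(x_{(6)}-x)$, does not in general divide $F$, and even if it did, the remaining polynomial would still contain the Jensen solutions. In the paper the Jensen locus shows up as a quadratic factor $A(x_{(6)})=2mn\,x_{(6)}^2-4mn\,x_{(6)}+2mn+n^2x_{(6)}^2-2n^2x_{(6)}+1$ of the final polynomial, and one checks $A(1)=1-n^2\neq0$, so $x_{(6)}=x$ is not even on the Jensen locus.

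Second, you keep $x$ as a free variable and plan a resultant elimination to reach a univariate polynomial $P_{m,n}(x)$. This is unnecessary: once $r_0=0$ forces $x_{(7)}=x_{(8)}$, you can normalise $x_{(7)}=x_{(8)}=1$ by the overall scale of the metric. The paper does exactly this, and then the equations $f_3,f_4,f_5$ are linear in $u_1,v_4,v_5$ with coefficients depending only on $x_{(6)}$; solving and substituting into $f_2$ gives a single \emph{one--variable} polynomial which factors as $A(x_{(6)})\cdot B_{n,m}(x_{(6)})$. The non--Jensen solutions are roots of the degree--$8$ polynomial $B_{n,m}(x_{(6)})$, and the paper proves the existence of two positive roots by checking $B_{n,m}(0)>0$, $B_{n,m}(2)>0$, and $B_{n,m}(3/2)<0$ (respectively $B_{n,m}(4/3)<0$ for small $m$), with the table in the statement coming from where these inequalities hold. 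The positivity of $u_1$ and $v_4$ at these roots is then checked via a resultant: the resulting univariate polynomials in $u_1$ (resp.\ $v_4$) have sign--alternating coefficients, so all real roots are positive. Your plan would eventually reach an equivalent place, but only after an extra elimination step and with the wrong factor removed.
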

  \begin{proof}
We consider the metric on $V_{2m}\bb{C}^{2m+n}$ with $a = 1, b = 0, c = 0$ and $d = 1$ (diagonal metric).  Then for $\ell = m$ we take from $r_0 = 0$ that 
\begin{eqnarray*}
r_0 = \frac{({x_{(8)}}-{x_{(7)}}) ({x_{(8)}}+{x_{(7)}})}{{x_{(8)}}^2
   {x_{(7)}}^2}\frac12\sqrt{\frac{n}{2m+n}}=0.  
\end{eqnarray*}  
We set $x_{(8)} = x_{(7)} = 1$. Then  system (\ref{sist3}) reduces to the system

\begin{equation}\label{systemM1}
(\star)\left\{
\begin{array}{l} 
 f_1 = ({u_1}-{u_2}) (m {u_1} {u_2}-m {x_{(6)}}^2+n {u_1} {u_2} {x_{(6)}}^2)=0, \nonumber\\
 f_2 = m {u_2}^2-2 m {u_2} {v_4} {x_{(6)}}^2+m {x_{(6)}}^2+n {u_2}^2 {x_{(6)}}^2-n {u_2} {v_4} {x_{(6)}}^2=0,  \nonumber\\
 f_3 = 2 m {v_4} {x_{(6)}}^2-2 m {v_5}+n {v_4} {x_{(6)}}^2-n {v_5} {x_{(6)}}^2=0  \nonumber\\
 f_4 = m^2 {u_1}+m^2 {u_2}+2 m^2 {v_5}-4 m^2 {x_{(6)}}+m n {v_5} {x_{(6)}}^2 -m n {x_{(6)}}^3\nonumber\\
 -{u_1}-{u_2}+2 {v_5}=0,  \\
 f_5 = -2 m^2 n {u_1}+2 m^2 n {u_2} {x_{(6)}}^2-2 m^2 n {u_2}+2 m^2 n {x_{(6)}}^3-8 m^2 n {x_{(6)}}^2 \nonumber\\
+8 m^2 n {x_{(6)}} +2 m n^2{x_{(6)}}^3  -4 m n^2 {x_{(6)}}^2+2 m {v_4} {x_{(6)}}^2+2 n {u_1}-2 n {u_2} {x_{(6)}}^2 \nonumber\\
+2 n {u_2} +n {v_4}{x_{(6)}}^2+n {v_5} {x_{(6)}}^2-4 n {v_5}=0,  \nonumber\\
 f_6 = (m-1) (m+1) (u_1-u_2)=0.\nonumber
\end{array}
\right. 
\end{equation}

From  $f_1=0$ and $f_6=0$ we have that $u_1 = u_2$   and by substituting into the system ($\star$)  we obtain
\begin{eqnarray*}
&& f_2 = m {u_1}^2-2 m {u_1} {v_4} {x_{(6)}}^2+m {x_{(6)}}^2+n {u_1}^2 {x_{(6)}}^2-n {u_1} {v_4} {x_{(6)}}^2=0, \\
&& f_3 = 2 m {v_4} {x_{(6)}}^2-2 m {v_5}+n {v_4} {x_{(6)}}^2-n {v_5} {x_{(6)}}^2=0,  \\
&& f_4 = 2 m^2 {u_1}+2 m^2 {v_5}-4 m^2 {x_{(6)}}+m n {v_5} {x_{(6)}}^2-m n {x_{(6)}}^3-2 {u_1}+2 {v_5}=0, \\
&& f_5 = 2 m^2 n {u_1} {x_{(6)}}^2-4 m^2 n {u_1}+2 m^2 n {x_{(6)}}^3-8 m^2 n {x_{(6)}}^2+8 m^2 n {x_{(6)}}+2 m n^2 {x_{(6)}}^3 \\
&&-4 m n^2{x_{(6)}}^2+2 m {v_4} {x_{(6)}}^2-2 n {u_1} {x_{(6)}}^2+4 n {u_1}+n {v_4} {x_{(6)}}^2+n {v_5} {x_{(6)}}^2-4 n{v_5}=0. 
\end{eqnarray*}

We observe that the equations $f_3=0, f_4=0$ and $f_5=0$ are {\it linear} with respect to $u_1, v_4, v_5$. By solving the system of equations $\{ f_3=0, f_4=0, f_5=0 \}$  we obtain that
{ \begin{eqnarray}\label{linear}
&& u_1 =H_1(x_{(6)})\equiv - {x_{(6)}} (2 m^3 n {x_{(6)}}^2-8 m^3 n {x_{(6)}}+8 m^3 n+m^2 n^2 {x_{(6)}}^4 \label{eq_u_1}   -4 m^2 n^2 {x_{(6)}}^3 
\\&& +6 m^2 n^2 {x_{(6)}}^2 -4 m^2n^2 {x_{(6)}} +4 m^2+m n^3 {x_{(6)}}^4-2 m n^3 {x_{(6)}}^3 
+7 m n {x_{(6)}}^2-8 m n {x_{(6)}} \nonumber \\&&+n^2 {x_{(6)}}^4 \nonumber  -4 n^2
   {x_{(6)}})  / 
  \big( {(m^2-1) (2 m n {x_{(6)}}^2-4 m n+n^2 {x_{(6)}}^4-2 n^2 {x_{(6)}}^2-2)}\big)
    \nonumber\\
   && v_4 = H_4(x_{(6)})\equiv  \frac{(2 m+n {x_{(6)}}^2)(6 m n {x_{(6)}}-8 m n+n^2 {x_{(6)}}^3-4 n^2)}{(2 m+n) (2 m n {x_{(6)}}^2-4 m n+n^2
   {x_{(6)}}^4-2 n^2 {x_{(6)}}^2-2)} 
    \label{eq_v_4}\\ \nonumber\\
&& v_5 = H_5(x_{(6)}) \equiv \frac{{x_{(6)}}^2 (6 m n {x_{(6)}}-8 m n + n^2 {x_{(6)}}^3-4 n^2 )}{2 m n {x_{(6)}}^2-4 m n+n^2 {x_{(6)}}^4-2 n^2{x_{(6)}}^2-2} \label{eq_v_5}. 
\end{eqnarray} }

From  equations (\ref{eq_v_4}) and (\ref{eq_v_5}),  we see that the value of $v_4$ is positive if and only if the value of $v_5$ is positive. 

Now we substitute the values $u_1$ and $v_4$  into $f_2$ and we see that 
$$
f_2 = \frac{m x_{(6)}^2 A(x_{(6)}) B_{n,m}(x_{(6)})}{\Gamma(x_{(6)})}=0,
$$
where
{\small \begin{eqnarray*}
&& A(x_{(6)}) = 2 m n {x_{(6)}}^2-4 m n {x_{(6)}}+2 m n+n^2 {x_{(6)}}^2-2 n^2 {x_{(6)}}+1, \\
&& B_{n, m}(x_{(6)}) = 16 m^5 n {x_{(6)}}^2-48 m^5 n {x_{(6)}}+40 m^5 n+20 m^4 n^2 {x_{(6)}}^4-64 m^4 n^2 {x_{(6)}}^3\\&&
+68 m^4 n^2 {x_{(6)}}^2 -24 m^4 n^2{x_{(6)}}+20 m^4+8 m^3 n^3 {x_{(6)}}^6-28 m^3 n^3 {x_{(6)}}^5+42 m^3 n^3 {x_{(6)}}^4 \\&&
-32 m^3 n^3 {x_{(6)}}^3+56 m^3 n {x_{(6)}}^2 -32 m^3n {x_{(6)}}-16 m^3 n+m^2 n^4 {x_{(6)}}^8-4 m^2 n^4 {x_{(6)}}^7\\&&
+11 m^2 n^4 {x_{(6)}}^6-14 m^2 n^4 {x_{(6)}}^5+41 m^2 n^2 {x_{(6)}}^4 -32m^2 n^2 {x_{(6)}}^3-8 m^2 n^2 {x_{(6)}}^2-16 m^2 n^2 {x_{(6)}} \\&&
-8 m^2+m n^5 {x_{(6)}}^8-2 m n^5 {x_{(6)}}^7+11 m n^3 {x_{(6)}}^6-8 m n^3{x_{(6)}}^5 -16 m n^3 {x_{(6)}}^3-16 m n {x_{(6)}}^2 \\&&
+16 m n {x_{(6)}}+8 m n+n^4 {x_{(6)}}^8-4 n^4 {x_{(6)}}^5-4 n^2 {x_{(6)}}^4+4 n^2{x_{(6)}}^2+8 n^2 {x_{(6)}}+4,\\
&& \Gamma(x_{(6)}) = (m-1)^2 (m+1)^2 (2 m n {x_{(6)}}^2-4 m n+n^2 {x_{(6)}}^4-2 n^2 {x_{(6)}}^2-2)^2.
\end{eqnarray*} }

\noindent{\bf \underline{Case of $A(x_{(6)}) = 0$}}\\
We consider the polynomial ring $R = \bb{Q}[x_{(6)}, u_1, v_4, v_5]$ and the ideal $I$ generated by the polynomials $\{f_2,\,f_3,\, f_4,\, f_5,\, A(x_{(6)}),\, z\,x_{(6)}\,u_1\,v_4\,v_5-1\}$.  We take a lexicographic ordering $>$ with $z>v_4>v_5>u_1>x_{(6)}$ for a monomial ordering on $R$.  Then, by the aid of computer, we see that a Gr\"obner basis for the ideal $I$ contains the polynomial $A(x_{(6)})$ and
\begin{eqnarray*} & F(x_{(6)}, u_1) = (m-1) m (m+1) (2 m n+1)(32 m^5 n+144 m^4-96 m^3 n^3+336 m^3 n-24 m^2 & \\&
 -176 m^2 n^4+384 m^2 n^2 -96 m n^5+168 m n^3-6 m n-16 n^6+24n^4+1)({u_1}-{x_{(6)}}). &
\end{eqnarray*}
Now we consider the lexicographic order $>$ with $z>v_4>u_1>v_5>x_{(6)}$. Then a Gr\"obner basis for the ideal $I$ contains the polynomial $A(x_{(6)})$ and the polynomial
\begin{eqnarray*}
&& G(x_{(6)}, v_5) = m (2 m n+1) (32 m^5 n+144 m^4-96 m^3 n^3+336 m^3 n-176 m^2 n^4\\
&& +384 m^2 n^2-24 m^2-96 m n^5+168 m n^3 -6 m n-16 n^6+24 n^4+1)({v_5}-{x_{(6)}}). 
\end{eqnarray*}
Therefore from the equations $F(x_{(6)}, u_1) = 0$ and $G(x_{(6)}, v_5) = 0$ we have $x_{(6)} = u_1 = v_5$.  We substitute into $f_i$, $i = 2,3,4,5$ and we have
\begin{eqnarray*}
&& g_1 = -{x_{(6)}}^2 (2 m {v_4} {x_{(6)}}-2 m+n {v_4} {x_{(6)}}-n {x_{(6)}}^2)=0, \\
&& A(x_{(6)}) =   2 m n {x_{(6)}}^2-4 m n {x_{(6)}}+2 m n+n^2 {x_{(6)}}^2-2 n^2 {x_{(6)}}+1=0, \\
&& g_2 = {x_{(6)}} (2 m {v_4} {x_{(6)}}-2 m+n {v_4} {x_{(6)}}-n {x_{(6)}}^2)=0,\\
&& g_3 = {x_{(6)}} (4 m^2 n {x_{(6)}}^2-8 m^2 n {x_{(6)}}+4 m^2 n+2 m n^2 {x_{(6)}}^2-4 m n^2 {x_{(6)}}\\
&&+2 m {v_4} {x_{(6)}}+n {v_4}{x_{(6)}}-n {x_{(6)}}^2)=0.
\end{eqnarray*}

From $A(x_{(6)}) = 0$ we have the solutions for $x_{(6)}$ as follows:
\begin{eqnarray*}
x_{(6)}^1 = \frac{-\sqrt{2 m n^3-2 m n+n^4-n^2}+2 m n+n^2}{2 m n+n^2}\\
x_{(6)}^2 = \frac{\sqrt{2 m n^3-2 m n+n^4-n^2}+2 m n+n^2}{2 m n+n^2}.
\end{eqnarray*}
We substitute into $g_1 = 0$ and we find
\begin{eqnarray*}
v_4^1 = \frac{n \left(-4 m^2+2 \sqrt{\left(n^3-n\right) (2 m+n)}-4 m n-2 n^2+1\right)}{(2 m+n) \left(\sqrt{\left(n^3-n\right) (2 m+n)}-2 m n-n^2\right)} \\ \\
v_4^2 = \frac{n \left(4 m^2+2 \sqrt{\left(n^3-n\right) (2 m+n)}+4 m n+2 n^2-1\right)}{(2 m+n) \left(\sqrt{\left(n^3-n\right) (2 m+n)}+2 m n+n^2\right)}.
\end{eqnarray*}
These metrics correspond to Jensen's type metrics.  

\noindent{\bf \underline{Case of $B_{n, m}(x_{(6)}) = 0$}}\\
For this case we observe that  the value of $B_{n, m}(x_{(6)})$ at $x_{(6)} =0$ is written as
$$B_{n, m}(0) =4 \left(5 m^4-2 m^2+1\right) (2 m n+1) >0.$$
We  also observe that  the value of $B_{n, m}(x_{(6)})$ at $x_{(6)} = 3/2$ is written as
\begin{eqnarray*}
&& B_{n, m}(3/2) = 4 m^5 n+\frac{9 m^4 n^2}{4}+20 m^4-\frac{135 m^3 n^3}{8}+62 m^3 n-\frac{6075 m^2
   n^4}{256}\\ 
   & & +\frac{921 m^2 n^2}{16}-8 m^2-\frac{2187 m n^5}{256}+\frac{675 m
   n^3}{64}-4 m n-\frac{1215 n^4}{256}+\frac{3 n^2}{4}+4. 
\end{eqnarray*}
We see that 
\begin{eqnarray*}&& B_{n, m}(3/2) = 
\left(\frac{135 m}{128}-\frac{43875 m^3}{512}\right)
   \left(n-\frac{m}{2}\right)^3+\left(-\frac{23085 m^2}{512}-\frac{1215}{256}\right)
   \left(n-\frac{m}{2}\right)^4
   \\ & & +\frac{\left(-85775 m^5+512408 m^3-13312 m\right)
   }{4096}\left(n-\frac{m}{2}\right)-\frac{3 \left(23667 m^4-22618 m^2-256\right)
   }{1024}\times \\&&
   \left(n-\frac{m}{2}\right)^2  +\frac{-10625 m^6+544050 m^4-80384
   m^2+32768}{8192}-\frac{2187}{256} m \left(n-\frac{m}{2}\right)^5.
\end{eqnarray*}
We also see that 
\begin{eqnarray*} &&
-10625 m^6+544050 m^4-80384 m^2+32768 \\ & & = 
-10625 (m^2-64)^3-1495950 (m^2-64)^2-61001984 (m^2-64)-561963008
\end{eqnarray*}
and thus $B_{n, m}(3/2)<0$ for $n\geq m/2$ and $m \geq 8$. 

  Some smaller values of $m$ can be examined separately as follows. 

For $ m =7$ we see that 
\begin{eqnarray*} &&B_{n, 7}(3/2) = -\frac{1}{256}\big(15309 (n-4)^5+605070 (n-4)^4+8694540 (n-4)^3 \\
& &
 +53940288 (n-4)^2+126839488 (n-4)+49348608\big), 
   \end{eqnarray*}
   and thus $B_{n, 7}(3/2) < 0$ for $ n \geq 4$. 
   (In fact, we can show that $B_{3, 7}({x_{(6)}}) > 0$ for all ${x_{(6)}}$.)
   
   Similarly, we obtain the following table: 
   \begin{center}
\begin{tabular}{|c|ccc|c|}
     \hline 
 $m=6,7$ & &  $B_{n, m}(3/2)$ $ < 0$ \ \  for $n\geq 4$ & & $B_{3, m}(x_{(6)})>0$ for all $x_{(6)}$   \\
  \hline \ 
$m=4,5$& & $B_{n, m}(3/2)$ $ < 0$  \ for $n\geq 3$ & & $B_{2, m}(x_{(6)})>0$ for all $x_{(6)}$ \\  
\hline 
 $m=2,3$& &$B_{n, m}(4/3)$ $ < 0$  \  \ for   $n\geq 2$ &  & \\
   \hline 
 \end{tabular}
 \end{center}   



   

 Finally, the value of $B_{n, m}(x_{(6)})$ at $x_{(6)} = 2$ is 
\begin{eqnarray*}
&& B_{n, m}(2) = 4 (2 m^5 n+m^4 (8 n^2+5)+4 m^3 n (2 n^2+9)+m^2 (84 n^2-2)+m (80 n^3-6 n)\\
&& +32 n^4-8 n^2+1)
\end{eqnarray*}
and we observe that for all positive integers $n ,m$ we have $B_{n, m}(2)>0$. 
 
From the above it follows that there exist at least two positive solutions for $x_{(6)} = \al_1, \beta_1$, where
\begin{center}
\begin{tabular}{|c|ccc|c|}
 \hline
 $m \geq 8$ & &  $n\geq m/2$ & &  $0< \al_1 < 3/2$, \  $3/2 < \beta_1 < 2$  \\
     \hline 
 $m=6,7$ & &  $n\geq 4$ & & $0< \al_1 < 3/2$,\   $3/2 < \beta_1 < 2$   \\
  \hline \ 
$m=4,5$& &   $n\geq 3$ & & $0< \al_1 < 4/3$, \  $4/3 < \beta_1 < 2$   \\  
\hline 
 $m=2,3$& &   $n\geq 2$ &  &  $0< \al_1 < 4/3$, \  $4/3 < \beta_1 < 2$\\
   \hline 
 \end{tabular} .
 \end{center}


\smallskip
Next, we substitute into (\ref{linear}) and we take real solutions for $u_1, v_4$ and $v_5$, so we must  prove that these are positive.  
We take the resultant  of the polynomials $B_{n, m}(x_{(6)})$ and the numerator of the rational function  $u_1 - H_1(x_{(6)})$ and  we obtain the polynomial 
{\small \begin{eqnarray*}
&& q_1(u_1) = \Big(16 (m^2-1)^6 n^{16} (m^2+m n+1) (2 m n+1) (m^2+2 m n+2)^2 \times\\
& & \big(32 m^5 n+144 m^4
 -96 m^3 n^3  +336 m^3 n-176 m^2 n^4+384 m^2 n^2-24 m^2 \\&&
 -96 m n^5+168 m n^3-6 m n-16 n^6+24 n^4+1\big)\Big) h_1(u_1),
\end{eqnarray*}}
where 
{\small \begin{eqnarray*}
&& h_1(u_1)=(m-1)^2 (m+1)^2 n^4 (3 m+2 n)^4 u_1^{8} -4 (m-1) (m+1) (7 m^2-4) n^4 (2 m+n) \times \\ 
&&(3 m +2 n)^3 u_1^7 + n^3 (3 m+2 n)^2 (4 \left(73 m^4-88 m^2+24\right) n^3+m \left(1197 m^4-1448 m^2+404\right) n^2
\\&&+(1219 m^6-1505 m^4 +396 m^2+16) n+m \left(120 m^6-133 m^4-2
   m^2+24\right))u_1^6  
\\
&&
 -2 n^3 (2 m+n) (3 m+2 n) \big(32 (3 m^2-1)(7 m^2-4) n^3+2 m (1495 m^4-1376 m^2+316) n^2 \\
&& +(3235 m^6-3016 m^4+492 m^2+96) n
+m \left(867 m^6-674 m^4-208 m^2+144\right)\big)u_1^5 
    \end{eqnarray*} 
\begin{eqnarray*} 
&&
+n^2 \big(256 \left(3 m^2-1\right)^2 n^6+64 m \left(353 m^4-244 m^2+47\right) n^5
+8 (10011 m^6-6816 m^4+1138 m^2
\\ &&
+96 ) n^4
+4 m \left(32552 m^6-20249 m^4+1176 m^2+1116\right) n^3  +2 (50219 m^8-24032 m^6
\\&&
-7052 m^4+4176
   m^2+48) n^2+m (32406 m^8-7077 m^6-15356 m^4 +5192 m^2+288) n \\&&
   +m^2 (3060 m^8-1227 m^6-1587 m^4+172 m^2+216)\big)u_1^4 
    -8 n^2 (2 m+n) \big(16 m (37 m^4 -22 m^2 \\&&
     +5) n^4+4 \left(731 m^6-319 m^4+26 m^2+16\right)
   n^3+4 m (1033 m^6-63 m^4-244 m^2 +94) n^2\\&&
   +(2131 m^8 +1265 m^6-1530 m^4+356 m^2+48) n+m (351 m^8+467 m^6-339 m^4-77
   m^2+ \\&&
   72 )\big)u_1^3 + 4 n \big(48 m^2 \left(19 m^4-10 m^2+3\right) n^5+4 m \left(1159 m^6-226 m^4-33
   m^2+60\right) n^4 \\&&
   +4(1915 m^8+750 m^6 -545 m^4+186 m^2+24) n^3+m  (4827 m^8+6834 m^6-2097 m^4-128 m^2\\&&
    +444 ) n^2+ (1093 m^{10} +2998 m^8 +713 m^6-1190 m^4+472 m^2+16 ) n+m  (36 m^{10}+230  m^8 \\&&
    +93 m^6-130 m^4+19 m^2+24 )\big)u_1^2   -8 n (2 m+n) (m^2+2 m n+2) (9 m^8+46 m^6+21 m^4 \\ &&
    -16 m^2+8+2 \left(39 m^4-18 m^2+7\right) m^2 n^2 +\left(77
   m^6+88 m^4-47 m^2+22\right) m n)u_1  + 4 (5 m^4\\&&
   -2 m^2+1) (m^2+m n+1) (2 m n+1) (m^2+2 m n+2)^2.
\end{eqnarray*} }
We observe that the coefficients of the polynomial $h_1(u_1)$ are positive for even degree terms and negative for odd degree terms.  
Thus if the equation $h_1(u_1) = 0$ has real solutions, then these are all positive.  
By the same way we take the resultant for the polynomials $B_{n, m}(x_{(6)})$ and the numerator of the rational function $v_4 - H_4(x_{(6)})$ 
and we obtain the polynomial 
 \begin{eqnarray*}
&& q_4(v_4)= -16 n^{20}\big(-32 m^5 n-144 m^4+48 m^3 (2 n^2-7) n+8 m^2 (22 n^4-48 n^2+3)\\
&&+6 m (16 n^5 -28 n^3+n)+16n^6-24 n^4-1\big) h_4(v_4), 
\end{eqnarray*} 
where 
{\small \begin{eqnarray*}
&& h_4(v_4) = (2 m+n)^8 (3 m+2 n)^4 (m^2+m n+1) (2 m n+1) (m^2+2 m n +2)^2 v_4^8 \\
&& -2 n (2 m+n)^8 (3 m+2 n)^3 (m^2+2 m n+2) \big(6 m^6+68 m^5 n+128 m^4 n^2+41 m^4+64 m^3 n^3 \\
&&+172 m^3 n +132 m^2 n^2+54 m^2+84 mn+16\big)v_4^7 +  n (2 m+n)^6 (3 m+2 n)^2 \big(80 m^{11}
 \\&&
+3060 m^{10} n+22010 m^9 n^2 +544 m^9+64352 m^8 n^3+14245 m^8 n+92232 m^7 n^4+75004 m^7 n^2 \\&&
+619m^7+68384 m^6 n^5 +151044 m^6 n^3+20896 m^6 n+25088 m^5 n^6+139312 m^5 n^4 
\\&&
+75512 m^5 n^2 +224 m^5+3584 m^4 n^7 +59328 m^4 n^5+94496 m^4n^3+11436 m^4 n+9408 m^3 n^6 \\&&
+48432 m^3 n^4+24064 m^3 n^2+192 m^3 +8640 m^2 n^5+15776 m^2 n^3+1968 m^2 n+3200 m n^4 
\\&&+1776 m n^2+96 m +384n^3+64 n \big)v_4^6   -2 n^2 (2 m+n)^6 (3 m+2 n) \big(2876 m^{11}+37908 m^{10} n \\
&&
 +177352 m^9 n^2  +9199 m^9+401744 m^8 n^3  +108038 m^8 n+485856 m^7 n^4+404452 m^7n^2 \\&&
 +6988 m^7+319744 m^6 n^5 +667416 m^6 n^3  +96400 m^6 n+107520 m^5 n^6+541664 m^5 n^4 \\&&
 +269168 m^5 n^2+2948 m^5+14336 m^4 n^7 +211520 m^4n^5  +293408 m^4 n^3+34312 m^4 n \\&&
 +31360 m^3 n^6+138304 m^3 n^4+59424 m^3 n^2+2064 m^3 +23040 m^2 n^5  +35264 m^2 n^3\\&&
 +6080 m^2 n+6400 mn^4+3968 m n^2+576 m+512 n^3+384 n \big)v_4^5 + n^2 (2 m+n)^4 \times \\
&&
\big(16080 m^{14}+520608 m^{13} n+4322720 m^{12} n^2+14040 m^{12}+16938600 m^{11} n^3  +967712 m^{11} n\\&&
+37781312 m^{10}n^4+7795248 m^{10} n^2+15289 m^{10}+51970688 m^9 n^5+26578272 m^9 n^3
\end{eqnarray*} }
 {\small \begin{eqnarray*}
&& +524192 m^9 n+45604736 m^8 n^6+48503328 m^8 n^4+4250984 m^8n^2+11480 m^8
\\&&
+25566080 m^7 n^7  +52079872 m^7 n^5 +12309456 m^7 n^3+241944 m^7 n+8849920 m^6 n^8\\&&
+33943424 m^6 n^6+17616272 m^6 n^4
 +1308512m^6 n^2+1544 m^6+1720320 m^5 n^9
 \\&&
 +13190400 m^5 n^7+13936000 m^5 n^5+2594464 m^5 n^3 +133056 m^5 n+143360 m^4 n^{10}\\
 &&
 +2805760 m^4n^8  +6201088 m^4 n^6+2454848 m^4 n^4+368864 m^4 n^2 +1264 m^4+250880 m^3 n^9\\ &&
 +1450240 m^3 n^7+1192960 m^3 n^5 +379328 m^3 n^3+21536 m^3n
 +138240 m^2 n^8  +283648 m^2 n^6\\&&
+172608 m^2 n^4+33664 m^2 n^2+864 m^2+25600 m n^7 +31232 m n^5+17856 m n^3 +1152 m n\\&&
+1024 n^6 +3072 n^4+384n^2\big) v_4^4  -16 n^3 (2 m+n)^4 \big(7808 m^{13}
+111360 m^{12} n+594784 m^{11} n^2 \\
&& +6356 m^{11}+1664096 m^{10} n^3+125340 m^{10} n+2765152 m^9 n^4+642880m^9 n^2 +5036 m^9\\&&
+2863040 m^8 n^5  +1579252 m^8 n^3+48298 m^8 n+1856640 m^7 n^6+2159232 m^7 n^4
\\&&
+219000 m^7 n^2+2819 m^7 +729216 m^6n^7 +1718224 m^6 n^5+454480 m^6 n^3+26488 m^6 n\\&&
+157696 m^5 n^8+786208 m^5 n^6+472096 m^5 n^4 +78116 m^5 n^2 +408 m^5+14336 m^4 n^9\\&&
+190464m^4 n^7+254976 m^4 n^5+92712 m^4 n^3+9592 m^4 n+18816 m^3 n^8
 +67712 m^3 n^6 \\&&
 +49408 m^3 n^4+17184 m^3 n^2+388 m^3+6912 m^2 n^7+10880 m^2n^5+10272 m^2 n^3+1248 m^2 n\\
&& +640 m n^6+1920 m n^4+872 m n^2+144 m+128 n^3+96 n\big) v_4^3 + 
16 n^3 (2 m+n)^2 \big(2240 m^{15}\\&&
+96128 m^{14} n 
+848320 m^{13} n^2+2328 m^{13}+3499648 m^{12} n^3+56304 m^{12} n+8416192 m^{11} n^4
\\&&+500432m^{11} n^2+544 m^{11} +12975200 m^{10} n^5+1956172 m^{10} n^3+38600 m^{10} n +13339312 m^9 n^6
\\&&
+4349536 m^9 n^4+180104 m^9 n^2 +620m^9 +9219520 m^8 n^7+5940704 m^8 n^5+543914 m^8 n^3\\&&
+14624 m^8 n+4215392 m^7 n^8+5109920 m^7 n^6  +976656 m^7 n^4+109640 m^7 n^2+431m^7\\&&
+1217920 m^6 n^9+2760400 m^6 n^7+1006392 m^6 n^5 +272120 m^6 n^3+1808 m^6 n+200704 m^5 n^{10}\\&&
+904704 m^5 n^8+597264 m^5 n^6+311304 m^5n^4+19228 m^5 n^2  -46 m^5+14336 m^4 n^{11}\\&&
+163840 m^4 n^9+199680 m^4 n^7 +180544 m^4 n^5+36856 m^4 n^3+2872 m^4 n +12544 m^3 n^{10}\\&&
+34368m^3 n^8+51584 m^3 n^6+27072 m^3 n^4+5776 m^3 n^2 +5 m^3+2304 m^2 n^9+5760 m^2 n^7\\
&& +8544 m^2 n^5+3616 m^2 n^3+400 m^2 n+960 m n^6+720 m n^4+404 m n^2+24 m +96 n^3+16 n\big)v_4^2\\
&&  -256 n^4 (2 m+n)^2 \big(496 m^{14}+8080 m^{13} n+44736 m^{12} n^2+472 m^{12}
+130952 m^{11} n^3\\&&
+2136 m^{11} n +236456 m^{10} n^4+7056 m^{10}n^2+84 m^{10}+280320 m^9 n^5+20276 m^9 n^3 \\&&
+1968 m^9 n+221432 m^8 n^6 +40056 m^8 n^4+7344 m^8 n^2+56 m^8+114488 m^7 n^7+47760 m^7n^5\\&&
+14298 m^7 n^3+350 m^7 n +36864 m^6 n^8+33170 m^6 n^6+15930 m^6 n^4+2028 m^6 n^2+38 m^6 \\&&
+6656 m^5 n^9+13056 m^5 n^7 +9600 m^5 n^5 +4107m^5 n^3+111 m^5 n+512 m^4 n^{10}+2688 m^4 n^8\\&&
+2880 m^4 n^6+3312 m^4 n^4  +384 m^4 n^2-9 m^4 +224 m^3 n^9 +336 m^3 n^7+1152 m^3 n^5\\&&
+312 m^3n^3+108 m^3 n+144 m^2 n^6 
+72 m^2 n^4+120 m^2 n^2 +2 m^2+30 m n^3+5 m n+2 \big) v_4 \\&&
+64 n^4 (4 m^4+32 m^3 n+32 m^2 n^2+4 m^2+8 m n^3+1) (64 m^3 n^9+768 m^4 n^8+3840 m^5 n^7\\&&
+4 m^2 \left(2629 m^4-24 m^2+12\right) n^6+96 m^3 \left(183 m^4-8 m^2+4\right) n^5 +96 m^4 (197 m^4-24 m^2\\ &&
+12 ) n^4+4 m \left(3316 m^8-810 m^6+417 m^4-12 m^2+3\right) n^3 +48 m^2 \left(8 m^4-2 m^2+1\right) \times \\&&
(15 m^4-2 m^2+1 ) n^2+48 m^3 \left(28 m^8-14 m^6+11 m^4-4 m^2+1\right) n +\left(4 m^4+2 m^2-1\right)^2 \times \\&&
\left(5 m^4-2 m^2+1\right)).
\end{eqnarray*} }

We observe that the coefficients of the polynomial $h_4(v_4)$ are positive for even degree terms and negative for odd degree terms.  Thus if the equation $h_4(v_4) = 0$ has real solutions, then these are all positive. 
 \end{proof}
    
We finally {\it conjecture}  that the Stiefel manifolds $V_2\mathbb{C}^{n+2}=\SU(n+2)/\SU(n)$  admit  precisely two invariant Einstein merics, which are of Jensen's type.  This is the analogue of the real Stiefel manifolds
$V_2\mathbb{R}^{n+2}=\SO(n+2)/\SO(n)$, which had been studied before by other authors (eg. \cite{Ke}).


\begin{thebibliography}{50}
  
  

  \bibitem [Ar1]{A1} 
A. Arvanitoyeorgos,
{\it Homogeneous Einstein metrics on Stiefel manifolds},
Comment. Math. Univ. Carolinae 37 (1996), no. 3, 627--634. 

  \bibitem [Ar2]{A2} 
A. Arvanitoyeorgos,
{\it Progress on homogeneous Einstein manifolds and some open problems}, 
Bull. Greek Math. Soc. 58 (2010/15), 75--97. 

 
  \bibitem [ArDzNi1]{ADN1} 
A. Arvanitoyeorgos, V.V. Dzhepko and Yu. G. Nikonorov,
{\it Invariant Einstein metrics on some homogeneous spaces of classical Lie groups}, 
 Canad. J. Math. 61 (2009), no. 6, 1201--1213.
 
\bibitem [ArDzNi2]{ADN2} 
A. Arvanitoyeorgos, V.V. Dzhepko and Yu. G. Nikonorov,
{\it Invariant Einstein metrics on certain Stiefel manifolds},
Differential Geometry and its Applications, Proc. Conf., in Honor of Leonard Euler, Olomouc, August 2007, 35--44, World Sci. Publ., Hackensack, NJ, 2008.

\bibitem [ArDzNi3]{ADN3} 
A. Arvanitoyeorgos, V.V. Dzhepko and Yu. G. Nikonorov,
{\it Invariant Einstein metrics on quaternionic Stiefel manifolds},
Bull. Greek Math. Soc. 53 (2007), 1--14.

\bibitem [ArSaSt1]{ASS1} 
A. Arvanitoyeorgos, Y. Sakane and M. Statha,
{\it New homogeneous Einstein metrics on Stiefel manifolds},
Differential Geom. Appl. 35 (2014), no. S1,  2--18.

\bibitem [ArSaSt2]{ASS2} 
A. Arvanitoyeorgos, Y. Sakane and M. Statha,
{\it New Einstein metrics on the Lie group $\SO(n)$ which are not naturally reductive},
Geom. Imaging and Computing 2 (2015), no. 2, 77--108.


 \bibitem[ArSaSt3]{ASS3} 
  A. Arvanitoyeorgos, Y. Sakane and M. Statha,
{\it New homogeneous Einstein metrics on quaternionic Stiefel manifolds}, to appear in Adv. Geom.

  
\bibitem[BeCoHaLi]{Co}
F. Belgum, V. Cort\'es, A.S. Haupt and D. Lindemann, {\it Left-invariant Einstein metrics on $\mathbb{S}^3\times\mathbb{S}^3$}, J. Geom. Phys. 128 (2018), 128--139.   
  
  
\bibitem[Be]{Be} 
A. L. Besse,
 {\it Einstein Manifolds}, 
  Springer-Verlag, Berlin, 1986. 
  
  \bibitem[B\"o]{Bom}  C. B\"ohm, {\it Homogeneous Einstein metrics and simplicial complexes}, J. Differential Geom. 67 (2004), no. 1, 79--165. 
  
 \bibitem[B\"oWaZi]{BWZ}
 C. B\"ohm, M. Wang and W. Ziller,
  {\it A variational approach for compact homogeneous Einstein manifolds}, Geom. Func. Anal. 14 (2004), no. 4,  681--733.
  
  \bibitem[ChNi]{CN} Z. Chen and Yu. G. Nikonorov,
  {\it Invariant Einstein metrics on generalized Wallach spaces}, to appear in Sci. China Math. 61 (2018), DOI:10.1007/s11425-017-9150-3.

 \bibitem[D'AZi]{DZ} J. E. D' Atri and W. Ziller,  {\it Naturally reductive metrics and Einstein metrics on compact Lie groups},  Memoirs Amer. Math. Soc. 18 (1979), no. 215, iii+72 pp.
  
\bibitem[Je]{J2}
G. Jensen,
{\it Einstein metrics on principal fiber bundles},
J. Differential Geom. 8 (1973), no. 4, 599--614.

\bibitem[Ke]{Ke}
M. Kerr,
{\it New examples of homogeneous Einstein metrics}, Michigan J. Math. 45 (1998), no. 1,  115--134.



\bibitem[Mo]{M}
K. Mori,
{\it Left Invariant Einstein Metrics on $\SU(N)$ that are Not Naturally Reductive}, Master Thesis (in Japanese), Osaka University 1994. English Translation Osaka University PRM 96--10 (preprint series) (1996).

\bibitem[Ni1]{Ni1}
Yu.G. Nikonorov, {\it Compact homogeneous Einstein $7$-manifolds}, Geom. Dedicata 109 (2004), no. 1, 7--30.

\bibitem[Ni2]{Ni2}
Yu.G. Nikonorov: {\it Noncompact homogeneous Einstein $5$-manifolds}, Geom. Dedicata 113 (2005), no.1, 107--143.
    
    \bibitem[PaSa]{PS}
  J-S. Park and Y. Sakane,
  {\it Invariant Einstein metrics on certain homogeneous spaces},
  Tokyo J. Math.  20 (1997), no.1,  51--61.
  
  \bibitem[Sa]{S}
  A. Sagle,
  {\it Some homogeneous Einstein manifolds}, Nagoya Math. J. 39 (1970) 81--106.
  
\bibitem[Wa1]{W1}
M. Wang,
{\it Einstein metrics from symmetry and bundle constructions}, In: Surveys in Differential Geometry: Essays on Einstein Manifolds. Surv. Differ. Geom. VI, Int. Press, Boston, MA 1999.

\bibitem[Wa2]{W2}
M. Wang, 
{\it Einstein metrics from symmetry and bundle constructions: A sequel}, In: 
 Differential Geometry: Under the Influence of S.-S. Chern, 253--309, Adv. Lect. Math. (ALM), 22, Int. Press, Somerville, MA, 2012.
  
\bibitem[WaZi]{WZ}
M. Wang and W. Ziller,
{\it Existence and non-existence of homogeneous Einstein metrics}, 
Invent. Math. 84 (1986), no. 1,  177--194.
  \end{thebibliography}
 \end{document}